\pgfplotsset{compat = newest}
\newcommand{\R}{\mathbb{R}}
\newcommand{\N}{\mathbb{N}}
\newcommand{\B}{\mathbb{B}}
\newcommand{\Be}{\mathsf{B}}
\DeclareMathOperator{\bigO}{O}
\newcommand{\eps}{\varepsilon}
\newtheorem{theorem}{Theorem}[section]
\theoremstyle{plain}
\newtheorem{corollary}{Corollary}[section]
\newtheorem{definition}{Definition}[section]
\newtheorem{lemma}{Lemma}[section]
\newtheorem{proposition}{Proposition}[section]
\newtheorem{remark}{Remark}[section]
\newtheorem{theorema}{Theorem}[section]
\newcommand{\la} {\lambda}
\newcommand{\rn}{\mathbb{R}^{n}}
\newcommand{\bn}{\mathbb{B}^{n}}
\newcommand{\calu}{\mathcal{U}}
\newcommand{\dvg}{{\rm d}v_{\bn}}
\newcommand{\tstar}{2^{\star}}
\newcommand{\dx}{{\rm d}x}
\newcommand{\cn}{c(n, \lambda)}
\numberwithin{equation}{section}
\begin{document}

\title[Sharp quantitative stability of Struwe's decomposition in the hyperbolic space]{Sharp quantitative stability of Struwe's decomposition of the Poincar\'e-Sobolev inequalities on the hyperbolic space: Part I}

\author{Mousomi Bhakta}
\address{Mousomi Bhakta, Department of Mathematics\\
Indian Institute of Science Education and Research Pune (IISER-Pune)\\
Dr Homi Bhabha Road, Pune-411008, India}
\email{mousomi@iiserpune.ac.in}
\author{Debdip Ganguly}
\address{Debdip Ganguly, Department of Mathematics\\
Indian Institute of Technology Delhi\\
IIT Campus, Hauz Khas, New Delhi, Delhi 110016, India}
\email{debdipmath@gmail.com, debdip@maths.iitd.ac.in}
\author{Debabrata Karmakar}
\address{Debabrata Karmakar, Tata Institute of Fundamental Research\\
Centre For Applicable Mathematics\\
Post Bag No 6503, GKVK Post Office, Sharada Nagar, Chikkabommsandra, Bangalore 560065, India}
\email{debabrata@tifrbng.res.in}
\author{Saikat Mazumdar}
\address{Saikat Mazumdar, Department of Mathematics\\
Indian Institute of Technology Bombay\\
Mumbai 400076, India}
\email{saikat.mazumdar@iitb.ac.in}

\date{\today}
\subjclass[2010]{Primary: 58J05, 46E35, 35A23, 35J61}
\keywords{Stability, critical points, Poincar\'e-Sobolev, hyperbolic space, $\delta$- interacting hyperbolic bubbles}


\begin{abstract}
A classical result owing to Mancini and Sandeep \cite{MS} asserts that all positive solutions of the Poincar\'e-Sobolev equation on the hyperbolic space
$$
-\Delta_{\mathbb{B}^n} u \, - \, \lambda u \, = \, |u|^{p-1}u, \quad u\in H^1(\mathbb{B}^n),
$$
are unique up to hyperbolic isometries where $n \geq 3,$ $1 < p \leq \frac{n+2}{n-2} $  and $\lambda \leq \frac{(n-1)^2}{4}.$
 
In the spirit of Struwe, 
Bhakta-Sandeep \cite{BS} proved the following non-quantitative stability result:  if $u_m \geq 0,$ and 
$
\|\Delta_{\mathbb{B}^n} u_m+ \la u_m +  u_m^{p}\|_{H^{-1}} \rightarrow 0, 
$
then $  \delta(u_m) := \mbox{dist}(u_m, \mathcal{M_{\lambda}}) \rightarrow 0,$ where $\mbox{dist}(u_m, \mathcal{M_{\lambda}})$ denotes the $H^1$-distance of $u_m$ from the manifold of 
sums of superpositions of  \it hyperbolic bubbles \rm and (localized) Aubin-Talenti bubbles. 

\medskip

In this article, we study the quantitative stability 
of Struwe decomposition. We prove under certain bounds on $\|\nabla u\|_{L^2(\mathbb{B}^n)}$ the inequality
$$
\delta(u) \lesssim    \|\Delta_{\mathbb{B}^n} u+ \la u +  u^{p}\|_{H^{-1}},
$$
 holds whenever $p >2$ and hence forcing the dimensional restriction $3 \leq n \leq 5.$ Moreover, it fails for any $n \geq 3$ and $p \in (1,2].$ This strengthens the phenomenon observed in the Euclidean case that the (linear) quantitative stability estimate depends only on whether the exponent $p$ is $>2$ or $\leq 2$. In the critical case, \it our dimensional constraint \rm coincides with the seminal result of Figalli and Glaudo \cite{FG} but we notice a striking dependence on the exponent $p$ in the subcritical regime as well which is not present in the flat case.
 
  Our technique is an amalgamation of Figalli and Glaudo's method and builds upon a series of new and novel estimates on the interaction of \it hyperbolic bubbles \rm and their derivatives and improved eigenfunction integrability estimates. Since the conformal group coincides with the isometry group of the hyperbolic space, we perceive a remarkable distinction in arguments and techniques to achieve our main results compared to that of the Euclidean case.

 \end{abstract}
\maketitle
\setcounter{tocdepth}{1}
\tableofcontents

\medskip

\noindent
\section{Introduction}

In this article, we continue our study on the stability of Poincar\'e-Sobolev inequalities in the hyperbolic space. The focus of this article is on the multi-bubble case. Let us first introduce to the reader the Sobolev embedding, generally refer to as Poincar\'e-Sobolev inequality on the hyperbolic space. Throughout this article, $\bn$ denotes the Poincar\'e ball model of the hyperbolic space equipped with the metric $g:= \left(\frac{2}{1-|x|^2}\right)^2 \; {\rm d}x^2$ and $\Delta_{\bn}, \nabla_{\bn}$ denote the Laplace-Beltrami operator and the gradient operator respectively, and $\dvg$ denotes the volume element.

\subsection{Poincar\'e-Sobolev inequality on the hyperbolic space.} Let $n \geq 3$ and $\lambda \leq \frac{(n-1)^2}{4}$ and $1 < p \leq \tstar -1.$ Then  there exists a best constant $S_{\la,p}:=S_{\lambda,p}(\B^{n})>0$ such that 
\begin{align}\label{S-inq}
S_{\lambda,p}\left(~\int \limits_{\B^{n}}|u|^{p+1} \, \dvg \right)^{\frac{2}{p+1}}\leq\int \limits_{\B^{n}}\left(|\nabla_{\B^{n}}u|^{2}-\lambda u^{2}\right)\, \dvg
\end{align}
holds for all  $u\in C_c^{\infty}(\B^n),$ where $\tstar = \frac{2n}{n-2}.$ 

\medskip

By density \eqref{S-inq} continues to hold for every $u$ belonging to the the closure of $C_c^{\infty}(\B^n)$ with respect to the norm
\begin{align}
\|u \|_{\lambda} = \left(\int \limits_{\B^{n}}\left(|\nabla_{\B^{n}}u|^{2}-\lambda u^{2}\right)\, \dvg\right)^{\frac{1}{2}}.
\end{align}
The quantity $\la_1(\bn):= \frac{(n-1)^2}{4}$ is the $L^2$-bottom of the spectrum of $-\Delta_{\bn}$ defined by 
\begin{align*}
\lambda_1(\bn) = \inf_{\|u\|_{L^2(\bn)} = 1} \|\nabla_{\bn} u\|_{L^2(\bn)}^2.
\end{align*}

As a result, if $\lambda < \frac{(n-1)^2}{4},$ then the closure is the classical Sobolev space $H^1(\bn).$ Otherwise, the closure is a larger space that we will denote by $\mathcal{H}^1(\bn)$ and which contains non-square-integrable functions. We refer to Section \ref{basic section} for a precise definition of $H^1(\bn).$  

\medskip

The inequality \eqref{S-inq} was first obtained by Mancini and Sandeep in \cite{MS} and in the same article, they also proved the existence of optimizers under appropriate assumptions on $n,\lambda$ and $p.$ In particular, they showed that under the assumption that

\begin{align*}
\mbox{{(\bf H1)}} \ \ \ \ \ \ \ \ \ \ \ \ \ \ \ \ \ \ \
\begin{cases}
\lambda < \frac{(n-1)^2}{4}, \ \ \ \mbox{when} \ 1 < p < \frac{n+2}{n-2}, \ \mbox{and} \ n \geq 3, \\ \\
\frac{n(n-2)}{4} < \lambda < \frac{(n-1)^2}{4}, \ \ \ \mbox{when} \  p = \frac{n+2}{n-2}, \ \mbox{and} \ n \geq 4, \ \ \ \ \ \ \ \
\end{cases}
\end{align*}

\medskip

 there always exists a strictly positive, radially symmetric and decreasing extremizer $\calu:= \calu_{n,\la,p}$ in $H^1(\bn).$ For simplicity of notations we will always denote the radially symmetric solution by $\calu$ and the dependence of $n,\la$ and $p$ will be implicitly assumed.  It is straightforward to verify that subject to the normalization 
 
 \begin{align} \label{calu}
 \bullet \ \|\calu\|_{\lambda}^2 = S_{\lambda,p}^{\frac{p+1}{p-1}}, \ \mbox{in addition to} \ \calu \in H^1(\bn), \calu > 0 \ \mbox{radially symmetric and decreasing},
\end{align}

\medskip

the obtained extremizer is a positive solution to the Euler-Lagrange equation 

\begin{align}\label{eq1}
 -\Delta_{\B^{n}}u-\lambda u=|u|^{p-1}u~ \ \ \ \  u \in H^{1}(\B^{n}).
 \end{align}
 
 \medskip
 
The equation \eqref{eq1} as well as the inequality \eqref{S-inq} is invariant under the conformal group of the ball model, which in this case coincides with the isometry group of the ball model and is generated by the hyperbolic translations $\tau_b, b\in \bn.$ In analogy with the Euclidean case $\tau_b$ can be thought of as the translation with takes $0$ to $b.$ In \cite{MS} Mancini and Sandeep also classified the positive solutions of \eqref{eq1} and which in turn provides the classification of the extremizers of \eqref{S-inq}. Their results are as follows: Under the assumptions {\bf (H1)} the $n$-dimensional manifold defined by
\begin{align*}
\mathcal{Z}_{0}:=\{\calu[b] := \mathcal{U}\circ\tau_{b}: b \in \B^{n}\}
\end{align*}
consists of all the positive solutions to \eqref{eq1} and $c\mathcal{Z}_0, \ c \in \mathbb{R}\backslash \{0\}$ consists of all the nontrivial extremizers of \eqref{S-inq}. Henceforth we shall call the elements of $\mathcal{Z}_0$ a {\it hyperbolic bubble}. 

\medskip

Thanks to \eqref{S-inq} the solutions of \eqref{eq1} are the critical points of the energy functional
\begin{align} \label{ilambda}
I_{\lambda}(u)= \frac{1}{2}\int \limits_{\B^{n}}\left(|\nabla_{\B^{n}}u|^{2}-\lambda u^{2}\right)~\dvg-\frac{1}{p+1} \int \limits_{\B^{n}}|u|^{p+1}~\dvg.
\end{align}
Using the conformal invariance of the norms stated in Section \ref{basic section}, in particular, Lemma \ref{lemma1}, it is easy to see that all the hyperbolic bubbles have the same energy 
$$I_{\la}(\calu \circ \tau_b) = \frac{p-1}{2(p+1)}S_{\la,p}^{\frac{p+1}{p-1}}.$$

\subsection{The Euclidean Sobolev inequality.} The classical Sobolev inequality in $\mathbb{R}^n,$ $n \geq 3,$ asserts that there exists a best constant $S := S(\R^n)$ such that
\begin{align}\label{Sobolev}
S \left(~\int \limits_{\R^{n}}|u|^{2^{\star}}{\rm d}x\right)^{2/2^{\star}} \leq \int \limits_{\R^{n}}|\nabla u|^{2} \ {\rm d}x
\end{align}
holds for all $u \in C_c^{\infty}(\R^n),$ where $2^{\star} = \frac{2n}{n-2}.$ By density argument, the inequality \eqref{Sobolev} continues to hold for all $u$  
satisfying $\|\nabla u\|_{L^2(\R^n)} <\infty,$ and $\mathcal{L}^n(\{|u| >t\}) < \infty$ for every $t>0,$ where $\|\cdot \|_{L^2(\R^n)}$ denotes the $L^2$-norm and $\mathcal{L}^n$ denotes the Lebesgue measure on $\R^n.$  Henceforth we shall denote the closure by $H^1(\rn).$
The value of $S$ is known and the equality cases in \eqref{Sobolev} have been well studied. The inequality \eqref{Sobolev} is invariant under the action of the conformal group of $\rn$ composed of the translations, dilations and inversions: for $z \in \rn, \mu > 0,$  and $u \in C_{c}^{\infty}(\rn)$ define $T_{z,\mu}(u) = \mu^{\frac{n-2}{2}}u(\mu(\cdot - z)),$ then both the norms in \eqref{Sobolev} are preserved. It is well known that \eqref{Sobolev}  is achieved if and only if $u$ is a constant multiple of the {\it Aubin-Talenti} bubbles \cite{AT, TAL}
\begin{align}\label{AT bubbles}
U[z,\mu](x) = (n(n-2))^{\frac{n-2}{4}}\mu^{\frac{n-2}{2}} \frac{1}{(1 + \mu^2|x-z|^2)^{\frac{n-2}{2}}}, \quad z \in \R^n,\, \mu>0. 
\end{align}
Therefore the set of equality cases in \eqref{Sobolev} forms a $(n+2)$ dimensional manifold. The choice of the dimensional constant in the definition of $U[z,\mu] = T_{z,\mu}(U[0,1])$ ensures that $U$ is a positive solution of the corresponding Euler-Lagrange equation 
\begin{align}\label{eleu}
-\Delta u  = |u|^{2^{\star} - 2}u \ \mbox{ in } \ \R^n.
\end{align}

Thanks to the Sobolev inequality, all solutions to \eqref{eleu} are the critical points of 
\begin{align*}
J(v) = \frac{1}{2}\int_{\rn}|\nabla v|^2 \ dx - \frac{1}{\tstar} \int_{\rn} |v|^{\tstar} \  dx, \ \ \ \ v \in H^{1}(\rn).
\end{align*}
and moreover all the Aubin-Talenti bubbles have the same energy $J(U[z,\la]) = \frac{1}{n}S(\rn)^{\frac{n}{2}}.$

\subsection{The quantitative stability problem}
In connection with Aubin and Talenti's results on the equality cases of the Euclidean Sobolev inequality, Brezis and Lieb \cite{BL} asked the following: if $u$ almost optimizes \eqref{Sobolev} i.e. if the deficit 
\begin{align*}
\delta_{Eucl}(u) = \|\nabla u\|^{2}_{L^2(\rn)}-S \|u\|^{2}_{L^{\tstar}(\rn)}
\end{align*}
is small then is it true that $u$ is close, in some appropriate sense, to the family of Aubin-Talenti bubbles? 
This question is positively answered by Bianchi and Egnell \cite{BE} nearly thirty years ago in the following quantitative form
\begin{align*}
\inf_{\mu \in \R, z \in \rn, c \in \R} \left ( \frac{\|\nabla u - cU[z,\mu]\|_{L^2(\rn)}}{\|\nabla u\|_{L^2(\rn)}}\right) \leq C(n)\delta_{Eucl}(u)^{\frac{1}{2}},
\end{align*}
strengthening the original results of Brezis and Lieb in bounded domains.
Here it is important to remark that the norm $\|\nabla u\|_{L^2(\rn)}$ is the strongest possible norm and the exponent $\frac{1}{2}$ is sharp, in the sense that if we replace $\frac{1}{2}$ by any other exponent strictly bigger than $\frac{1}{2}$ then the inequality fails as $\delta_{Eucl}(u) \to 0.$

\medskip

The stability problem has been a centre of attraction for several authors for quite some time.  Many possible generalizations and improvements in several directions have appeared in the literature. A notable generalization in this direction is the stability for $p$-Sobolev inequality for $p \neq 2.$  In \cite{EPaT} the authors 
obtained results similar to Brezis and Lieb but the Bianchi-Egnell type result was open for a long time. After some remarkable results of Cianchi \cite{CI}, Cianchi-Fusco-Maggi-Pratelli \cite{CFMP}, Figalli-Neumayer \cite{FN} finally Figalli and Zhang \cite{FZp} obtained sharp quantitative stability results for $p \neq 2$. Our goal for this article is somewhat different what we are going to describe now.

\medskip

Another remarkable direction is the stability problem from the Euler-Lagrange point of view initiated by Cirallo-Figalli and Maggi \cite{CFM}. The stability question can also be posed as follows: if $u$ almost solves the corresponding Euler-Lagrange equation then is it true that $u$ is close to the family of Aubin-Talenti bubbles in a quantitative way? The problem is far more challenging and generally false unless some more hypotheses are imposed upon it. To see this note that $u = U[-Re_1, 1] + U[Re_1, 1]$ almost solves \eqref{eleu} i.e. $\|\Delta u + |u|^{p-1}u\|_{H^{-1}}$ is small but clearly $u$ is not close to the family of Aubin-Talenti bubbles. Here it is important to remark that  the interaction between the bubbles 
\begin{align*}
\int_{\rn} \nabla U[-Re_1,1] \cdot \nabla U[Re_1,1] \ dx = \int_{\rn} U[-Re_1,1]^p U[Re_1,1] \ dx \approx R^{2-n}=o(1) \ \ \mbox{as} \ \ R \rightarrow \infty,
\end{align*}
which makes $\|\Delta u + |u|^{p-1}u\|_{H^{-1}}$ small.

\medskip

 This shortcoming  can easily be addressed by rephrasing the question to `Does $u$ close to a sum of Aubin-Talenti bubbles?' Yet this is not true as there are examples of non-trivial sign-changing solutions which are not the sum of Aubin-Talenti bubbles \cite{DingY}. However, the classification results of Gidas, Ni and Nirenberg ensure all the positive solutions to \eqref{eleu}
 are the Aubin-Talenti bubbles \cite{GNN}.
 
 \medskip
 
 A seminal work of Struwe \cite{MS2} (also see \cite{MS1}) which dates back to the results of Bianchi and Egnell asserts that imposing a non-negativity assumption on $u$ one can get a non-quantitative stability result.
\begin{theorema}[Struwe, 1984]
 Let $n\geq 3$ and $N \geq 1$ be positive integers. Let $\{u_m\} \subset H^1(\rn)$ be a sequence of nonnegative functions such that $(N - \frac{1}{2})S^n \leq \|\nabla u_m\|_{L^2(\rn)} \leq (N - \frac{1}{2})S^n,$ and assume that
 \begin{align*}
 \|\Delta u_m + u_m^{\tstar - 1}\|_{H^{-1}} \rightarrow 0 \ \ \mbox{as} \ \ m \rightarrow \infty.
 \end{align*}
Then there exists a sequence $(z_1^m,\ldots, z_N^m)_{m \in \mathbb{N}}$ of $N$-tuple of points in $\rn$ and sequence $(\la_1^m, \ldots, \la_N^m)$ of $N$-tuple of positive real numbers such that 
\begin{align*}
\Big \|\nabla \left( u_m - \sum_{i=1}^N U[z_i^m, \la_i^m]\right) \Big \|_{L^2} \rightarrow 0 \ \ \mbox{as} \ \ m \rightarrow \infty.
\end{align*}
Moreover the Aubin-Talenti bubbles  $U[z_i^m, \la_i^m]$, $U[z_j^m, \la_j^m]$, for $i\neq j$ do not interact with each other at the $H^{1}$-level
\begin{align*}
\int_{\rn} \nabla U[z_i^{m},\la_i^{m}] \cdot \nabla U[z_j^{m},\la_j^{m}] \ dx &= \int_{\rn} U[z_i^{m},\la_i^{m}]^{\tstar-1} U[z_j^{m},\la_j^{m}] \ dx \\ 
&\approx \min \left(\frac{\la_i^{m}}{\la_j^{m}}, \frac{\la_j^{m}}{\la_i^{m}}, \frac{1}{\la_i^{m}\la_j^{m}|z_i^{m}-z_j^{m}|^2}\right)^{\frac{n-2}{2}}\rightarrow 0 \ \ \mbox{as} \ \ m \rightarrow \infty. 
\end{align*}
\end{theorema}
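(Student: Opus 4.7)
The plan is to follow the concentration-compactness/profile decomposition strategy pioneered by Struwe himself. First I would observe that $\{u_m\}$ is bounded in $H^1(\rn)$: the assumed energy upper bound combined with \eqref{Sobolev} controls both $\|\nabla u_m\|_{L^2(\rn)}$ and $\|u_m\|_{L^{\tstar}(\rn)}$. Passing to a subsequence, $u_m \rightharpoonup u^{(0)}$ weakly in $H^1$, strongly in $L^p_{\loc}$ for $p<\tstar$, and a.e.\ in $\rn$. Testing the assumed almost Euler-Lagrange condition $\Delta u_m + u_m^{\tstar-1}\to 0$ in $H^{-1}$ against compactly supported smooth functions and using a.e.\ convergence together with nonnegativity, one deduces that $u^{(0)}\geq 0$ solves $-\Delta u^{(0)}=(u^{(0)})^{\tstar-1}$ in $\rn$. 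By the Caffarelli-Gidas-Spruck classification of nonnegative finite-energy solutions of the critical equation, $u^{(0)}$ is either identically zero or a single Aubin-Talenti bubble $U[z_0,\mu_0]$.

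\medskip

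Next I set $v_m^{(1)}:=u_m-u^{(0)}$ and apply the Brezis-Lieb lemma:
\begin{align*}
\|\nabla v_m^{(1)}\|_{L^2}^2 &= \|\nabla u_m\|_{L^2}^2 - \|\nabla u^{(0)}\|_{L^2}^2 + o(1), \\
\|v_m^{(1)}\|_{L^{\tstar}}^{\tstar} &= \|u_m\|_{L^{\tstar}}^{\tstar} - \|u^{(0)}\|_{L^{\tstar}}^{\tstar} + o(1).
\end{align*}
Combined with the equation satisfied by $u^{(0)}$ and a pointwise-convergence decomposition of the nonlinear term (Vitali's convergence theorem for $u_m^{\tstar-1}-(u^{(0)})^{\tstar-1}-(v_m^{(1)})^{\tstar-1}$), one finds that $v_m^{(1)}$ is again an almost Euler-Lagrange sequence, $\|\Delta v_m^{(1)}+(v_m^{(1)})^{\tstar-1}\|_{H^{-1}}\to 0$, and $v_m^{(1)}\rightharpoonup 0$. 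If $\|\nabla v_m^{(1)}\|_{L^2}\not\to 0$, I then extract a \emph{concentration profile} by rescaling: one identifies $z_m\in\rn$ and $\mu_m>0$ such that the rescaled sequence $w_m(x):=\mu_m^{-(n-2)/2}\,v_m^{(1)}(z_m+x/\mu_m)$ admits a nontrivial weak $H^1$-limit. By the scaling invariance of \eqref{eleu}, $w_m$ satisfies the same almost Euler-Lagrange identity, so its weak limit $u^{(1)}$ is another nonnegative solution of the critical equation and, being nonzero by construction, must be an Aubin-Talenti bubble.

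\medskip

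The hard part will be precisely this extraction step: producing $(z_m,\mu_m)$ that forces a nontrivial weak limit. The argument is a Lions-type concentration dichotomy on $|\nabla v_m^{(1)}|^2$ (equivalently $|v_m^{(1)}|^{\tstar}$). Vanishing is excluded because the almost Euler-Lagrange identity together with the Sobolev inequality forces $\|v_m^{(1)}\|_{L^{\tstar}}^{\tstar}$ to remain bounded away from zero whenever $\|\nabla v_m^{(1)}\|_{L^2}\not\to 0$; one may then choose $\mu_m^{-1}$ as the radius at which a fixed positive fraction of the $L^{\tstar}$-mass of $v_m^{(1)}$ concentrates around an appropriately selected point $z_m$ (via Lions' concentration function). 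Having extracted $u^{(1)}$, I iterate by setting $v_m^{(2)}:=v_m^{(1)}-\mu_m^{(n-2)/2}u^{(1)}(\mu_m(\cdot-z_m))$ and repeating. Each iteration strips off a fixed amount of gradient energy $\|\nabla U[0,1]\|_{L^2}^2$, so the assumed upper bound on $\|\nabla u_m\|_{L^2}^2$ forces termination after at most $N$ iterations with residual $H^1$-norm $\to 0$. Finally, the non-interaction of the extracted bubbles follows from the construction itself: the successive parameters produced by the concentration extractions automatically satisfy $\min\bigl(\mu_i^m/\mu_j^m,\,\mu_j^m/\mu_i^m,\,(\mu_i^m\mu_j^m|z_i^m-z_j^m|^2)^{-1}\bigr)\to 0$ for $i\neq j$, and the stated decay of the $H^1$ and $L^{\tstar-1}\!\cdot\!L^1$ interaction integrals is then an explicit computation using the closed-form expression for $U[z,\mu]$.
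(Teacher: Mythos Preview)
The paper does not give its own proof of this statement: Theorem~A is quoted as background from Struwe's 1984 paper \cite{MS2} (and the subsequent interaction estimate is classical, cf.\ Bahri \cite{Bahri}), so there is no in-paper argument to compare against. Your outline is the standard concentration--compactness/iterated profile extraction proof and is essentially correct.

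One point deserves more care. After you subtract the first bubble and set $v_m^{(1)}=u_m-u^{(0)}$, the sequence $v_m^{(1)}$ need no longer be nonnegative, so you cannot immediately conclude that the weak limit $u^{(1)}$ of the rescaled sequence $w_m$ is a \emph{nonnegative} solution of \eqref{eleu}. The fix is to observe that since $v_m^{(1)}\rightharpoonup 0$ while $w_m\rightharpoonup u^{(1)}\not\equiv 0$, the rescaling $(z_m,\mu_m)$ must be asymptotically singular; hence the rescaled copy of the fixed bubble $u^{(0)}$ converges weakly to $0$, and $u^{(1)}$ coincides with the weak limit of the rescaled \emph{original} sequence $T_{z_m,\mu_m}^{-1}u_m\geq 0$. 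This simultaneously yields the asymptotic orthogonality of the successive scales that you invoke at the end. With this clarification (and the obvious replacement of $(v_m^{(1)})^{\tstar-1}$ by $|v_m^{(1)}|^{\tstar-2}v_m^{(1)}$ once signs are in play), your plan goes through.
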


 A sharp quantitative form of Struwe decomposition has recently been obtained by Figalli and Glaudo \cite{FG}. A clever way of getting around the problem of sign-changing $u$ is by phrasing the problem locally around the sum of Aubin-Talenti bubbles as nicely done by Cirallo-Figalli-Maggi \cite{CFM} for single bubble case and Figalli-Galudo \cite{FG} for the multi-bubble case in dim $3 \leq n \leq 5$, and more recently by Deng-Sun and Wei \cite{Weietal} in dimension $n \geq 6.$ It is remarkable that for multi-bubble cases the linear dependence of $\|\Delta u + |u|^{p-1}u\|$ heavily relies on the underlying dimension. In other words, unlike the single bubble case, the form of quantitative stability results for the multi-bubble case, the underlying dimension plays a significant role and it seems this difference results from the behaviour of the interacting bubbles \cite{FG, Weietal}.
To state their results we recall the definition of weak interaction among the bubbles. 


A family of Aubin-Talenti bubbles $\{U[z_i,\la_i]\}_{1 \leq i \leq N}$ is said to be $\delta$-interacting if the interaction between any two bubbles is less than $\delta$ i.e.
\begin{align*}
 \max_{1 \leq i \neq j \leq N}\min \left(\frac{\la_i}{\la_j}, \frac{\la_j}{\la_i}, \frac{1}{\la_i\la_j|z_i-z_j|^2}\right)^{\frac{n-2}{2}} \leq \delta.
\end{align*}
Note that the definition makes sense only if there is more than one bubble. The results obtained so far in the Euclidean case have been summarized in the following theorem.
\begin{theorema} \label{stability euclidean theorem}
 Let $n\geq 3$ and $N \geq 1$ be positive integers. There exist a small constant $\delta = \delta(n,N)$ and a large constant $C = C(n,N) > 0$ such that the following statement holds: let $u \in H^1(\rn)$ be a function such that 
 \begin{align*}
\Big \|\nabla u - \sum_{i=1}^N \nabla \tilde U\Big \|_{L^2} \leq \delta
\end{align*}
where $\{\tilde U_i\}_{1 \leq i \leq N}$ is a \underline{$\delta$-interaction family of }\footnote{when $N=1$ the underlined statement should be omitted.}  Aubin-Talenti bubbles.
Then there exists $N$ Aubin-Talenti bubbles $U_1,\ldots, U_N$ such that 
\begin{itemize}
\item[(a)] Cirallo-Figalli-Maggi \cite{CFM}. \ If $N=1$ then 
\begin{align*}
\Big \|\nabla  u -  \nabla U_1 \Big \|_{L^2} \leq C\|\Delta u + |u|^{\tstar -2}u\|_{H^{-1}}.
\end{align*}
\item[(b)] Figalli-Glaudo \cite{FG}. If $N>1$ and $3 \leq n \leq 5$ then 
\begin{align} \label{stability e}
\Big \|\nabla  u -  \sum_{i=1}^N\nabla U_i \Big \|_{L^2} \leq C\|\Delta u + |u|^{\tstar -2}u\|_{H^{-1}}.
\end{align}
Moreover, for any $i \neq k$, the interaction between the bubbles can be estimated
as
\begin{align*}
\int_{\rn} U_i^{\tstar -1}U_k \ dx \leq C\|\Delta u + |u|^{\tstar -2}u\|_{H^{-1}}.
\end{align*}
Furthermore, the dimensional restriction is optimal in the sense that for $n \geq 6$ and $N=2$ there exists a sequence $\{u_R\} \subset H^1(\rn)$ such that \eqref{stability e} fails to hold for any  $C= C(n)$ as $R \rightarrow \infty.$

\item[(c)]Ding-Sun-Wei \cite{Weietal}. For $n \geq 6$ the following optimal \footnote{optimality in the same sense as in (b).} inequality holds

\begin{align*}
\Big \|\nabla  u -  \sum_{i=1}^N\nabla U_i \Big \|_{L^2} \leq C
\begin{cases}
\|\Delta u + |u|^{\tstar -2}u\|_{H^{-1}}\big| \ln \|\Delta u + |u|^{\tstar -2}u\|_{H^{-1}}\big|^{\frac{1}{2}}, \ \ \mbox{if} \ \ n =6, \\ \\
\|\Delta u + |u|^{\tstar -2}u\|_{H^{-1}}^{\frac{n+2}{2(n-2)}}, \ \ \mbox{if} \ \ n \geq 7.
\end{cases}
\end{align*}

\end{itemize}
\end{theorema}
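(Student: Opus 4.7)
The plan is to treat parts (a), (b), (c) in a unified Lyapunov-Schmidt framework, where only the final interaction analysis distinguishes the regimes. Fix $N$ and assume $u\in H^1(\rn)$ lies within $\delta$ of a $\delta$-interacting sum. First I would minimize the $H^1$-distance from $u$ to the $N(n+2)$-dimensional manifold
\[
\mathcal{M}_N=\Big\{\sum_{i=1}^N U[z_i,\mu_i]:z_i\in\rn,\,\mu_i>0\Big\},
\]
producing a best approximation $\sigma=\sum_{i=1}^N U_i$ with remainder $\rho:=u-\sigma$ that is $H^1$-orthogonal to the tangent space $T_\sigma\mathcal{M}_N$, i.e. to every $\partial_{z_i}U_i$ and $\partial_{\mu_i}U_i$. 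A standard continuity argument shows that along a sequence with deficit tending to $0$ the extracted bubbles remain $\delta$-interacting and $\|\nabla\rho\|_{L^2}=o(1)$.

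Next I would expand the equation as
\[
-\Delta u-|u|^{\tstar-2}u=-\Delta\rho-(\tstar-1)\sigma^{\tstar-2}\rho-\mathcal{I}(\sigma)-\mathcal{N}(\sigma,\rho),
\]
where $\mathcal{I}(\sigma)=\sigma^{\tstar-1}-\sum_i U_i^{\tstar-1}$ collects bubble-bubble cross terms and $\mathcal{N}$ gathers the quadratic-and-higher corrections in $\rho$. The central analytic input is a coercivity estimate for the linearized operator $L_\sigma:=-\Delta-(\tstar-1)\sigma^{\tstar-2}$ on the orthogonal complement of $T_\sigma\mathcal{M}_N$, namely $\langle L_\sigma\rho,\rho\rangle\gtrsim\|\nabla\rho\|_{L^2}^2$. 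For small $\delta$ this follows by pasting together the classical spectral gap around a single bubble (whose only non-positive directions are generated by dilations and translations, precisely those quotiented out by the orthogonality) with cut-offs around each $U_i$, using that the weak interaction makes the cross terms small.

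Testing against $\rho$ and invoking coercivity yields
\[
\|\nabla\rho\|_{L^2}^2\lesssim \|\Delta u+|u|^{\tstar-2}u\|_{H^{-1}}\|\nabla\rho\|_{L^2}+\sum_{i\neq k}\mathcal{Q}_{ik}\|\nabla\rho\|_{L^2}+\|\mathcal{N}\|_{H^{-1}}\|\nabla\rho\|_{L^2},
\]
where $\mathcal{Q}_{ik}:=\int_{\rn}U_i^{\tstar-1}U_k\,dx$. Testing instead against the tangent profile $\partial_{\mu_i}U_i$ uses a Pohozaev-type identity coming from the conformal invariance of \eqref{eleu} to cancel the linear contribution $\langle L_\sigma\rho,\partial_{\mu_i}U_i\rangle$, leaving an extraction $\mathcal{Q}_{ik}\lesssim\|\Delta u+|u|^{\tstar-2}u\|_{H^{-1}}+\|\mathcal{N}\|_{H^{-1}}$. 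Part (a) (Cirallo-Figalli-Maggi, $N=1$) is then immediate since $\mathcal{I}\equiv 0$ and $\|\mathcal{N}\|_{H^{-1}}\lesssim\|\nabla\rho\|_{L^2}^{\min(2,\tstar-1)}$ can be absorbed into the left-hand side.

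The hard part, and the source of the dimensional dichotomy in (b) and (c), is the sharp comparison between $\|\mathcal{N}\|_{H^{-1}}$ and the interaction scale $\mathcal{Q}:=\max_{i\neq k}\mathcal{Q}_{ik}$. For $3\leq n\leq 5$ one has the pointwise bound $|\mathcal{N}|\lesssim |\rho|^{\tstar-1}$ with $\tstar-1>2$, which is absorbed and yields the clean Figalli-Glaudo estimate \eqref{stability e}. For $n=6$ the expansion of $(\sigma+\rho)^{\tstar-1}$ produces a $\rho^2$ term whose pairing with $U_i$ diverges logarithmically in $\mathcal{Q}$, and for $n\geq 7$ the quadratic correction $\sigma^{\tstar-3}\rho^2$ genuinely dominates the linear deficit, forcing the Ding-Sun-Wei exponent $\tfrac{n+2}{2(n-2)}$; sharpness is certified by the two-bubble configurations $u_R=U[-Re_1,1]+U[Re_1,1]$ for which $\|\Delta u_R+u_R^{\tstar-1}\|_{H^{-1}}$ decays strictly faster than $\mathcal{Q}$. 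The main technical obstacle therefore reduces to obtaining asymptotically sharp expansions of $\mathcal{Q}_{ik}$ in the parameter $\epsilon_{ik}=\min(\mu_i/\mu_k,\mu_k/\mu_i,(\mu_i\mu_k|z_i-z_k|^2)^{-1})^{(n-2)/2}$ and matching them, dimension by dimension, with the tested equation.
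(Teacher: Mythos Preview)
This statement is not proved in the paper at all. It is presented as Theorem~B, a summary of results from the literature: part~(a) is attributed to Ciraolo--Figalli--Maggi \cite{CFM}, part~(b) to Figalli--Glaudo \cite{FG}, and part~(c) to Deng--Sun--Wei \cite{Weietal}. The paper states it purely as background for the Euclidean case before moving on to its own contribution on the hyperbolic space; there is no proof to compare against.

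Your sketch is a reasonable high-level outline of the Lyapunov--Schmidt strategy underlying those papers, but since no proof is expected here, the exercise is moot. If anything, a few points in your outline are slightly off relative to the actual arguments in the cited works. In Figalli--Glaudo the minimization is over $\sum_i\alpha_i U[z_i,\mu_i]$ with free scalars $\alpha_i$, not just over $\mathcal{M}_N$ as you wrote, and showing $|\alpha_i-1|$ is controlled is part of the conclusion. Your treatment of $\mathcal{N}$ for $3\le n\le 5$ is too crude: the relevant pointwise bound is $|\mathcal{N}|\lesssim \sigma^{\tstar-3}\rho^2+|\rho|^{\tstar-1}$, and the key observation is that $\tstar-3>0$ in those dimensions so the quadratic term is subcritical. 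Most importantly, your account of part~(c) understates the difficulty: Deng--Sun--Wei do not simply track a logarithm or a power through the same scheme, but introduce a corrected approximation $\sigma+\phi_0$ solving an auxiliary problem to absorb the leading interaction error before running the coercivity argument; without this correction the estimate does not close in dimensions $n\ge 6$.
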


Before proceeding to the next section let us mention to the interested readers some existing stability results in the literature. We refer to the beautiful treatises \cite{ISO1, ISO2, ISO3, ISO4, CI, CFMP, CF, DE1, DT, FMP, FMP-1, FN, BDNS, DE,  Ngu, Ruff, FZp, CKNWei, SA, Neum, Seu} which covers, for example, the stability of isoperimetric inequality, $1$-Sobolev inequality, Caffarelli-Kohn-Nirenberg inequality, log-Sobolev inequality,  Gagliardo-Nirenberg-Sobolev inequality, Hardy-Littlewood-Sobolev, 
logarithmic Hardy-Littlewood-Sobolev inequality on the Euclidean space. On the values of optimal constant $C$ in the Bianchi-Egnell result we refer to \cite{DEFFL} (see also \cite{Kon}). On the stability of isoperimetric inequality on the sphere see \cite{isosphere} and on the hyperbolic space, we refer to \cite{isohyperbolic}.
See also the recent articles  \cite{Frank} for stability results on $\mathbb{S}^1(1/\sqrt{n-2}) \times S^1(1)$ and \cite{Max, NVo} for related stability on a  general Riemannian manifold. 

\medskip

For the critical point of view, other than the results \cite{CFM, FG,Weietal} mentioned above, one can consult the beautiful monographs \cite{CM17, KM17} for the Euclidean isoperimetric inequality. The beautiful survey article by Fusco
\cite{Fuscosurvey} gives a broad description of the stability results concerning several other related geometric and functional inequalities.

\subsection{Known results in the hyperbolic space}
In a recent article \cite{BGKM}, we studied the quantitative stability of the Poincar\'e-Sobolev inequality in the spirit of Bianchi and Egnell \cite{BE}. The bottom of the spectrum of $-\Delta_{\bn}$ being positive, the hyperbolic space admits sub-critical Sobolev inequality as well (i.e. $p < \tstar-1$) and we established the stability for both critical and subcritical Sobolev inequalities. 

\medskip

The non-quantitative form of the Poincar\'e Sobolev inequality on the hyperbolic space have been obtained nearly a decade ago by the first author jointly with Sandeep \cite{BS}. They establish the following Struwe-type profile decomposition both for the subcritical and the critical case.
Needless to say in the subcritical case there are no Aubin-Talenti bubbles and hence there are only hyperbolic bubbles present in the profile decomposition. However, in the critical case one can not exclude the possibility of the presence of suitably localized Aubin-Talenti bubbles as indicated below.

\medskip

For the critical case $p = \tstar-1,$ there are two types of bubbles. The hyperbolic bubbles which are of the form $\mathfrak{u}_k = \calu \circ \tau_{b_k}$ for some sequence $b_k \in \bn$ and localized Aubin-Talenti bubbles which are of the form 
\begin{align*}
\mathfrak{v}_k = \left(\frac{1 - |x|^2}{2}\right)^{\frac{n-2}{2}}\phi(x) U[x_0, \epsilon_k]
\end{align*}
where $\epsilon_k \rightarrow 0+$ and $\phi$ is a smooth cut-off function such that $0 \leq \phi \leq 1,$ and $\phi \equiv 1$ in a neighbourhood of $x_0,$ and $U[x_0,\epsilon_k]$ are the standard Aubin-Talenti bubbles defined above. Using the compact $L^2_{\tiny{loc}}$-embedding, it is easy to verify that $I_{\la}(\mathfrak{v}_k) = J(U[x_0, \epsilon_k]) + o(1)$ as $\epsilon_k \rightarrow 0.$ 
As a result the hyperbolic bubbles have energy $I_{\la}(\mathfrak{u}_k^j) = \frac{1}{n}S_{\la,p}^{\frac{n}{2}}$ for all $j$ while the Aubin-Talenti bubble has energy $I_{\la}(\mathfrak{v}_k^j) = \frac{1}{n}S^{\frac{n}{2}} + o(1)$ as $\epsilon_k \rightarrow 0+$ for all $j.$

\begin{theorema}[Bhakta and Sandeep \cite{BS}] \label{profile decomposition}
Let $n \geq 3, \lambda, p$ satisfies the hypothesis {\bf(H1)}. Let $\{u_m\} \subset H^1(\bn)$ be a sequence such that $I_{\la}(u_m) \rightarrow d$ and 
\begin{align*}
\|\Delta_{\bn} u_m+ \la u_m + |u_m|^{p-1}u_m\|_{H^{-1}} \rightarrow 0, \ \ \mbox{as} \ \ m \rightarrow \infty.
\end{align*}

 Then there exists $N_1,N_2 \in \mathbb{N} $ and functions $\mathfrak{u}_m^j \in H^1(\bn), 1 \leq j \leq N_1, \mathfrak{v}_m^j, 1 \leq j \leq N_2$ and $u \in H^1(\bn)$ such that up to a subsequence 
\begin{align*}
u_m = u + \sum_{j=1}^{N_1} \mathfrak{u}_m^j + \sum_{j=1}^{N_2} \mathfrak{v}_m^j + o(1) \ \ in \ H^1(\bn),
\end{align*}
where $I_{\la}^{\prime}(u) = 0$ in $H^{-1}(\bn)$ and $\mathfrak{u}_m^j, \mathfrak{v}_m^j$ are defined as above. Furthermore, we have 
\begin{align*}
d = 
\begin{cases}
 I_{\la}(u)  + N_1\frac{p-1}{2(p+1)}S_{\la,p}^{\frac{p+1}{p-1}}, \ \ \ \ \ \ \ \ \ \ \ \ \ \mbox{if} \ p < \tstar-1, \\ \\
 I_{\la}(u) + \frac{N_1}{n}S_{\la,p}^{\frac{n}{2}} + \frac{N_2}{n}S^{\frac{n}{2}}, \ \ \ \ \ \ \ \ \ \ \ \mbox{if} \ p =\tstar-1.
\end{cases}
\end{align*}
\end{theorema}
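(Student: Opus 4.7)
The plan is to adapt Struwe's classical concentration--compactness iteration to the hyperbolic setting, where the two sources of non-compactness are (i) the isometry group $\{\tau_b : b \in \bn\}$ of hyperbolic translations, which drives \emph{hyperbolic bubbles}, and (ii) in the critical case only, the local conformal invariance of the equation around a point of $\bn$, which drives \emph{localized Aubin--Talenti bubbles}. First I would establish boundedness of $\{u_m\}$ in $H^1(\bn)$ by combining $I_{\la}(u_m) \to d$ with $\langle I_{\la}'(u_m), u_m\rangle = o(\|u_m\|_{\la})$, which gives $(\tfrac12 - \tfrac1{p+1})\|u_m\|_{\la}^2 = d + o(1) + o(\|u_m\|_{\la})$. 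Passing to a subsequence I extract a weak limit $u_m \rightharpoonup u$ in $H^1(\bn)$, and standard arguments (weak convergence of $|u_m|^{p-1}u_m$ in $L^{(p+1)/p}_{\loc}$ against test functions) show $I_{\la}'(u) = 0$. Setting $w_m^{(0)} := u_m - u$, a Br\'ezis--Lieb lemma on $\bn$ yields $I_{\la}(w_m^{(0)}) \to d - I_{\la}(u)$ and $\|I_{\la}'(w_m^{(0)})\|_{H^{-1}} \to 0$.

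Next, assuming $w_m^{(0)} \not\to 0$ strongly, I would run a Lions-type concentration lemma on $\bn$: if $\sup_{b \in \bn} \int_{B_{\bn}(b,1)} |w_m^{(0)}|^{p+1}\,\dvg \to 0$, then $w_m^{(0)} \to 0$ in $L^{p+1}(\bn)$ and hence strongly in the energy norm (by pairing with $I_{\la}'(w_m^{(0)})$), a contradiction. So there is $b_m \in \bn$ with $\int_{B_{\bn}(b_m,1)} |w_m^{(0)}|^{p+1}\,\dvg \geq c > 0$. Setting $\tilde w_m := w_m^{(0)} \circ \tau_{b_m}$, which has the same energy and Palais--Smale decay by the isometry invariance of $I_{\la}$ (Lemma \ref{lemma1}), two scenarios arise: either (A) $\tilde w_m \rightharpoonup \mathfrak{U} \neq 0$ in $H^1(\bn)$, in which case $\mathfrak{U}$ is a nonzero critical point of $I_{\la}$ and I subtract the hyperbolic bubble $\mathfrak{u}_m^1 := \mathfrak{U} \circ \tau_{b_m}^{-1}$; or (B) $\tilde w_m \rightharpoonup 0$, which forces a finer-scale concentration.

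In the subcritical case $p < \tstar - 1$ scenario (B) is ruled out by the compact embedding $H^1(B_{\bn}(0,R)) \hookrightarrow L^{p+1}(B_{\bn}(0,R))$, so only hyperbolic bubbles arise. In the critical case $p = \tstar - 1$ scenario (B) produces an Aubin--Talenti bubble: using that the hyperbolic metric $\bigl(\tfrac{2}{1-|x|^2}\bigr)^2\mathrm{d}x^2$ is conformally Euclidean, I identify a concentration scale $\epsilon_m \to 0^+$ and a point $x_0$, rescale as $\epsilon_m^{(n-2)/2}\tilde w_m(x_0 + \epsilon_m\,\cdot\,)$, and pass to the limit; the $\la u$-term and the conformal factor contribute sub-dominantly under the critical scaling, so one recovers the Euclidean critical equation $-\Delta v = |v|^{\tstar - 2}v$ on $\rn$. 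The classified limit profile, multiplied back by $\bigl(\tfrac{1-|x|^2}{2}\bigr)^{(n-2)/2}$ and smoothly cut off near $x_0$, furnishes a localized Aubin--Talenti bubble $\mathfrak{v}_m^1$ of the form prescribed in the statement. After subtracting the extracted bubble, the new remainder satisfies the same Palais--Smale hypothesis with strictly smaller energy (by a second Br\'ezis--Lieb computation), so the procedure terminates after finitely many steps because every bubble carries a fixed quantum of energy $\tfrac{p-1}{2(p+1)}S_{\la,p}^{(p+1)/(p-1)}$ or, respectively, $\tfrac1n S^{n/2} + o(1)$, yielding the claimed energy identity.

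I expect the main obstacle to be handling scenario (B) correctly: one has to (a) locate the right concentration scale $\epsilon_m$ and point $x_0$ so that the rescaling balances $|\tilde w_m|^{\tstar - 1}$ against $\Delta_{\bn}\tilde w_m$; (b) verify that the cut-off $\phi$ and the conformal factor $\bigl(\tfrac{1-|x|^2}{2}\bigr)^{(n-2)/2}$ introduce only $o(1)$-errors in the $H^1(\bn)$-norm as $\epsilon_m \to 0$, using the compact $L^2_{\loc}$-embedding to absorb the $\la u$-term; and (c) certify that subtracting $\mathfrak{v}_m^1$ decreases the energy by exactly $\tfrac1n S^{n/2} + o(1)$. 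This is where the critical-Sobolev scale invariance is used essentially and explains why such Aubin--Talenti profiles cannot be excluded at $p=\tstar-1$ yet do not appear at all for $p<\tstar-1$.
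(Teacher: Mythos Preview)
This theorem is quoted in the paper as a result of Bhakta--Sandeep \cite{BS} and is not proved here; there is no proof in the present paper against which to compare your proposal. Your outline is nonetheless the standard Struwe-type iteration adapted to $\bn$---boundedness from the Palais--Smale relation, weak extraction, Br\'ezis--Lieb splitting, Lions' vanishing lemma, re-centering by hyperbolic isometries, and (in the critical case only) a local conformal blow-up to recover Euclidean Aubin--Talenti profiles---which is exactly how the result is obtained in \cite{BS}.

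One point deserves care in your scenario (A). After re-centering by $\tau_{b_m}$, the nonzero weak limit $\mathfrak{U}$ is a priori just \emph{some} nontrivial critical point of $I_{\la}$, not necessarily the ground state $\calu$. The energy identity in the stated form---each hyperbolic profile contributing exactly $\tfrac{p-1}{2(p+1)}S_{\la,p}^{(p+1)/(p-1)}$---implicitly assumes the profiles are ground states. This is justified in the applications made in the present paper because the relevant Palais--Smale sequences are nonnegative (cf.\ Remark~\ref{rem2}), so Mancini--Sandeep uniqueness forces each profile to be a translate of $\calu$. Without a sign assumption your argument yields the more general conclusion $d = I_{\la}(u) + \sum_j I_{\la}(\mathfrak{U}_j) + \tfrac{N_2}{n}S^{n/2}$, which you should state as such and then specialize.
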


\begin{remark}
{\rm
If one or more kinds of bubbles are absent we will write $N_i = 0, i= 1,2.$ For example if the Aubin-Talenti bubbles are absent from the profile decomposition we simply write $N_2 = 0.$ Note that this is indeed the case if $p < \tstar-1$ or $d < \frac{1}{n}S^{\frac{n}{2}}$ for $p=\tstar-1.$ 
}
\end{remark}

\begin{remark} \label{rem2}
{\rm
It is worth noting that in the above theorem, we have imposed no sign assumption on the sequence $u_m$ and hence the $u$,  which is obtained by just taking the weak limit of $u_m$, may be different than the standard hyperbolic bubble. However, if we impose $u_m$ are non-negative, then from the uniqueness of the positive solutions to \eqref{eq1} we obtain either $u \equiv 0$ or $u = \calu \circ \tau_b$ for some $b \in \bn.$ In either case we have the energy quantization
\begin{align*}
d =  \frac{\tilde N_1}{n}S_{\la,p}^{\frac{n}{2}} + \frac{N_2}{n}S^{\frac{n}{2}}, \ \ \mbox{if} \ \ p = \tstar-1, \ \ \mbox{or} \ \ d= \tilde N_1\frac{p-1}{2(p+1)}S_{\la,p}^{\frac{p+1}{p-1}}, \ \ \mbox{if} \ \ p <\tstar-1,
\end{align*}
where $\tilde N_1 $ equal to either $N_1$ or $N_1 + 1.$ 
}
\end{remark}

\begin{remark}
{\rm 
In Section \ref{absence section} we show that if $p=\tstar-1$ and $d = \frac{ N}{n}S_{\la,p}^{\frac{n}{2}} $ for some $N\in \N$ then there exists an at most countable set $\Lambda$ such that if $\la \not\in \Lambda$ and $u_m \geq 0$ satisfies the assumptions of Theorem \ref{profile decomposition} then $N_2 = 0,$ i.e. the Aubin-Talenti bubbles are absent in the profile decomposition. This is the starting point of our result and quite remarkably the proof relies on a very simple observation of the strict decreasing property of the function $\la \mapsto S_{\la, \tstar-1.}$
}
\end{remark}

\subsection{Main results of the article}  

In Section \ref{interaction of bubbles section} we will show that 
\begin{align*}
\int_{\bn} \left(\langle \nabla_{\bn} \calu[z_i], \nabla_{\bn}\calu[z_k]\rangle_{\bn}  - \la \calu[z_i]\calu[z_k] \right)\ \dvg= \int_{\bn} \calu[z_i]^p \calu[z_k] \ \dvg \approx e^{-\cn d(z_i,z_k)},
\end{align*}
where 
\begin{equation}\label{cn-lambda}
\cn=\frac{n-1+\sqrt{(n-1)^2-4\lambda}}{2},
\end{equation}
is the rate of decay of the hyperbolic bubbles: $\calu(x) \approx e^{-\cn d(x,0)}.$
 This estimate led us to the following definition. 

\begin{definition}\label{d:delta-int}
Let $\calu_1 = \calu[z_1], \ldots, \calu_N = \calu[z_N]$ be a family of hyperbolic bubbles. We say that the family is $\delta$-interacting for some $\delta >0$ if 
\begin{align} \label{delta interaction}
\max_{ 1 \leq i \neq k \leq N} e^{-\cn d(z_i,z_k)} \leq \delta. 
\end{align}
\end{definition}
If we consider for some $\alpha_1,\ldots,\alpha_N \in \mathbb{R}$ the family $\alpha_i\calu_i$,  $1 \leq i \leq N,$ then 
the family is said to be $\delta$-interacting if in addition to \eqref{delta interaction}
\begin{align*}
\max_{1\leq i \leq N} \ |\alpha_i - 1| \leq \delta
\end{align*}
holds. 

\medskip
Therefore $\calu_i, \calu_k$ belongs to a $\delta$-interacting family if their $H^1$-scalar product is bounded by a constant multiple of
$\delta.$  Throughout the article, we shall use the following notations:
\begin{align}\label{the Q}
Q_{ik} = e^{-\cn d(z_i,z_k)}, \ \ Q_k = \max_{i \neq k} \ Q_{ik} \ \ \mbox{and} \ \ Q = \max_k \ Q_k
\end{align}
where $1 \leq i,\, k \leq N.$
\medskip

The main result of this article is the following 

\begin{theorem} \label{main theorem}
Let $3 \leq n \leq 5$  and $p > 2$ such that  $p$ and $\la$ satisfy {\bf (H1)}. For any $N \in \mathbb{N}$ there exists a small constant $\delta = \delta(n,\la,p,N)>0$ and a large constant $C = C(n,p,\la,N) >0$ such that the following statement holds: let $u \in H^1(\bn)$ be a function such that 
\begin{align*}
\Big\|u - \sum_{i=1}^N \tilde \calu_i\Big\|_{\la} \leq \delta,
\end{align*}
where $(\tilde \calu_i)_{1 \leq i \leq N}$ is a $\delta$-interacting  family of hyperbolic bubbles. Then there exists $N$ hyperbolic bubbles $(\calu_i)_{1 \leq i \leq N}$ such that 
\begin{align*}
\Big\|u - \sum_{i=1}^N  \calu_i\Big\|_{\la} \leq C \| \Delta_{\bn} u + \la u + |u|^{p-1}u\|_{H^{-1}}.
\end{align*}
Moreover, for any $i \neq k,$ the interaction between the bubbles can be estimated as
\begin{align*}
\int_{\bn} \calu_i^p \calu_k \ \dvg \leq C \| \Delta_{\bn}u + \la u + |u|^{p-1}u\|_{H^{-1}}.
\end{align*}
 \end{theorem}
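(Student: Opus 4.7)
The overall strategy adapts the Figalli--Glaudo scheme \cite{FG} to the hyperbolic setting, replacing the Euclidean dilation symmetry by hyperbolic translations and relying on the sharp bubble-interaction estimates announced in Section~\ref{interaction of bubbles section}. The plan is first to minimise
\begin{equation*}
(\alpha_1,\ldots,\alpha_N,z_1,\ldots,z_N) \;\mapsto\; \Big\| u - \sum_{i=1}^N \alpha_i\, \calu[z_i]\Big\|_\lambda
\end{equation*}
in a small neighbourhood of the initial parameters; by compactness and the smallness hypothesis this produces an optimal approximant $\sigma = \sum_{i=1}^N \calu_i$, with the scalars $\alpha_i$ close to $1$ absorbed into a re-selection of the bubble. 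Setting $\rho = u - \sigma$, the first-order conditions at the minimum yield the orthogonality relations
\begin{equation*}
\langle \rho, \calu_i \rangle_\lambda = 0, \qquad \langle \rho, \partial_s \calu_i \rangle_\lambda = 0 \quad (1 \le s \le n),
\end{equation*}
where $\partial_s \calu_i$ denotes the directional derivatives of $\calu_i$ along an orthonormal frame of hyperbolic translations at $z_i$, and the family $(\calu_i)$ remains $C\delta$-interacting in the sense of Definition~\ref{d:delta-int}.

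The heart of the argument is a coercivity estimate for the linearised quadratic form on the orthogonal complement. Using the non-degeneracy of a single hyperbolic bubble, i.e.\ that the kernel of $-\Delta_{\bn} - \lambda - p\,\calu^{p-1}$ in $H^1(\bn)$ is precisely the $n$-dimensional space of translation directions, together with the $\delta$-interacting assumption, one establishes
\begin{equation*}
\int_{\bn} \bigl( |\nabla_{\bn}\rho|^2 - \lambda \rho^2 - p\,\sigma^{p-1}\rho^2 \bigr)\,\dvg \;\geq\; c_0\,\|\rho\|_\lambda^2,
\end{equation*}
up to a subdominant cross-bubble error controlled by $Q \|\rho\|_\lambda^2$ by means of the interaction bounds of Section~\ref{interaction of bubbles section}.

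Next, testing the residual equation $-\Delta_{\bn}u - \lambda u - |u|^{p-1}u = f$ against $\rho$ and using the Taylor-type expansion
\begin{equation*}
|\sigma+\rho|^{p-1}(\sigma+\rho) - \sigma^p - p\,\sigma^{p-1}\rho \;=\; O\bigl(\sigma^{p-2}\rho^2 + |\rho|^p\bigr),
\end{equation*}
which is legitimate precisely because $p>2$ (the restriction $n\le 5$ in the critical case), together with the interaction-based estimate $\bigl\|\sigma^p - \sum_i \calu_i^p\bigr\|_{H^{-1}} \lesssim Q^{\min(p,2)/2}$, one obtains $\|\rho\|_\lambda \lesssim \|f\|_{H^{-1}} + Q$. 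Testing the equation against each $\calu_i$ and each $\partial_s \calu_i$, and exploiting that $\calu_i$ solves the bubble equation while $\rho$ satisfies the orthogonality relations above, produces a linear system in the scalar quantities $\int \calu_i^p \calu_k\,\dvg$ whose inhomogeneity is $O(\|f\|_{H^{-1}} + Q\|\rho\|_\lambda)$. Inverting this system via the sharp asymptotic $\int \calu_i^p \calu_k\,\dvg \asymp Q_{ik}$ from Section~\ref{interaction of bubbles section} delivers $Q \lesssim \|f\|_{H^{-1}}$; substituting back into the previous estimate closes the argument for both conclusions.

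The principal obstacle, and the source of the bulk of the technical work, is the bubble-interaction machinery itself. Because hyperbolic space admits no conformal dilation, the bubble manifold $\mathcal{Z}_0$ is only $n$-dimensional, and there is no scaling parameter to exploit for extra orthogonality or to compare bubbles at different concentration scales as in \cite{FG}. Every interaction integral $\int \calu[z_i]^\alpha \calu[z_k]^\beta \,\dvg$, and every derivative of such an expression in the translation parameters, must therefore be computed directly as a function of the hyperbolic distance $d(z_i,z_k)$ using only the asymptotic decay $\calu(x) \approx e^{-\cn d(x,0)}$. Producing these estimates in sufficiently sharp form, and verifying that the induced nonlinear remainders remain subdominant relative to $\|f\|_{H^{-1}}$, is the essential new ingredient and the place where the geometry of $\bn$ has to be used in a genuine way.
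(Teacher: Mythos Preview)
Your overall structure mirrors the paper's: minimise to obtain $\sigma$ and the orthogonality relations, establish the spectral coercivity (this is Proposition~\ref{p:spec}), test against $\rho$ to get $\|\rho\|_\lambda \lesssim \|f\|_{H^{-1}} + Q$, and then close by estimating $Q$ from tests against the bubbles and their translation derivatives. But there is a genuine gap in this last step, and it is exactly the new difficulty the paper isolates and resolves.

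When you test the equation against a translation derivative $V_j(\calu_k)$, the leading interaction term is $\sum_{i\neq k}\alpha_k^{p-1}\alpha_i\int_{\bn}\calu_i^p\,V_j(\calu_k)\,\dvg$ (Lemma~\ref{ie1}). The asymptotic you invoke, $\int\calu_i^p\calu_k\asymp Q_{ik}$, does \emph{not} transfer to this derivative interaction: by Lemma~\ref{interaction derivatives} one has $\int_{\bn}\calu[z]^p V_j(\calu[0])\,\dvg \approx_z \pm\,e^{-\cn d(z,0)}$, with a \emph{sign} determined by which side of the hyperplane $\{x_j=0\}$ the point $z$ lies on, and a constant that degenerates to zero as $z$ approaches that hyperplane. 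So the ``linear system in the $Q_{ik}$'' you describe is neither sign-definite nor uniformly invertible; cancellations among the summands in $\sum_{i\neq k}$ can make the total $o(Q)$, and your inversion step does not go through. This is precisely the obstruction flagged in the paper's strategy discussion: there is no hyperbolic analogue of the Euclidean $\partial_\mu U_k$ (which in \cite{FG} yields a sign-definite pairing), so one is forced onto spatial derivatives, where the sign issue appears.

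The paper's remedy is the geometric Lemma~\ref{geometric lemma}: given any $\delta$-interacting family there exist an index $k$ and a direction $e_j$ such that, after a hyperbolic translation and rotation, $z_k=0$ and \emph{all} remaining $z_i$ lie in the half-ball $\{x_j<0\}$ at uniform distance $\kappa(\delta)>0$ from the hyperplane. Lemma~\ref{interaction derivatives refined} then guarantees $\int\calu_i^p V_j(\calu_k)\,\dvg \approx_\kappa +Q_{ik}$ for every $i\neq k$ with a \emph{uniform positive} constant, whence $Q_k\lesssim \|f\|_{H^{-1}}+\varepsilon\|\rho\|_\lambda+\|\rho\|_\lambda^2+o(Q)$. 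One then removes the index $k$ and iterates the geometric lemma on the remaining $N-1$ bubbles. This inductive device, not the raw interaction asymptotics, is the missing ingredient in your outline.

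A minor correction: the scalars $\alpha_i$ cannot be ``absorbed into a re-selection of the bubble''. The hyperbolic bubble manifold is only $n$-dimensional (translations, no dilations), so $\alpha_i\calu[z_i]$ is never itself a bubble unless $\alpha_i=1$. The paper keeps $\sigma=\sum_i\alpha_i\calu_i$ throughout and, once $Q$ and $\|\rho\|_\lambda$ are controlled, deduces $|\alpha_i-1|\lesssim\|f\|_{H^{-1}}$ by testing against $\calu_k$ (see \eqref{ieeqn6}--\eqref{ieeqn7}); only then does one pass to $\sigma'=\sum_i\calu_i$.
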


As an application of Theorem \ref{main theorem} and the profile decomposition Theorem \ref{absence theorem} proved in Section \ref{absence section} we have the following corollary.

\begin{corollary} \label{main corollary}
Let $3 \leq n \leq 5$  and $p > 2$ such that  $p$ and $\la$ satisfy {\bf (H1)} and $N \in \mathbb{N}.$ Then there exists a small constant $\delta = \delta(n,\la,p,N)>0$ and a large constant $C = C(n,p,\la,N) >0$ and a countable subset $\Lambda$ of $\mathbb{R}$ if $p = \tstar-1$ such that for $\la \not\in \Lambda$ the following statement holds: For any 
non-negative function $u \in H^1(\bn)$ satisfying
\begin{align*}
\left(N-\delta \right)S_{\la,p}^\frac{p+1}{p-1} \leq \|u\|^2_{\la} \leq \left(N-\delta \right)S_{\la,p}^\frac{p+1}{p-1},
\end{align*}
there exists $N$ hyperbolic bubbles $(\calu_i)_{1 \leq i \leq N}$ such that 
\begin{align*}
\Big\|u - \sum_{i=1}^N  \calu_i\Big\|_{\la} \leq C \| \Delta_{\bn}u + \la u + u^p\|_{H^{-1}}.
\end{align*}
Moreover, for any $i \neq k,$ the interaction between the bubbles can be estimated as
\begin{align*}
\int_{\bn} \calu_i^p \calu_k \ \dvg \leq C \| \Delta_{\bn} u+ \la u + |u|^p\|_{H^{-1}}.
\end{align*}
\end{corollary}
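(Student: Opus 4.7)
The strategy is a compactness-and-contradiction argument reducing the corollary to Theorem \ref{main theorem} via the profile decomposition (Theorem \ref{profile decomposition} and Theorem \ref{absence theorem}). Suppose the claim fails: choosing $\delta = 1/m$, one obtains a non-negative sequence $u_m \in H^1(\bn)$ with
$$(N-1/m)S_{\la,p}^{\frac{p+1}{p-1}}\le \|u_m\|_\la^2\le (N+1/m)S_{\la,p}^{\frac{p+1}{p-1}},$$
such that
$$\Big\|u_m - \sum_{i=1}^N \calu_i\Big\|_\la > m\,\|\Delta_{\bn}u_m+\la u_m+u_m^p\|_{H^{-1}}$$
for every $N$-tuple of hyperbolic bubbles $(\calu_i)$. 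Since $\|u_m\|_\la$ is uniformly bounded, choosing e.g.\ $\calu_i=\calu$ for all $i$ keeps the left-hand side uniformly bounded, so
$$\|\Delta_{\bn}u_m+\la u_m+u_m^p\|_{H^{-1}} \longrightarrow 0.$$

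Pairing this asymptotic Euler--Lagrange relation with $u_m$ itself yields $\int_{\bn} u_m^{p+1}\,\dvg = \|u_m\|_\la^2 + o(1)$, hence $I_\la(u_m) \to N\cdot\frac{p-1}{2(p+1)}S_{\la,p}^{\frac{p+1}{p-1}}$. Apply the profile decomposition along $\{u_m\}$: when $p<\tstar-1$ only hyperbolic bubbles can appear, while if $p=\tstar-1$ the hypothesis $\la\notin\Lambda$ together with Theorem \ref{absence theorem} excludes the localized Aubin--Talenti bubbles, so $N_2=0$. Non-negativity of $u_m$ combined with Remark \ref{rem2} forces the weak limit $u$ to be either zero or a single $\calu\circ\tau_b$, and the energy identity pins down the total count of hyperbolic profiles (the $\mathfrak u_m^j$ together with $u$ when nontrivial) to be exactly $N$. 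After relabeling,
$$u_m = \sum_{j=1}^N \calu\circ\tau_{b_j^m} + o_{H^1}(1).$$

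The asymptotic orthogonality in the profile decomposition gives $d(b_j^m,b_k^m) \to \infty$ for $j\ne k$, so by the interaction identity of Section \ref{interaction of bubbles section} we have $Q_{jk}^m = e^{-\cn d(b_j^m,b_k^m)} \to 0$. Hence $\{\calu\circ\tau_{b_j^m}\}_{j=1}^N$ is a $\delta$-interacting family in the sense of Definition \ref{d:delta-int} for every $m$ large, and Theorem \ref{main theorem} applies to $u_m$, producing hyperbolic bubbles $(\calu_i^{m})_{1\le i\le N}$ with
$$\Big\|u_m - \sum_{i=1}^N \calu_i^{m}\Big\|_\la \le C\,\|\Delta_{\bn}u_m+\la u_m+u_m^p\|_{H^{-1}},$$
contradicting the lower bound above for $m$ large. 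The companion bound $\int_{\bn}(\calu_i^m)^p\calu_k^m\,\dvg \le C\|\Delta_{\bn}u_m+\la u_m+u_m^p\|_{H^{-1}}$ transfers directly from Theorem \ref{main theorem}.

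The principal technical point is the interplay between non-negativity, which via Remark \ref{rem2} prevents a wild sign-changing weak limit, and the exclusion of Aubin--Talenti bubbles provided by Theorem \ref{absence theorem} in the critical case; the exceptional set $\Lambda$ enters precisely at this step and is the sole reason for the restriction $\la\notin\Lambda$. Once these two ingredients are in place, the argument is a routine compactness reduction to the multi-bubble stability statement of Theorem \ref{main theorem}.
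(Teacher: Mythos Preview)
Your proof is correct and follows essentially the same contradiction argument as the paper: assume failure along a sequence, deduce $\|\Delta_{\bn}u_m+\la u_m+u_m^p\|_{H^{-1}}\to 0$ and $I_\la(u_m)\to \frac{N(p-1)}{2(p+1)}S_{\la,p}^{\frac{p+1}{p-1}}$, invoke the profile decomposition (together with Theorem~\ref{absence theorem} when $p=\tstar-1$) to land in the hypotheses of Theorem~\ref{main theorem}, and reach a contradiction. You have simply spelled out in more detail the steps the paper leaves implicit, including the verification that the profiles are $\delta$-interacting; the only cosmetic point is that the negation of the corollary should allow either the distance bound or the interaction bound to fail, but since Theorem~\ref{main theorem} delivers both simultaneously this does not affect the argument.
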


After proving the stability result we look for the counter-example when $n \geq 3$ and $1 < p \leq 2.$ We prove that there exists a one-parameter family $u_R \in H^1(\bn), R \in (0,1)$ such that the following estimate holds:
 
 \begin{align*}
 \| \Delta_{\bn} u_{R} + \lambda u_R + |u_R|^{p-1} u_R \|_{L^{(p+1)'}} \lesssim
 \begin{cases}
\mathcal{D}|\log \mathcal{D}|^{-\frac{1}{2}}, \quad \mbox{if} \ p =2 \\
\medskip
\mathcal{D}^{1 + \alpha_0}, \qquad \quad \ \mbox{if} \ 1< p < 2.
 \end{cases}
\end{align*}
where $\mathcal{D}$ is the distance of $u_R$ from the manifold of sum of hyperbolic bubbles and $\alpha_0 >0$ is a constant depending on $n,\la$ and $p.$ When we let $R\uparrow 1,$ then $\mathcal{D} \rightarrow 0.$ This proves that the stability estimate of Theorem \ref{main theorem} fails whenever $1 < p \leq 2,$ in particular, in dimension $n \geq 6.$ Regarding Struwe decomposition in the hyperbolic space, we prove that the positive part of $u_R, u_R^+:= \max \{u_R,0\}$ also satisfies similar estimates and hence strengthening the counter-example for non-negative functions. The main result concerning the counterexample is stated in Section \ref{counter example section 1}, Theorem \ref{main counter example} and proved in subsequent sections.

\begin{remark}
{\rm 
We remark that the case $N=1$ has already been studied in \cite{BGKM} and the (linear stability) result holds in any dimension $n \geq 3$ and $p \in (1,\tstar-1].$ For $N \geq 2$,
note that in the statement of Corollary \ref{main corollary} for the critical case $p = \tstar -1$ we claim to prove the absence of Aubin-Talenti bubbles which significantly improves the profile decomposition result of \cite{BS} stated in Theorem \ref{profile decomposition}. However, we have to pay the price by excluding a countable set which is still, in our humble opinion, a nice result.  
Our study reveals that the stability of the hyperbolic space in low dimension $3 \leq n \leq 5$ bear resembles that of the Euclidean space even though the kernel of the linearized operator in the hyperbolic space is $n$-dimensional.  The stability result of \cite{Weietal}  in dimension $n\geq 6$ for the hyperbolic space is still unknown. However, we expect similar results should hold in the hyperbolic space as well which we plan to study in a forthcoming article.}
\end{remark}

\medskip

\subsection{Main hurdles, Novelty and Strategy of the proof} 

We shall now discuss the main difficulties that we encounter while proving Theorem~\ref{main theorem} and we briefly describe the new tools and the strategy of the proof. 

\begin{itemize}

\item As discussed above, while considering the multi-bubble scenario in the {\it critical case} we cannot ignore the existence of Aubin-Talenti bubbles. Hence we must take into account the interaction between the hyperbolic bubbles and the Aubin-Talenti bubbles, which is a  very challenging task. In this article as a first step, we look for the situation where the Aubin-Talenti bubbles are absent. To our surprise, and even more surprising the simplicity of the proof, we found that at energy level integer multiple of $\frac{1}{n}S_{\la,p}^{\frac{n}{2}}$, $p=\tstar-1$ and outside a countable set $\Lambda$ of $\lambda$'s there exist only hyperbolic bubbles. The set is characterized by 

\begin{align}
\Lambda = \left \{\la \in \left[\frac{n(n-2)}{4}, \frac{(n-1)^2}{4}\right] \ : \ \left(\frac{S_{\lambda,p}}{S}\right)^{\frac{n}{2}} \in [0,1] \cap \mathbb{Q}\right\}.
\end{align}

We prove in Section \ref{absence section} that $\Lambda$ is at most countable. As a result, we could make the situation slightly simpler by considering $\la$ in the complement of $\Lambda.$ Then we have to only consider the interaction between the \it hyperbolic bubbles \rm. However, working with hyperbolic bubbles poses many non-trivial and new difficulties which we try to explain briefly. 

\medskip

\item To our knowledge, the interaction integrals of hyperbolic bubbles are not known. As a first step, we obtained all the necessary and sharp interaction estimates among the bubbles. We obtain a series of new and novel estimates on the interactions of bubbles and their derivatives. We want to stress that these estimates are quite geometric in nature and very explicit, i.e., it depends largely on the geometry of the hyperbolic space.  Another novelty of this article is that we used explicit formulas of the hyperbolic distance and we were able to reduce the computations to a manageable integral evaluation. We found that all the interaction estimates are exponentially decaying in the hyperbolic distance between the points where the bubbles are most concentrated. More precisely, we prove that if $\calu[z_1], \calu[z_2]$ are two bubbles then 
\begin{align*}
\int_{\bn} \calu[z_1]^{\alpha}\calu[z_2]^{\beta} \ \dvg \approx
\begin{cases}
e^{-\cn \min\{\alpha, \beta\}d(z_1,z_2)} \ \ \ \ \ \ \ \ \ \ \ \ \ \ \ \mbox{if} \ \alpha \neq \beta,\\
d(z_1,z_2)e^{-\cn \min\{\alpha, \beta\}d(z_1,z_2)} \ \ \ \ \mbox{if} \ \alpha = \beta,
 \end{cases}
\end{align*}
whenever, $\alpha + \beta \geq 2,$ and where $d(z_1,z_2)$ is the hyperbolic distance between $z_1$ and $z_2$ and the constant in $\approx$ depends on $|\alpha-\beta|$ and  other natural parameters.

\medskip

\item A significant challenge comes while estimating the interaction between a bubble $ \calu[z_1]$ and the spacial derivative $V_j(\calu[z_2])$ of another bubble $ \calu[z_2]$ (see Appendix for the definition of $V_j(\calu[z_2])$) where $j$-denoted the direction (say) $e_j.$ Without loss of generality we may assume $z_1 = z$ and $z_2 = 0.$ It turns out that the interaction 
\begin{align*}
\int_{\bn} \calu[z]^p V_j(\calu[0]) \ \dvg
\end{align*}
is very sensitive to the position of the point $z.$  Some simple computations lead to the fact that if $z$ lies on the hyperplane $\{x | \ x \cdot e_j = 0\}$ then the interaction vanishes. More generally we can prove: if $z$ lies in the negative half $\{x | \ x \cdot e_j < 0\}$ then the interaction is $\approx_z e^{- \cn d(z,0)}$ and if $z$  lies in the positive half $\{x | \ x \cdot e_j > 0\}$ then the interaction is $\approx_z -e^{- \cn d(z,0)}.$ This is not a good situation to be in because constant $C$ in Theorem~\ref{main theorem} should depend only on the weak interaction strength $\delta$, not on the location of the bubbles on which we do not have any information whatsoever.
However, we manage to prove that if $z$ lies $\kappa >0$ distance away from the hyperplane i.e. $z \in \{x | \ |x \cdot e_j| \geq \kappa\}$ then the interaction is $\approx_{\kappa} \pm \ e^{- \cn d(z,0)},$ where the dependence of the constant $\approx_{\kappa}$ on $n,p,\la$ is implicitly assumed. As a result, the interaction of two bubbles and the interaction of a bubble and the derivative of another bubble has the same decay estimates up to a sign.

\medskip

\item The hyperbolic space does not admit any substitute of the conformal group of dilations as in the case of Euclidean geometry. This has both advantages and disadvantages. The advantage is that we have only bubble clusters and not bubble towers in the language of \cite{Weietal}. As a result, the computation of the interaction in the hyperbolic space turns out to be simpler than that of the Euclidean case. There is also a caveat while proving Theorem \ref{main theorem} as indicated in the next few bullet points.

\medskip

\item The proof of the Theorem \ref{main theorem} follows the same strategy as in \cite{FG}. Given $u$ we find a $\delta$-interacting family of hyperbolic bubbles $(\alpha_i, \calu_i)_{1 \leq i \leq N}$ by the minimization process 
$\inf_{\tilde\alpha_i \in \mathbb{R}, \tilde z_i \in \bn} \|u - \sum_{i=1}^N  \tilde\alpha_i\calu[\tilde z_i]\|_{\la}$ and we put $\rho = u - \sum_{i=1}^N \alpha_i\calu_i = u - \sigma.$ We test $\Delta_{\bn} u + \la u + |u|^{p-1}u$ against $\rho$ and performing some similar computations as in \cite[Theorem 3.3]{FG} the proof boils down to the justification of the following two inequalities:

\medskip

\begin{itemize}
\item[(i)] {\it Improved spectral inequality.}
Provided the weakly interaction $\delta$ is sufficiently small, it holds that
\begin{align*}
\int_{\bn}\sigma^{p-1}\rho^2\, \dvg\leq \ \frac{\tilde c}{p}  \|\rho\|^2_{\la}, \ \ \mbox{where} \quad \tilde c =  \tilde c(n, N, \lambda,p) <1.
\end{align*}
\item[(ii)] {\it Interaction integral estimates.}
Given $\eps>0$, if $\delta$ is sufficiently small then
\begin{align*}
\max_{i\neq k}\int_{\bn}\calu_i^p\calu_k \ \dvg \lesssim \|(\Delta_{\bn} + \lambda)u + |u|^{p-1}u\|_{H^{-1}} + \epsilon\|\rho\|_{\la}+\|\rho\|_{\la}^2.
\end{align*}
\end{itemize}
\medskip

\item The proof of (i) uses a localization argument as in \cite{FG}. The main difference is that the $L^n$-norm of the gradient is conformal and hence one can not expect the $L^n$-norm of the gradient of the localized function to be small. The substitute in the hyperbolic space is the $L^{\infty}$-norm of the gradient of the localized function. Thanks to the Poincar\'{e} inequality this poses no threat in our case.

\medskip

\item The proof of $(ii)$ in the Euclidean case relies heavily upon the interaction among the bubbles and the $\ mu$ derivatives of another bubble. Since $u = \sigma+\rho$ one can express $\Delta_{\bn} u + \la u + |u|^{p-1}u$ in the following manner
\begin{align*}
\Delta_{\bn} \rho + \lambda \rho + p\sigma^{p-1}\rho + I_1 + I_2 + I_3  = \Delta_{\bn} u + \la u + |u|^{p-1}u,
\end{align*}
where $I_1 = \sigma^p - \sum_{i=1}^N \alpha_i^p\calu_i^p$ contains all the interactions between the bubbles.
In the Euclidean case one test the above equation against $\partial_{\mu} U_k$ and derive that 
$\big|\int_{\rn} I_1\partial_{\mu} U_k \ dx \big|\approx \max_{i \neq k} \int_{\rn} U_i^pU_k \ dx.$ 

\medskip

The main reason for testing by $\partial_{\mu} U_k$ over testing by a spacial derivative (i.e. $\partial_{x_j}U_k$) is that spatial derivative leads to much weaker interaction estimates $<< \max_{i \neq k} \int_{\rn} U_i^pU_k$ in case the heights of the Aubin-Talenti bubbles are comparable i.e. if $U_i = U[z_i,\mu_i], 1 \leq i \leq N,$ then $\mu_i \approx \mu_j$ (see A. Bahri \cite[Estimate (F11)]{Bahri}). Note that the latter situation is precisely the case of the hyperbolic bubbles and the only possibility is to test by  $V_j(\calu_k)$. Thankfully our spacial derivative interaction almost behaves like the interaction of bubbles, but there is one caveat regarding the sign. There might be cancellation and $\int_{\bn} I_i V_j(\calu_k)$ might bring 
very weak interaction. 

\medskip 

\item We overcome this issue with the help of a geometric lemma \ref{geometric lemma}. The lemma suggests that if the family $(\calu_i)_{1 \leq i \leq N}$ sufficiently small and weakly interacting, then there exists a direction $e_j$ such that up to hyperbolic translations, orthogonal transformations and rearrangement of indices  $z_N = 0$ and all other $z_i$ lies in the negative half of the plane $\{x | \ x \cdot e_j < 0\}.$ Moreover, all $z_i, i \neq N$ lie 
at a positive distance away from the plane $\{x | \ x \cdot e_j = 0\}$. The proof of the lemma is purely geometric and makes the most use of the properties of the reflection in plane and inversion in spheres.

\medskip

\item In the next step we look for the counter-example of linear stability for $1 < p \leq 2.$ We proceed as in \cite{FG}. Let us denote $U = \calu[-Re_1], V = \calu[Re_1],$  where $R \in (0,1)$ and put $u_R = U + V + \rho.$
Then we can estimate
\begin{align*}
\Delta_{\bn} u_R + \la u_R + |u_R|^{p-1}u_R  = \Delta_{\bn} \rho + \lambda \rho + p (U+V)^{p-1}\rho + f + O(|\rho|^p)
\end{align*}
where $f = (U+V)^p - U^p - V^p.$ Of course, solving $\Delta_{\bn} \rho + \lambda \rho + p (U+V)^{p-1}\rho + f = 0$ and 
satisfying the orthogonality condition is not possible and therefore we have to look for a perturbed problem 
\begin{align*}
\Delta_{\bn} \rho + \lambda \rho + p (U+V)^{p-1}\rho + \tilde f = 0,
\end{align*}
where $\tilde f$ is such that $f - \tilde f$ belongs to the finite-dimensional space spanned by the eigenvectors 
of the operator $(-\Delta - \la)/(U+V)^{p-1}.$ As a result by Poincar\'e-Sobolev inequality we get
\begin{align*}
\|\Delta_{\bn} u_R + \la u_R + |u_R|^{p-1}u_R\|_{L^{\frac{p+1}{p}}} \lesssim \|f - \tilde f\|_{L^{\frac{p+1}{p}}} + \|\rho\|_{\la}^p,
\end{align*}
where $\|\cdot\|_{\la}$ is an equivalent $H^1$-norm defined in Section \ref{basic section}.
Moreover we can estimate $\|\rho\|_{\la} \approx \|\tilde f\|_{H^{-1}}$ and $\|f - \tilde f\|_{L^{\frac{p+1}{p}}} \lesssim \sum_{\Psi} |\int_{\bn} f \Psi \dvg|,$ where $\Psi$ solves
\begin{align}\label{egeintro}
-\Delta_{\bn} \Psi- \la\Psi = \mu (U+V)^{p-1}\Psi.
\end{align}

In \cite{FG}, the authors showed that $\|f\|_{L^2} = o(\|f\|_{H^{-1}})$ and hence one obtain $\|\Delta_{\bn} u_R + \la u_R + |u_R|^{p-1}u_R\|_{L^{\frac{p+1}{p}}} = o(\|\rho\|_{\la}).$ Then they proceed to show that $\|\rho\|_{\la}$ approximates the distance of $u_R$ from the manifold of the sum of two Aubin-Talenti bubbles and concluding the construction of counter-example in dimension $n \geq 6.$

\medskip

\item In the hyperbolic space, thanks to Poincar\'e inequality $\|f\|_{H^{-1}} \lesssim \|f\|_{L^2}$ and hence we need a major diversion from the line of reasoning from that of the Euclidean case. We achieve this in two steps:
\begin{itemize}
\item[(a)] A delicate eigenfunction estimates of \eqref{egeintro}.  We can show that for sufficiently small $\alpha_0$ the uniform bound
\begin{align*}
\int_{\bn} \frac{\Psi^2}{(U + V)^{\alpha_0}} \ \dvg \lesssim_{\mu, \alpha_0} 1,
\end{align*}
holds.

\item[(b)] And an improvement of the integral interaction estimates in half ball. Denote $\bn_+ := \{x\in \bn \ | \ x_1 >0\}$ then
\begin{align*}
\int_{\bn_+} U^{\alpha} V^{\beta(p-1)} \approx e^{-c(n,\la) \frac{\alpha + \beta(p-1)}{2} d(-Re_1, Re_1)}, \ \ 
\mbox{if} \ \alpha > \beta(p-1).
\end{align*}
\end{itemize}
With the help of these two results and a well-derived lower bound on $\|f\|_{H^{-1}}$, we conclude that 
\begin{align*}
\left|\int_{\bn_+}f \Psi \ \dvg\right| = o(\|f\|_{H^{-1}}).
\end{align*}
Once these estimates are achieved one can follow the ideas of Figalli and Glaudo to complete the proof. However, we emphasize that several technical difficulties arise in the hyperbolic space which need to be addressed separately. We derived these technical results either in the appendix or some of them whenever they arise.
\end{itemize}
\medskip

The organization of the article is as follows:

\medskip

\begin{itemize}
\item[Section 1:] Introduction section, contains a brief background on the stability problem in the Euclidean space, known results in the hyperbolic space, the main result of this article and a short subsection about major challenges and how we addressed it.   
\item[Section 2:] Contains the basics of the hyperbolic space. 
\item[Section 3:] This section is devoted to the improvement of the Struwe-type profile decomposition on the hyperbolic space. In particular, we obtained instances when Aubin-Talenti bubbles are absent in the profile decomposition.
\item[Section 4:] This is one of the core sections of this article. In this section, we proved all the interaction estimates namely (a) interaction between two bubbles, (b) interaction among three bubbles, and (c) interaction between a bubble and the spacial derivation of another bubble. This section is slightly technical but all the results obtained, to the best of the knowledge of the authors, are all new.
 \item[Section 5:] Contains the proof of the main theorem, namely, Theorem~\ref{main theorem} and Corollary~\ref{main corollary}
assuming (i) improved spectral inequality which is proved in Section 7 and (ii) Interaction integral estimates, 
which is proved in Section 8.
\item[Section 6:] Another slightly technical section containing the core of the localization argument.
\item[Section 9:] Set up and technical results concerning the counter-example in the case $1 < p \leq 2.$ The main result is Theorem \ref{main counter example}.
\item[Section 10:] Contains the proof of Theorem \ref{main counter example}.
\item [Appendix:] We recalled some technical, some known and some probably unknown, results needed for the article.
\end{itemize}

\medskip

\subsection{Notations}
\begin{itemize}
\item A point $x \in \rn$ will be denoted by $x = (x_1,\ldots, x_n).$ For a point $x \in \rn,$ $x_j$ will denote the $j$-th component of $x.$

\item $n=$ dimension, $N = $ number of bubbles, $\delta = $ interaction strength, $p+1 =$ exponent in Sobolev inequality and $\la=$ spectral parameter. 
\item The notation $ T \lesssim S$ (respectively $T \gtrsim S$) means $T \leq CS$ (respectively $T \geq CS$) for some constant $C$ that depends only on the given data
$n,N,p,\lambda.$  We denote $T \approx S$ if both $T \lesssim S$ and $T \gtrsim S$ hold.

\item The notation $T \lesssim_{p,q,r} S$ means $T \leq CS$ for some constant $C$ that depends on the parameters $p,q,r$ as well as on $n,N,\lambda,p.$

\item $x \cdot y = \sum_{j=1}^n x_jy_j$ denotes the standard dot product in $\rn$ and $| \cdot |$ the associated norm.

\item $\{e_j\}_{1 \leq j \leq n}$ denotes the standard basis of $\rn,$ i.e $l$-th component of $e_j$ is $\delta_{jl}.$
\item We denote by $B_E (a,r)$ the Euclidean ball $\{x \in \rn \ | \ |x-a| < r \}$ and $S(a,r)$ the Euclidean sphere $\{x \in \rn \ | \ |x-a| = r \}.$ The hyperbolic ball will be denoted by $B(a,r)$ which is defined in the next section.  

\item The notation $\|\cdot\|_{L^r}$ will denote the $L^r$-norm with respect to the measure $\dvg.$

\item Given dimension $n\geq 3$, $S$ denotes the best Sobolev constant in $\rn$ and given $\la,p$ as in \eqref{S-inq}, $S_{\la,\, p}$ denotes the best Poincar\'e-Sobolev inequality in $\bn.$ In the later case if $p = \tstar -1$ then we denote $S_{\la,p}$ by $S_{\la}.$

\end{itemize}

\section{Basics on the hyperbolic space} \label{basic section}

The Euclidean unit ball $\Be^n:= \{x \in \mathbb{R}^n: |x|^2<1\}$ equipped with the Riemannian metric
\begin{align*}
{\rm d}s^2 = \left(\frac{2}{1-|x|^2}\right)^2 \, {\rm d}x^2
\end{align*}
constitute the ball model for the hyperbolic $n$-space, where ${\rm d}x$ is the standard Euclidean metric and $|x|^2 = \sum_{i=1}^nx_i^2$ is the standard Euclidean length. By definition, the hyperbolic $n$-space is a $n$-dimensional complete, non-compact Riemannian manifold having constant sectional curvature equal to $-1$ and any two manifolds sharing the above properties are isometric \cite{RAT}. In this article, all our computations will involve only the ball model and will be denoted by $\bn$. We denote the inner product on the tangent space of $\mathbb{B}^n$ by $\langle \cdot, \cdot \rangle_{\bn}$ and
the volume element is given by $\dvg = \left(\frac{2}{1 - |x|^2}\right)^n {\rm d}x,$ where ${\rm d}x$ denotes the 
Lebesgue measure on $\mathbb{R}^n.$

Let $\nabla_{\bn}$ and $\Delta_{\bn}$ denote gradient vector field
and Laplace-Beltrami operator respectively.  Therefore, in terms of local (global) coordinates $\nabla_{\bn}$ and $\Delta_{\bn}$ takes the form:
\begin{align*} 
 \nabla_{\bn} = \left(\frac{1 - |x|^2}{2}\right)^2\nabla,  \quad 
 \Delta_{\bn} = \left(\frac{1 - |x|^2}{2}\right)^2 \Delta + (n - 2)\left(\frac{1 - |x|^2}{2}\right)  x \cdot \nabla,
\end{align*}
where $\nabla, \Delta$ is the standard Euclidean gradient vector field and Laplace operator respectively, and `$\cdot$' denotes the 
standard inner product in $\mathbb{R}^n.$

\medskip 

\noindent
 {\bf Hyperbolic distance on $\bn.$} The hyperbolic distance between two points $x$ and $y$ in $\bn$ will be denoted by $d(x, y).$ The hyperbolic distance between
$x$ and the origin can be computed explicitly  
\begin{align*}
\rho := \, d(x, 0) = \int_{0}^{|x|} \frac{2}{1 - s^2} \, {\rm d}s \, = \, \log \frac{1 + |x|}{1 - |x|},
\end{align*}
and therefore  $|x| = \tanh \frac{\rho}{2}.$ Moreover, the hyperbolic distance between $x, y \in \bn$ is given by 
\begin{align*}
d(x, y) = \cosh^{-1} \left( 1 + \dfrac{2|x - y|^2}{(1 - |x|^2)(1 - |y|^2)} \right).
\end{align*}

In this article, we will use several simplifications of the above formula which can be derived easily. For example, the following two formulas will be used:

\begin{align*}
\cosh \left(\frac{d(x,y)}{2} \right)= \frac{\sqrt{1-2 x \cdot y + |x|^2|y|^2}}{\sqrt{(1-|x|^2)(1-|y|^2)}} = 
\frac{|y||x-y^*|}{\sqrt{(1-|x|^2)(1-|y|^2)}}.
\end{align*}
where $y^* = y/|y|^2.$ Alternatively, using the formula  $\cosh^{-1}(s) = \ln (s + \sqrt{s^2-1}),$
$s>1$ one can deduce 
\begin{align*}
\frac{d(x,y)}{2} = \ln \left(\frac{|y||x-y^*| + |x-y|}{\sqrt{(1-|x|^2)(1-|y|^2)}}\right).
\end{align*}

Let $A: \rn \to \rn $ be an orthogonal transformation i.e., $Ax \cdot Ay = x \cdot y$ for all $x,y \in \bn.$ Then  it follows from the formula of hyperbolic distance that $d(Ax, Ay) = d(x,y)$ for all $x,y \in \bn.$ In other words, the hyperbolic distance is preserved under the action of the orthogonal group of $\rn.$


Geodesic balls in $\bn$ of radius $r$ centred at $x \in \bn$ will be denoted by 
$$
B(x,r) : = \{ y \in \bn : d(x, y) < r \}.
$$

Next, we introduce the concept of hyperbolic translation.
\subsection{Hyperbolic translation} We first recall some basic notions of reflections and inversions. 

\subsubsection{Reflections about a plane} Let $a$ be a non-zero vector in $\rn$ and  $t$ be a real number. Consider the hyperplane in $\rn$ 
\begin{align*}
P(a,t) = \{x \in \rn \ | \ \frac{a}{|a|} \cdot x = t \}.
\end{align*}
The reflection $\rho_{a,t}$ of $\rn$ about the plane $P(a,t)$ is defined by 
\begin{align*}
\rho_{a,t}(x) = x + 2\left(t - \frac{a}{|a|} \cdot x\right)\frac{a}{|a|}.
\end{align*}
When $t = 0$ we will simply denote the reflection by $\rho_a.$ It is easy to verify that $\rho_a$ sends the positive half plane $\{x \ | \ a \cdot x > 0\}$ to the negative half $\{ x\ | \ a \cdot x < 0\}$ and vice versa, and leaves the plane $P_{a,0}$ invariant.

\subsubsection{Inversion about a sphere} Let $a$ be a point in $\rn$ and let $r$ be a positive real number. The sphere of $\rn$ of radius $r$ centered at $a$ is defined by
\begin{align*}
S(a,r) = \{ x \in \rn \ | \ |x-a| = r\}.
\end{align*}

The inversion $\sigma_{a,r}$ of $\rn$ in the sphere $S(a,r)$ is defined by the formula 
\begin{align*}
\sigma_{a,r} (x) = a + \left(\frac{r}{|x-a|}\right)^2(x-a).
\end{align*}
It is easy to verify that 
\begin{align*}
|\sigma_{a,r}(x) - a||x-a| = r^2, \ \ \ \ \mbox{for all} \ x \in \rn.
\end{align*}
Hence $\sigma_{a,r}$ sends sphere $S(a,r_1)$ to the sphere $S(a,\frac{r^2}{r_1})$ for every $r_1>0.$ When $r$
is determined by $a$ (for example $r^2 = |a|^2-1$) we will simply use the notation $\sigma_a$ instead of $\sigma_{a,r}.$

\subsubsection{The hyperbolic translation}
Given a point $a \in \rn$ such that $|a|>1$  and $r>0,$ let $S(a,r):=\{x \in \rn \ | \ |x-a| = r\}$ be the sphere in $\rn$ with center $a$ and radius $r$ that intersects $S(0,1)$ orthogonally. It is known that it is the case if and only if $r^2 = |a|^2 -1,$ and hence $r$ is determined by $a.$ Let $\rho_{a}$ denotes the reflection with respect to the plane $P_a := \{x \in \rn \ | \ x\cdot a = 0\}$ and $\sigma_a$ denotes the inversion with respect to the sphere $S(a,r).$ Then $\sigma_a\rho_a$
leaves $\Be^n$ invariant (see \cite{RAT}).
 
 For $b \in \mathbb{B}^n,$ the hyperbolic translation $\tau_{b}: \mathbb{B}^n \rightarrow \mathbb{B}^n$ that takes $0$ to $b$ is defined by $\tau_b = \sigma_{b^*}\rho_{b^*}$ and can be expressed by the following formula
 
  \begin{align} \label{hyperbolictranslation}
  \tau_{b}(x) := \frac{(1 - |b|^2)x + (|x|^2 + 2 x \cdot b + 1)b}{|b|^2|x|^2 + 2 x\cdot b  + 1}
 \end{align}
where $b^* = \frac{b}{|b|^2}.$
 It turns out that $\tau_{b}$ is an isometry and forms the M\"obius group of $\Be^n$ (see \cite{RAT}, Theorem 4.4.6) for details and further discussions on isometries. Note that $\tau_{-b} = \sigma_{-b^*}\rho_{-b^*}$ is the hyperbolic translation that takes $b$ to the origin. In other words, the hyperbolic translation that takes $b$ to the origin is the composition of the reflection $\rho_{-b^*}$ and the inversion $\sigma_{-b^*}.$
  
For the convenience of the reader, we list several well-known properties of the hyperbolic translation in the next lemma. The proof follows by now standard properties of the M\"{o}bius transformations on the ball model which we skip for brevity and refer to the book \cite{Stoll}. 

\begin{lemma}\label{lemma1}
  For $b \in \B^n,$ let $\tau_b$ be the hyperbolic translation of $\mathbb{B}^n$ by $b.$ Then for every $u \in C_c^{\infty}(\mathbb{B}^n),$ there holds,

\begin{itemize}
  
\item[(i)] $\Delta_{\bn} (u \circ \tau_b) = (\Delta_{\bn} u) \circ \tau_b, \ \ \langle \nabla_{\bn} (u \circ \tau_b),
   \nabla_{\bn} (u \circ \tau_b)\rangle_{\bn} = \langle(\nabla_{\bn} u) \circ \tau_b, (\nabla_{\bn} u) \circ \tau_b\rangle_{\bn}.$
  
  \item[(ii)] For every open subset $U$ of $\mathbb{B}^n$
  \begin{align*}
  \int_{U} |u \circ \tau_b|^p \, \dvg = \int_{\tau_b(U)} |u|^p \, \dvg , \ \mbox{for all} \ 1 \leq p < \infty.
 \end{align*}
 
 \item[(iii)] For every $\phi, \psi \in C_c^{\infty}(\B^n)$,
 \begin{align*}
 \int_{\B^n} \phi(x) (\psi \circ \tau_b)(x) \, \dvg = \int_{\B^n} (\phi \circ \tau_{-b})(x) \psi(x) \, \dvg. 
 \end{align*}
\end{itemize}
\end{lemma}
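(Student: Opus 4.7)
The three claims are standard consequences of the fact that $\tau_b$ is an isometry of $(\bn, g)$, so the plan is to establish this one fact carefully and then harvest (i)--(iii) as corollaries. The cleanest route uses the decomposition $\tau_b = \sigma_{b^{*}}\rho_{b^{*}}$ already recorded in the preceding discussion, where $b^{*} = b/|b|^2$, $\rho_{b^{*}}$ is the Euclidean reflection in the hyperplane $P_{b^{*}} = \{x \cdot b^{*} = 0\}$, and $\sigma_{b^{*}}$ is the inversion in the sphere $S(b^{*}, r)$ with $r^2 = |b^{*}|^2 - 1$.

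\textbf{Step 1: $\tau_b^{*} g = g$.} First I would note that the Euclidean reflection $\rho_{b^{*}}$ is an Euclidean isometry, preserves $|x|$, and maps $\bn$ to $\bn$, hence it preserves $\left(\tfrac{2}{1-|x|^2}\right)^2 dx^2$ trivially. For the inversion $\sigma := \sigma_{b^{*}, r}$, the classical identity $|D\sigma(x)\,v| = \tfrac{r^2}{|x-b^{*}|^2}|v|$ (conformality with factor $r^2/|x-b^{*}|^2$) reduces the claim to the algebraic identity
\begin{equation*}
1 - |\sigma(x)|^2 \;=\; \frac{r^2}{|x-b^{*}|^2}\bigl(1 - |x|^2\bigr), \qquad x \in \bn,
\end{equation*}
which is exactly the statement that $S(b^{*}, r)$ meets $S(0,1)$ orthogonally (equivalently $r^2 = |b^{*}|^2-1$); a short direct expansion confirms it. Multiplying the two conformal factors, the $r^2/|x-b^{*}|^2$ terms cancel and we obtain $\sigma^{*} g = g$, hence $\tau_b^{*} g = \rho_{b^{*}}^{*}\sigma^{*} g = g$. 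As a by-product, the same calculation, applied to $\tau_{-b}$, together with the formula \eqref{hyperbolictranslation}, yields $\tau_b \circ \tau_{-b} = \mathrm{id}$, so $\tau_b^{-1} = \tau_{-b}$.

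\textbf{Step 2: Deducing (i), (ii), (iii).} With $\tau_b$ a Riemannian isometry, (i) is the general naturality statement $\Delta_g(u \circ \Phi) = (\Delta_g u) \circ \Phi$ and $\langle \nabla_g(u\circ\Phi), \nabla_g(u\circ\Phi)\rangle_g = \langle \nabla_g u, \nabla_g u\rangle_g \circ \Phi$ for any isometry $\Phi$; these follow from the coordinate-free definitions of $\nabla_g$ and $\Delta_g$ in terms of the metric alone. Claim (ii) is the change-of-variables formula $\int_U |u\circ\tau_b|^p\,\dvg = \int_{\tau_b(U)}|u|^p\,\dvg$, using $\tau_b^{*}\,\dvg = \dvg$ and the diffeomorphism $\tau_b : U \to \tau_b(U)$. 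Claim (iii) follows from (ii) with $U = \bn$, setting $y = \tau_b(x)$ so that $x = \tau_{-b}(y)$:
\begin{equation*}
\int_{\bn}\phi(x)\,\psi(\tau_b(x))\,\dvg(x) \;=\; \int_{\bn}\phi(\tau_{-b}(y))\,\psi(y)\,\dvg(y).
\end{equation*}

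\textbf{Main obstacle.} The only genuine work is Step 1. The potential pitfall is the bookkeeping of conformal factors: one must match the Euclidean Jacobian of $\sigma$ against the hyperbolic conformal factor $2/(1-|x|^2)$ and check that the orthogonality relation $r^2 = |b^{*}|^2 - 1$ is precisely what makes things cancel. Everything else is an application of standard Riemannian-geometric principles once isometry is in hand, so I would budget essentially all the effort on verifying the displayed identity in Step 1.
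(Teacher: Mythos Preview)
Your proposal is correct. The paper itself does not give a proof of this lemma at all: it simply states that the result ``follows by now standard properties of the M\"obius transformations on the ball model'' and refers to Stoll's book, so your argument via the decomposition $\tau_b = \sigma_{b^*}\rho_{b^*}$ and the conformal-factor identity $1 - |\sigma(x)|^2 = \tfrac{r^2}{|x-b^*|^2}(1-|x|^2)$ is in fact more detailed than what the paper provides, while being exactly the kind of verification the paper is alluding to.

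One small remark: your justification that $\tau_b^{-1} = \tau_{-b}$ is a bit terse. The cleanest way to see it from the ingredients you already have is to note that $\rho_{b^*}$ and $\sigma_{b^*}$ are involutions, so $\tau_b^{-1} = \rho_{b^*}\sigma_{b^*}$, and then observe that $\rho_{-b^*} = \rho_{b^*}$ while $\rho_{b^*}\sigma_{-b^*}\rho_{b^*} = \sigma_{b^*}$ (conjugating an inversion by a Euclidean isometry gives the inversion in the image sphere), which yields $\tau_{-b} = \sigma_{-b^*}\rho_{-b^*} = \rho_{b^*}\sigma_{b^*} = \tau_b^{-1}$. This closes the only loose end in Step~1.
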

By density, the above formulas hold as long as the integrals involved are finite.

\subsection{The Sobolev space $H^1(\mathbb{B}^n)$:} Thanks to the Poincar\'{e} inequality, we define the Sobolev space on $\bn$, denoted by $H^1(\mathbb{B}^n)$, as the completion of $C_c^\infty(\mathbb{B}^n)$ with respect to the norm
\begin{align*}
 \|u\|_{H^1(\mathbb{B}^n)} :=  \left(\int_{\mathbb{B}^n} 
 |\nabla_{\bn} u|^2  \, \dvg \right)^{\frac{1}{2}},
\end{align*}
where  $|\nabla_{\bn} u| $ is given by
 \begin{align*}
  |\nabla_{\bn} u| := \langle \nabla_{\bn} u, \nabla_{\bn} u \rangle^{\frac{1}{2}}_{\bn}.
\end{align*}
Using Poincar\'e inequality once again, $\|u\|_{\lambda}$ defined by 
\begin{align*}
\|u\|_{\lambda}^2 : = \int \limits_{\B^{n}}\left(|\nabla_{\bn} u|^{2}-\lambda u^{2}\right) \dvg~\hbox{ for all } u\in H(\B^{n}),
\end{align*}
is an equivalent $H^1(\mathbb{B}^n)$ norm for $\lambda < \frac{(n-1)^2}{4}.$ However, for $\lambda = \frac{(n-1)^2}{4},$ $\|u\|_{\lambda}$ is not an equivalent norm because, in general, $u$ may fail to be square integrable. Moreover, thanks to Lemma \ref{lemma1}, the norms $\|u\|_{H^1(\bn)}, \|u\|_{\lambda}$ are invariant under the action of isometries of the ball model. To distinguish between the norms we will call $\|\cdot\|_{\la}$ by $H^1_{\la}$-norm. Similarly, We will denote the
$H^1_{\la}$-inner product by $\langle \cdot, \cdot \rangle_{\la} : = \langle \cdot, \cdot \rangle_{H^1} - \la \langle \cdot, \cdot \rangle_{L^2}$  which generates the norm $\| \cdot \|_{\la}.$

\medskip 

We also define the subset of $H^1(\bn)$ consisting of radial functions
\begin{align*}
H^1_{\mbox{\tiny{rad}}}(\bn) := \{u \ \mbox{radial} \ | \ \int_{0}^{\infty} \left[u^2(\rho) + (u^{\prime}(\rho))^2\right] (\sinh \rho)^{n-1} \ d\rho < \infty \},
\end{align*}
where $\prime$ denotes the differentiation with respect $\rho.$

\medskip

We will also be needing the space $H^1_0(\Omega)$ for $\Omega \subset \bn$ which is defined by the closure of $C_c^{\infty}(\Omega)$ in $H^1(\bn).$

For a positive weight $w$ we define the weighted $L^2$-space by 
\begin{align*}
L^2_{w}(\bn):= \Big\{u \ | \ \int_{\bn} |u|^2w \ \dvg < \infty \Big\}.
\end{align*}

\section{On the absence of Aubin-Talenti bubbles} \label{absence section}

We investigate, in the critical case $p = \tstar -1$, under what conditions only the hyperbolic bubbles are present in Struwe-type profile decomposition. Because of the non-existence results of Sandeep and Mancini \cite{MS} in low dimensions and low spectrum regions,  in this section, we assume $n $ and $\lambda$ satisfy

\begin{align*}
\mbox{{(\bf H2)}} \ \ \ \ \ \ \ \ \ \ \ \ \ \ \ \ \ \ \
\ n \geq 4 \ \ \ \ \mbox{and} \ \ \frac{n(n-2)}{4} < \lambda < \frac{(n-1)^2}{4}.  \ \ \ \ \  \ \ \ \ \ \ \ \ \ \ \ \ 
\end{align*}

Since $p = \tstar-1$ is fixed in this section, for the simplicity of notations we denote $S_{p,\la}$ by $S_{\la},$ and the unique positive radial solution of \eqref{eq1} will be denoted by $\calu.$ To begin with we recall the uniqueness result of \cite{MS} in its simplest form.

\begin{theorema}[Theorem $1.3$ of \cite{MS}]\label{MS_uniq}
Assume {\bf (H2)}, then all the positive solutions of \eqref{eq1} are of the form $\calu\circ \tau_{b}$ for some $b \in \bn.$
\end{theorema}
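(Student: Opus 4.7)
The plan is to follow the two-stage classical strategy: first show that any positive finite-energy solution must be radially symmetric about some point $b \in \bn$, and second prove that the radial profile is uniquely determined. Throughout I denote by $p = \frac{n+2}{n-2}$ the critical exponent.

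\textbf{Stage 1 (Symmetry via moving planes).} The key geometric inputs are that the hyperbolic isometry group is generated by reflections across totally geodesic hyperplanes, and that $-\Delta_{\bn}u - \lambda u = u^p$ is invariant under every such reflection. In the Poincar\'e ball model these hyperplanes are either Euclidean hyperplanes through $0$ or spherical caps meeting $\partial\bn$ orthogonally. I would begin with the sharp decay estimate: elliptic regularity plus comparison with the fundamental solution of $-\Delta_{\bn}-\lambda$ (coercive since $\lambda < (n-1)^2/4$ under \textbf{(H2)}) yields $u(x) \lesssim e^{-\cn d(x,0)}$. Equipped with this decay, the Gidas--Ni--Nirenberg moving planes method adapts to $\bn$: slide a totally geodesic hyperplane from infinity, compare $u$ with its hyperbolic reflection, and apply the maximum principle and Hopf lemma to conclude symmetry across some hyperplane in every parallel family. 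Intersecting the resulting symmetry sets over all geodesic directions pins down a unique point $b \in \bn$ about which $u$ is radial.

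\textbf{Stage 2 (Radial uniqueness via conformal reduction).} Applying $\tau_{-b}$ we may assume $b=0$, so $u = u(\rho)$ with $\rho = d(\,\cdot\,,0)$ solves
\begin{equation*}
-u'' - (n-1)\coth\rho\, u' - \lambda u = u^{p}, \qquad u'(0) = 0, \ u(\infty)=0.
\end{equation*}
My preferred route is conformal reduction. The hyperbolic metric is conformal to the Euclidean one with $g_{\bn} = e^{2\phi}g_{0}$, $e^{\phi} = 2/(1-|x|^2)$, and $R_{g_{\bn}} = -n(n-1)$. Writing $u(x) = \bigl(\tfrac{2}{1-|x|^2}\bigr)^{-(n-2)/2}v(x)$ and using the conformal Laplacian identity transforms the equation into
\begin{equation*}
-\Delta v = \frac{4\mu}{(1-|x|^2)^2}\,v + v^{p} \qquad \text{on } \{|x|<1\},
\end{equation*}
where $\mu = \lambda - \tfrac{n(n-2)}{4} > 0$ by \textbf{(H2)}. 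The strict positivity of the Hardy-type coefficient is decisive: it furnishes an effective positive mass term that mimics the subcritical regime. For the resulting radial ODE in $r = |x|$, I would form a Wronskian/Pohozaev energy between two hypothetical positive solutions $v_1 \neq v_2$, and exploit the strict monotonicity of the effective potential provided by $\mu>0$ to force a sign contradiction, in the spirit of Kwong--Ni--Nussbaum type uniqueness. A variational characterization (the solution is the unique minimizer of the Poincar\'e--Sobolev quotient among radial functions, with the minimum attained thanks to the Hardy term's sign) provides an alternative endgame.

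\textbf{Main obstacle.} The real difficulty is Stage 2. The hyperbolic radial ODE lacks the scale invariance that powers Kwong's classical $\rn$ proof, and the threshold $\lambda > \tfrac{n(n-2)}{4}$ is quantitatively sharp, so any argument must exploit it essentially rather than qualitatively. The conformal reduction to a Euclidean ball problem with a positive Hardy-type potential is, in my view, the cleanest way to convert the critical-exponent hyperbolic problem into a subcritical-flavored Euclidean one where ODE energy methods become available. Stage 1, though technically delicate because the moving plane is performed on a curved space, is essentially a translation of the classical argument using that hyperbolic reflections are isometries and the equation is invariant under them.
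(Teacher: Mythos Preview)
The paper does not prove this statement; it is quoted verbatim as Theorem~1.3 of Mancini--Sandeep \cite{MS} and used as a black box, so there is no ``paper's own proof'' to compare against. Your outline is in fact consonant with the strategy of \cite{MS}: there the conformal change (to the half-space model rather than the ball, but equivalently) reduces \eqref{eq1} to the Hardy--Sobolev--Maz'ya equation on a Euclidean half-space, after which moving planes yield radial symmetry in the horizontal variables and a one-dimensional ODE remains.

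That said, your Stage~2 is where the genuine content lies, and your description there is only a heuristic. The phrases ``Wronskian/Pohozaev energy'' and ``Kwong--Ni--Nussbaum type'' point in the right direction, but the actual uniqueness argument in \cite{MS} is delicate: the reduced ODE is neither autonomous nor scale-invariant, and the condition $\mu=\lambda-\tfrac{n(n-2)}{4}>0$ enters in a sharp way (uniqueness fails at $\mu=0$, where one recovers the full Aubin--Talenti family). A complete proof requires either the specific monotonicity identity used in \cite{MS} or a careful adaptation of the Kwong--Li framework to a weighted, non-autonomous problem; simply invoking ``strict monotonicity of the effective potential'' does not by itself close the argument. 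If you intend to supply a self-contained proof rather than cite \cite{MS}, Stage~2 needs to be made precise.
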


Also recall that all the positive solutions to \eqref{eq1} have the same energy $I_{\lambda}(\calu \circ \tau_b) = \frac{1}{n}S_{\lambda}^{\frac{n}{2}}.$

\medskip

Below are a few applications of the above existence and uniqueness of solutions to \eqref{eq1}. 

\medskip

Define 
$\mathbcal{h} : [\frac{n(n-2)}{4}, \frac{(n-1)^2}{4}] \to (0,\infty)$ by $\mathbcal{h}(\lambda) = S_{\lambda}.$ Clearly $\mathbcal{h}$ is monotonically decreasing. We have

\begin{lemma} \label{strictly decreasing}
The function $\mathbcal{h}$ is strictly decreasing. 
\end{lemma}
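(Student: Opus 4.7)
The plan is to deduce strict monotonicity of $\mathbcal{h}$ directly from the variational definition by plugging the radial extremizer of $S_{\lambda_1}$ (which exists under \textbf{(H2)} by Theorem~\ref{MS_uniq}) into the Rayleigh quotient for $S_{\lambda_2}$. Non-strict monotonicity is evident: for any fixed $u\in H^{1}(\bn)$ with $\|u\|_{p+1}=1$ and $\|u\|_{L^{2}}^{2}>0$, the quantity $\|u\|_{\lambda}^{2}=\int_{\bn}|\nabla_{\bn}u|^{2}\,\dvg-\lambda\int_{\bn}u^{2}\,\dvg$ is a strictly decreasing affine function of $\lambda$, so passing to the infimum yields $\lambda\mapsto S_{\lambda}$ non-increasing. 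The entire work lies in upgrading this to a strict inequality.

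The main case is $\tfrac{n(n-2)}{4}<\lambda_{1}<\lambda_{2}\leq\tfrac{(n-1)^{2}}{4}$. Under \textbf{(H2)} at $\lambda_{1}$, Theorem~\ref{MS_uniq} furnishes the positive radial extremizer $\calu:=\calu_{\lambda_{1}}\in H^{1}(\bn)$ satisfying $\|\calu\|_{\lambda_{1}}^{2}=S_{\lambda_{1}}\|\calu\|_{p+1}^{2}$. Because $\lambda_{1}<(n-1)^{2}/4$, the Poincar\'e inequality forces $\calu\in L^{2}(\bn)$, and positivity gives $\|\calu\|_{L^{2}}^{2}>0$. Using $\calu$ as a test function for $S_{\lambda_{2}}$ and invoking the algebraic identity $\|\calu\|_{\lambda_{2}}^{2}=\|\calu\|_{\lambda_{1}}^{2}-(\lambda_{2}-\lambda_{1})\|\calu\|_{L^{2}}^{2}$ yields
\begin{equation*}
S_{\lambda_{2}}\;\leq\;\frac{\|\calu\|_{\lambda_{2}}^{2}}{\|\calu\|_{p+1}^{2}}\;=\;S_{\lambda_{1}}-(\lambda_{2}-\lambda_{1})\frac{\|\calu\|_{L^{2}}^{2}}{\|\calu\|_{p+1}^{2}}\;<\;S_{\lambda_{1}}.
\end{equation*}
Observe that the use of the extremizer at the \emph{lower} endpoint $\lambda_{1}$ (not $\lambda_{2}$) is essential: only then does the $(\lambda_{2}-\lambda_{1})\|\calu\|_{L^{2}}^{2}$ term enter with the correct sign.

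The only genuine subtlety, and the step I expect to require the most care, is the boundary value $\lambda_{1}=\tfrac{n(n-2)}{4}$. There, no extremizer is available and the test-function argument above cannot be applied directly. I would resolve this by a short bootstrap: pick any intermediate $\lambda_{3}\in(\tfrac{n(n-2)}{4},\lambda_{2})$, apply the main case to the pair $(\lambda_{3},\lambda_{2})$ to obtain $S_{\lambda_{2}}<S_{\lambda_{3}}$, and combine this with the already-established non-strict monotonicity $S_{\lambda_{3}}\leq S_{\lambda_{1}}$ to conclude $S_{\lambda_{2}}<S_{\lambda_{1}}$. No further estimates are needed; the whole proof rests on the existence theorem of Mancini--Sandeep, the Poincar\'e inequality, and the trivial difference-of-norms identity.
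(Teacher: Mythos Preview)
Your proof is correct and is in fact more elementary than the paper's. The paper argues by contradiction: assuming $S_{\lambda_1}=S_{\lambda_2}$, it observes that $\calu_{\lambda_1}$ must then also extremize $S_{\lambda_2}$, invokes the Mancini--Sandeep \emph{uniqueness} theorem to deduce $\calu_{\lambda_1}=\calu_{\lambda_2}$, and subtracts the two Euler--Lagrange equations to reach a contradiction. Your argument bypasses uniqueness entirely: you only need \emph{existence} of the extremizer at $\lambda_1$ together with $\calu_{\lambda_1}\in L^2(\bn)$, and the strict inequality falls out of the single test-function computation. The paper's route has the advantage of exhibiting a structural rigidity (equality of constants forces equality of extremizers), while yours is shorter and requires less machinery. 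One minor remark: the existence of the extremizer is not Theorem~\ref{MS_uniq} itself but the Mancini--Sandeep existence result recalled just before \eqref{calu}; and your bootstrap at the left endpoint $\lambda_1=\tfrac{n(n-2)}{4}$ is exactly the right way to handle the non-attainment there (the paper treats this endpoint via the known fact $S_{\frac{n(n-2)}{4}}=S>S_\lambda$ for $\lambda>\tfrac{n(n-2)}{4}$, citing \cite{TT}).
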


\begin{proof}
It is well known that $S_{\frac{n(n-2)}{4}} = S$ where recall $S$ is the best Euclidean Sobolev constant and $S_{\lambda}< S$ for $\la > \frac{n(n-2)}{4}$ (see \cite{TT}). So, with out loss of generality we assume $\frac{n(n-2)}{4} < \la_1 < \la_2 \leq \frac{(n-1)^2}{4}$ and we claim to show that $S_{\lambda_1}> S_{\lambda_2}.$ 
On the contrary  we assume $S_{\lambda_1} = S_{\lambda_2}.$ Then $S_{\lambda_1}(\B^n)$ is also attained by $\calu_{\la_2}$ and hence by uniqueness Theorem \ref{MS_uniq}, $\calu_{\la_1} = C\calu_{\la_2}$ for some positive constant $C.$ Because of the assumed normalization \eqref{calu} and $S_{\lambda_1} = S_{\lambda_2},$ we conclude $C = 1.$ But $\calu_{\la_i}$ satisfies the Euler-Lagrange \eqref{eq1} with $\la = \la_i, i = 1,2.$ Subtracting the two equations satisfied by $\calu_{\la_i}$ we see that $(\la_1 - \la_2)\calu_{\la_1} = 0$ which implies $\calu_{\la_1} \equiv 0,$ a contradiction. Hence the proof follows.
\end{proof}

A direct application of Lemma \ref{strictly decreasing} is the following corollary. Set 

\begin{align}
\Lambda = \left \{\la \in \left[\frac{n(n-2)}{4}, \frac{(n-1)^2}{4}\right] \ : \ \left(\frac{S_{\lambda}}{S}\right)^{\frac{n}{2}} \in [0,1] \cap \mathbb{Q}\right\}.
\end{align}

\begin{corollary}\label{countable}
The set $\Lambda$ is at most countable.
\end{corollary}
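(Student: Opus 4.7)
The plan is to invoke Lemma \ref{strictly decreasing} directly: the map $\lambda \mapsto (S_\lambda/S)^{n/2}$ is a strictly monotone function on the interval $[\frac{n(n-2)}{4}, \frac{(n-1)^2}{4}]$, hence injective, and $\Lambda$ is precisely the preimage under this injection of the countable set $\mathbb{Q} \cap [0,1]$.

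In more detail, first I would observe that the function $\varphi : \lambda \mapsto (S_\lambda/S)^{n/2}$ is well defined from $[\frac{n(n-2)}{4}, \frac{(n-1)^2}{4}]$ into $[0,1]$: indeed, as recalled in the proof of Lemma \ref{strictly decreasing}, one has $S_{n(n-2)/4} = S$ and $S_\lambda < S$ for $\lambda > \frac{n(n-2)}{4}$, so $\varphi$ takes values in $(0,1]$. Next, since $\mathbcal{h}(\lambda) = S_\lambda$ is strictly decreasing by Lemma \ref{strictly decreasing} and the map $t \mapsto (t/S)^{n/2}$ is strictly monotone on $(0,\infty)$, the composition $\varphi$ is strictly decreasing, and in particular one-to-one.

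By definition, $\Lambda = \varphi^{-1}(\mathbb{Q} \cap [0,1])$. The restriction $\varphi|_\Lambda$ is therefore an injection of $\Lambda$ into the countable set $\mathbb{Q} \cap [0,1]$, which forces $\Lambda$ to be at most countable. There is no real obstacle here, the whole content of the corollary is encoded in the strict monotonicity established in Lemma \ref{strictly decreasing}; the only thing worth double-checking is that $\varphi$ is genuinely defined on the full closed interval (which is ensured by the existence of extremizers under hypothesis \textbf{(H2)} and the boundary value $S_{n(n-2)/4} = S$).
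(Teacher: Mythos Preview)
Your proof is correct and follows essentially the same approach as the paper: define the strictly decreasing map $\varphi(\lambda) = (S_\lambda/S)^{n/2}$ using Lemma~\ref{strictly decreasing}, observe that $\Lambda$ is the preimage of a subset of $\mathbb{Q}$ under this injection, and conclude countability. The paper writes $\Lambda = g^{-1}(\mathbb{Q} \cap \operatorname{Im}(g))$ rather than $\varphi^{-1}(\mathbb{Q} \cap [0,1])$, but this is cosmetic.
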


\begin{proof}
By Lemma \ref{strictly decreasing}, the function $g(\la) = \left(\frac{S_{\lambda}}{S}\right)^{\frac{n}{2}} = 
\left(\frac{\mathbcal{h}(\la)}{S}\right)^{\frac{n}{2}}$
is strictly decreasing and hence there exists a one-to-one correspondence between $\left[\frac{n(n-2)}{4}, \frac{(n-1)^2}{4}\right]$ and the image of $g$ denoted by $Im(g).$ As a result, we can write
\begin{align*}
\Lambda = g^{-1}(\mathbb{Q} \cap Im(g))
\end{align*}
and since $\mathbb{Q} \cap Im(g)$ is countable and  $g$ is strictly decreasing $\Lambda$ must be countable.
\end{proof}

\medskip

\noindent
{\bf On the absence of Aubin-Talenti bubbles in the profile decomposition:} Recall that under the hypothesis {\bf (H2)} the profile decomposition may contain both Aubin-Talenti bubbles and the hyperbolic bubbles. The following simple consequence of 
Lemma~\ref{countable} and Theorem \ref{profile decomposition} isolate the situation preferable for us. 

\begin{theorem} \label{absence theorem}
Let $n \geq 4, p = \tstar-1$ and $\frac{n(n-2)}{4} < \la < \frac{(n-1)^2}{4}$ be such that $\la \not \in \Lambda$ and $N \in \mathbb{N}$ be given. 
Let $\{u_m\} \subset H^1(\bn)$ be a sequence such that $u_m \geq 0,$ $I_{\la}(u_m) \rightarrow d = \frac{N}{n} S_{\la}^{\frac{n}{2}}$ and 
\begin{align*}
\|\Delta_{\bn} u_m+ \la u_m + u_m^p\|_{H^{-1}} \rightarrow 0, \ \ \mbox{as} \ \ m \rightarrow \infty.
\end{align*}

 Then there exists $N$ sequences $\{b_m^j\} \subset \bn, 1 \leq j \leq N$ such that 
 \begin{align*}
u_m = \sum_{j=1}^N\calu \circ \tau_{b_m^j} + o(1) \ \ in \ H^1(\bn).
\end{align*} 
In other words, no Aubin-Talenti bubble exists in the profile decomposition of $u_m.$
\end{theorem}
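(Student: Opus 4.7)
The plan is to combine the Bhakta-Sandeep profile decomposition (Theorem~\ref{profile decomposition}) with the refinement for nonnegative sequences recorded in Remark~\ref{rem2}, and then to exploit the hypothesis $\lambda\notin \Lambda$ together with the strict monotonicity of $\lambda\mapsto S_\lambda$ from Lemma~\ref{strictly decreasing}. The argument is a short arithmetic consequence once the profile decomposition is in hand; the countability of $\Lambda$ (Corollary~\ref{countable}) is exactly what guarantees that the theorem holds for generic $\lambda$.

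Applying Theorem~\ref{profile decomposition} to $\{u_m\}$ produces integers $N_1,N_2\geq 0$, sequences of hyperbolic bubbles $\mathfrak{u}_m^j$, localized Aubin-Talenti bubbles $\mathfrak{v}_m^j$, and a weak limit $u\in H^1(\bn)$ with $I_\lambda'(u)=0$, satisfying
\begin{align*}
u_m \,=\, u + \sum_{j=1}^{N_1}\mathfrak{u}_m^j + \sum_{j=1}^{N_2}\mathfrak{v}_m^j + o(1) \quad\text{in } H^1(\bn),
\end{align*}
together with the energy identity $d = I_\lambda(u) + \tfrac{N_1}{n}S_\lambda^{n/2} + \tfrac{N_2}{n}S^{n/2}$. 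Since $u_m\geq 0$, Remark~\ref{rem2} (which is in turn based on the Mancini-Sandeep classification Theorem~\ref{MS_uniq}) forces $u\equiv 0$ or $u=\calu\circ\tau_b$ for some $b\in\bn$. Setting $\tilde N_1 = N_1$ in the first case and $\tilde N_1 = N_1+1$ in the second, the energy relation simplifies to
\begin{align*}
d \,=\, \frac{\tilde N_1}{n}S_\lambda^{n/2} + \frac{N_2}{n}S^{n/2}.
\end{align*}

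The crucial step is to deduce $N_2=0$. Comparing the above with the hypothesis $d = \tfrac{N}{n}S_\lambda^{n/2}$ yields $(N-\tilde N_1)S_\lambda^{n/2} = N_2\,S^{n/2}$. Suppose toward contradiction that $N_2\geq 1$. Since the right-hand side is then strictly positive and $S_\lambda>0$, we must have $N>\tilde N_1$, and therefore
\begin{align*}
\left(\frac{S_\lambda}{S}\right)^{n/2} \,=\, \frac{N_2}{N-\tilde N_1}\in (0,1]\cap\mathbb{Q},
\end{align*}
where the containment in $(0,1]$ uses $S_\lambda\leq S$. By the definition of $\Lambda$ this forces $\lambda\in\Lambda$, contradicting our hypothesis. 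Hence $N_2=0$, so $\tilde N_1 = N$, and the decomposition collapses to $u_m = \sum_{j=1}^{N}\calu\circ\tau_{b_m^j}+o(1)$ in $H^1(\bn)$: either all $N$ bubbles come from the sequences $\mathfrak{u}_m^j$ (when $u\equiv 0$), or $N-1$ of them come from the $\mathfrak{u}_m^j$ and the last one is the constant sequence $\calu\circ\tau_b$.

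I do not foresee a substantial obstacle beyond keeping the bookkeeping straight; the heart of the proof is the purely rationality-based clash between the energy quantization and the definition of $\Lambda$. All genuinely nontrivial inputs are black-boxed: the Bhakta-Sandeep profile decomposition, the Mancini-Sandeep uniqueness theorem (which upgrades Remark~\ref{rem2}), and the strict monotonicity $S_{\lambda_1}>S_{\lambda_2}$ for $\lambda_1<\lambda_2$ established in Lemma~\ref{strictly decreasing}. The statement of the theorem is essentially the cleanest way to carve out a dense open set of parameters on which the Aubin-Talenti bubbles cannot coexist with hyperbolic bubbles at integer energy levels $\tfrac{N}{n}S_\lambda^{n/2}$.
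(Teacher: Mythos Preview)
Your proof is correct and follows essentially the same route as the paper: apply the Bhakta--Sandeep profile decomposition together with Remark~\ref{rem2}, then observe that $N_2\neq 0$ would force $(S_\lambda/S)^{n/2}$ to be rational and hence $\lambda\in\Lambda$. The paper's argument is identical, if slightly terser; your added detail on why $N>\tilde N_1$ and the containment in $(0,1]$ is correct bookkeeping.
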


\begin{proof}
By Theorem \ref{profile decomposition} and the subsequent Remark \ref{rem2}, we have 
\begin{align*}
\frac{N}{n} S_{\la}^{\frac{n}{2}}  = \frac{\tilde N_1}{n}S_{\la}^{\frac{n}{2}} + \frac{N_2}{n}S^{\frac{n}{2}}.
\end{align*}
If $N_2 \neq 0,$ (which corresponds to the existence of Aubin-Talenti bubbles) then $N \neq \tilde N_1$ and hence \begin{align*}
\left(\frac{S_{\lambda}}{S}\right)^{\frac{n}{2}} = \frac{N_2}{N - \tilde N_1} \in \mathbb{Q}.
\end{align*}
This contradicts the assumption that $\la \not \in \Lambda$ and completes the proof of the theorem.
\end{proof}

At this point, we are tempted to conjecture that Theorem \ref{absence theorem} is the best possible scenario we can get for $\frac{n(n-2)}{4} < \la < \frac{(n-1)^2}{4}.$ In other words, if $\mathbcal{h}$ is continuous then $Im(g)$ must contain both rational and irrational numbers and thence we can not expect to do better than Theorem \ref{absence theorem}. However, proving rigorously the existence of both Aubin-Talenti bubbles and hyperbolic bubbles at the prescribed energy level mentioned in Theorem \ref{absence theorem} is a formidable challenge. We don't have any positive or negative answer to it yet.

\section{Interaction of bubbles}\label{interaction of bubbles section}

\subsection{Interaction of two bubbles}\label{interaction of bubbles section (a)}
 In this subsection, we shall compute the interaction of two hyperbolic bubbles. We use the following sharp bounds on the hyperbolic bubble: there exist constants $\chi_i = \chi_i(n,p,\la),\,  i = 1,2$ such that 
\begin{align} \label{bubble decay}
\chi_1e^{-\cn d(x,z)} \leq \calu[z](x) \leq \chi_2 e^{- \cn d(x,z)}, \ \ \ \mbox{for all} \ \ x \in \bn,
\end{align}
and for every $z \in \bn$, where $\cn$ is as defined in \eqref{cn-lambda}. The case $\la = 0$ and $p < \tstar -1$ the decay estimate \eqref{bubble decay} have been obtained by Bonforte-Gazzola-Grillo and V\'azquez \cite{BGGV} some time ago which is a slight improvement of the seminal result of Mancini-Sandeep \cite{MS} concerning the asymptotic behaviour of radial solutions. 
We came to know about the validity of \eqref{bubble decay} for all admissible $\la$ from personal communication with Sandeep K and Ramya Dutta \cite{SRa}.  We remark that this article was completed only after knowing the results of Sandeep and Ramya, and we thank them for explaining and sharing the estimates to us. The result is going to appear in their forthcoming article. 

\begin{lemma}\label{lem-int-2}
Given $n \geq 3,$ let $\alpha, \beta \geq 0$ be such that $\alpha + \beta \geq 2.$ 
Let $\calu_i = \calu[z_i], i = 1,2$ be two hyperbolic bubbles such that $d(z_i, z_j) \geq \ln 3.$ Then 
\begin{align*}
\int_{\bn} \calu_1^{\alpha} \calu_2^{\beta}\ \dvg  \ 
\begin{cases}
\approx_{|\alpha-\beta|} \ e^{-\cn \min\{\alpha, \beta \} \, d(z_1, z_2)}, \ \ \ \ \ \ \ \ \ \  \ \ \ \mbox{if} \ \alpha \neq \beta \\
\medskip
\approx  \ \ \ \ \ \  d(z_1, z_2)e^{-\cn \beta  \, d(z_1, z_2)}, \ \ \ \ \ \ \ \ \ \ \ \mbox{if} \ \alpha = \beta. \\
\end{cases}
\end{align*}
Here we follow the convention that the constant in $\approx$ depends also on $n,p$ and $\la.$ 
\end{lemma}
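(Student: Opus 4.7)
My plan is to use the sharp pointwise asymptotic $\calu[z](x)\approx e^{-\cn d(x,z)}$ from \eqref{bubble decay} to reduce the lemma to two-sided bounds on the purely metric integral
\begin{equation*}
I(\alpha,\beta,D):=\int_{\bn} e^{-\cn\alpha\, d(x,z_1)-\cn\beta\, d(x,z_2)}\,\dvg,\qquad D:=d(z_1,z_2)\geq\ln 3,
\end{equation*}
which by symmetry I treat under the convention $\alpha\leq\beta$.

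To compute $I$, I apply a hyperbolic isometry to place $z_1$ and $z_2$ on a fixed complete geodesic $\gamma$ at arc-lengths $0$ and $D$, and introduce Fermi coordinates $(t,\rho,\omega)\in\mathbb{R}\times[0,\infty)\times S^{n-2}$ along $\gamma$, in which
\begin{equation*}
\dvg=\cosh\rho\,\sinh^{n-2}\rho\,dt\,d\rho\,d\omega,\quad d(x,z_1)=\cosh^{-1}(\cosh\rho\cosh t),\quad d(x,z_2)=\cosh^{-1}(\cosh\rho\cosh(t-D)),
\end{equation*}
the last two identities being the hyperbolic Pythagorean theorem. The elementary two-sided bound $\rho+|s|-\ln 4\leq\cosh^{-1}(\cosh\rho\cosh s)\leq\rho+|s|$ then reduces the exponent of the integrand, up to an additive $O(1)$ depending only on $n,\lambda,p,\alpha,\beta$, to the piecewise-linear expression $\cn(\alpha+\beta)\rho+\cn\alpha|t|+\cn\beta|t-D|$.

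I then split the $t$-integration into $(-\infty,0]$, $[0,D]$, and $[D,\infty)$. On the middle interval the exponent simplifies to $\cn\beta D-\cn(\beta-\alpha)t$, and the $t$-integral produces $\int_0^D e^{\cn(\beta-\alpha)t}\,dt$, which equals exactly $D$ when $\alpha=\beta$ and is $\approx_{|\alpha-\beta|} e^{\cn(\beta-\alpha)D}$ when $\alpha<\beta$; multiplication by the overall $e^{-\cn\beta D}$ yields the claimed prefactors $D\,e^{-\cn\alpha D}$ or $e^{-\cn\alpha D}$ respectively. The two outer intervals decay exponentially in $|t|$ or $|t-D|$ and contribute at most $e^{-\cn\alpha D}$, so they are absorbed into the implicit constant. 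The transverse integral $\int_0^\infty e^{-\cn(\alpha+\beta)\rho}\cosh\rho\,\sinh^{n-2}\rho\,d\rho$ is finite precisely because the hypothesis $\alpha+\beta\geq 2$ combined with $\cn>(n-1)/2$ (which holds under \textbf{(H1)}, since $\lambda<(n-1)^2/4$) gives $\cn(\alpha+\beta)>n-1$; this is where the exponent assumption of the lemma enters decisively.

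For the matching $\gtrsim$-bound I restrict the integration to a tube of the form $\{t\in[1,\,D-1]\}\times\{\rho\in[0,1]\}\times S^{n-2}$, on which the two-sided bounds on $\cosh^{-1}(\cosh\rho\cosh s)$ are simultaneously tight up to $O(1)$, and repeat the same computation; the standing hypothesis $D\geq\ln 3$ ensures that the transitional regions near $t=0,\,D$ only affect the multiplicative constant rather than the asymptotic dependence on $D$. The main technical obstacle will be tracking, uniformly in $D$, the dependence of the implied constants on $|\alpha-\beta|$: the factor $1/\cn|\alpha-\beta|$ that emerges from the $t$-integral is exactly what is encoded in the notation $\approx_{|\alpha-\beta|}$ in the statement, and degenerates as $\alpha\to\beta$, which is the analytic mechanism producing the extra factor of $D$ in the borderline case $\alpha=\beta$.
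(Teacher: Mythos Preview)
Your proposal is correct and takes a genuinely different route from the paper's proof. The paper works entirely in the conformal ball model: after translating so that $z_1=0$, it rewrites the integrand via the explicit distance formula as a Euclidean integral of the form
\[
(1-|z|^2)^{\cn\beta}\int_{\Be^n}\frac{(1-|x|^2)^{\cn(\alpha+\beta)-n}}{|x-z^*|^{2\cn\beta}}\,dx,
\]
then handles the two cases by direct estimation. For $\alpha\neq\beta$ it bounds $|x-z^*|\gtrsim 1-|x|^2$ and uses a Fatou argument for the lower bound; for $\alpha=\beta$ it passes to polar coordinates about $z^*$, invokes a classical closed-form angular integral to extract the logarithm, and for the lower bound constructs an explicit cone on which $|x-z^*|\lesssim 1-|x|^2$. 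Your argument is instead intrinsic: Fermi coordinates along the geodesic through $z_1,z_2$ plus the hyperbolic Pythagorean theorem reduce everything to the one-dimensional $t$-integral $\int_{\mathbb R}e^{-\cn\alpha|t|-\cn\beta|t-D|}\,dt$, from which both cases and both bounds drop out by elementary calculus. Your approach is more transparent about \emph{why} the logarithmic factor appears exactly at $\alpha=\beta$ (the $t$-integrand becomes constant on $[0,D]$), and it avoids the somewhat ad hoc cone construction; the paper's approach has the advantage of never leaving the familiar ball-model coordinates and of not requiring the Fermi metric formula. One small point to clean up: your lower-bound tube $\{t\in[1,D-1]\}$ is empty when $D$ is close to $\ln 3$; either use $[\tfrac{D}{4},\tfrac{3D}{4}]$ instead, or observe that for $D$ in any compact range the lower bound follows trivially from positivity and continuity, so one may assume $D$ large.
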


\medskip

\begin{proof}

Before beginning the proof let us recall that for any two points $x, z \in \bn$ we have 
\begin{align*}
\cosh \frac{d(x, z)}{2} \, = \, \dfrac{\sqrt{1 - 2x.z + |x|^2 |z|^2}}{\sqrt{(1 - |z|^2) (1 - |x|^2)}}.
\end{align*}
So in particular for $z =0$ we have 
\begin{align*}
e^{d(x, 0)} \, \approx \, \dfrac{1}{(1 - |x|^2)}.
\end{align*}

By Lemma \ref{lemma1}(iii) it is enough to prove the result for $z_1=0, z_2= z.$ 

\medskip

Let us compute the following integral for $\alpha, \beta\in  \mathbb{R}^{+} \cup \{ 0 \}$ such that $\alpha + \beta \geq 2:$

\begin{align}\label{local-estimate}
I \, &= \, \int_{\bn} \calu[0]^{\alpha} \, \calu[z]^{\beta} \, \dvg \notag\\
& \approx \int_{\bn} e^{-\cn \alpha d(x, 0)} e^{-\cn \beta d(x, z)} \, \dvg  \notag\\
& \approx \int_{\Be^n} (1 - |x|^2)^{\cn \alpha} \left( \frac{(1 - |z|^2) (1 - |x|^2)}{1 - 2x.z + |x|^2 |z|^2} \right)^{\cn \beta} \, 
\left( \frac{2}{1 - |x|^2} \right)^n \, {\rm d}x \notag \\
& \approx (1 - |z|^2)^{\cn \beta} \int_{\Be^n} \frac{(1 - |x|^2)^{\cn (\alpha + \beta) - n}}{(1 - 2 x.z + |x|^2 |z|^2)^{\cn \beta}} \, {\rm d}x  \notag \\
& =  \frac{(1 - |z|^2)^{\cn \beta}}{|z|^{2 \cn \beta}} \int_{\Be^n} \frac{(1 - |x|^2)^{\cn (\alpha + \beta) - n}}{|x - z^*|^{2 \cn \beta}} \,  \dx.
\end{align}

Note that the the values of $\cn > \frac{(n-1)}{2}$ for all $\la < \frac{(n-1)^2}{4}$ and hence the integral \eqref{local-estimate} makes sense only if $\alpha + \beta \geq 2.$
Now without loss of generality, we may assume that $\alpha \geq \beta.$ We further subdivide our proof into two cases.  

\medskip 

{\bf Case-1: $\alpha > \beta.$} We notice that  $|z^*| \geq 1$ and $|x| < 1$ and therefore 

\begin{align*}
|x - z^*| \geq |z^*| - |x| \geq 1 - |x| \approx (1 - |x|^2).
\end{align*}
Since by our hypothesis $d(0,z) \geq \ln 3,$ we have $|z| \geq \frac{1}{2}$ and hence $\Be^n \subset B_E(z^*,3).$ Exploiting the fact that $\cn (\alpha - \beta) > 0$ we get

\begin{align}
\int_{\Be^n} \frac{(1 - |x|^2)^{\cn (\alpha + \beta) - n}}{|x - z^*|^{2 \cn \beta}} \,  
\notag \dx  &\lesssim \int_{\Be^n} |x - z^*|^{\cn (\alpha - \beta) - n} \, \dx \\
& \lesssim \int_{B_E(z^*,3)} |x - z^*|^{\cn (\alpha - \beta) - n} \, \dx  \lesssim_{|\alpha-\beta|} 1.
\end{align}
This proves the upper bound. 

\medskip 

For the lower bound, first note that for fixed $z_0$ with $\frac{1}{2} \leq |z_0| \leq 1$ the same argument leads to
\begin{align*}
0 < \int_{\Be^n} \frac{(1 - |x|^2)^{\cn (\alpha + \beta) - n}}{|x - z_0^*|^{2 \cn \beta}} \, \dx \lesssim_{|\alpha-\beta|} 1.
\end{align*}
Taking a minimizing sequence and using Fatou's lemma one can easily conclude that 
\begin{align*}
\inf_{\frac{1}{2} \leq |z| \leq 1} \int_{\Be^n} \frac{(1 - |x|^2)^{\cn (\alpha + \beta) - n}}{|x - z^*|^{2 \cn \beta}} \, \dx \approx_{|\alpha-\beta|} 1.
\end{align*}
This completes the proof lower bound. Hence combining the upper bound and lower bound estimates we conclude that 
\begin{align*}
\int_{\bn} \calu[0]^{\alpha} \, \calu[z]^{\beta} \, \dvg \approx_{|\alpha-\beta|} (1 - |z|^2)^{\cn \beta} \approx_{|\alpha-\beta|}  e^{-\cn \mbox{min}\{\alpha, \beta \} d(0, z)}.
\end{align*}

\medskip 

{\bf Case-2: $\alpha = \beta.$} {\it The upper estimate.}

\medskip

In this case, we have
\begin{align*}
\int_{\Be^n} \frac{(1 - |x|^2)^{\cn (\alpha + \beta) - n}}{|x - z^*|^{2 \cn \beta}} \, \dx \lesssim \int_{\Be^n} \frac{1}{|x - z^*|^n} \, \dx
\end{align*}
In the proof all constants depend on $n, \la$ and $p.$ With out loss of generality we may assume that $z^* = |z^*| e_n = t e_n,$ where $t = \frac{1}{|z|}.$ Now we transformation the integral in polar co-ordinates 
\begin{align*}
&x_1 = \varrho \, \sin \theta \sin \theta_1\ldots \sin \theta_{n-3}\sin \theta_{n-2} \\
& x_2 = \varrho \, \sin \theta \sin \theta_1 \ldots \sin  \theta_{n-3} \cos \theta_{n-2} \\
& x_3 = \varrho \, \sin \theta \sin \theta_1 \ldots \cos \theta_{n-3} \\
& \ldots \\
& x_n = \varrho \cos \theta.
\end{align*}
The Jacobian of the above transformation is $\rho^{n-1}\sin^{n-2}\theta \sin^{n-3}\theta_1\cdots \sin \theta_{n-3},$ 
where $\rho \in [0,1],$ \ $\theta, \theta_1, \ldots, \theta_{n-3} \in [0,\pi]$ and $ \theta_{n-2} \in [0,2\pi].$ With the above transformation, our integral now transforms to 

\begin{align}
\int_{\Be^n} \frac{1}{|x - z^*|^n} \, \dx \, &\approx \, \int_{0}^{1} \varrho^{n-1} 
\int_{0}^{\pi} \frac{ \sin^{n-2}\theta  \ {\rm d}\theta}{(t^2 - 2t \varrho \cos \theta + \varrho^2)^{n/2}} \, {\rm d}\varrho \notag \\
& = \int_{0}^{|z|} \varrho^{n-1} {\rm d}\varrho \int_{0}^{\pi} \frac{ \sin^{n-2}\theta \ {\rm d}\theta}{(1 - 2 \varrho \cos \theta + \varrho^2)^{n/2}}  \notag \\
& =  \int_{0}^{|z|} \frac{\varrho^{n-1}}{ \varrho^{n}} {\rm d} \varrho \int_{0}^{\pi} \frac{ \sin^{n-2}\theta \ {\rm d}\theta}{(1/\varrho^2 - 2 /\varrho \cos \theta + 1)^{n/2}}  \notag \\
& = \int_{0}^{|z|} \frac{\varrho^{n-1}}{ \varrho^{n}} \frac{\int_{0}^{\pi}  \sin^{n-2}\theta \ {\rm d}\theta}{(1/\varrho)^{n-2} (1/\varrho^2 - 1)} {\rm d}\varrho \notag \\
& = C \int_{0}^{|z|} \frac{\varrho^{n-1}}{ 1- \varrho^2} \, {\rm d}\varrho, \notag
\end{align}
where in the first line we have absorbed the term $2\pi\prod_{l=1}^{n-3}\int_0^{\pi} \sin^{l} \theta \ d\theta$ into $\approx$ sign and in the second line we have made the change of variable $\varrho \to t\varrho$. In the last line $C: = \int_{0}^{\pi}  \sin^{n-2} \theta \ {\rm d}\theta$ and in the fourth line we have used the well-known  formula 
\begin{align*}
\int_{0}^{\pi} \frac{ \sin^{n-2}\theta \ {\rm d}\theta}{(r^2 - 2r \cos \theta + 1)^{n/2}} \, {\rm d}\theta =  
\frac{1}{r^{n-2}(r^2 - 1)}\int_{0}^{\pi}  \sin^{n-2}\theta \ {\rm d}\theta, \ \ \mbox{for} \ r>1.
\end{align*}

Further, we estimate using $t  = \frac{1}{|z|} \approx 1$ as follows 

\begin{align}\label{eq-estimate}
\frac{C}{t} \int_{0}^{|z|} \frac{\varrho^{n-1}}{ 1- \varrho^2} \, {\rm d}\varrho  \lesssim |z|^{n-2} \int_{0}^{|z|}  \frac{\varrho }{1 - \varrho^2} \, {\rm d}\varrho 
 \lesssim  \log\left( \frac{1}{1 - |z|^2} \right) \approx  d(0, z). 
\end{align}
Substituting \eqref{eq-estimate} in \eqref{local-estimate} we obtain the desired upper bound
\begin{align*}
\int_{\bn} \calu[0]^{\alpha} \, \calu[z]^{\beta} \, \dvg \lesssim d(0, z) e^{-\cn d(0, z)}
= d(0, z) e^{-\cn \mbox{min}\{\alpha, \beta \} d(0, z)}.
\end{align*}

\medskip 

\noindent
{\it The lower estimate.} Now we shall compute the lower bound. We fix $\theta_0$ small to be determined later (actually, $\theta_0 = \pi/4$ will work).  We consider the following cone defined by the collection of all $x \in B_E(z,|z|)$ such that the angle between $(x-z)$ and $-z$ is less than or equal to $\theta_0.$
\begin{align*}
\mathcal{C} := \{ x \in B_E(z,|z|) : -  (x-z)\cdot  z \geq |x -z| |z| \, \cos \theta_0 \},
\end{align*}
where recall $\cdot$ denotes the standard Euclidean inner product. We emphasise that we shall choose $\theta_0$ independent of the point $z.$

\medskip

\begin{figure}[H]
\centering 
\begin{tikzpicture}[ scale=2.5]
  \draw[thick] (0,0) circle (1);
  \draw[thick] (0,1)--(0,-1);
    \draw[thick] (0.8,0)--(0.10717967697,-0.4);
  \draw[thick] (0.8,0)--(0.10717967697,0.4)  node[anchor=south west]{$\mathcal{C}$};
  \filldraw[black] (1.25,0) circle (1pt) node[anchor=south ]{z*};
  \draw (0.10717967697,-0.4) arc
    [
        start angle=30,
        end angle=-30,
        x radius=-0.8cm,
        y radius =-0.8cm
    ] ;
  \draw (0.530192378,0) arc
    [
        start angle=0,
        end angle=30,
        x radius=-0.3cm,
        y radius =-0.3cm
    ]  node[anchor=east]{$\tiny \theta_0$};
     \draw[dashed] (0.70710678118,0.70710678118)--(0.2,0.2) node[anchor=south east]{x};

       \filldraw[black] (0.2,0.2) circle (1pt);
       \draw[dashed] (1.25,0)--(0.2,0.2);
    \filldraw[black] (0.8,0) circle (1pt) node[anchor=south west]{z};
   \filldraw[black] (0,0) circle (1pt);
   \draw[dashed, <->] (-2,0)--(2,0);
 \end{tikzpicture}
 \caption{The above picture demonstrating the bounded cone $\mathcal{C}$ where the terms $(1-|x|^2)$ and $
 |x-z^*|$ are comparable for all $x \in \mathcal{C}.$ } 
\end{figure}
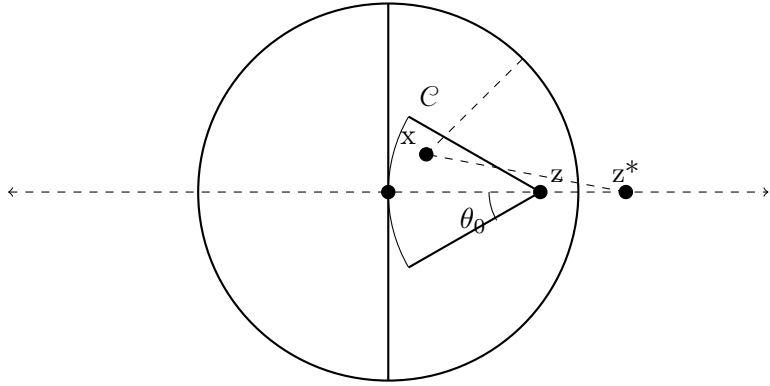

First we claim that if $|z| \sim 1,$ then 
\begin{align*}
|x - z^*| \lesssim (1 - |x|^2) \quad  \ \ \mbox{for all} \ x  \in \mathcal{C}.
\end{align*}

\noindent
{\it Proof of the claim:} By triangle inequality
\begin{align}\label{esti-cone-1}
|x - z^*| \leq |x - z| + |z - z^*| \leq |x - z| + \frac{1 - |z|^2}{|z|} \lesssim |x -z| + (1 - |z|^2).
\end{align}
On the other hand $|x|^2 = |x - z|^2 + 2  (x-z)\cdot z + |z|^2,$ and hence 
\begin{align} \label{esti-cone-11}
1 - |x|^2 &= 1 - |z|^2 - |x - z|^2 - 2 (x-z)\cdot z \geq (1 - |z|^2) - |x - z|^2 + 2|x-z||z| \cos \theta_0 \notag \\
& = (1 - |z|^2) + |x - z| \left(  2 |z| \cos \theta_0 - |x -z| \right).
\end{align}
In the $\mathcal{C},$ we have $|x -z| \leq |z|,$ therefore 
\begin{align*}
2 |z| \cos \theta_0 - |x -z| \geq  |z| (2\cos \theta_0 - 1)
 \geq (\sqrt{2}-1)|z| \\
 \end{align*}
if  $\theta_0  =\pi/4.$ We get  from \eqref{esti-cone-11}
\begin{align} \label{esti-cone-2}
1 - |x|^2 \geq (\sqrt{2} - 1) \left[  (1 - |z|^2) + |x -z| \right].
\end{align}
Hence from \eqref{esti-cone-1} and \eqref{esti-cone-2} the claim follows. Now given the above, we simplify the required integral as follows 
\begin{align*}
\int_{\mathcal{C}} \frac{(1 - |x|^2)^{\cn (\alpha + \beta) - n}}{|x - z^*|^{2 \cn \beta}} \, \dx \gtrsim \int_{\mathcal{C}} \frac{1}{|x - z^*|^n} \, \dx.
\end{align*}
Now we again make the following change of variable in polar co-ordinate 
\begin{align*}
&x_1 = -\varrho \, \sin \theta \sin \theta_1\ldots \sin \theta_{n-3}\sin \theta_{n-2} \\
& x_2 = -\varrho \, \sin \theta \sin \theta_1 \ldots \sin  \theta_{n-3} \cos \theta_{n-2} \\
& x_3 = -\varrho \, \sin \theta \sin \theta_1 \ldots \cos \theta_{n-3} \\
& \ldots \\
& x_n = |z| - \varrho \cos \theta, \quad z = |z|e_n, \ t = \frac{1}{|z|} \notag.
\end{align*}
With the above change of variable, we can write 
\begin{align}
|x - z^*|^2 & = |x|^2 + |z^*|^2 - 2 x_n z_n^* = \varrho^2 + |z|^2 - 2 |z| \varrho \cos \theta + \frac{1}{|z|^2} - 2 \left( |z| - \varrho \cos \theta \right) \frac{1}{|z|} \notag \\
& = \varrho^2 + 2 \varrho \cos \theta \left( \frac{1- |z|^2}{|z|} \right) + \frac{(1 - |z|^2)^2}{|z|^2} \notag.
\end{align}
This yields for $|z| \approx 1$
\begin{align}
\int_{\mathcal{C}} \frac{1}{|x - z^*|^n} \, \dx & \approx \int_{0}^{|z|} \varrho^{n-1} \, 
{\rm d}\varrho \int_{0}^{\theta_0} \frac{(\sin \theta)^{n-2} \, {\rm d}\theta}{\big[ \varrho^2 + 
2 \varrho \cos \theta \left( \frac{1- |z|^2}{|z|} \right) + \frac{(1 - |z|^2)^2}{|z|^2}\big]^{\frac{n}{2}}} \notag \\
& = \int_{0}^{\frac{|z|^2}{ 1 - |z|^2}} \dfrac{\left( \frac{ 1 - |z|^2}{|z|} \varrho \right)^{n-1}}{\left( \frac{1 - |z|^2}{|z|} \right)^{n-1}} \, {\rm d}\varrho \int_{0}^{\theta_0} 
\frac{(\sin \theta)^{n-2} \, {\rm d}\theta}{\big[ \varrho^2 + 
2 \varrho \cos \theta  + 1\big]^{\frac{n}{2}}} \notag\\
& =  \int_{0}^{\frac{|z|^2}{ 1 - |z|^2}}  \varrho^{n-1} {\rm d}\varrho \int_{0}^{\theta_0} 
\frac{(\sin \theta)^{n-2} \, {\rm d}\theta}{\big[ \varrho^2 + 
2 \varrho \cos \theta + 1\big]^{\frac{n}{2}}} \notag \\
& \gtrsim \int_{1}^{\frac{|z|^2}{ 1 - |z|^2}} \varrho^{n-1}   \dfrac{1}{\varrho^n} \int_{0}^{\theta_0} \sin^{n-2} \theta \ {\rm d}\theta \, {\rm d}\varrho\notag \\
& \gtrsim C_{\theta_0} \int_{1}^{\frac{|z|^2}{ 1 - |z|^2}} \frac{{\rm d}\varrho}{\varrho} \gtrsim \log \left(\frac{|z|^2}{1 - |z|^2} \right) \gtrsim \log \left(\frac{1}{1 - |z|^2} \right) \notag \\
& \approx d(0, z)
\end{align}
where in the second line we used the change of variable $\varrho \to \left( \frac{1- |z|^2}{|z|} \right)\varrho.$
Therefore from the above computation, we conclude 
\begin{align*}
\int_{\bn} \calu[0]^{\alpha} \, \calu[z]^{\beta} \, \dvg \gtrsim d(0, z) e^{-\cn \beta d(0, z)}.
\end{align*}
\end{proof}
\begin{remark}
{\rm 
The sharpness of the exponent in Lemma~\ref{lem-int-2} in case $\alpha \neq \beta$ can also be seen from the following heuristic explanation. Note that 
\begin{align*}
e^{d(x, z) - d(0,z)} \approx \frac{1 - 2 x.z + |x|^2 |z|^2}{1 - |x|^2}.
\end{align*}
Then as $|z| \rightarrow 1,$ i.e. $z \rightarrow e$ with $|e| =1,$ there holds 
\begin{align*}
e^{d(x, z) - d(0,z)}  \rightarrow \frac{1 - 2 x \cdot e + |x|^2}{1 - |x|^2} \approx |x - e|^2 e^{d(0, x)}
\end{align*}
Therefore for $\alpha > \beta,$ denoting $c = \cn$
\begin{align*}
e^{c \beta d(0, z)} \int_{\bn} e^{- c \alpha d(0, x)} e^{- c \beta d(x, z)} \, \dvg(x) \notag 
 \rightarrow & \int_{\bn} e^{- c \alpha d(0, x)} |x - e|^{-2 c \beta} e^{- c \beta d(0, x)} \, \dvg(x) \\
 \approx  &\int_{\Be^n} \frac{(1 - |x|^2)^{c (\alpha + \beta) - n}}{|x - e|^{2 c \beta}} \, \dvg(x) \approx 1.
\end{align*}

By replacing $\beta$ in $e^{c \beta d(0, z)}$ by $\nu,$ we observe that as  $|z| \rightarrow 1,$
\begin{align*}
  e^{c \nu d(0, z)} \int_{\bn} e^{- c \alpha d(0, x)} e^{- c \beta d(x, z)} \, \dvg(x) 
\rightarrow
\begin{cases}
0 \ \ \ \ \ \ \ \ \   \mbox{if} \ \nu < \beta \\
+ \infty  \ \ \ \ \ \mbox{if} \ \nu >  \beta. \\
\end{cases}
\end{align*}

This proves that $\nu = \beta$ is the precise asymptotic estimate for the interaction terms. 
}

\end{remark}


\subsection{Interaction of three bubbles}
In this section, we will derive the interaction of three hyperbolic bubbles. We denote $\calu_i = \calu[z_i], \ i = 1,2,3.$ The main focus of this section is to derive upper bounds on the integral of interactions. 
We recall 
\begin{align*}
Q_{ij} := e^{-\cn d(z_i,z_j)} \ \ \ \ \mbox{and} \ \ \ \ Q := \max_{i \neq j} \ Q_{ij}.
\end{align*}

We prove the following lemma.

\begin{lemma} \label{three bubble}
Let $\calu_i = \calu[z_i], i = 1,2,3$ be three hyperbolic bubbles and $p>1$ and assume $Q< <1$. Then 
\begin{align*}
\int_{\bn} \calu_1^{p-1}\calu_2\calu_3 \ \dvg  \ 
\begin{cases}
\ \lesssim \ Q^{\frac{3}{2}}\left(\ln \frac{1}{Q}\right)^{\frac{1}{3}}, \ \ \ \ \ \ \ \ \ \  \ \ \ \ \ \ \mbox{if} \ p-1>1, \\
\ \lesssim \ Q^{\frac{3}{2}}\ln \frac{1}{Q}, \ \ \ \ \ \ \ \ \ \ \ \ \ \ \ \ \ \ \ \ \   \mbox{if} \ p-1=1, \\
\ \lesssim_{\nu} \ Q^{\nu}, \ \mbox{for any} \ \nu < \frac{p+1}{2}, \ \ \ \ \mbox{if} \ p-1<1.
\end{cases}
\end{align*}
\end{lemma}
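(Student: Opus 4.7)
The plan is to reduce the three-bubble interaction to a product of two-bubble interactions via H\"{o}lder's inequality and then apply Lemma~\ref{lem-int-2}. Concretely, I would write
\begin{equation*}
\calu_1^{p-1}\calu_2\calu_3 \;=\; \bigl(\calu_1^{(p-1)/2}\calu_2^{\gamma}\bigr)\bigl(\calu_1^{(p-1)/2}\calu_3^{\gamma}\bigr)\bigl(\calu_2^{1-\gamma}\calu_3^{1-\gamma}\bigr)
\end{equation*}
for a parameter $\gamma \in (0,1]$ to be chosen, and apply the three-factor H\"{o}lder inequality with exponents $r_1, r_2, r_3$ satisfying $\tfrac{1}{r_1}+\tfrac{1}{r_2}+\tfrac{1}{r_3}=1$:
\begin{equation*}
\int_{\bn}\calu_1^{p-1}\calu_2\calu_3\,\dvg \;\leq\; \prod_{i=1}^3 \Bigl(\int_{\bn} f_i^{r_i}\,\dvg\Bigr)^{1/r_i}.
\end{equation*}
Each $f_i^{r_i}$ involves only two bubbles, so Lemma~\ref{lem-int-2} applies whenever the corresponding pair of exponents sums to at least $2$.

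For $p-1 > 1$, I would choose $\gamma = \tfrac{1}{2}$ and $r_1 = r_2 = r_3 = 3$. The first two norms reduce to $\bigl(\int \calu_1^{3(p-1)/2}\calu_j^{3/2}\bigr)^{1/3}$; since $p > 2$ the two exponents are unequal and Lemma~\ref{lem-int-2} gives $\|f_j\|_3 \approx Q_{1j}^{1/2}$ without any logarithm. The third norm $\bigl(\int \calu_2^{3/2}\calu_3^{3/2}\bigr)^{1/3}$ falls in the equal-exponent regime and contributes $Q_{23}^{1/2}(\ln 1/Q_{23})^{1/3}$. Bounding $Q_{ij} \leq Q$ and using the fact that $x \mapsto x^{1/2}(\ln 1/x)^{1/3}$ is increasing on a right-neighbourhood of $0$ (so the log factor can be passed through individually while $Q\ll 1$), the product collapses to the desired $Q^{3/2}(\ln 1/Q)^{1/3}$. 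The case $p-1 = 1$ uses exactly the same split, except that now \emph{all three} two-bubble integrals lie in the equal-exponent regime; the three logarithms of exponent $1/3$ aggregate to $\ln(1/Q)$, producing the claimed $Q^{3/2}\ln(1/Q)$.

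For $p-1 < 1$, the balanced choice is $\gamma = (p-1)/2$, which forces $r_1 = r_2 = 2/(p-1)$ and $r_3 = 1/(2-p)$ (one checks $\tfrac{2}{r_1} + \tfrac{1}{r_3} = 1$ for $p \in (1,2)$). After this choice, $f_1^{r_1} = \calu_1\calu_2$, $f_2^{r_2} = \calu_1\calu_3$ and $f_3^{r_3} = \calu_2^{\eta}\calu_3^{\eta}$ with $\eta = (3-p)/(2(2-p))$; each sits at the equal-exponent boundary of Lemma~\ref{lem-int-2} and contributes exactly one logarithm. Collecting terms gives a total $Q$-exponent of $(p-1)/2 + (p-1)/2 + (3-p)/2 = (p+1)/2$ and a total log-exponent of $(p-1)/2 + (p-1)/2 + (2-p) = 1$. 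Absorbing the $\ln(1/Q)$ into an arbitrarily small loss in the $Q$-exponent then yields $\lesssim_{\nu} Q^{\nu}$ for every $\nu < (p+1)/2$.

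The principal obstacle is the regime $p-1 < 1$: both the constraint $\alpha + \beta \geq 2$ of Lemma~\ref{lem-int-2} and the H\"{o}lder identity $\sum 1/r_i = 1$ become simultaneously tight at the balanced split, so every two-bubble factor is forced onto the equal-exponent boundary, which in turn forces logarithmic losses that have to be swallowed into the weaker bound $Q^{\nu}$ with $\nu$ strictly less than $(p+1)/2$; any attempt to avoid logs by perturbing the split immediately degrades the $Q$-exponent. A secondary but indispensable technical point is the monotonicity of $x^{a}(\ln 1/x)^{b}$ near $0$, which is what legitimises passing each individual bound $Q_{ij} \leq Q$ through the logarithmic factors.
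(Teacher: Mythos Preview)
Your proof is correct and follows essentially the same strategy as the paper: split the three-bubble integrand into three two-bubble factors via H\"older and invoke Lemma~\ref{lem-int-2}, with identical splits in the cases $p-1\geq 1$. The only difference is in the case $p-1<1$: the paper perturbs two of the three factors to have \emph{unequal} exponents (setting $\alpha=(p+1)(1-\epsilon)/2$, $\beta=(p+1)(1+\epsilon)/2$) so that only one logarithm appears, and then sends $\epsilon\to0$; you instead take the exactly balanced split, land all three factors on the equal-exponent boundary of Lemma~\ref{lem-int-2}, and absorb the resulting single power of $\ln(1/Q)$ into the loss $Q^{(p+1)/2-\nu}$ --- both routes are valid and yield the same $\lesssim_\nu Q^\nu$ bound.
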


\begin{proof}
\noindent 
\item[Case (i).] $p - 1>1.$ We apply H\"older inequality as follows
\begin{align*}
\int_{\bn}  \calu_1^{p-1}\calu_2\calu_3 \ \dvg &= \int_{\bn}
 (\calu_1^{\frac{p-1}{2}}\calu_2^{\frac{1}{2}})  (\calu_2^{\frac{1}{2}}\calu_3^{\frac{1}{2}})  (\calu_1^{\frac{p-1}{2}}\calu_3^{\frac{1}{2}}) \ \dvg \\
 &\leq \left(\int_{\bn}  \calu_1^{\frac{3(p-1)}{2}}\calu_2^{\frac{3}{2}} \ \dvg\right)^{\frac{1}{3}} \left(\int_{\bn}  \calu_2^{\frac{3}{2}}\calu_3^{\frac{3}{2}} \ \dvg\right)^{\frac{1}{3}} \left(\int_{\bn}  \calu_1^{\frac{3(p-1)}{2}}\calu_3^{\frac{3}{2}} \ \dvg\right)^{\frac{1}{3}}\\
 &\lesssim Q_{12}^{\frac{1}{2}}\left(Q_{23}^{\frac{3}{2}} \ln \frac{1}{Q_{23}}\right)^{\frac{1}{3}}Q_{13}^{\frac{1}{2}}\\
 &\lesssim Q^{\frac{3}{2}}\left(\ln \frac{1}{Q}\right)^{\frac{1}{3}},
\end{align*}
where we have used $t \to t^{\epsilon \ln (1/t)}$ is increasing near $t = 0.$

\medskip

\noindent 
\item[Case (ii).] $p - 1=1.$ The idea is the same as in case (i). We apply H\"older inequality as follows
\begin{align*}
\int_{\bn}  \calu_1\calu_2\calu_3 \ \dvg &= \int_{\bn}
 (\calu_1^{\frac{1}{2}}\calu_2^{\frac{1}{2}})  (\calu_2^{\frac{1}{2}}\calu_3^{\frac{1}{2}})  (\calu_1^{\frac{1}{2}}\calu_3^{\frac{1}{2}}) \ \dvg \\
 &\leq \left(\int_{\bn}  \calu_1^{\frac{3}{2}}\calu_2^{\frac{3}{2}} \ \dvg\right)^{\frac{1}{3}} \left(\int_{\bn}  \calu_2^{\frac{3}{2}}\calu_3^{\frac{3}{2}} \ \dvg\right)^{\frac{1}{3}} \left(\int_{\bn}  \calu_1^{\frac{3}{2}}\calu_3^{\frac{3}{2}} \ \dvg\right)^{\frac{1}{3}}\\
 &\lesssim \left(Q_{12}^{\frac{3}{2}} \ln \frac{1}{Q_{12}}\right)^{\frac{1}{3}}\left(Q_{23}^{\frac{3}{2}} \ln \frac{1}{Q_{23}}\right)^{\frac{1}{3}}\left(Q_{13}^{\frac{3}{2}} \ln \frac{1}{Q_{13}}\right)^{\frac{1}{3}}\\
 &\lesssim Q^{\frac{3}{2}}\ln \frac{1}{Q}.
\end{align*}

\medskip

\noindent
\item[Case (iii).] We fix $\beta > \alpha > 1$ and $s_1,s_2 \in (0,1)$ that to be decided later and apply H\"older
\begin{align*}
\int_{\bn}  \calu_1^{p-1}\calu_2\calu_3 \ \dvg &  \leq \left(\int_{\bn}  \calu_1^{\alpha}\calu_2^{\beta} \ \dvg\right)^{s_1} \left(\int_{\bn}  \calu_1^{\alpha}\calu_3^{\beta} \ \dvg\right)^{s_1} \left(\int_{\bn}  \calu_2^{\frac{p+1}{2}}\calu_3^{\frac{p+1}{2}} \ \dvg\right)^{s_2}.
\end{align*}
A quick sanity check gives
\begin{align} \label{sanity check}
2\alpha s_1 = p-1, \ \ \ \beta s_1 + \frac{p+1}{2}s_2 = 1.
\end{align}
Since we assumed $\alpha < \beta,$ the interaction estimates of the previous section gives
\begin{align*}
\int_{\bn}  \calu_1^{p-1}\calu_2\calu_3 \ \dvg &  \leq Q_{12}^{\alpha s_1}Q_{13}^{\alpha s_1}\left(Q_{23}^{\frac{p+1}{2}} \ln \frac{1}{Q_{23}}\right)^{s_2} \\
& \leq Q^{2\alpha s_1 + \frac{p+1}{2}s_2}\left( \ln \frac{1}{Q}\right)^{s_2}\\
&= Q^{p-\beta s_1}\left( \ln \frac{1}{Q}\right)^{s_2}.
\end{align*}
If we want the interaction to be $o(Q)$ then we need $p - \beta s_1 >1.$ Now let us check that all the above requirements are feasible. Choose $\epsilon > 0$ small and set $\alpha = \frac{(p+1)(1-\epsilon)}{2}, \beta = \frac{(p+1)(1+\epsilon)}{2}.$ Then \eqref{sanity check} gives for sufficiently small $\epsilon$
\begin{align*}
&s_1 = \frac{p-1}{(p+1)(1-\epsilon)}  < 1, \ \ \beta s_1 = \frac{(1+\epsilon)(p-1)}{2(1-\epsilon)} < p-1, \\
&\frac{p+1}{2}s_1 = 1-\beta s_1 = \frac{3-\epsilon - (1+\epsilon)p}{2(1-\epsilon)} < \frac{p+1}{2}.
\end{align*}
Next we show that for any $\nu < \frac{p+1}{2}$ we can choose $\epsilon > 0$ such that $\nu < p - \beta s_1.$ Indeed, direct computation gives
\begin{align*}
p - \beta s_1 = p - \frac{(1+\epsilon)(p-1)}{2(1- \epsilon)} \rightarrow \frac{p+1}{2}, \ \ \mbox{as} \ \epsilon \rightarrow 0
\end{align*}
and hence we can choose such $\epsilon.$ As a result
\begin{align*}
Q^{p-\beta s_1}\left( \ln \frac{1}{Q}\right)^{s_2} = Q^{\nu}Q^{p-\beta s_1 - \nu}\left( \ln \frac{1}{Q}\right)^{s_2} 
\lesssim_{\nu} Q^{\nu}
\end{align*}
 if $Q < < 1.$ This completes the proof.

\end{proof}

\begin{remark}
The constant in the third case i.e $p-1 < 1$ depends on $\nu.$ In our first case if $p -1> \frac{3}{2}$ then we can refine the upper bound to $Q^{\min\{p-1,2\}}.$ Since our interest is in $p \leq \tstar - 1$ such assumptions are void in dimension $n \geq 5.$ The proof follows in the same manner by clubbing $\calu_1^{p-1}\calu_2\calu_3 = \left(\calu_1^{\frac{p-1}{2}}\calu_2\right) \left(\calu_1^{\frac{p-1}{2}}\calu_3 \right)$
and using H\"older inequality.
\end{remark}

\subsection{Interaction of bubbles and the derivative of bubbles}

In this section, we find the estimates on the interaction of bubbles and the space derivatives of other bubbles. Before proceeding let us first simplify the formula of the derivative. In coordinates $\frac{d}{dt}|_{t =0}(\calu \circ \tau_{te_j}) = V_j(\calu),$ where 
$V_j = (1+|x|^2)\frac{\partial}{\partial x_j} - 2x_j \sum_{l = 1}^n x_l\frac{\partial}{\partial x_l}$ and the derivatives involved are in the sense of Euclidean (see Appendix \ref{Appendix}).

\medskip

Recall the Euclidean norm $r = |x|$ and the hyperbolic distance $\rho = d(x,0)$ is related by $r = \tanh \frac{\rho}{2}.$ Since a hyperbolic ball with center $0$ is also a Euclidean ball with center $0$ but possibly different radius, we deduce $\calu$ is a function of $r.$ By abuse of notation we denote $\calu(x) = \calu(\rho) = U_1(r).$ Then direct computation gives
\begin{align}
V_j(\calu) = \frac{x_j}{r}(1-r^2)\frac{d}{dr}(U_{1}(r)) = 2 \frac{x_j}{|x|}\calu^{\prime}(\rho)
\end{align}
 where $\prime$ denotes the derivative with respect to $\rho$ and recall $\rho = d(x,0).$
It follows directly from the above expression and the bound $|\calu^{\prime}(\rho)| \lesssim \calu (\rho)$ that $|V_j(\calu)| \lesssim \calu$ on $\bn.$ Moreover, note that as $\calu^{\prime}<0$ for all $\rho > 0,$ $V_j(\calu)$ changes sign in $\bn.$

\medskip

We prove the following 

\begin{lemma} \label{interaction derivatives}
Let $z= (z_1,\ldots,z_n) \in \bn$ does not lie on the hyperplane $P_j:=\{x \in \rn \ | \ x_j = 0\}.$ Then 
\begin{align*}
\int_{\bn} \calu[z]^pV_j(\calu[0]) \ \dvg \approx_z
\begin{cases}
e^{-\cn d(z,0)}, \ \ \ \ \ \ \mbox{if} \ z_j < 0, \\
-e^{-\cn d(z,0)}, \ \ \ \ \mbox{if} \ z_j > 0, \\
\end{cases}
\end{align*}
where $\approx_z$ indicates the constant depends on (the position of) $z.$
\end{lemma}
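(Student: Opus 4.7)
The plan is to combine an upper bound from Lemma~\ref{lem-int-2}, a reflection antisymmetry across $P_j$, and a symmetrisation of the Euclidean integrand to pin down both the magnitude and the sign.

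The upper bound $|I(z)| \lesssim e^{-\cn d(z,0)}$ follows immediately from $V_j(\calu[0])(x) = 2\tfrac{x_j}{|x|}\calu'(d(x,0))$ and $|\calu'(\rho)| \lesssim e^{-\cn\rho}$, which yield $|V_j(\calu[0])| \lesssim \calu[0]$; Lemma~\ref{lem-int-2} applied to $\int \calu[z]^p\calu[0]\,\dvg$ with $(\alpha,\beta)=(1,p)$ then gives $|I(z)| \lesssim e^{-\cn d(z,0)}$. For the sign, observe that the Euclidean reflection $\rho_{e_j}$ about $P_j$ restricts to a hyperbolic isometry fixing $0$, and conjugation of a hyperbolic translation by such an isometry satisfies $\tau_{\rho_{e_j}(z)} = \rho_{e_j}\circ\tau_z\circ\rho_{e_j}$. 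Combined with the radiality of $\calu$ this forces $\calu[\rho_{e_j}(z)] = \calu[z]\circ\rho_{e_j}$, while the identity $V_j(\calu[0])\circ\rho_{e_j} = -V_j(\calu[0])$ follows from $(\rho_{e_j}(x))_j = -x_j$ together with $|\rho_{e_j}(x)| = |x|$ and $d(\rho_{e_j}(x),0) = d(x,0)$. A change of variables $y = \rho_{e_j}(x)$ in the integral then yields the reflection antisymmetry $I(\rho_{e_j}(z)) = -I(z)$; in particular $I \equiv 0$ on $P_j$ and we only need to establish the sign and the lower bound on the half-space $\{z_j > 0\}$.

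For the sign and the lower bound I would mimic the Euclidean-coordinate analysis of Lemma~\ref{lem-int-2}. Substituting the sharp asymptotic $\calu'(\rho) = -c_0\, e^{-\cn\rho}(1+o(1))$ for some $c_0 > 0$ (which follows either from the ODE for $\calu$ at infinity or from \cite{SRa}), $e^{-\cn d(x,0)} \asymp (1-|x|^2)^{\cn}$, and $\calu[z](x) \asymp \bigl((1-|x|^2)(1-|z|^2)/(|z|^2|x-z^*|^2)\bigr)^{\cn}$ with $z^* = z/|z|^2$, the integral takes the form
\[
I(z) \;=\; -C\,\frac{(1-|z|^2)^{\cn p}}{|z|^{2\cn p}}\int_{\Be^n} \frac{x_j}{|x|}\,\frac{(1-|x|^2)^{\cn(p+1)-n}}{|x-z^*|^{2\cn p}}\,dx \;+\; o\bigl(e^{-\cn d(z,0)}\bigr),
\]
for some positive constant $C$. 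Pairing each $x$ with $x' := \rho_{e_j}(x)$ and using the identity $|x-z^*|^2 - |x'-z^*|^2 = -4x_j z_j/|z|^2$, the Euclidean integral symmetrises to
\[
\int_{\{x_j z_j > 0\}}\frac{x_j}{|x|}\,(1-|x|^2)^{\cn(p+1)-n}\Bigl[|x-z^*|^{-2\cn p} - |x'-z^*|^{-2\cn p}\Bigr]\,dx,
\]
in which both $x_j/|x|$ and the bracketed kernel difference are strictly positive throughout the domain. This forces the Euclidean integral to carry the sign $\operatorname{sgn}(z_j)$, so that $I(z)$ inherits the sign $-\operatorname{sgn}(z_j)$, matching the claim. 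The matching lower bound $|I(z)| \gtrsim_z e^{-\cn d(z,0)}$ is obtained by restricting the $x$-integration to a fixed compact subregion of $\{x_j z_j > 0\}$ on which the integrand is uniformly bounded below by a constant depending only on $|z_j|/|z|$ (consistent with the degeneration of $\approx_z$ as $z$ approaches $P_j$), and by matching the prefactor $(1-|z|^2)^{\cn p}/|z|^{2\cn p}$ times the bare Euclidean integral against the known order $\asymp e^{-\cn d(z,0)}$ of $\int \calu[0]\calu[z]^p\,\dvg$ from Lemma~\ref{lem-int-2}.

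The main technical obstacle lies precisely in this last step: because $V_j(\calu[0])$ is sign-changing, strong cancellation across $P_j$ is a priori possible, and the lower bound hinges decisively on the geometric fact that the inverse point $z^* = z/|z|^2$ lies on the same side of $P_j$ as $z$ itself, which is what makes the symmetrised kernel difference positive. Propagating the $o(\cdot)$ error term from the sharp asymptotic of $\calu'$ uniformly across $\bn$ without contaminating the leading-order sign determination, and quantifying the $z$-dependence of the lower bound as $z$ nears $P_j$, constitutes the delicate part of the argument.
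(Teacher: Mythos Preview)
Your reflection-symmetrisation idea is the right one, but the execution has a genuine gap at the substitution step. When you replace $\calu'(\rho)$ by $-c_0e^{-\cn\rho}(1+o(1))$ and $\calu[z]$ by the Euclidean kernel via $\asymp$, you claim the resulting error is $o\bigl(e^{-\cn d(z,0)}\bigr)$. This is not justified: the relations \eqref{bubble decay} are two-sided bounds $\chi_1 e^{-\cn d}\le \calu\le \chi_2 e^{-\cn d}$ with $\chi_1\neq\chi_2$, not asymptotic equalities, and the $o(1)$ in $\calu'$ is only valid as $\rho\to\infty$ (indeed $\calu'(0)=0$). Substituting such bounds into a \emph{sign-changing} integrand produces an error of the same order $O\bigl(e^{-\cn d(z,0)}\bigr)$ as the putative main term, so the positivity of your symmetrised Euclidean kernel integral does not determine the sign of $I(z)$. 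This is precisely the obstacle you flag at the end, but it is fatal to the argument as written, not a technicality.

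The paper resolves this by symmetrising the \emph{exact} integrand before any approximation. First it integrates by parts through the linearised equation to obtain $I(z)=2p\int_{\bn}\calu[z]\,\calu[0]^{p-1}\tfrac{x_j}{|x|}\calu[0]'\,\dvg$; then pairing $x$ with $\hat x=\rho_{e_j}(x)$ yields
\[
I(z)=2p\int_{\bn_-}\bigl[\calu[z](x)-\calu[z](\hat x)\bigr]\,\calu[0]^{p-1}(x)\,\tfrac{x_j}{|x|}\calu[0]'(x)\,\dvg,
\]
whose integrand is pointwise non-negative (for $z_j<0$) because $d(x,z)<d(\hat x,z)$ on $\bn_-$. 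The lower bound on a fixed compact $B\subset\bn_-$ is then obtained not from kernel estimates but from a PDE comparison principle: $w:=\calu[z]-\calu[z](\hat\cdot)$ satisfies $(-\Delta_{\bn}-\lambda)w\ge\calu[z]^{p-1}w$, and since the first Dirichlet eigenvalue of $(-\Delta_{\bn}-\lambda)/\calu[z]^{p-1}$ on $\bn_-$ equals $p>1$ (Lemma~\ref{half ev}), one deduces $w\ge c_2(z)\calu[z]$ on $B$, whence $I(z)\gtrsim_z\int_B\calu[z]\calu[0]^{p}\,\dvg\gtrsim e^{-\cn d(z,0)}$. The preliminary integration by parts matters here: without it the comparison is against $\int_B\calu[z]^{p}\calu[0]$, which on a fixed compact set is only of order $e^{-\cn p\,d(z,0)}$, so your ``matching the prefactor'' step would produce a lower-bound constant degenerating like $e^{-\cn(p-1)d(z,0)}$---still formally compatible with $\approx_z$, but useless for the uniform refinement in Lemma~\ref{interaction derivatives refined}.
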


\begin{remark}
Before proving the lemma let us first remark why it should be true. Also, this is a good time to remark that this is in sharp contrast to the Euclidean case. In the previous section, we observed that the interaction is an exponentially decaying function of the distance between the points where the bubbles are most concentrated. The derivative of an exponential function is an exponential function while the derivative of the distance function is of the absolute value of approximately 1 and that precisely plays the decisive role here. Formally,
\begin{align*}
\int_{\bn} \calu[z]^pV_j(\calu[0]) \ \dvg & = \int_{\bn} \calu[z]^p \frac{d}{dt}\Big|_{t=0}\calu[te_j] \ \dvg\\
&= \frac{d}{dt}\Big|_{t=0}\int_{\bn} \calu[z]^p \calu[te_j]\ \dvg\\
&\approx \frac{d}{dt}\Big|_{t=0}e^{-\cn d(z,te_j)} \\
&\approx e^{-\cn d(z,0)} \frac{d}{dt}|_{t=0}d(z,te_j) \\
&\approx_z \pm e^{-\cn d(z,0)}.
\end{align*}
On the other hand, in the Euclidean case,  the interaction of Aubin-Talenti bubbles has polynomial decay:
\begin{align*}
\int_{\rn} U[z,1]^pU[z_1,1] \ dx \approx |z-z_1|^{-(n-2)} 
\end{align*}
and hence 
\begin{align*}
\Big|\int_{\rn} U[z,1]^p\partial_{x_j}U[0,1] \ dx\Big| &=  \Big|\frac{d}{dt}\Big|_{t=0}\int_{\rn} U[z,1]^pU[te_j,1] \ dx \Big|\\
&\approx  \Big|\frac{d}{dt}\Big|_{t=0} |z-te_j|^{-(n-2)}\Big| \\
&\approx |z|^{-(n-1)} << |z|^{-(n-2)} \ \ \mbox{if} \ \ |z|>>1.
\end{align*}
See the treatise by A. Bahri \cite[Estimate (F11)]{Bahri}. However, the interaction of bubbles and the $\mu$-derivative of another bubble has the same decay as the interaction of bubbles if the height of the bubbles is comparable \cite[Estimate (F16)]{Bahri}. 
\end{remark}

\medskip

\noindent
{\bf Proof of Lemma \ref{interaction derivatives}.}
\begin{proof}
First, we simplify the expression as follows
\begin{align}\label{der1}
\int_{\bn} \calu[z]^pV_j(\calu[0]) \ \dvg &= \int_{\bn}(-\Delta_{\bn} \calu[z])V_j(\calu[0]) \ \dvg \notag \\
&=\int_{\bn}\calu[z] (-\Delta_{\bn} V_j(\calu[0])) \ \dvg \notag\\
&= p\int_{\bn}\calu[z] \calu[0]^{p-1}V_j(\calu[0]) \ \dvg \notag \\
&= 2p\int_{\bn}\calu[z] \calu[0]^{p-1} \frac{x_j}{|x|}\calu[0]^{\prime} \ \dvg.
\end{align}
Recall that $\calu[0]^{\prime} \leq 0$ on $\bn$ and radial and hence the term $\frac{x_j}{|x|}$ will play the central role in our estimate. Also, note that the upper bound follows directly from the bound $|\calu^{\prime}(\rho)| \lesssim \calu (\rho)$ and the estimates on the interaction of bubbles obtained in the last subsection.

\medskip

For $x \in \bn,$ we denote by $\hat x$ the reflection of $x$ with respect to the plane $P_j.$ 
We denote by $\bn_{-}$ the negative half of the ball $\{x \in \bn \ | \ x_j < 0\}.$ $\bn_+$ is similarly defined with $< 0$ replaced by $> 0.$
Without loss of generality, we assume that $z \in \bn_-.$
First, we claim that 

\medskip

\noindent
{\bf Claim:} $d(x,z) \leq d(\hat x,z)$ for all $x \in \bn_-.$

\medskip

The proof follows from the formula of the hyperbolic distance.
\begin{align*}
\cosh^2\left(\frac{d(\hat x,z)}{2}\right) - \cosh^2\left(\frac{d( x,z)}{2}\right) &= \frac{2(x - \hat x)\cdot z}{(1-|x|^2)(1-|z|^2)}\\
&=\frac{2x_jz_j}{(1-|x|^2)(1-|z|^2)} > 0
\end{align*}
as both $x,z \in \bn_-.$ Since $\cosh$ is strictly increasing the claim follows. Since $\calu[z]$ is strictly decreasing in $d(x,z)$ we deduce $\calu[z](x)> \calu[z](\hat x)$ for all $x \in \bn_-.$
We further simplify the integral 
\eqref{der1}
\begin{align}\label{der2}
\int_{\bn} \calu[z]^pV_j(\calu[0]) \ \dvg &= 2p\int_{\bn}\calu[z] \calu[0]^{p-1} \frac{x_j}{|x|}\calu[0]^{\prime} \ \dvg \notag \\
& = 2p\int_{\bn_-}\calu[z] \calu[0]^{p-1} \frac{x_j}{|x|}\calu[0]^{\prime} \ \dvg + 2p\int_{\bn_+}\calu[z] \calu[0]^{p-1} \frac{x_j}{|x|}\calu[0]^{\prime} \ \dvg \notag \\
&= 2p\int_{\bn_-}(\calu[z](x) - \calu[z](\hat x)) \calu[0]^{p-1}(x) \frac{x_j}{|x|}\calu[0]^{\prime}(x) \ \dvg(x).
\end{align}
Notice that the integrand in \eqref{der2} is non-negative. The proof can be finished if we show that $\calu[z](x) - \calu[z](\hat x) 
\gtrsim \calu[z](x)$ on a compact subdomain $B$ of $\bn_-.$ We achieve this by comparison principle.
Let us denote $w(x ) = \calu[z](x) - \calu[z](\hat x)$ and fix a compact domain $B \subset \bn_-.$ Then $w$ satisfies 
$$
(-\Delta_{\bn} - \lambda)w = \calu[z]^p(x) - \calu[z]^p(\hat x)
 \geq p \calu[z]^{p-1}(x)w(x)
 \geq \calu[z]^{p-1}(x)w(x)
$$
as $w \geq 0$ in $B.$ Let $c_1 = \inf_{x \in \partial B} w(x) >0$ and let $c_2$ be such that $c_2 \sup_{x \in \partial B} \calu[z](x) \leq c_1.$ Note that the constants $c_1,c_2$ depends on $B$  and hence on the location of $z.$ Since $c_2\calu[z]$ satisfies $(-\Delta_{\bn} - \lambda)(c_2\calu[z]) = \calu[z]^{p-1}(c_2\calu[z])$, weak comparison principle gives $w \geq c_2\calu[z]$ in $B.$ Indeed, denoting $v(x) = w(x) - c_2\calu[z](x),$ we see that $v$ satisfies 
\begin{align} \label{eqn-v}
\begin{cases}
-\Delta_{\bn} v - \lambda v \geq \calu[z]^{p-1}v, \ \ \ \ \ \mbox{in} \ B,\\
\ \ \ \ \ \ \ \ \ \ \ \  \ \ \  v \geq 0 \ \ \ \ \ \ \ \ \ \ \ \ \ \mbox{on} \ \partial B.
\end{cases}
\end{align}
Multiplying \eqref{eqn-v} by $v^- \in H_0^1(B)$ and integrating by parts gives
\begin{align} \label{inq-v-}
\int_B (|\nabla_{\bn} v^-|^2 - \lambda |v^-|^2) \ \dvg \leq \int_B \calu[z]^{p-1}|v^-|^2 \ \dvg.
\end{align}

But \eqref{inq-v-} is possible only when $v^- = 0$ because the first Dirichlet eigenvalue of the operator
$(-\Delta_{\bn} - \lambda)/\calu[z]^{p-1}$ on $\bn_-$ is $p>1$ (see Lemma \ref{half ev}) and so on $B$ it must be bigger than $1.$ Hence 
$v \geq 0$ on $B.$
 Moreover, if the compact set stays away from the hyperplane $\{x_j = 0\}$ then we have the lower bound $\frac{x_j}{|x|}\calu[0]^{\prime}(x) \gtrsim \calu[0]$ on $B.$ Therefore  
\begin{align*}
\int_{\bn} \calu[z]^pV_j(\calu[0])  &= 2p\int_{\bn_-}(\calu[z](x) - \calu[z](\hat x)) \calu[0]^{p-1}(x) \frac{x_j}{|x|}\calu[0]^{\prime}(x) \ \dvg(x)\\
&\geq 2pc_2 \int_{B} \calu[z] \calu[0]^{p-1}(x)\frac{x_j}{|x|}\calu[0]^{\prime}(x) \ \dvg(x)\\
&\gtrsim_z \int_{B} \calu[z] \calu[0]^p \ \dvg\\
&\gtrsim_z e^{-\cn d(z,0)}.
\end{align*}
This completes the proof.
\end{proof}

\medskip

 \begin{remark}
 {\rm 
 Note that we can not do better than $\approx_z,$ i.e., the constant must depend on $z.$ To view this note that 
 if $z_j =0$ then it follows from the formula of the hyperbolic distance that $d(x,z) = d(\hat x, z)$ where recall $\hat x$ is the reflection of $x$ with respect to the plane $\{x_j = 0\}.$ As a result, the interaction vanishes and hence the constant must depend on the position of $z.$ Indeed, in the next lemma we show that if $z_j$ stays away from 0, then we can make the bound uniform in the sense that if $|z_j| \geq \kappa >0 $ then the constant depends only on $\kappa.$ 
 }
\end{remark}

\begin{lemma} \label{interaction derivatives refined}
Let $z= (z_1,\ldots,z_n) \in \bn$ satisfy $|z_j| \geq \kappa$ for some constant $\kappa \in (0,1).$ Then there exists a $\delta_0 > 0$ such that for all $z$ satisfying $1- |z| < \delta_0$

\begin{align*}
\int_{\bn} \calu[z]^pV_j(\calu[0]) \ \dvg \approx_{k,\delta_0}
\begin{cases}
e^{-cd(z,0)}, \ \ \ \ \ \ \mbox{if} \ z_j < -\kappa, \\
-e^{-cd(z,0)}, \ \ \ \ \mbox{if} \ z_j > \kappa, \\
\end{cases}
\end{align*}
where $\approx_{k,\delta_0}$ indicates a constant that depends on $\kappa, \delta_0$ and the parameters $n, \lambda,p.$
\end{lemma}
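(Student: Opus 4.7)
The strategy is to refine the proof of Lemma \ref{interaction derivatives} so that all constants depend only on $\kappa, \delta_0$ and the fixed parameters $n, p, \lambda$. The comparison-principle step of the earlier lemma (which produced $z$-dependent constants through $\sup_{\partial B}\calu[z]$) will be replaced by a direct pointwise estimate of the ratio $[\calu[z](x) - \calu[z](\hat x)]/\calu[z](x)$ on a \emph{fixed} compact set.

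Without loss of generality assume $z_j \leq -\kappa$. Arguing exactly as in \eqref{der1}--\eqref{der2},
$$\int_{\bn} \calu[z]^p V_j(\calu[0]) \, \dvg = 2p \int_{\bn_-} w(x)\, \calu[0]^{p-1}(x) \frac{|x_j|}{|x|}\,|\calu[0]'(x)| \, \dvg(x),$$
where $w(x) := \calu[z](x) - \calu[z](\hat x) \geq 0$. The upper bound $\lesssim e^{-\cn d(z,0)}$ is immediate from $|V_j(\calu[0])| \lesssim \calu[0]$ together with Lemma \ref{lem-int-2} (applicable since $1-|z| < \delta_0$ forces $d(z,0) \geq \ln 3$ for $\delta_0$ small); the bulk of the work is the matching lower bound.

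Fix a compact region $B = \{x \in \bn : |x| \leq R_0,\ x_j \leq -\eta\}$ with parameters $R_0,\eta$ to be chosen in terms of $\kappa,\delta_0$. Using $\cosh^2(d(\cdot,z)/2) = |z|^2|\cdot - z^*|^2/[(1-|\cdot|^2)(1-|z|^2)]$ with $z^* = z/|z|^2$, one computes
$$|\hat x - z^*|^2 - |x - z^*|^2 = \frac{4\,x_j z_j}{|z|^2} \geq 4\eta\kappa, \qquad |x-z^*|^2 \leq \bigl(R_0 + \tfrac{1}{1-\delta_0}\bigr)^2,$$
which produces a uniform lower bound $d(\hat x, z) - d(x, z) \geq M_0 = M_0(\kappa,\delta_0) > 0$ for all $x \in B$ and all admissible $z$. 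Combined with the representation
$$w(x) = \int_{d(x,z)}^{d(\hat x, z)} |\calu'(\rho)|\,d\rho$$
and the sharp estimate $|\calu'(\rho)| \gtrsim e^{-\cn \rho}$ for $\rho$ large (a consequence of the asymptotic $\calu(\rho) \sim e^{-\cn \rho}$ yielding $|\calu'|/\calu \to \cn$, valid once $\delta_0$ is small enough that $d(x,z)$ is uniformly large on $B$), one obtains $w(x) \gtrsim e^{-\cn d(x,z)}\bigl(1 - e^{-\cn M_0}\bigr)$. Since \eqref{bubble decay} gives $\calu[z](x) \leq \chi_2 e^{-\cn d(x,z)}$, the uniform ratio bound
$$w(x) \;\geq\; c_*\, \calu[z](x) \quad \text{on } B, \qquad c_* = c_*(\kappa,\delta_0,n,p,\lambda) > 0,$$
follows, independently of $z$.

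Restricting the integral to $B$ and using that $|x_j|/|x|$, $|\calu[0]'|/\calu[0]$, and $\calu[0]$ are all uniformly bounded below on the fixed set $B$ (which stays away from both the origin and $P_j$), one concludes
$$\int_{\bn} \calu[z]^p V_j(\calu[0]) \, \dvg \;\gtrsim\; \int_B \calu[z](x) \, \dvg(x) \;\gtrsim\; e^{-\cn d(0,z)}\cdot\mathrm{vol}(B),$$
where the last step invokes $\calu[z](x) \geq \chi_1 e^{-\cn R_0} e^{-\cn d(0,z)}$ on $B$ via the triangle inequality. The principal obstacle is producing the uniform ratio estimate $w \geq c_*\calu[z]$ on a $z$-independent compact set: the comparison argument of Lemma \ref{interaction derivatives} inevitably yields $z$-dependent constants, and avoiding that dependence requires exploiting both hypotheses $|z_j| \geq \kappa$ and $1-|z| < \delta_0$ in tandem through the explicit hyperbolic distance formula and the sharp asymptotics of $\calu$ and $\calu'$.
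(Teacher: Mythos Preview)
Your proof is correct and shares the paper's core idea---bound $d(\hat x,z)-d(x,z)$ from below on a suitable compact set and deduce $w\gtrsim\calu[z]$ there---but the implementation differs in two notable ways. First, you work on a single $z$-independent region $B=\{|x|\le R_0,\ x_j\le-\eta\}$, whereas the paper places its compact set $B_e$ near the boundary point $e=z/|z|$ (so $B_e$ moves with the direction of $z$) and then invokes an orthogonal-transformation argument to show the final integral over $B_e$ is uniformly bounded below in $e$; your choice sidesteps that last step entirely. Second, because your $B$ is fixed away from the boundary, the gap $d(\hat x,z)-d(x,z)\ge M_0$ you obtain is merely a fixed positive number, not arbitrarily large; the paper's $B_e$, sitting near $e$, forces a \emph{large} gap $R_1$, which lets it conclude $w\gtrsim\calu[z]$ directly from the two-sided bound \eqref{bubble decay} once $\chi_2e^{-\cn R_1}<\chi_1/2$. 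To compensate, you use the representation $w=\int|\calu'|$ together with $|\calu'(\rho)|\gtrsim e^{-\cn\rho}$ for large $\rho$. This derivative asymptotic (equivalently $\calu'/\calu\to-\cn$) is indeed true for the radial ODE solution but is not stated in the paper, and does not follow from the two-sided bound \eqref{bubble decay} alone; you should justify it, e.g.\ from the asymptotic linear equation $\calu''+(n-1)\calu'+\lambda\calu\approx 0$ at infinity. Modulo that point, your route is a bit more elementary (no moving compact set, no rotational uniformity step), while the paper's avoids any appeal to derivative asymptotics of $\calu$.
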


\begin{proof}
As in Lemma \ref{interaction derivatives}, we may assume $z_j \leq -\kappa$ and we only need to obtain the lower bound. Given $z$ as in the statement of the lemma, we set $e = \frac{z}{|z|}.$ We fix $R_0$ large whose value will be decided later. We fix a compact set $B$ ($= B_e$, see Figure $2$) satisfying the following two conditions:

\begin{itemize}
\item[(i)] $|x-e| < \frac{1}{R_0}$ for all $x \in B,$
\item[(ii)] The Euclidean diameter of $B$ $\leq \frac{\kappa}{3}$ and $x_j \leq -\frac{\kappa}{2},$ for all $x \in B.$
\end{itemize}

\begin{figure}[H]
\centering 
\begin{tikzpicture}[ scale=2.5]
\draw[thick] (1,0) arc
    [
        start angle=0,
        end angle=180,
        x radius=1cm,
        y radius =1cm
    ] ;
    \draw[dashed] (-2,0)--(2,0);
     \draw[dashed, <->] (0,0)--(0,1.25)node[anchor=south west]{$-e_j$};
     \draw[dashed] (0,0)--(-0.95393920141, 0.3);
      \filldraw[black] (-0.76315136113,0.24) circle (0.5pt) node[anchor=south west]{$z$};
      \filldraw[black](-0.95393920141, 0.3)circle (0.5pt) node[scale = 0.5, anchor=south east]{$e = \frac{z}{|z|}$};
      \draw[thick] (-0.85,.3) circle (.065);
     \filldraw[black] (-.75,.35) circle (0.00000003pt) node[scale = 0.5, anchor=south east ]{$B_e$};
\end{tikzpicture}
 \caption{The above picture demonstrating the choice of the compact set $B_e$ which depends only on the direction $e$ of $z$.} 
\end{figure}
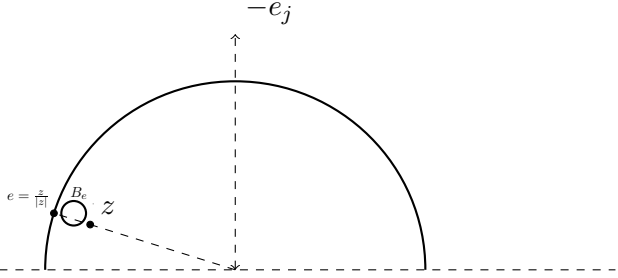

Note that $B$ depends only on the direction of $z.$
As before we denote by $\hat x$ the reflection of $x$ with respect to the plane $\{x_j = 0\}.$ We claim that if $R_0$ is sufficiently large and $\delta_0$ is sufficiently small then $d(\hat x, z) - d(x,z)$ is large. We compute
\begin{align*}
d(\hat x, z) - d(x,z) &= 2\left(  \ln \left(\frac{|z||\hat x-z^*| + |\hat x-z|}{\sqrt{(1-|x|^2)(1-|z|^2)}}\right) -  \ln \left(\frac{|z||x-z^*| + |x-z|}{\sqrt{(1-|x|^2)(1-|z|^2)}}\right)\right) \notag \\
&= 2  \ln \left(\frac{|z||\hat x-z^*| + |\hat x-z|}{|z||x-z^*| + |x-z|}\right)
\end{align*}
Recalling $e = \frac{z}{|z|}, z_k \leq -\kappa$ and $1-|z| < \delta_0$ we estimate
\begin{align*}
|z||x-z^*| + |x-z| &\leq |z||x-e| + |z||e - z^*| + |x-e| + |e-z| \\&\leq (1+ |z|)|x-e| + (1 + |z|^{-1})(1-|z|)\\
&\leq (1 + \kappa^{-1})(R_0^{-1} + \delta_0).
\end{align*}
On the other hand as $x_k \leq -\frac{\kappa}{2}$ we see that $|z||\hat x-z^*| + |\hat x-z| \gtrsim \kappa$ and hence 

\begin{align*}
d(\hat x, z) - d(x,z) \gtrsim 2 \ln \left( \frac{\kappa}{(1 + \kappa^{-1})(R_0^{-1} + \delta_0)}\right) =: R_1.
\end{align*}
Clearly, if $R_0$ is sufficiently large and if $\delta_0$ is sufficiently small then $R_1$ can be made large. Now recall that there exists universal positive constants $\chi_1, \chi_2$ such that $\chi_1 e^{-\cn d(x,z)} \leq \calu[z](x) \leq \chi_2 e^{-\cn d(x,z)}$ for all $x \in \bn.$ On the boundary of the fixed ball $B, x \in \partial B$ we have 
\begin{align*}
\calu[z](x) - \calu[z](\hat x) &\geq \chi_1 e^{-\cn d(x,z)} - \chi_2e^{-\cn d(\hat x, z)} \\
&\geq \chi_1 e^{-\cn d(x,z)} - \chi_2e^{-\cn (d( x, z) + R_1)} \\
&\geq (\chi_1 - \chi_2e^{-\cn R_1}) e^{-\cn d(x,z)} \\
&\geq \left(\frac{\chi_1 - \chi_2e^{- \cn R_1}}{\chi_2}\right)\calu[z](x).
\end{align*}
Hence the choice of $R_1$ such that $\chi_1 - \chi_2e^{-\cn R_1} \geq \chi_1/2$ would suffice to ensure uniform lower bound $\calu[z](x) - \calu[z](\hat x) \gtrsim_{\kappa,\delta_0} \calu[z](x)$ on $B.$ Moreover, as $x_k \leq -\frac{\kappa}{2}$ on $B$ we have $\frac{x_j}{|x|}\calu[0]^{\prime} \gtrsim_{\kappa} \calu[0]$ on $B.$ Hence 
\begin{align} \label{int-der}
\int_{\bn} \calu[z]^pV_j(\calu[0]) \ \dvg&= 2p\int_{\bn_-}(\calu[z](x) - \calu[z](\hat x)) \calu[0]^{p-1}(x) \frac{x_j}{|x|}\calu[0]^{\prime}(x) \ \dvg(x)  \notag \\
& \gtrsim_{\kappa,\delta_0} \int_B \calu[z]\calu[0]^p \ \dvg\notag \\
& \gtrsim_{\kappa,\delta_0} e^{- \cn d(z,0)} \int_B\frac{(1-|x|^2)^{\cn (p+1)-n}}{|x-z^*|^{2\cn }} \ dx.
\end{align}
Note that the same $B$ can be chosen for any points on the line segment $[z,e).$
Finally, to conclude the proof we note that given any two points $z_1$ and $z_2$ satisfying the hypothesis of the lemma and $|z_1| = |z_2|$, let $A$ be the orthogonal transformation that takes $z_1$ to $z_2.$ If $B_1$ is the ball that is chosen for the point $z_1$ according to the properties $(i)$ and $(ii)$ then $B_2 = A(B_1)$ would satisfy the same properties for the point $z_2.$  Hence for each such $z$ we can choose $B$ so that the integral involved in \eqref{int-der} can be bound from below uniformly with respect to $z.$ This completes the proof of the lemma.
\end{proof}

\section{Proof of the main stability theorem}
In this section, we prove the main theorem of this paper, namely Theorem~\ref{main theorem}.

\medskip

{\bf Proof of Theorem~\ref{main theorem}:} 
Let $\calu_i:=\calu[z_i]$, $i=1,\cdots, N$ and $\sigma=\sum_{i=1}^N\alpha_i\calu_i$ be the linear combination of hyperbolic bubbles that  is closest to $u$ in $\|\cdot\|_{\la}$, i.e.,  
$$\|u-\sigma\|_{\lambda}=\min_{\tilde\alpha_1,\cdots, \tilde\alpha_N\in\mathbb{R}, \\ \tilde z_1, \cdots,\tilde z_N\in\bn}\bigg(\int_{\bn}|\nabla_{\bn}(u-\tilde\sigma)|^2 \, \dvg-\lambda\int_{\bn}|u-\tilde\sigma|^2 \, \dvg\bigg)^\frac{1}{2},$$
and $\tilde \sigma = \sum_{i=1}^N \tilde \alpha_i \calu[\tilde z_i].$
Let $\rho:=u-\sigma$. From the hypothesis, it directly follows that $\|\rho\|_{\lambda}\leq \delta$. Furthermore, since the bubbles $\tilde\calu_i$ are $\delta-$interacting, the family $(\alpha_i, \calu_i)_{1\leq i\leq N}$ is $\delta'-$interacting for some $\delta'$ that goes to $0$ as $\delta$ goes to $0$. 

\medskip

Summing up, we can say that {\it qualitatively} $\sigma$ is a sum of weakly-interacting hyperbolic bubbles and $\|\rho\|_{\lambda}$ is small.

\medskip

Since $\sigma$ minimizes $H^1_{\lambda}$ distance from $u$,  $\rho$ is $H^1_{\lambda}$ orthogonal to the manifold composed of linear combination of $N$ hyperbolic bubbles (see \cite[(5.1)]{BGKM}), namely the following orthogonality conditions hold:
\begin{equation}\label{10-11-1}
 \int_{\bn} \left(\langle \nabla_{\bn} \rho , \nabla_{\bn} \calu_i \rangle_{\bn} \, - \, \lambda\rho\calu_i \right)\dvg=0
\end{equation}
\begin{equation}\label{10-11-2}
 \int_{\bn} \left(\langle \nabla_{\bn} \rho , \nabla_{\bn} V_j(\calu_i) \rangle_{\bn} \, - \, \lambda\rho V_j(\calu_i) \right)\dvg=0
\end{equation}
for all $i=1,\ldots,N, j = 1,\ldots,n,$ and where $V_j(\calu_i)=\frac{d}{dt}\calu_i\circ\tau_{te_j}\big|_{t=0}$.

Since $\calu_i$, $V_j(\calu_i)$ are eigenfunctions for $\frac{-\Delta_{\bn}-\lambda}{\calu_i^{p-1}}$ (see \cite[Proposition 3.1]{BGKM} or Appendix) the above orthogonality conditions are equivalent to 
\begin{equation}\label{10-11-3}
\int_{\bn}\calu_{i}^{p}\,\rho\,  \dvg=0,
\end{equation}
\begin{equation}\label{10-11-4}
\int_{\bn}V_j(\calu_i)\calu_i^{p-1}\,\rho \  \dvg =0.
\end{equation}
for all $i=1,\ldots,N, j = 1,\ldots,n.$
Our goal is to show that $\|\rho\|_{\lambda}$ is controlled by $\|\Delta_{\bn}u+\lambda u+|u|^{p-1}u\|_{H^{-1}(\bn)}$. To achieve that let us start testing $\Delta_{\bn}u+\lambda u+|u|^{p-1}u$ by $\rho$, exploiting the orthogonality condition \eqref{10-11-1} yields
\begin{align}\label{10-11-5}
\|\rho\|_{\lambda}^2 &=\int_{\bn}[\langle \nabla_{\bn}\rho, \nabla_{\bn}(u-\sigma) \rangle_{\bn}-\lambda \rho(u-\sigma)]\,\dvg\notag\\
&=\int_{\bn}(\langle \nabla_{\bn}\rho, \nabla_{\bn}u \rangle_{\bn} -\lambda \rho u)\,\dvg\notag\\
&=\int_{\bn}(-\Delta_{\bn}u-\lambda u-u|u|^{p-1})\rho\,\dvg+\int_{\bn}u|u|^{p-1}\rho\,\dvg\notag\\
&\leq\|-\Delta_{\bn}u-\lambda u-u|u|^{p-1}\|_{H^{-1}(\bn)}\|\rho\|_{H^1(\bn)}+\int_{\bn}u|u|^{p-1}\rho\, \dvg\notag\\
&\lesssim\|\Delta_{\bn}u+\lambda u+u|u|^{p-1}\|_{H^{-1}(\bn)}\|\rho\|_{\la}+\int_{\bn}u|u|^{p-1}\rho\, \dvg.
\end{align}
To control the last term in \eqref{10-11-5}, we use the following elementary estimates
\begin{equation}\label{10-11-6}
\bigg|(a+b)|a+b|^{p-1}-a|a|^{p-1}\bigg|\leq p|a|^{p-1}|b|+C_p\bigg(|a|^{p-2}|b|^2+|b|^p\bigg),
\end{equation}
\begin{equation}\label{10-11-7}
\bigg|\bigg(\sum_{i=1}^N a_i\bigg)\bigg|\sum_{i=1}^Na_i\bigg|^{p-1}-\sum_{i=1}^Na_i|a_i|^{p-1}\bigg|\lesssim\sum_{1\leq i\neq k\leq N}|a_i|^{p-1}|a_k|,
\end{equation}
that holds for any $a,b\in\R$ and for any $a_1,\cdots, a_N\in\R$. Applying \eqref{10-11-6} with $a=\sigma$ and $b=\rho$, and \eqref{10-11-7} with $a_i=\alpha_i\calu_i$ yields
$$\bigg|u|u|^{p-1}-|\sigma|^{p-1}\sigma\bigg|\leq p|\sigma|^{p-1}|\rho|+C(|\sigma|^{p-2}|\rho|^2+|\rho|^p)$$
where $C = C_{n,\lambda, N,p}$ and
$$\bigg| \sigma|\sigma|^{p-1}-\sum_{i=1}^N\alpha_i\calu_i|\alpha_i\calu_i|^{p-1}\bigg|\lesssim\sum_{1\leq i\neq k\leq N}|\alpha_i\calu_i|^{p-1}|\alpha_k\calu_k|.$$
Combining the above two estimates we deduce
$$\bigg|u|u|^{p-1}-\sum_{i=1}^N\alpha_i|\alpha_i|^{p-1}\calu_i^p\bigg|\leq p|\sigma|^{p-1}|\rho|+C\bigg(|\sigma|^{p-2}|\rho|^2+|\rho|^p+\sum_{1\leq i\neq k\leq N}\calu_i^{p-1}\calu_k\bigg).$$
Therefore, using \eqref{10-11-3}, we get
\begin{align}\label{10-11-8}
\int_{\bn}u|u|^{p-1}\rho \, \dvg\leq p\int_{\bn}\sigma^{p-1}\rho^2 \, \dvg \, + \, C\bigg(\int_{\bn} |\sigma|^{p-2}|\rho|^3 \, \dvg\nonumber\\
+\int_{\bn}|\rho|^{p+1} \, \dvg+\sum_{1\leq i\neq k\leq N}\int_{\bn}|\rho|\calu_i^{p-1}\calu_k \, \dvg \bigg).
\end{align}
Using Proposition~\ref{p:spec} from the forthcoming section, we have
$$p\int_{\bn}\sigma^{p-1}\rho^2 \, \dvg\leq\tilde c\|\rho\|_{\la}^2, \quad\mbox{with }\, \tilde c = \tilde c(n,\lambda, N,p)<1.$$ 
Using H\"{o}lder and Poincar\'e-Sobolev inequality we estimate the other terms on the RHS of \eqref{10-11-8} as follows
$$\int_{\bn} |\sigma|^{p-2}|\rho|^3 \, \dvg\leq \|\rho\|_{L^{p+1}}^3\|\sigma\|_{L^{p+1}}^{p-2}\lesssim\|\rho\|_{\lambda}^3 ,$$
 $$\int_{\bn}|\rho|^{p+1} \, \dvg\lesssim\|\rho\|_{\lambda}^{p+1},$$
$$\int_{\bn}|\rho|\calu_i^{p-1}\calu_k\dvg\leq \|\rho\|_{L^{p+1}}\|\calu_i^{p-1}\calu_k\|_{L^{(p+1)'}}\lesssim\|\rho\|_{\lambda}\|\calu_i^{p-1}\calu_k\|_{L^{(p+1)'}}.$$
Substituting these estimates into \eqref{10-11-8} yields 
\begin{align}\label{10-11-9}
\Big|\int_{\bn}u|u|^{p-1}\rho \, \dvg\Big|&\leq \tilde c(n,\lambda, N)\|\rho\|_{\lambda}^2\nonumber\\
&\qquad+C\bigg(\|\rho\|_{\lambda}^3+\|\rho\|_{\lambda}^{p+1}+\sum_{1\leq i\neq j\leq N}
\|\rho\|_{\lambda}\|\calu_i^{p-1}\calu_k\|_{L^{(p+1)'}} \bigg).
\end{align}
Next, we control $\|\calu_i^{p-1}\calu_k\|_{L^{(p+1)'}}$ for $i\neq k$. Since  $p>2$ implies $\min\{(p-1)(p+1)', (p+1)'\}=(p+1)'$. Therefore, by Lemma~\ref{lem-int-2}, we find
\begin{align}\label{10-11-10}
\|\calu_i^{p-1}\calu_k\|_{L^{(p+1)'}}=\bigg(\int_{\bn}\calu_i^{(p-1)(p+1)'}\calu_k^{(p+1)'} \, \dvg \bigg)^\frac{1}{(p+1)'}&\approx \big(e^{-\cn(p+1)'d(z_i,z_k)}\big)^\frac{1}{(p+1)'}\nonumber\\
&=e^{-\cn d(z_i,z_k)}\nonumber\\
&\approx\int_{\bn}\calu_i^{p}\calu_k\,\dvg.
\end{align}
Moreover, using Lemma~\ref{interaction lemma} we get
\begin{equation}\label{10-11-11}\max_{i\neq k}\int_{\bn}\calu_i^p\calu_k\dvg \lesssim \|(\Delta_{\bn} + \lambda)u + |u|^{p-1}u\|_{H^{-1}} + \epsilon \|\rho\|_{\la} +\|\rho\|_{\la}^2.\end{equation}
Hence, combining \eqref{10-11-9}, \eqref{10-11-10} and \eqref{10-11-11}, we find
\begin{align}\label{10-11-12}
\int_{\bn}u|u|^{p-1}\rho \, \dvg\leq \bigg(\tilde c(n,\lambda, N,p)+\eps N^2 C_{n,\lambda,N,p}\tilde C \bigg)\|\rho\|_{\lambda}^2
+C'_{n,\lambda,N,p}\bigg(\|\rho\|_{\lambda}^3+\|\rho\|_{\lambda}^{p+1}\nonumber\\
\qquad\qquad\qquad+ \|\Delta_{\bn}u+\lambda u+u|u|^{p-1}\|_{H^{-1}(\bn)}\|\rho\|_{\lambda} \bigg),
\end{align}
where $\tilde C$ is the constant hidden in \eqref{10-11-11}. We choose $\eps>0$ small such that \\
$\tilde c(n,\lambda, N,p)+\eps N^2 C_{n,\lambda,N,p}\tilde C <1$. Hence, substituting \eqref{10-11-12} in \eqref{10-11-5} yields
\begin{align*}
\bigg(1-\tilde c(n,\lambda, N,p)-\eps N^2 C_{n,\lambda,N,p}\tilde C\bigg)\|\rho\|_{\lambda}^2 &\lesssim\|\rho\|_{\lambda}^3+\|\rho\|_{\lambda}^{p+1}\\
&\qquad+ \|\Delta_{\bn}u+\lambda u+u|u|^{p-1}\|_{H^{-1}(\bn)}\|\rho\|_{\lambda}.
\end{align*}
Since we can assume $\|\rho\|_{\lambda}<<1$, the last inequality implies
\begin{equation}\label{10-11-13}
\|\rho\|_{\lambda}\lesssim  \|\Delta_{\bn}u+\lambda u+u|u|^{p-1}\|_{H^{-1}(\bn)}.\end{equation}
So, now we are just left to show that the value of all the $\alpha_i$ can be replaced by $1$; note that thanks to \eqref{10-11-13}, this fact follows from 
Lemma~\ref{interaction lemma}. More precisely, $|\alpha_i-1|\lesssim  \|\Delta_{\bn}u+\lambda u+u|u|^{p-1}\|_{H^{-1}(\bn)}$, so it suffices to consider $\sigma'=\sum_{i=1}^N\calu_i$ to get that $\sigma'$ satisfies all the desired conditions. 

\hfill{$\square$}

\noindent

{\bf Proof of Corollary \ref{main corollary}}

\begin{proof}
First assume $p < \tstar-1.$ Then the result immediately follows from the profile decomposition Theorem \ref{profile decomposition}, Remark \ref{rem2} and Theorem \ref{main theorem}.

Now assume $p = \tstar -1,$ and let $\la \not\in \Lambda.$ We know that that $S_{\la}$ is not a rational multiple of $S$ and hence there exists $\delta  = \delta(n,N,\la)>0$ such that the interval $((N-\delta)S_{\la}, (N+\delta)S_{\la})$ does not contain an integer multiple of $S.$ 
Should the statement of the corollary be false there must exist a sequence $\{u_m\}$ in $H^1(\bn)$ such that $u_m \geq 0, I_{\la}(u_m) \rightarrow \frac{N}{n}S_{\la}^{\frac{n}{2}}$  and $\|\Delta_{\bn}u_m + \la u_m + u_m^p\|_{H^{-1}} \lesssim \frac{1}{m} \rightarrow 0$
as $m \rightarrow \infty.$ Then by Theorem \ref{absence theorem} and Theorem \ref{main theorem} the desired bound follows.
\end{proof}

\section{localization of family of bubbles}

In this section, we collect some prerequisites to prove the improved spectral inequality stated in the next section.
\begin{lemma}\label{l:lip func}
Let $n\geq 1$. Given a point $x_0\in\bn$ and radii $0<r<R$, there exists a Lipschitz bump function $\varphi=\varphi_{r,R}:\bn\to[0,1]$ such that
$\varphi=1$ on the geodesic ball $B(x_0, r)$, $\varphi=0$ in $B(x_0, R)^\complement$ and
$$\sup_{x\in\bn}|\nabla_{\bn}\varphi(x)|_{\bn}\lesssim\frac{1}{R-r}.$$
\end{lemma}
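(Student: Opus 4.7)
The plan is to build $\varphi$ as a composition of a one-dimensional cutoff with the hyperbolic distance function from $x_0$. Concretely, let $\psi:[0,\infty)\to[0,1]$ be the piecewise-affine cutoff
\[
\psi(t)=
\begin{cases}
1, & 0\leq t\leq r,\\
\dfrac{R-t}{R-r}, & r\leq t\leq R,\\
0, & t\geq R,
\end{cases}
\]
which is Lipschitz with $|\psi'(t)|\leq \frac{1}{R-r}$ for a.e.\ $t$. Then define $\varphi(x):=\psi(d(x,x_0))$ for $x\in\bn$. By construction $\varphi\equiv 1$ on $B(x_0,r)$ and $\varphi\equiv 0$ on $B(x_0,R)^\complement$, and $\varphi$ takes values in $[0,1]$.

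Next I would verify the gradient bound. Since the hyperbolic distance function $x\mapsto d(x,x_0)$ is smooth on $\bn\setminus\{x_0\}$ and satisfies the eikonal identity $|\nabla_{\bn} d(\cdot,x_0)|_{\bn}=1$ away from $x_0$ (a standard Riemannian fact for the distance from a point on any complete manifold, before the cut locus, which is empty here since $\bn$ has non-positive curvature), the chain rule gives
\[
\nabla_{\bn}\varphi(x)=\psi'\!\left(d(x,x_0)\right)\,\nabla_{\bn}d(x,x_0)
\]
for a.e.\ $x\in\bn$. Consequently
\[
|\nabla_{\bn}\varphi(x)|_{\bn} \;=\; \big|\psi'(d(x,x_0))\big|\,\big|\nabla_{\bn}d(x,x_0)\big|_{\bn} \;\leq\; \frac{1}{R-r},
\]
valid pointwise for $x\neq x_0$ and trivially on $B(x_0,r)$ where $\varphi$ is constant. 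This gives the desired Lipschitz constant and shows $\varphi$ is globally Lipschitz on $\bn$.

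There is essentially no obstacle: the only minor technical point is that $\varphi$ is not smooth, only Lipschitz (a.e.\ differentiable), because the profile $\psi$ has corners at $t=r$ and $t=R$. If a $C^{\infty}$ version were ever needed one could mollify $\psi$ on intervals of length $\eta\ll R-r$ and keep the same bound up to a factor arbitrarily close to $1$; but since the statement only asks for Lipschitz, the piecewise linear choice suffices. No use of the specific ball model is required beyond the fact that $(\bn,g)$ is a complete Riemannian manifold with a well-defined smooth distance function away from the base point.
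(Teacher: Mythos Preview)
Your proof is correct and follows essentially the same approach as the paper: compose a one-dimensional cutoff with the geodesic distance $\rho(x)=d(x,x_0)$ and use that $|\nabla_{\bn}\rho|_{\bn}=1$. The only cosmetic difference is that the paper takes a sine profile on $[r,R]$ (yielding constant $\pi/2$) while you take the piecewise-affine profile (yielding constant $1$); both give Lipschitz functions with the desired bound.
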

\begin{proof}
Without loss of generality, we can assume that $x_0=0$. As before let $\rho(x)=d(x,0)$ denote the geodesic distance of $x$ to $0$. Define

\begin{align*}
\varphi(x) =
\begin{cases}
1, \qquad\qquad\qquad\qquad \mbox{ if } \ \  0<\rho(x)<r, \\
\sin\bigg[\frac{\pi}{2}\big(\frac{\rho(x)+R-2r}{R-r}\big)\bigg] \ \ \ \mbox{if } \ \ r<\rho(x)<R, \\
0, \qquad\qquad\qquad\qquad \mbox{ if } \ \ \rho(x)\geq R. \\
\end{cases}
\end{align*}
Since $\varphi$ is radial, by abuse of notation we can write $\varphi(x)=\varphi(\rho)$. Therefore, it only remains to show that
$$\sup_{x\in\bn}|\nabla_{\bn}\varphi(x)|_{\bn}=\sup_{\rho}|\varphi'(\rho)|\lesssim\frac{1}{R-r},$$
where $'$ denotes the derivative w.r.t. $\rho$. Clearly, for $r<\rho(x)<R$
$$|\varphi'(\rho)|=\frac{\pi}{2(R-r)}\bigg|\cos\bigg[\frac{\pi}{2}\big(\frac{\rho(x)+R-2r}{R-r}\big)\bigg]\bigg|\lesssim \frac{1}{R-r},$$
and $\varphi'(\rho)=0$ elsewhere. This completes the proof.

\end{proof}
As a corollary of Lemma~\ref{l:lip func}, choosing $R$ large enough and $r=R/2$, it follows $$|\nabla_{\bn}\varphi(x)|_{\bn}=o(1) \quad\mbox{as}\quad R\to\infty.$$

\begin{lemma}\label{l:localization}
Assume $3 \leq n \leq 5$, $p \in (2,\tstar-1]$ and $\la$ satisfies {\bf (H1)}. For $N\in\N$ and $\eps>0$, there exists $\delta=\delta(n,N,\lambda,p,\eps)>0$ such that if $\calu_i=\calu[z_i]$, $i=1,\cdots, N$ are $\delta-$interacting hyperbolic bubbles, then for any $1\leq i\leq N$, there exists a Lipschitz bump functions $\varphi_i:\bn\to[0,1]$ such that the followings hold:

(i) Almost all mass of $\calu_i^{p+1}$ is in the region $\{\varphi_i=1\}$, i.e., 
$$\int_{\{\varphi_i=1\}}\calu_i^{p+1}\dvg\geq (1-\eps)S_{\lambda,p}^\frac{p+1}{p-1}.$$

(ii) In the region $\{\varphi_i>0\}$, there holds
$$\eps \calu_i>\calu_k \quad\mbox{for all }\, k\neq i.$$

(iii) The $L^\infty$ norm of $|\nabla_{\bn}\varphi_i|_{\bn}$ is small, i.e., 
$$\sup_{x \in \bn} \ |\nabla_{\bn}\varphi_i(x)|_{\bn}\leq \eps.$$
 \end{lemma}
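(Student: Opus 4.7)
My plan is to take $\varphi_i$ to be the radial cutoff from Lemma~\ref{l:lip func} transplanted to the geodesic ball around $z_i$ via a hyperbolic translation. Concretely, fix parameters $0 < r < R$ (to be chosen below, depending only on $\eps, n, \la, p$) and set
\begin{equation*}
\varphi_i(x) := \varphi_{r,R}\bigl(\tau_{-z_i}(x)\bigr),
\end{equation*}
where $\varphi_{r,R}$ is the bump from Lemma~\ref{l:lip func} centered at the origin. Since $\tau_{-z_i}$ is a hyperbolic isometry, $\varphi_i$ equals $1$ on $B(z_i,r)$, vanishes off $B(z_i,R)$, and by Lemma~\ref{lemma1}(i) satisfies the pointwise hyperbolic gradient bound $|\nabla_{\bn}\varphi_i|_{\bn}\lesssim 1/(R-r)$.

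First I would address (i) and (iii), which force the choice of $r$ and $R$. Because $\calu \in L^{p+1}(\bn)$ with $\int_{\bn}\calu^{p+1}\dvg = S_{\la,p}^{(p+1)/(p-1)}$ (by testing \eqref{eq1} against $\calu$ together with the normalization \eqref{calu}), I can select $r = r(\eps,n,\la,p)$ large enough so that
\begin{equation*}
\int_{B(0,r)}\calu^{p+1}\,\dvg \geq (1-\eps)\,S_{\la,p}^{(p+1)/(p-1)}.
\end{equation*}
Conjugating by $\tau_{-z_i}$ and applying Lemma~\ref{lemma1}(ii) yields the same bound on $B(z_i,r) \subseteq \{\varphi_i = 1\}$, establishing (i). Next, choosing $R := r + C/\eps$ where $C$ is the universal constant from Lemma~\ref{l:lip func} forces $\sup_{\bn}|\nabla_{\bn}\varphi_i|_{\bn}\leq \eps$, which gives (iii). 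Crucially, both $r$ and $R$ are determined by $\eps, n, \la, p$ alone and do \emph{not} depend on the configuration $(z_1,\ldots,z_N)$.

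Finally, I would enforce (ii) by choosing $\delta$ small after $R$ has been fixed, using the sharp two-sided decay estimate \eqref{bubble decay}. For any $x \in \mathrm{supp}(\varphi_i)\subseteq B(z_i,R)$ and any $k \neq i$, the reverse triangle inequality gives $d(x,z_k) \geq d(z_i,z_k) - R$, so
\begin{equation*}
\frac{\calu_k(x)}{\calu_i(x)} \leq \frac{\chi_2 e^{-\cn d(x,z_k)}}{\chi_1 e^{-\cn d(x,z_i)}} \leq \frac{\chi_2}{\chi_1}\, e^{\cn(2R - d(z_i,z_k))}.
\end{equation*}
Since the family is $\delta$-interacting, $d(z_i,z_k)\geq \frac{1}{\cn}\ln(1/\delta)$ for every $i \neq k$, so imposing
\begin{equation*}
\delta \;\leq\; \frac{\chi_1\,\eps}{\chi_2}\, e^{-2\cn R}
\end{equation*}
guarantees $\calu_k(x) < \eps\,\calu_i(x)$ throughout $\{\varphi_i > 0\}$. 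This final choice of $\delta = \delta(\eps,n,\la,p,N)$ is legitimate because $R$ was fixed in the previous paragraph without reference to $\delta$, so there is no circular dependence. The finiteness of $N$ lets the bound hold uniformly over the $N(N-1)$ pairs $(i,k)$.

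The only mildly delicate point is ensuring that the hyperbolic mass concentration estimate in (i) is intrinsic (that is, that the threshold $r$ can be chosen before knowing the $z_i$); this is handled by transferring everything to the standard bubble $\calu = \calu[0]$ via Lemma~\ref{lemma1}, after which integrability of $\calu^{p+1}$ on $\bn$ suffices. No estimates on bubble interactions or on $\|u-\sigma\|_\la$ are needed here — the lemma is a purely geometric statement about disjoint concentration of well-separated bubbles.
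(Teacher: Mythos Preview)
Your proof is correct and follows essentially the same approach as the paper: construct $\varphi_i$ as a radial Lipschitz cutoff on a geodesic ball around $z_i$ (via Lemma~\ref{l:lip func}), fix the radii first using the integrability of $\calu^{p+1}$ and the gradient bound, then choose $\delta$ small afterward using the two-sided decay \eqref{bubble decay}. The only cosmetic differences are that the paper takes $r=R/2$ and enlarges $R$ to handle (i) and (iii) simultaneously, whereas you fix $r$ first and set $R=r+C/\eps$; both orderings yield the same structure and the same dependence $\delta=\delta(n,\la,p,\eps)$.
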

 
\begin{proof}
Without loss of generality, we may assume $i=N$ and applying hyperbolic translation we can also assume $\calu_N=\calu[0]$. Recall that, 
$$\calu_N(x)\approx e^{-\cn d(x, 0)}, \quad \calu_k(x)\approx e^{-\cn d(x, z_k)}, \, k\neq N.$$
Let  $R>1$ be a large parameter (we will fix it later depending on $\epsilon$). 
$$\int_{B(0,R/2)^\complement}\calu_N^{p+1}\dvg\lesssim\int_{R/2}^{\infty}e^{-(p+1)\cn\rho}e^{(n-1)\rho}d\rho\leq C \, \frac{e^{-\big((p+1)\cn-(n-1)\big)R/2}}{(p+1)\cn-(n-1)},$$
as $(p+1)\cn>n-1$ (follows from the fact that $\cn>\frac{n-1}{2}$ and $p>1$).
We choose $R$ such that
$$C \, \frac{e^{-\big((p+1)\cn-(n-1)\big)R/2}}{(p+1)\cn-(n-1)}\leq\eps \, S_{\lambda,p}^\frac{p+1}{p-1}$$
i.e., $$R\ \geq \ \frac{2}{(p+1)\cn-(n-1)}\ln\left({\frac{C}{\big((p+1)\cn-(n-1)\big)\eps S_{\lambda,p}^\frac{p+1}{p-1}}}\right).$$
Let $\varphi$ be the Lipchitz bump function constructed in Lemma~\ref{l:lip func} with $r=R/2$, i.e., $\varphi=1$ in $B(0, R/2)$, $\varphi=0$ in $B(0, R)^\complement$ and 
$|\nabla_{\bn}\varphi|_{\bn} \lesssim 2/R$.
We further enlarge $R$ (in addition to the previous choice), if required, to ensure that $2/R<\eps$. Then, 
 $$\int_{\{\varphi=1\}}\calu_N^{p+1}\dvg=\int_{\bn}\calu_N^{p+1}\dvg-\int_{B(0,R/2)^\complement}\calu_N^{p+1}\dvg\geq S_{\lambda,p}^\frac{p+1}{p-1}-\eps S_{\lambda,p}^\frac{p+1}{p-1}=(1-\eps)S_{\lambda,p}^\frac{p+1}{p-1}.$$
Moreover, $\sup_{\bn}|\nabla_{\bn}\varphi(x)|_{\bn} \lesssim 2/R< \eps$. Hence (i) and (iii) hold. 

Now we will determine $\delta$. Note that $\{\varphi>0\}=B(0,R)$. If $x\in B(0,R)$, using the decay estimate of $\calu_N$ and $\calu_k$, $k\neq N$, it follows
$$\calu_N(x)\gtrsim e^{-\cn d(x,0)}\gtrsim e^{-\cn R}$$
$$\calu_k(x)\lesssim e^{-\cn d(x,z_k)}\lesssim e^{\cn R}e^{-\cn d(0, z_k)},$$
as $d(x,z_k)\geq d(0,z_k)-d(x,0)$. To hold $\eps\calu_N>\calu_k$, $k\neq N$ in $B(0,R)$, we need to show that 
$$\eps e^{-\cn R}\geq C_1e^{\cn R}e^{-\cn d(0, z_k)},$$
for some constant $C_1>0$. In other words, we need $d(0,z_k)\geq \frac{1}{\cn}\ln\big(\frac{C_1e^{2\cn R}}{\eps}\big)$. Since $z_i$'s are $\delta-$interacting bubbles, from the definition we already have 
$$\max_{ i \neq k } e^{-\cn d(z_i,z_k)} \leq \delta,$$
i.e., $d(z_i, z_k)\geq \frac{\ln\delta^{-1}}{\cn}$ for all $k\neq i$. So we choose $\delta>0$ such that
$$\ln\delta^{-1}>  \ln\Big(\frac{C_1e^{2\cn R}}{\eps}\Big),$$
then clearly $\calu_i$s are $\delta-$interacting bubbles imply, 
$$\eps\calu_N>\calu_k, \quad k\neq N, $$
on the set $\{\varphi>0\}$. Note that $\delta \lesssim \epsilon.$ Taking $\varphi_N = \varphi$ completes the proof. 
\end{proof}

\section{Improved spectral inequality}
\begin{proposition}\label{p:spec}
Let $3 \leq n \leq 5$, $p \in (2,\tstar-1]$ and $\la$ satisfies {\bf (H1)} and $N\in\N$. There exists a positive constant $\delta=\delta(n, N,\lambda,p)$ such that if $\sigma=\sum_{i=1}^N\alpha_i\calu[z_i]$ is a linear combination of $\delta-$interacting hyperbolic bubbles and $\rho\in H^1(\bn)$ satisfy the orthogonality conditions
\begin{equation}\label{eq:ortho}
\int_{\bn}\rho\,\calu_i^{p}\ \dvg=0 \quad\mbox{and }\,  \int_{\bn}\rho\,\calu_i^{p-1} V_j(\calu_i)\ \dvg=0 \quad \mbox{for all }\, 1\leq i\leq N, \, \, 1\leq j\leq n,
\end{equation}
where  $\calu_i=\calu[z_i]$.
Then
$$\int_{\bn}\sigma^{p-1}\rho^2\, \dvg\leq \frac{\tilde c}{p}\|\rho\|_{\la}^2,$$ where $\tilde c=\tilde c(n, N, \lambda,p)$ is a positive constant strictly less than 1. 

\end{proposition}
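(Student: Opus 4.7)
The guiding strategy is to localize the inequality around each bubble via the cutoffs of Lemma \ref{l:localization} and to reduce each piece to the \emph{single-bubble} spectral inequality. The latter follows from the fact that $\calu_i$ and the $n$-dimensional space $\mathrm{span}\{V_j(\calu_i)\}_{j=1}^n$ are eigenfunctions of $(-\Delta_{\bn}-\lambda)/\calu_i^{p-1}$ with eigenvalues $1$ and $p$ respectively, with a strict spectral gap above $p$: consequently there is a constant $c_0=c_0(n,\lambda,p)<1$ such that whenever $w\in H^1(\bn)$ satisfies $\int_\bn w\,\calu_i^p\,\dvg=0$ and $\int_\bn w\,V_j(\calu_i)\calu_i^{p-1}\,\dvg=0$ for all $j$, we have $p\int_\bn \calu_i^{p-1}w^2\,\dvg \le c_0\,\|w\|_\lambda^2$. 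The plan is to use this inequality after projecting a localized piece of $\rho$ onto the orthogonal complement of $\mathrm{span}\{\calu_i,V_j(\calu_i)\}_{j=1}^n$.

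Take $\varphi_1,\ldots,\varphi_N$ from Lemma \ref{l:localization} with a small parameter $\eps$; for $\delta$ small enough they have pairwise disjoint supports, and I set $\varphi_0:=\sqrt{1-\sum_{i=1}^N\varphi_i^2}$, yielding a partition with $\sum_{i=0}^N\varphi_i^2\equiv 1$. Using $\sum_i\varphi_i\nabla_{\bn}\varphi_i=\tfrac{1}{2}\nabla_{\bn}\sum_i\varphi_i^2=0$, a direct expansion gives the IMS-type identity
$$\sum_{i=0}^N\|\varphi_i\rho\|_\lambda^2 \,=\, \|\rho\|_\lambda^2 + \int_\bn\Big(\sum_{i=0}^N|\nabla_{\bn}\varphi_i|_\bn^2\Big)\rho^2\,\dvg \,\le\, (1+C\eps^2)\|\rho\|_\lambda^2,$$
where the last step uses Lemma \ref{l:localization}(iii) and Poincaré's inequality (valid since $\lambda<\lambda_1(\bn)$). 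Next I split $\int_\bn\sigma^{p-1}\rho^2\,\dvg$ into pieces over each $\mathrm{supp}(\varphi_i)$ and over the exterior $\Omega_0:=\bn\setminus\bigcup_i\mathrm{supp}(\varphi_i)$. On $\mathrm{supp}(\varphi_i)$, Lemma \ref{l:localization}(ii) yields $\sigma\le(1+C\eps)\alpha_i\calu_i$ and hence $\sigma^{p-1}\le(1+C\eps)^{p-1}\alpha_i^{p-1}\calu_i^{p-1}$; on $\Omega_0$ and on the transition region $\mathrm{supp}(\varphi_i)\setminus\{\varphi_i=1\}$, Hölder combined with the mass bound of Lemma \ref{l:localization}(i) and Poincaré-Sobolev yields contributions of size $O(\eps^{(p-1)/(p+1)})\|\rho\|_\lambda^2$, so that
$$\int_\bn\sigma^{p-1}\rho^2\,\dvg \,\le\, (1+C\eps)^{p-1}\sum_{i=1}^N\alpha_i^{p-1}\int_\bn\calu_i^{p-1}(\varphi_i\rho)^2\,\dvg + O(\eps^{(p-1)/(p+1)})\|\rho\|_\lambda^2.$$

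For each $i$, I write $\varphi_i\rho=w_i+\beta_i\calu_i+\sum_j\gamma_i^j V_j(\calu_i)$ with $\beta_i,\gamma_i^j$ chosen so that $w_i$ satisfies the single-bubble orthogonality conditions. The global orthogonality assumption \eqref{eq:ortho} forces these coefficients to depend only on the "missing" part $(1-\varphi_i)\rho$, and invoking the symmetry identities $\int_\bn V_j(\calu_i)\calu_i^p\,\dvg=0$ (odd function against an even one) and $\int_\bn V_j(\calu_i)V_k(\calu_i)\calu_i^{p-1}\,\dvg=c\delta_{jk}$ (rotational symmetry) to decouple the linear system gives $|\beta_i|+\sum_j|\gamma_i^j|\lesssim\eps^{p/(p+1)}\|\rho\|_\lambda$. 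Applying the single-bubble spectral inequality to $w_i$, combined with Young's inequality on the expansion of $(\varphi_i\rho)^2$, yields
$$p\int_\bn\calu_i^{p-1}(\varphi_i\rho)^2\,\dvg \,\le\, (1+\eta)c_0\|\varphi_i\rho\|_\lambda^2 + C(\eta)\eps^{2p/(p+1)}\|\rho\|_\lambda^2$$
for any small $\eta>0$. Assembling this with the IMS identity, the comparison step, and $\alpha_i=1+O(\delta)$ produces
$$p\int_\bn\sigma^{p-1}\rho^2\,\dvg \,\le\, (1+C\eps)^{p-1}(1+C\delta)(1+\eta)c_0(1+C\eps^2)\|\rho\|_\lambda^2 + O(\eps^{(p-1)/(p+1)})\|\rho\|_\lambda^2,$$
and choosing $\eta$, then $\eps$, and finally $\delta$ sufficiently small makes the right-hand side $\tilde c\|\rho\|_\lambda^2$ with $\tilde c<1$. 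The main obstacle is precisely this bookkeeping: the argument only closes because the margin $1-c_0>0$ supplied by the spectral gap of the single-bubble problem can absorb all the multiplicative and additive errors introduced by the cutoffs, projections, and pointwise comparisons; a secondary technical issue is the simultaneous requirement on the cutoffs (concentration of $\calu_i^{p+1}$-mass on $\{\varphi_i=1\}$ \emph{and} uniformly small gradient) that Lemma \ref{l:localization} is engineered to supply, together with the control of $|\nabla_\bn\varphi_0|_\bn$ on its transition region.
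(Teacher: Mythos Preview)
Your proof is correct and follows essentially the same strategy as the paper's: localize via the cutoffs of Lemma~\ref{l:localization}, show that each localized piece $\varphi_i\rho$ is almost orthogonal to the first two eigenspaces of $(-\Delta_{\bn}-\lambda)/\calu_i^{p-1}$, apply the single-bubble spectral gap, and sum. The only cosmetic differences are that you package the summation step as an IMS localization identity (introducing $\varphi_0=\sqrt{1-\sum_i\varphi_i^2}$, which is indeed Lipschitz with $|\nabla_{\bn}\varphi_0|_\bn\lesssim\eps$ for the specific sine cutoffs of Lemma~\ref{l:lip func}) whereas the paper directly exploits the disjointness of the supports, and you phrase the almost-orthogonality as an explicit projection with small coefficients rather than bounding the inner products $\int_\bn(1-\varphi_i)\rho\,\psi\,\calu_i^{p-1}\,\dvg$ directly.
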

\begin{proof}
Fix a parameter $\eps>0$ and in this proof, we will denote with $o(1)$ any quantity that goes to $0$ as $\eps\to 0$. By Lemma~\ref{l:localization}, there exists $\delta$, $\varphi_i$ satisfying (i), (ii) and (iii) of Lemma~\ref{l:localization}.
Note that (i) implies the sets $\{\varphi_i=1\}$, $i=1,\cdots, N$ are disjoint as almost all the masses of $\calu_i$ is concentrated on $\{\varphi_i=1\}$. Moreover, (ii) implies $\varphi_i$'s have disjoint support.  Therefore,
$$\bn\subset \cup_{i=1}^N\Big\{\varphi_i=1\Big\}\cup\Big\{\sum_{i=1}^N\varphi_i<1\Big\}.$$
By Lemma \ref{l:localization}(ii) and $\delta$-interaction of $\calu_i$, on the set $\{\varphi_i>0\}$, 
$$\sigma^{p-1}=( \alpha_i\calu_i+\sum_{k\neq i} \alpha_k\calu_k)^{p-1}\leq (1+ C N\eps)^{p-1}\calu_i^{p-1}=(1+o(1))\calu_i^{p-1}.$$
Therefore, 
\begin{align}\label{eq:9-3}
\int_{\bn}\sigma^{p-1}\rho^2\, \dvg&\leq\sum_{i=1}^N\int_{\{\varphi_i=1\}}\sigma^{p-1}\rho^2\dvg+\int_{\{\sum_{i=1}^N\varphi_i<1\}}\
\sigma^{p-1}\rho^2\, \dvg\nonumber\\
&\leq (1+o(1))\sum_{i=1}^N\int_{\{\varphi_i=1\}}\calu_i^{p-1}\rho^2\dvg+\bigg(\int_{\{\sum_{i=1}^N\varphi_i<1\}}\sigma^{p+1}\dvg\bigg)^\frac{p-1}{p+1}\|\rho\|^2_{L^{p+1}}\nonumber\\
&\leq (1+o(1))\sum_{i=1}^N\int_{\bn}(\rho\varphi_i)^{2}\calu_i^{p-1}\dvg+\bigg(\sum_{i=1}^N\int_{\{\varphi_i<1\}}\calu_i^{p+1}\dvg\bigg)^\frac{p-1}{p+1}\|\rho\|^2_{\la}\nonumber\\
&\leq (1+o(1))\sum_{i=1}^N\int_{\bn}(\rho\varphi_i)^{2}\calu_i^{p-1}\dvg+\eps^\frac{p-1}{p+1}\|\rho\|^2_{\la}
\end{align}
{\bf Claim:} $\displaystyle\int_{\bn}(\rho\varphi_i)^{2}\calu_i^{p-1}\dvg\leq \frac{1}{\Lambda}\|(\rho\varphi_i)\|_{\la}^2+o(1)\|\rho\|^2_{\la} \quad\mbox{for }\,  i=1,\cdots, N$,\\
 where $\Lambda$ is the smallest eigenvalue of 
$\frac{-\Delta_{\bn}-\lambda}{\calu_i^{p-1}}$, which is
bigger than $p$. 

To prove the claim, let $\psi$ be either $\calu_i$ or $V_j(\calu_i)$, $1\leq j\leq n$ normalized so that\\ $\int_{\bn}\psi^2\calu_i^{p-1}\dvg=1$. Hence, thanks to the orthogonality condition \eqref{eq:ortho}, it holds that
\begin{align*}
\bigg|\int_{\bn}\rho\,\varphi_i\,\psi\,\calu_i^{p-1}\,\dvg\bigg|&=\bigg|\int_{\bn}\rho\,\psi\,\calu_i^{p-1}(1-\varphi_i)\,\dvg\bigg|\leq \bigg|\int_{\{\varphi_i<1\}}\rho\,\psi\,\calu_i^{p-1}\,\dvg\bigg|\\
&\leq \bigg(\int_{\bn}|\rho|^{p+1}\dvg\bigg)^\frac{1}{p+1}\bigg(\int_{\bn}\psi^2\calu_i^{p-1}\dvg\bigg)^\frac{1}{2} 
\bigg(\int_{\{\varphi_i<1\}}\calu_i^{p+1}\dvg\bigg)^\frac{p-1}{2(p+1)}\\
&\leq o(1)\|\rho\|_{\la},
\end{align*}
where in the last inequality we have applied Lemma~\ref{l:localization}(i). 

This proves that $\rho\varphi_i$ is almost orthogonal to $\psi$. Hence, using \cite[Proposition 3.1]{BGKM}, it follows that if $\Lambda$ is the smallest eigenvalue of 
$(-\Delta_{\bn}-\lambda)/\calu_i^{p-1}$
bigger than $p$ then
\begin{equation}\label{eq:9-2}
\displaystyle\int_{\bn}(\rho\varphi_i)^{2}\calu_i^{p-1}\dvg\leq \frac{1}{\Lambda}\|(\rho\varphi_i)\|_{\la}^2+o(1)\|\rho\|^2_{\la}.
\end{equation}
This proves the claim. 

We now estimate RHS of \eqref{eq:9-2}.
\begin{equation}\label{eq:9-1}
\int_{\bn}|\nabla_{\bn}(\rho\varphi_i)|^2\, \dvg=\int_{\bn}|\nabla_{\bn}\rho|^2\varphi_i^2\, \dvg+\int_{\bn}\rho^2|\nabla_{\bn}\varphi_i|^2\, \dvg+
2\int_{\bn}\rho\varphi_i\nabla_{\bn}\rho\cdot\nabla_{\bn}\varphi_i\,\dvg.
\end{equation}
Using Lemma~\ref{l:localization}(iii) and Poincar\'e inequality,  $$\int_{\bn}\rho^2|\nabla_{\bn}\varphi_i|^2\, \dvg\leq \eps^2\int_{\bn}\rho^2\, \dvg=o(1)\|\rho\|^2_{\la}.$$
Next, 
\begin{align*}
\int_{\bn}\rho\varphi_i\nabla_{\bn}\rho\cdot\nabla_{\bn}\varphi_i\,\dvg&\leq \bigg(\int_{\bn}\rho^2|\nabla_{\bn}\varphi_i|^2 \dvg\bigg)^\frac{1}{2} \bigg(\int_{\bn}|\nabla_{\bn}\rho|^2\dvg\bigg)^\frac{1}{2}\\
&\leq\eps\|\rho\|_{\la}\|\nabla_{\bn}\rho\|_{L^2}\lesssim o(1)\|\rho\|_{\la}^2.
\end{align*}
Inserting the above two estimates into \eqref{eq:9-1}, we find
$$\|(\rho\varphi_i)\|_{\la}^2=\int_{\bn}\bigg(|\nabla_{\bn}(\rho\varphi_i)|^2-\la(\rho\varphi_i)^2\bigg)\, \dvg\leq \int_{\bn}\bigg(|\nabla_{\bn}\rho|^2-\la\rho^2\bigg)\varphi_i^2\, \dvg+o(1)\|\rho\|_{\la}^2.$$
Taking a sum of $i=1$ to $N$ and using the fact that $\varphi_i$s have disjoint support, it follows 
\begin{equation}\label{eq:9-4}
\sum_{i=1}^N\|(\rho\varphi_i)\|_{\la}^2\, \dvg\leq (1+o(1))\|\rho\|_{\la}^2.
\end{equation}
Thus combining \eqref{eq:9-3}, \eqref{eq:9-2} and \eqref{eq:9-4}, we deduce 
$$
\int_{\bn}\sigma^{p-1}\rho^2\, \dvg\leq \frac{1+o(1)}{\Lambda}\|\rho\|_{\la}^2\leq \frac{\tilde c}{p}\|\rho\|_{\la}^2,$$
with $\tilde c<1$ (since $\Lambda>p$).
\end{proof}

\section{Interaction integral estimates}

In this section, we prove the following interaction integral estimate:

\begin{lemma} \label{interaction lemma}
Let $3 \leq n \leq 5, \ N \in \mathbb{N}, p \in (2, \tstar-1]$ and $\lambda$ be as in {\bf (H1)}. For every $\epsilon>0,$ there exists a $\delta>0$
depending on $n,p,N,\lambda,\epsilon$ such that if $(\alpha_i,\calu[z_i])_{1 \leq i \leq N}$ is a family of $\delta$-interacting hyperbolic bubbles satisfying the orthogonality conditions \eqref{10-11-3} and \eqref{10-11-4} then setting $u = \sum_{i=1}^N \alpha_i\calu[z_i] + \rho$ the following estimates hold:
\begin{align*}
\max_{i\neq k}\int_{\bn}\calu_i^p\calu_k \ \dvg \lesssim \|(\Delta_{\bn} + \lambda)u + |u|^{p-1}u\|_{H^{-1}} + \epsilon \|\rho\|_{\la} + \|\rho\|_{\la}^2,
\end{align*}
and 
\begin{align*}
\max_{i}|\alpha_i-1|\lesssim \|(\Delta_{\bn} + \lambda)u + |u|^{p-1}u\|_{H^{-1}} + \epsilon \|\rho\|_{\la} +\|\rho\|_{\la}^2.
\end{align*}
\end{lemma}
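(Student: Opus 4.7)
The plan is to estimate the bubble interactions and the coefficient defects $|\alpha_i - 1|$ separately, each by testing the residual $F := (\Delta_{\bn}+\la)u + |u|^{p-1}u$ against a carefully chosen function adapted to one of the bubbles. Throughout I set $\sigma = \sum_i \alpha_i \calu_i$, so $u = \sigma + \rho$, and I will repeatedly use the identities $(-\Delta_\bn - \la)\calu_i = \calu_i^p$ and $(-\Delta_\bn - \la)V_j(\calu_i) = p\calu_i^{p-1}V_j(\calu_i)$ (the latter being \cite[Proposition 3.1]{BGKM}), together with the two orthogonality conditions \eqref{10-11-3}--\eqref{10-11-4} imposed on $\rho$. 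The effect of each test is to reduce the left-hand side to a sum of explicit bubble interactions, plus higher-order errors in $\rho$ and in the interaction strength $Q$ from \eqref{the Q}.

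For the coefficient estimate, I would test $F$ against $\calu_i$. Integration by parts moves $(\Delta_\bn+\la)$ onto $\calu_i$ and produces $-\int u\calu_i^p\,\dvg$; the orthogonality $\int \rho \calu_i^p\,\dvg = 0$ then leaves
\[ \int F \calu_i\, \dvg = -\alpha_i S_{\la,p}^{\frac{p+1}{p-1}} - \sum_{j\neq i}\alpha_j \int \calu_i^p \calu_j \,\dvg + \int \bigl(|u|^{p-1}u - \calu_i^p\bigr)\calu_i \,\dvg. \]
Expanding $|u|^{p-1}u$ around $\calu_i$ via \eqref{10-11-6} with $a=\calu_i$, $b=u-\calu_i = (\alpha_i-1)\calu_i + \sum_{j\neq i}\alpha_j\calu_j + \rho$, and using once more the orthogonality relation together with $\int \calu_i^{p+1}\,\dvg = S_{\la,p}^{\frac{p+1}{p-1}}$, the leading contribution simplifies to $(p-1)(\alpha_i-1)S_{\la,p}^{\frac{p+1}{p-1}}$, while the remainders are controlled by $\max_{j\neq i}\int \calu_i^p\calu_j\,\dvg$ and by $\|\rho\|_\la^2 + \|\rho\|_\la^{p+1}$ after H\"older and Poincar\'e-Sobolev. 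Rearranging yields $|\alpha_i - 1| \lesssim \|F\|_{H^{-1}} + \max_{j\neq i}\int \calu_i^p\calu_j\,\dvg + \|\rho\|_\la^2$, so the $|\alpha_i-1|$ bound follows once the interaction bound is proven.

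The heart of the proof is the interaction estimate. Pick $k^*$ with $Q_{k^*} = \max_k Q_k$, so that $\max_{i\neq k}\int \calu_i^p\calu_k\,\dvg \approx Q_{k^*}$ by Lemma \ref{lem-int-2}. I apply Lemma \ref{geometric lemma} to the family $(\calu_i)$: after a hyperbolic isometry and relabeling of indices I may assume $z_{k^*}=0$ and every other $z_i$ lies in a half-space $\{x_j < -\kappa\}$ for a universal $\kappa = \kappa(N)>0$ and some coordinate direction $e_j$. Then I test $F$ against $V_j(\calu_{k^*})$: integration by parts gives
\[ \int F\, V_j(\calu_{k^*})\, \dvg = \int\bigl(|u|^{p-1}u - p\calu_{k^*}^{p-1}u\bigr)V_j(\calu_{k^*})\, \dvg. \]
Writing $u=\sigma+\rho$, expanding via \eqref{10-11-6}--\eqref{10-11-7}, killing $\int \rho\calu_{k^*}^{p-1}V_j(\calu_{k^*})\,\dvg$ with the orthogonality \eqref{10-11-4}, and absorbing three-bubble terms through Lemma \ref{three bubble}, the RHS reduces modulo errors to
\[ \sum_{i\neq k^*}\alpha_i^p \int \calu_i^p V_j(\calu_{k^*})\,\dvg \;-\; p\sum_{i\neq k^*}\alpha_i \int \calu_i\calu_{k^*}^{p-1}V_j(\calu_{k^*})\,\dvg. \]
The second sum, after applying the Killing property of $V_j$ (so that $\int f\, V_j g\,\dvg = -\int g\, V_j f\,\dvg$) and the chain rule $V_j(\calu_{k^*}^p) = p\calu_{k^*}^{p-1}V_j(\calu_{k^*})$, is bounded in absolute value by $\sum_{i\neq k^*}\int \calu_i \calu_{k^*}^p\,\dvg \lesssim Q_{k^*}$ via Lemma \ref{lem-int-2}. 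In the first sum, because every $z_i,\, i\neq k^*$ sits in $\{x_j\leq -\kappa\}$ at uniform distance from the hyperplane, Lemma \ref{interaction derivatives refined} applies and yields $\int \calu_i^p V_j(\calu_{k^*})\,\dvg \approx -e^{-\cn d(z_i,0)}$ with the \emph{same} sign for every $i\neq k^*$; consequently the summands add rather than cancel, and the whole sum is $\gtrsim Q_{k^*}$ (using that $\alpha_i^p$ is close to $1$). The error terms split into three families: three-bubble interactions estimated by Lemma \ref{three bubble}, which are $o(Q_{k^*})$ and hence absorbable into the left-hand side once $\delta$ is small; a cross term $\int \rho(\sigma^{p-1}-\alpha_{k^*}^{p-1}\calu_{k^*}^{p-1})V_j(\calu_{k^*})\,\dvg$ that H\"older and Lemma \ref{lem-int-2} bound by $\epsilon\|\rho\|_\la$ provided $\delta$ is chosen small enough in terms of $\epsilon$; and genuinely quadratic remainders of size $\|\rho\|_\la^2$. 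Combining these yields $Q_{k^*} \lesssim \|F\|_{H^{-1}} + \epsilon\|\rho\|_\la + \|\rho\|_\la^2$, which is the advertised interaction bound.

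The main obstacle is precisely the sign issue flagged in the strategy section. A direct test against $V_j(\calu_{k^*})$ without preparing the geometry could produce fine cancellation among the $N-1$ summands $\int \calu_i^p V_j(\calu_{k^*})\,\dvg$, since Lemma \ref{interaction derivatives} allows either sign depending on the position of $z_i$ relative to the hyperplane $\{x_j = 0\}$; such a cancellation would reduce the bound to $o(Q_{k^*})$ and break the argument. The geometric lemma is the decisive input that rules this out by arranging all centers on one side of the hyperplane and uniformly away from it, so that Lemma \ref{interaction derivatives refined} delivers a coherent sign and the correct order of magnitude $e^{-\cn d(z_i, 0)}$ for every interaction. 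A secondary technical hurdle is the $\rho$-linear bookkeeping: only after the orthogonality \eqref{10-11-4} extracts the contribution $\int \rho\calu_{k^*}^{p-1}V_j(\calu_{k^*})\,\dvg = 0$ can the remaining linear-in-$\rho$ error be paid for by $\epsilon\|\rho\|_\la$ instead of by $\|\rho\|_\la$, which would be fatal.
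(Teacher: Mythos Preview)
There are two genuine gaps.

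The more serious one is your leading--order analysis after testing against $V_j(\calu_{k^*})$. Your two displayed sums are in fact equal term by term (when $\alpha_i=1$): integrating by parts through $(-\Delta_{\bn}-\la)$ gives
\[
p\int_{\bn}\calu_i\,\calu_{k^*}^{p-1}V_j(\calu_{k^*})\,\dvg
=\int_{\bn}\calu_i\,(-\Delta_{\bn}-\la)V_j(\calu_{k^*})\,\dvg
=\int_{\bn}V_j(\calu_{k^*})(-\Delta_{\bn}-\la)\calu_i\,\dvg
=\int_{\bn}\calu_i^{p}\, V_j(\calu_{k^*})\,\dvg,
\]
so their difference carries the coefficient $\alpha_i^{p}-\alpha_i=O(\delta)$ and is $o(Q)$, not $\gtrsim Q_{k^*}$. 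Put differently, bounding the second sum by $Q_{k^*}$ and the first from below by $Q_{k^*}$ tells you nothing about their difference. The $O(Q)$ signal you need lives precisely in the term you discarded: when you replace $\sigma^{p}$ by $\sum_i\alpha_i^{p}\calu_i^{p}$ via \eqref{10-11-7}, the resulting error $I_1=\sigma^{p}-\sum_i\alpha_i^{p}\calu_i^{p}$, once tested against $V_j(\calu_{k^*})$, is \emph{not} a three--bubble quantity (the pointwise bound $\sum_{i\neq j}\calu_i^{p-1}\calu_j$ contributes $\calu_{k^*}^{p}\calu_j$ when $i=k^*$). The paper's Lemma~\ref{ie1} computes it exactly: $\int_{\bn} I_1 V_j(\calu_{k^*})\,\dvg=\sum_{i\neq k^*}\alpha_{k^*}^{p-1}\alpha_i\int_{\bn}\calu_i^{p} V_j(\calu_{k^*})\,\dvg+o(Q)$. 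Adding this missing piece to your two sums produces the coefficient $\alpha_{k^*}^{p-1}\alpha_i+\alpha_i^{p}-\alpha_i\approx 1$, and only then does the coherent--sign argument from Lemma~\ref{interaction derivatives refined} deliver $Q_{k^*}\lesssim\|F\|_{H^{-1}}+\epsilon\|\rho\|_{\la}+\|\rho\|_{\la}^{2}+o(Q)$.

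The second gap is in the use of Lemma~\ref{geometric lemma}. That lemma asserts the \emph{existence} of an index $k$, determined by the configuration (in the proof it is an endpoint of the pair realising $\max_{i\neq j}d(z_i,z_j)$, i.e.\ the pair of \emph{smallest} interaction); you cannot force $k$ to be your chosen $k^*$ with $Q_{k^*}=Q$. A single application therefore only controls $Q_k$ for this specific $k$, with an $o(Q)$ error still involving the uncontrolled global $Q$. The paper closes this by induction: having bounded $Q_k$, one discards $k$, re-applies the lemma to the remaining $N-1$ bubbles to obtain a new index $l$ and direction, and tests against $V_j(\calu_l)$; the term $i=k$ in the resulting sum may have the wrong sign but is bounded by $Q_{kl}\le Q_k$, already controlled, while the remaining terms have coherent sign. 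Iterating $N-1$ times gives $Q\lesssim\|F\|_{H^{-1}}+\epsilon\|\rho\|_{\la}+\|\rho\|_{\la}^{2}+o(Q)$, and the $o(Q)$ is then absorbed.
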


\subsection{A geometric lemma} To prove the above lemma we take the help of the following geometrically intuitive lemma.

\begin{lemma} \label{geometric lemma}
Let $\calu_i = \calu[z_i], \ 1 \leq i \leq N$ be a family of $\delta$-interacting hyperbolic bubbles. Then there exists $\delta_0>0$ such that for every $\delta < \delta_0,$ there exists an indices $k \in \{1,\ldots, N\}$ and a direction $e_j$ such that after applications of hyperbolic translations and rotations, the followings hold:
\begin{itemize}
\item[(i)] $ z_k = 0,$
\item[(ii)] all $z_i, i \neq k$ lie in the negative half \ $\bn_- := \{x \in \bn \ | \ x_j < 0\}.$ 
\item[(iii)] There exists a constant $\kappa = \kappa(\delta) >0$ such that $(z_i)_j$ (the $j$-th component if $z_i$) satisfies 
$(z_i)_j \leq -\kappa$ for all $i \neq k.$
\end{itemize}
\end{lemma}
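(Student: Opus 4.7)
My proof would use a separating-hyperplane argument carried out in the Beltrami--Klein model of hyperbolic space, in which geodesics are Euclidean chords of the unit ball and hyperbolic convex hulls of finite sets coincide with their Euclidean convex hulls. Recall that the Poincar\'e-to-Klein change of coordinates is the radial rescaling $z \mapsto z^K := \frac{2z}{1+|z|^2}$, so $z^K$ and $z$ always point in the same radial direction from the origin; in particular, the sign of $\tilde\nu \cdot z^K$ and the sign of $\tilde\nu \cdot z$ agree for every $\tilde\nu \in S^{n-1}$. Hyperbolic isometries, realised as projective transformations of $\bn$ in the Klein model, map Euclidean chords to Euclidean chords and therefore preserve Euclidean convex hulls and their extreme points.

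Since $\{z_1^K,\ldots,z_N^K\}$ is a finite set of pairwise distinct points (the $\delta$-interaction hypothesis forces $d(z_i,z_j) \geq \cn^{-1}\ln\delta^{-1} > 0$), its Euclidean convex hull has at least one extreme point; pick $k$ so that $z_k^K$ is such a vertex. Strict hyperplane separation of a vertex from the remaining finitely many points yields $\nu \in S^{n-1}$ with $\nu \cdot (z_j^K - z_k^K) < 0$ for every $j \neq k$. Now apply the hyperbolic isometry $\tau_{-z_k}$ mapping $z_k \mapsto 0$, and set $w_j := \tau_{-z_k}(z_j)$. By the Klein-model discussion above, $w_k^K = 0$ is still an extreme point of $\mathrm{Conv}\{w_1^K,\ldots,w_N^K\}$, so the separation theorem furnishes $\tilde\nu \in S^{n-1}$ with $\tilde\nu \cdot w_j^K < 0$ for all $j \neq k$. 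Because $w_j^K$ is a positive radial rescaling of $w_j$, we also have $\tilde\nu \cdot w_j < 0$ in the original Poincar\'e coordinates. Composing with the orthogonal rotation sending $\tilde\nu$ to the standard basis vector $e_j$ (an isometry of $\bn$ fixing $0$) and relabelling, we obtain conditions (i) and (ii).

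The substantive hurdle is the quantitative estimate (iii). I would take $\tilde\nu$ to be the support functional realising $\kappa_k := \mathrm{dist}\bigl(0, \mathrm{Conv}\{w_i^K : i \neq k\}\bigr) > 0$ (Euclidean distance), then maximise $\kappa_k$ over the at-most-$N$ extreme vertices of the original Klein convex hull. The $\delta$-interaction hypothesis enters decisively: it gives $|w_i| \geq \tanh(R/2)$ with $R = \cn^{-1}\ln\delta^{-1}$, and also the pairwise bound $d(w_i, w_l) \geq R$ for $i, l \neq k$, which together prevent two of the $w_i$'s from coalescing or any one of them from lying arbitrarily close to the separating hyperplane through $0$. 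A compactness argument on $\delta$-interacting configurations modulo hyperbolic isometry (whose quotient is controlled because the pairwise separation rules out concentration of points) then produces the required uniform lower bound $\kappa = \kappa(\delta, n, N, \lambda) > 0$. The first two paragraphs are essentially routine convex geometry once the Klein-model viewpoint is adopted; the real work is in upgrading the qualitative strict separation of Step 2 to a quantitative bound depending only on the given data, and this is where the interaction hypothesis must be exploited carefully.
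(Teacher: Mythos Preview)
Your approach to (i) and (ii) via extreme points of the Klein-model convex hull is correct and elegant, and genuinely different from the paper's. The paper makes an explicit choice: it selects indices $k_1,k_2$ realising the \emph{maximum} pairwise hyperbolic distance, translates $z_{k_1}$ to $0$, rotates so that $z_{k_2}=|z_{k_2}|\,e_j$, and then applies $\tau_{-z_{k_2}}$. All remaining $z_i$ then lie in $\overline{B_E(0,|z_{k_2}|)}$ (by maximality of $d(z_{k_1},z_{k_2})$), and one tracks where this ball is sent by decomposing $\tau_{-z}=\sigma_{-z^*}\rho_{-z^*}$ into a Euclidean reflection followed by a sphere inversion. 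Your convex-geometry argument gives a cleaner conceptual reason why \emph{some} vertex can be strictly separated from the rest by a hyperplane through the origin, with no computation at all.

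There is, however, a genuine gap in your treatment of (iii). The compactness you invoke fails: the space of $\delta$-interacting $N$-tuples modulo the isometry group is \emph{not} compact, because the interaction hypothesis imposes only a \emph{lower} bound $d(z_i,z_l)\ge R:=\cn^{-1}\ln\delta^{-1}$, not an upper bound. After pinning $z_k$ at the origin the remaining points may escape to $\partial\bn$, and in that limit the normalising translation is not even defined. Your parenthetical ``the pairwise separation rules out concentration of points'' names the wrong obstruction---concentration is indeed excluded, but escape to infinity is not. Nor does $|w_i|\ge\tanh(R/2)$ prevent a single $w_i$ from sitting arbitrarily close to the separating hyperplane: with $N=3$ and Klein coordinates $w_1^K=0$, $w_2^K=(\tanh R,0)$, $w_3^K=(-\tanh R,\epsilon)$, all pairwise hyperbolic distances are at least $R$, the origin is extreme, yet your $\kappa_1$ is of order $\epsilon$. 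Maximising over extreme vertices may well rescue the bound (here $\kappa_2\approx\tanh R$), but you have not provided an argument that this always happens. The paper avoids the issue altogether: the diameter choice forces the image $\tau_{-z_{k_2}}\bigl(\overline{B_E(0,|z_{k_2}|)}\bigr)$ into the explicit ball $\overline{B_E(-z_{k_2}^*,1/|z_{k_2}|)}$, and a two-line computation from the equation of that ball yields the concrete bound $(z_i)_j\le -\tfrac12(1-\eta)^3$ with $\eta=\eta(\delta)\to 0$.
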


\begin{proof}
Since there are only $N$-many indices we can find two indices $k_1, k_2 \in \{1,\ldots, N\}$ such that 
\begin{align*}
d(z_{k_1}, z_{k_2}) = \max_{ i \neq k } \ d(z_i,z_k).
\end{align*}
There may be other choices of such indices, but we only need a pair of them.
By rearranging the indices and by applying a hyperbolic translation we may assume $z_1 = 0$ and $d(z_1, z_N) =  \max_{ i \neq k } d(z_i,z_k).$ Further applying an orthogonal transformation we may assume $z_N = |z_N|e_j.$
Recall that the hyperbolic distances are preserved under the orthogonal group of transformations.

\medskip

We will show that if $\calu[z_i]$ are sufficiently small $\delta$-interacting then $\tau_{-z_N}(z_i), 1 \leq i \leq N$ satisfies all the properties of the stated lemma.

\medskip

For the simplicity of notations we denote $z_N = z.$ Recall that $\tau_{-z} = \sigma_{-z^*}\rho_{-z^*}$ is the composition of reflection $\rho_{-z*}$ with respect to the plane $\{x \in \rn \ | \  x \cdot z^* = 0\}$ and the inversion with respect to the sphere 
$S(-z^*, r)$ where $r$ is determined by $r^2 = |z^*|^2-1 = \frac{1}{|z|^2} - 1.$ We divide $\bn$ into the regions $I_1,I_2$ and $I_3$ defined below (see Figure $3$):

\begin{figure}[H]
\centering 
\begin{tikzpicture}[ scale=2.5]
  \draw[thick] (0,0) circle (1) node[near end, anchor=north west]{$I_2$};
  \draw[very thick] (1.25,0) circle (0.5625)node[near end, anchor=south east]{$I_2$};
  \draw[very thick] (-1.25,0) circle (0.5625);
  \draw[thick] (0,1)--(0,-1);
  \filldraw[black] (1.25,0) circle (1pt) node[anchor=south]{z*};
   \filldraw[black] (-1.25,0) circle (1pt) node[anchor=south]{-z*};
    \filldraw[black] (0.8,0) circle (1pt) node[anchor=south]{z} node[ anchor=north west]{$I_1$};
    \filldraw[black] (-0.8,0) circle (1pt) node[anchor=south]{-z} node[ anchor=north east]{$I_3$};
   \filldraw[black] (0,0) circle (1pt);
   \draw[dashed, <->] (-2.5,0)--(2.5,0);
 \end{tikzpicture}
 \caption{The above picture demonstrating the regions $I_1,I_2$ and $I_3.$ The thick boundary portion of $B_E(z^*,r)$ in $\bn$ is included in $I_1,$ and similarly for $I_3.$ $I_2$ is the open region in $\bn$ complementing $I_1 \cup I_2$. All the $z_i$s which lie outside $I_1$ will map under the hyperbolic translation to the region $I_3.$ The boundary portion of $I_1$ will map to the boundary portion of $I_3.$} 
\end{figure}
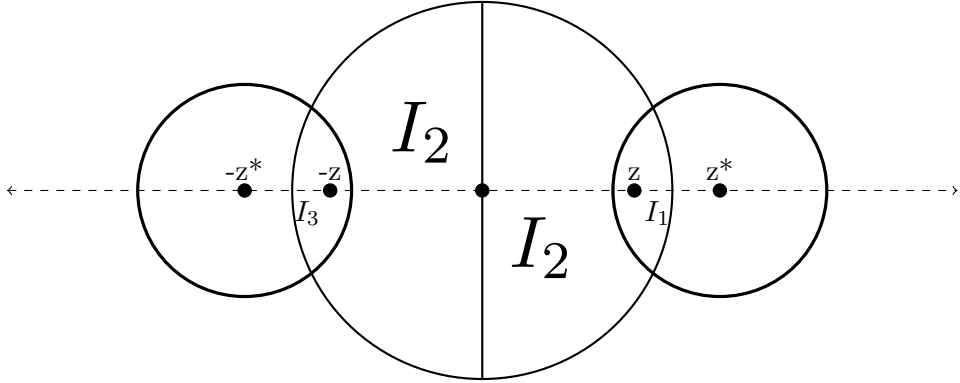
\begin{align*}
 I_1 = \bn \cap \overline {B_E(z^*,r)} \ \ \
 I_3 = \bn \cap \overline {B_E(-z^*,r)} \ \ \ \mbox{and} \ \ \
 I_2 = \bn \backslash (I_1 \cup I_3).
\end{align*}
The following observations are easy to verify:
\begin{itemize}
\item[(a)] $\sigma_{-z^*}$ leaves $S(-z^*, r)$ invariant.
\item[(b)] $z \in B_E(z^*, r)$ as direct computation shows $|z-z^*|^2 = r^2(1-|z|^2) < r^2.$ 
\item[(c)] $\rho_{-z^*}(I_1) = I_3, \rho_{-z^*}(I_3) = I_1$ and $\rho_{-z^*}$ leaves $I_2$ invariant.
\item[(d)] $\sigma_{-z^*}(I_1 \cup I_2) \subset I_3$ and $\sigma_{-z^*}(I_3 \backslash S(-z^*,r)) \subset I_1 \cup I_2.$
\item[(e)] $\sigma_{-z^*}$ sends spheres $S(-z^*, r_1)$ to spheres $S(-z^*, \frac{r^2}{r_1}).$
\end{itemize}

According to our choice all  $z_i$'s lie with in $\overline{B_E(0,|z|)}.$
The above observation indicates all points $z_i$ such that $z_i \not \in \overline{B_E(0,|z|)} \cap B_E(z^*, r)$ would translate  
under the hyperbolic translation $\tau_{-z}$ to the region $I_3.$ Clearly $\tau_{-z}(z) = 0.$ Now let   $z_i \in \overline{B_E(0,|z|)} \cap B_E(z^*, r).$ Then $\rho_{-z^*}(z_i) \in B_E(-z^*,r) \cap \overline{B_E(0,|z|)}.$ Let us now look at the action of $\sigma_{-z^*}$ on the set $B_E(-z^*,r) \cap \overline{B_E(0,|z|)}.$

\medskip

According to $(e),$ $\sigma_{-z^*}$ maps $B_E(-z^*,r) \cap \overline{B_E(0,|z|)}$ to the region $\overline{B_E(-z^*, \frac{1}{|z|})}.$ To see that we let $r_0 = |z - z^*| = |z|r^2.$ Then by $(e),$ $\sigma_{-z^*}$ maps $S(-z^*, r_0)$ to $S(-z^*, \frac{1}{|z|})$ and leaves $S(-z^*, r)$ invariant. Since all the $\rho_{-z^*}(z_i)$s are trapped with in the spheres $S(-z^*, r)$ and 
 $S(-z^*,r_0)$ we conclude $\tau_{-z}(z_i), i \neq N$ would lie with in $\overline{B(-z^*, \frac{1}{|z|})} \backslash \{0\}$ (see Figure $4$). This proves $(i)$ and $(ii)$.
 
 \medskip
 
 \begin{figure}[H]\centering
\begin{tikzpicture}[ scale=2.5]
  \draw[thick] (0,0) circle (1) node[anchor=north west]{0};
   \draw[thin] (0,0) circle (0.8);
   \draw[thick] (1.25,0) circle (0.5625) ;
  \draw[thick] (-1.25,0) circle (0.5625);
  \draw[thick] (0,1)--(0,-1);
  \draw[thick] (0.75,0)--(0.75,-1)--(1.1,-1) node[anchor=west]{$\mathcal{R}_0=B_E(0,|z|) \cap B_E(z^*, r)$};
   \draw[ultra thick] (-1.25,0) circle (1.25);
   \filldraw[black] (1.25,0) circle (1pt) node[anchor=south]{z*};
   \filldraw[black] (-.5,-.5) circle (1pt) node[anchor=south]{$x$};
   \filldraw[black] (-.5,-.5)--(0,-.5) node[scale = 0.5, anchor=north east]{distance at least $\kappa$};
   \filldraw[black] (-1.25,0) circle (1pt) node[anchor=south]{-z*};
   \filldraw[black] (0,0) circle (1pt);
    \draw[dashed, <->] (-3,0) node[anchor=east]{$-e_j$} --(2.5,0) node[anchor=west]{$e_j$};
     \filldraw[black] (0.8,0) circle (0.5 pt) node[anchor=south west]{z};
      \filldraw[black] (-0.8,0) circle (0.5 pt) node[anchor=south east]{-z};
 \end{tikzpicture}
 \caption{Under the action of hyperbolic translation $\tau_{-z},$ $z$ will map to $0$ and all other $z_i$s which lie in the region $\mathcal{R}_0$ will map to the largest ball centered at $-z^*$ and radius $\frac{1}{|z|}.$ After the translation if $x$ denotes the point $\tau_{-z}(z_i)$ then $x$ is at least $\kappa$ distance apart from the plane $\{x_j=0\}$.}
\end{figure}
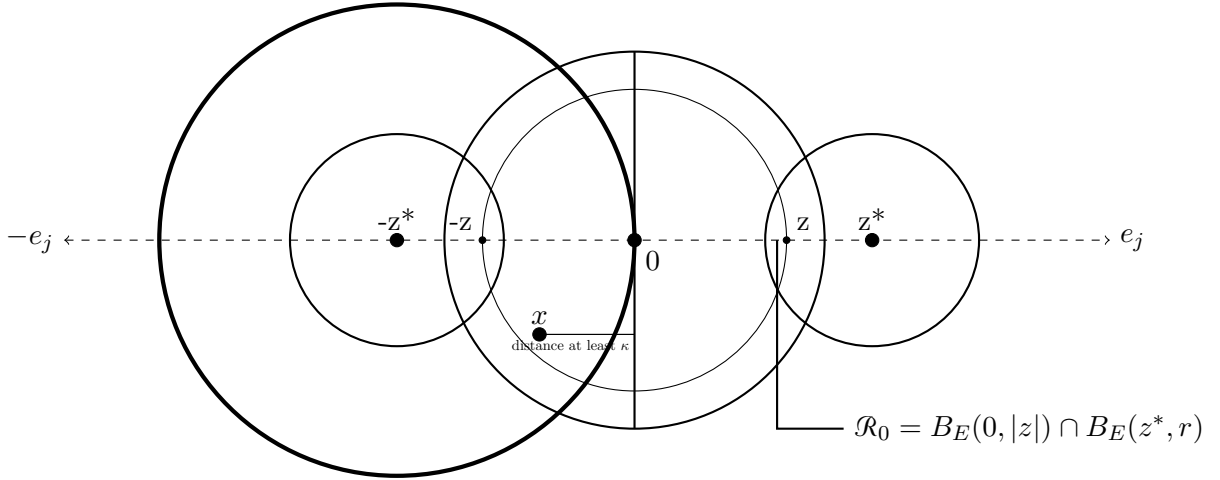

 \medskip
 
 Now we claim that if $\calu[z_i], 1 \leq i \leq N$ are sufficiently small $\delta$-interacting then $(iii)$ holds (cf. Figure $4$). 
 Fix $i \neq N$, for simplicity we denote $x = \tau_{-z}(z_i)$ and we write $x = (x^{\prime}, x_j)$ to indicate the $j$-th component. The above discussions yields  $x \in \overline{B(-z^*, \frac{1}{|z|})} \backslash \{0\}$ i.e,
 \begin{align} \label{qqq}
 \left|x_j + \frac{z_j}{|z|^2}\right|^2 + |x^{\prime}|^2 \leq \frac{1}{|z|^2}.
  \end{align}
If $\delta$ is small then $d(z_i,z_N) = d(x,0) = \ln \left(\frac{1+|x|}{1-|x|}\right)$ is sufficiently large. That is there exists small $\eta = \eta(\delta) >0$  such that $|x| > 1-\eta.$ Hence from \eqref{qqq} denoting $t = \frac{1}{|z|^2}$ we deduce 
\begin{align*}
& (x_j + tz_j)^2 + |x^{\prime}|^2 = |x|^2 + 2tx_jz_j + t^2z_j^2 \leq t \\
i.e. \ & x_j \leq \frac{t-t^2z_j^2-|x|^2}{2tz_j} \leq -\frac{(1-\eta)^3}{2}=: -\kappa,
\end{align*}
where we have used $z_j =|z| > 1-\eta$ as $d(0,z) = d(z_1,z_N) > > 1.$ This completes the proof.
\end{proof}

\begin{remark}
The point $(ii)$ is redundant given point $(iii)$ of the above lemma. We wanted to emphasise that $(i)$ and $(ii)$ are the properties of the hyperbolic translation while $(iii)$ is the property of the $\delta$-interaction.
\end{remark}

We are now in a position to prove the interaction integral estimates Lemma \ref{interaction lemma}.

\subsection{The proof of interaction integral estimates}
Given $u$ and $(\alpha_i, \calu[z_i])_{i=1}^N $ a family of $\delta$-interacting hyperbolic bubbles as in Lemma \ref{interaction lemma}. We put
\begin{align}
u = \sum_{i=1}^N \alpha_i\calu_i+ \rho := \sigma + \rho,
\end{align}
where $\calu_i = \calu[z_i]$ and $\sigma = \sum_{i=1}^N \alpha_i\calu_i.$ Recall that  the following orthogonality conditions hold
\begin{align}
&\int_{\bn} (\Delta_{\bn} + \lambda) \calu_i \rho \ \dvg= \int_{\bn} \calu_i^p \rho \ \dvg= 0 \label{org1}\\
&\int_{\bn} (\Delta_{\bn} + \lambda) V_j(\calu_i) \rho\ \dvg = p\int_{\bn} \calu_i^{p-1} V_j(\calu_i) \rho \ \dvg = 0, \ \ \mbox{for} \ 1 \leq j \leq n \label{org2},
\end{align}
for all $i = 1,\ldots, N.$ We compute
\begin{align}\label{eqn for u}
\Delta_{\bn} u +\lambda u + |u|^{p-1}u &= \Delta_{\bn} \rho + \lambda \rho -\sum_{i=1}^N \alpha_i\calu_i^p + |\sigma + \rho|^{p-1}(\sigma + \rho).
\end{align}

Putting $f = -\Delta_{\bn} u -\lambda u - |u|^{p-1}u $ we can rewrite the equation \eqref{eqn for u} as 
\begin{align} \label{main equation}
\Delta_{\bn} \rho + \lambda \rho + p\sigma^{p-1}\rho + I_1 + I_2 + I_3 + f = 0,
\end{align}
where 
\begin{align}
I_1 = \sigma^p - \sum_{i=1}^N \alpha_i^p\calu_i^p, \ \ I_2 =  |\sigma + \rho|^{p-1}(\sigma + \rho) - \sigma^p- p\sigma^{p-1}\rho, \ \ I_3 =  \sum_{i=1}^N (\alpha_i^p-\alpha_i)\calu_i^p
\end{align}

We follow the approach of \cite{Weietal} and divide the proof of Lemma \ref{interaction lemma} into several small lemmas. We point out that the next four lemmas hold in any dimension $n \geq 3.$

\begin{lemma}\label{ie1}
 We have
\begin{align} 
\int_{\bn} I_1 V_j(\calu_k) \ \dvg= \sum_{i \neq k} \alpha_k^{p-1}\alpha_i \int_{\bn} \calu_i^{p}V_j(\calu_k) \ \dvg + o(Q).
\end{align}
for all $1 \leq k \leq N$ and $j = 1,\ldots,n.$
\end{lemma}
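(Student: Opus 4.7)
The plan is to expand $\sigma^p$ around the $k$-th bubble, extract the linear leading term via integration by parts, and bound the remainder by the two- and three-bubble interaction estimates of Section 4. Since $V_j(\calu_k)$ satisfies the linearized equation $(-\Delta_{\bn}-\lambda)V_j(\calu_k) = p\calu_k^{p-1}V_j(\calu_k)$, and each $\calu_i$ solves $(-\Delta_{\bn}-\lambda)\calu_i = \calu_i^p$, the symmetry of $-\Delta_{\bn}-\lambda$ yields the two key identities
\[
\int_{\bn}\calu_k^p V_j(\calu_k)\,\dvg = 0, \qquad p\int_{\bn}\calu_k^{p-1}V_j(\calu_k)\calu_i\,\dvg = \int_{\bn}\calu_i^p V_j(\calu_k)\,\dvg,
\]
which will convert the linear term into the desired form.

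Write $\eta_k := \sigma - \alpha_k\calu_k = \sum_{i\neq k}\alpha_i\calu_i$. Taylor's theorem with integral remainder, combined with the elementary identity $p(p-1)\int_0^1(1-t)t^{p-2}\,dt = 1$, gives the algebraic decomposition
\[
\sigma^p = (\alpha_k\calu_k)^p + p(\alpha_k\calu_k)^{p-1}\eta_k + \eta_k^p + R_{\mathrm{br}},
\]
where the \emph{bridge} term $R_{\mathrm{br}} = p(p-1)\int_0^1(1-t)\bigl[(\alpha_k\calu_k+t\eta_k)^{p-2} - (t\eta_k)^{p-2}\bigr]\eta_k^2\,dt$ is pointwise controlled by $\calu_k^{p-2}\eta_k^2 + \calu_k\eta_k^{p-1}$ when $p\geq 2$. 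Iterating the inequality $|(a+b)^p - a^p - b^p|\lesssim a^{p-1}b + ab^{p-1}$ also yields $\eta_k^p = \sum_{i\neq k}(\alpha_i\calu_i)^p + O\bigl(\sum_{i,j\neq k,\,i\neq j}\calu_i^{p-1}\calu_j\bigr)$. Substituting into $I_1 = \sigma^p - \sum_i(\alpha_i\calu_i)^p$, the subtracted sum precisely cancels both $(\alpha_k\calu_k)^p$ and the main part of $\eta_k^p$, leaving
\[
I_1 = p(\alpha_k\calu_k)^{p-1}\eta_k + R, \qquad |R|\lesssim \calu_k^{p-2}\eta_k^2 + \calu_k\eta_k^{p-1} + \sum_{\substack{i,j\neq k\\ i\neq j}}\calu_i^{p-1}\calu_j.
\]

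Testing against $V_j(\calu_k)$, the linear piece contributes, by the identities above,
\[
\int_{\bn} p(\alpha_k\calu_k)^{p-1}\eta_k\, V_j(\calu_k)\,\dvg \;=\; \alpha_k^{p-1}\sum_{i\neq k}\alpha_i\int_{\bn}\calu_i^p V_j(\calu_k)\,\dvg,
\]
which is the main term in the claim. For the remainder, $|V_j(\calu_k)|\lesssim \calu_k$ and expansion of $\eta_k$ show that $\int_{\bn}|R|\calu_k\,\dvg$ reduces to a finite sum of either two-bubble integrals $\int_{\bn}\calu_k^a\calu_i^b\,\dvg$ with $a+b=p+1$ and $\min(a,b)\geq 2$, which by Lemma \ref{lem-int-2} are $o(Q)$ because $p>2$ forces $\min(a,b)>1$, or three-bubble integrals $\int_{\bn}\calu_i^{p-1}\calu_j\calu_k\,\dvg$ with distinct indices, which by Lemma \ref{three bubble} are $o(Q)$ as well. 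Adding the two contributions yields the claim.

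The main obstacle is the structural observation underlying the decomposition of $I_1$: the piece $\eta_k^p$ by itself is genuinely of order $Q$, so any pointwise bound on $|R_{\mathrm{br}}+\eta_k^p|\cdot|V_j(\calu_k)|$ would lose the required decay. Recognizing that $\eta_k^p$ agrees with $\sum_{i\neq k}(\alpha_i\calu_i)^p$ up to an error contributing only $o(Q)$, and that this leading piece cancels against the subtraction already present in $I_1$, is essential; once done, the residual has a genuinely multilinear structure (three-bubble, or two-bubble with both exponents $\geq 2$) and is handled directly by the interaction estimates of Section 4.
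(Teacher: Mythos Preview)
Your argument is correct and reaches the same conclusion, but by a genuinely different route than the paper. The paper splits the domain $\bn$ into the two regions $\{N\alpha_k\calu_k \geq \sigma_k\}$ and $\{\sigma_k > N\alpha_k\calu_k\}$, expanding $\sigma^p$ around the dominant term in each region; the resulting error pieces are then bounded on each region by exploiting the dominance (e.g.\ writing $\calu_k^{p-1}\sigma_k^2 \leq \calu_k^{p-\epsilon}\sigma_k^{1+\epsilon}$ where $\sigma_k \lesssim \calu_k$) to force an exponent $1+\epsilon$ on the minority factor. Your approach instead performs a single global Taylor expansion with the bridge remainder $R_{\mathrm{br}}$, together with the cancellation $\eta_k^p - \sum_{i\neq k}(\alpha_i\calu_i)^p = O(\sum\calu_i^{p-1}\calu_j)$; the errors are then handled directly by Lemma~\ref{lem-int-2} and Lemma~\ref{three bubble} without any domain decomposition. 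Your method is cleaner and makes the structure (linear term plus genuinely multilinear remainder) more transparent, while the paper's region-by-region argument is more elementary in that it only ever needs the two-bubble estimate with exponents $(p-\epsilon,1+\epsilon)$ and avoids the pointwise bound on $R_{\mathrm{br}}$.

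One small slip: you write ``$\min(a,b)\geq 2$'' for the two-bubble errors, but when $2<p<3$ the pair $(p-1,2)$ has minimum $p-1<2$. This is harmless, since the justification you actually use---$p>2$ forces $\min(a,b)>1$, hence $Q^{\min(a,b)}=o(Q)$---is correct and is what Lemma~\ref{lem-int-2} requires.
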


\begin{proof}
We denote $\sigma_k = \sum_{i \neq k} \alpha _i \calu_i$ so that $\sigma = \sigma_k + \alpha_k \calu_k$ holds. We decompose the integral $\int_{\bn} I_1 V_j(\calu_k) \ \dvg$ into the following four integrals:
\begin{align*}
&J_1 := \int_{\{N\alpha_k\calu_k \geq \sigma_k\}} \left(p \alpha_k^{p-1}\calu_k^{p-1}\sigma_k - \sum_{i \neq k}\alpha_i^p\calu_i^p\right) V_j(\calu_k) \ \dvg \\
&J_2 = \int_{\{N\alpha_k\calu_k \geq \sigma_k\}} \left(\sigma^p - \alpha_k^p\calu_k^p - p \alpha_k^{p-1}\calu_k^{p-1}\sigma_k\right) V_j(\calu_k) \ \dvg \\
&J_3 = \int_{\{\sigma_k > N\alpha_k\calu_k\}}\left(p \alpha_k\calu_k\sigma_k^{p-1} + \sigma_k^p - \sum_{i=1}^N \alpha_i^p\calu_i^p\right)V_j(\calu_k) \ \dvg \\
&J_4 = \int_{\{\sigma_k > N\alpha_k\calu_k\}} \left(\sigma^p - \sigma_k^p - p\alpha_k\calu_k\sigma_k^{p-1}\right)V_j(\calu_k)\ \dvg.
\end{align*}
We now estimate each integral one by one. Using $|V_j(\calu_k)| \lesssim \calu_k$ and using the inequality $|(a+b)^p - a^p - pa^{p-1}b| \lesssim a^{p-2}b^2$ whenever $0\leq b \lesssim a$ we estimate
\begin{align}\label{J2}
|J_2| &\lesssim \int_{\{N\alpha_k\calu_k \geq \sigma_k\}} \calu_k^{p-2}\sigma_k^2\calu_k \ \dvg \notag \\
&\lesssim \int_{\{N\alpha_k\calu_k \geq \sigma_k\}} \calu_k^{p-\epsilon} \sigma_k^{1+\epsilon} \ \dvg \notag \\
&\lesssim Q_k^{1 + \epsilon}.
\end{align}
Similarly,
\begin{align}\label{J4}
|J_4| &\lesssim \int_{\{\sigma_k > N\alpha_k\calu_k\}} \sigma_k^{p-2}\calu_k^2\calu_k \ \dvg \notag \\
&\lesssim  \int_{\{\sigma_k > N\alpha_k\calu_k\}} \sigma_k^{p-\epsilon}\calu_k^{1+\epsilon} \ \dvg \notag \\
&\lesssim Q_k^{1+\epsilon}.
\end{align}
For $J_3$ we approach as follows.
\begin{align}\label{J3_1}
|J_3| &\lesssim \int_{\{\sigma_k > N\alpha_k\calu_k\}} \sigma_k^{p-1}\calu_k^2 \ \dvg + 
\int_{\{\sigma_k > N\alpha_k\calu_k\}}\Big|\sigma_k^p - \sum_{i \neq k} \alpha_i^p\calu_i^p\Big| \ \calu_k \ \dvg \notag \\
& \ + \int_{\{\sigma_k > N\alpha_k\calu_k\}} \calu_k^{p+1} \ \dvg.
\end{align}
The first term in \eqref{J3_1} can be estimated as in $(J2),(J4)$ to obtain $O (Q_k^{1+\epsilon}).$
For the second term we use the inequality $|(\sum_i a_i)^p - \sum_i a_i^p| \lesssim \sum_{i \neq j} |a_i|^{p-1}\min\{|a_i|,|a_j|\}$ to get
\begin{align*}
\int_{\{\sigma_k > N\alpha_k\calu_k\}}\Big|\sigma_k^p - \sum_{i \neq k} \alpha_i^p\calu_i^p\Big|\ \calu_k \ \dvg &\lesssim \sum_{i < j, i \neq k, j \neq k}\int_{\{\sigma_k > N\alpha_k\calu_k\}}\calu_i^{p-1}\calu_j\calu_k\notag \\
&\lesssim Q^{1+\epsilon},
\end{align*}
where in the last line we used Lemma \ref{three bubble}.
For the last term in \eqref{J3_1} we see that $\{\sigma_k > N\alpha_k\calu_k\} \subset 
\cup_{i \neq k} \{\alpha_i \calu_i > \alpha_k \calu_k\}.$ Hence
\begin{align*}
\int_{\{\sigma_k > N\alpha_k\calu_k\}} \calu_k^{p+1} \ \dvg &\leq \sum_{i \neq k}
\int_{\{\alpha_i\calu_i > \alpha_k\calu_k\}} \calu_k^{p+1} \ \dvg \\
&\lesssim \int_{\{\alpha_i\calu_i > \alpha_k\calu_k\}} \calu_k^{p-\epsilon}\calu_i^{1+\epsilon} \ \dvg \lesssim Q^{1+\epsilon}.
\end{align*}
Now for the estimation of $J_1$, we see that 
\begin{align}\label{J1_1}
&J_1 - \int_{\bn}p\alpha_k^{p-1}\calu_k^{p-1}\sigma_kV_j(\calu_k) \ \dvg \notag \\
 =& -\int_{\{N\alpha_k\calu_k < \sigma_k\}} p\alpha_k^{p-1}\calu_k^{p-1}\sigma_kV_j(\calu_k) \ \dvg - \sum_{i \neq k}\int_{\{N\alpha_k\calu_k \geq \sigma_k\}}\alpha_i^p\calu_i^pV_j(\calu_k)
\ \dvg = J_1^1+J_2^2.
\end{align}
$J_1^1, J_2^2$ can be estimated in a similar way
\begin{align*}
\Big|J_1^1\Big| \lesssim \int_{\{N\alpha_k\calu_k < \sigma_k\}} \calu_k^p\sigma_k \ \dvg 
\lesssim \int_{\{N\alpha_k\calu_k < \sigma_k\}} \calu_k^{p-\epsilon}\sigma_k^{1+\epsilon} \ \dvg \lesssim Q^{1+\epsilon},
\end{align*}
and since on $\{\sigma_k \leq N\alpha_k\calu_k\},$  $\calu_i \lesssim \calu_k$ holds for all $i \neq k$
\begin{align*}
\Big|J_1^2\Big| \lesssim \sum_{i \neq k}\int_{\{ \sigma_k \leq N\alpha_k\calu_k \}}\calu_i^p\calu_k \ \dvg \lesssim \sum_{i \neq k}\int_{\{ \sigma_k \leq N\alpha_k\calu_k \}}\calu_i^{p-\epsilon}\calu_k^{1+\epsilon} \ \dvg \lesssim Q^{1+\epsilon}. 
\end{align*}
Combining all the estimates and integrating by parts gives the result as indicated below
\begin{align*}
\int_{\bn} I_1 V_j(\calu_k) \ \dvg = \sum_{l=1}^4 J_l &= \int_{\bn}p\alpha_k^{p-1}\calu_k^{p-1}\sigma_kV_j(\calu_k) \ \dvg + o(Q) \\
&= \sum_{i\neq k}\alpha_i \int_{\bn}p\alpha_k^{p-1}\calu_k^{p-1}\calu_iV_j(\calu_k) \ \dvg + o(Q) \\
&= \sum_{i\neq k} \alpha_k^{p-1}\alpha_i\int_{\bn}\calu_i(-\Delta_{\bn} - \lambda)V_j(\calu_k) \ \dvg + o(Q)\\
&=\sum_{i\neq k} \alpha_k^{p-1}\alpha_i\int_{\bn}(-\Delta_{\bn} - \lambda)\calu_i \ V_j(\calu_k) \ \dvg + o(Q)\\
&=\sum_{i\neq k} \alpha_k^{p-1}\alpha_i\int_{\bn}\calu_i^p \ V_j(\calu_k) \ \dvg  + o(Q).
\end{align*}
This completes the proof.

\end{proof}

\begin{lemma}\label{ie2}
Denoting  $\xi =$ either $\calu_k$ or $V_j(\calu_k)$ for $1 \leq k \leq N,$ the following estimate holds.
\begin{align}
\Big|\int_{\bn} I_2 \xi \ \dvg \Big|  \lesssim
\|\rho\|_{\la}^{\min\{p,2\}}.
\end{align}
\end{lemma}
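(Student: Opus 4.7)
The plan is to reduce the estimate to a pointwise bound on $I_2$ via Taylor's theorem applied to the map $t\mapsto |t|^{p-1}t$, and then apply H\"older's inequality combined with the Poincar\'e-Sobolev inequality to control everything in terms of $\|\rho\|_{\la}$. The key elementary inequality to invoke is: for all $a,b\in\mathbb{R}$,
\begin{align*}
\big||a+b|^{p-1}(a+b) - a|a|^{p-1} - p|a|^{p-1}b\big| \lesssim
\begin{cases} |a|^{p-2}b^2 + |b|^p, & \text{if } p\geq 2, \\ |b|^p, & \text{if } 1 < p < 2. \end{cases}
\end{align*}
Applied with $a = \sigma\geq 0$ and $b = \rho$, this yields the pointwise bounds $|I_2| \lesssim \sigma^{p-2}\rho^2 + |\rho|^p$ when $p\geq 2$, and $|I_2|\lesssim |\rho|^p$ when $1<p<2$.

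Next I would use that $\xi$ is either $\calu_k$ or $V_j(\calu_k)$, so $|\xi|\lesssim\calu_k\leq\sigma$. Integrating the pointwise bound against $|\xi|$ and applying H\"older's inequality with conjugate exponents $\tfrac{p+1}{p-1}$ and $\tfrac{p+1}{2}$ for the first term, and $p+1$ together with $\tfrac{p+1}{p}$ for the second term, gives
\begin{align*}
\int_{\bn}\sigma^{p-1}\rho^{2}\,\dvg \ \leq\ \|\sigma\|_{L^{p+1}}^{p-1}\|\rho\|_{L^{p+1}}^{2}, \qquad \int_{\bn}|\rho|^{p}\sigma\,\dvg \ \leq\ \|\sigma\|_{L^{p+1}}\|\rho\|_{L^{p+1}}^{p}.
\end{align*}
The Poincar\'e-Sobolev inequality converts the $L^{p+1}$-norm of $\rho$ into $\|\rho\|_{\la}$, while $\|\sigma\|_{L^{p+1}}$ is uniformly bounded since $\sigma$ is a linear combination of $N$ bubbles with coefficients $\alpha_i$ close to $1$ (and each bubble has fixed $L^{p+1}$-norm by conformal invariance).

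Combining the above we obtain $|\int I_2\,\xi\,\dvg|\lesssim\|\rho\|_{\la}^{2}+\|\rho\|_{\la}^{p}$ in the case $p\geq 2$, and $|\int I_2\,\xi\,\dvg|\lesssim\|\rho\|_{\la}^{p}$ in the case $1<p<2$. In the former situation, since $\|\rho\|_{\la}\leq\delta\ll 1$ and $p>2$, we have $\|\rho\|_{\la}^{p}\leq\|\rho\|_{\la}^{2}=\|\rho\|_{\la}^{\min\{p,2\}}$, and both contributions collapse to the desired right-hand side. In the latter situation $\|\rho\|_{\la}^{p}=\|\rho\|_{\la}^{\min\{p,2\}}$ directly. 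This argument is quite routine; the only real care is verifying the Taylor-type pointwise inequality and the uniform boundedness of $\|\sigma\|_{L^{p+1}}$, both of which are standard, so I do not anticipate a serious obstacle here.
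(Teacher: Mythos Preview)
Your proposal is correct and follows essentially the same route as the paper: the paper uses the identical pointwise Taylor-type inequality $|I_2|\lesssim |\rho|^p + \sigma^{p-2}\rho^2$, the bound $|\xi|\lesssim\calu_k$, and then H\"older plus Poincar\'e--Sobolev to conclude. The only cosmetic difference is that the paper keeps the factor $\calu_k$ separate in the H\"older step (using a three-term H\"older with exponents $\tfrac{p+1}{p-2},\tfrac{p+1}{2},p+1$) rather than absorbing $\calu_k\lesssim\sigma$ first as you do; and your treatment of the range $1<p<2$ is superfluous here since the ambient hypotheses already force $p>2$.
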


\begin{proof}
We again use the elementary inequality $|(a+b)|a+b|^{p-1} - a^p - p\sigma^{p-1}b| \leq |b|^p + |a|^{p-2}|b|^2$ and $|\xi| \lesssim \calu_k$ 
\begin{align*}
\Big|\int_{\bn} I_2\xi \ \dvg \Big| &\lesssim \int_{\bn}|\rho|^{p}\calu_k \ \dvg + \int_{\bn} \sigma^{p-2}|\rho|^2\calu_k \ \dvg \\
&\lesssim \|\rho\|_{L^{p+1}}^p\|\calu_k\|_{L^{p+1}} + \|\sigma\|_{L^{p+1}}^{p-2} \|\rho\|_{L^{p+1}}^2\|\calu_k\|_{L^{p+1}} \\
&\lesssim \|\nabla_{\bn} \rho\|_{L^2}^p + \|\nabla_{\bn} \rho\|_{L^2}^2 \\
&\lesssim   \| \rho\|_{\la}^p + \| \rho\|_{\la}^2\\
&\lesssim \|\rho\|_{\la}^{\min\{p,2\}} \quad\text{(as}\quad \|\rho\|_{\la} \mbox{ is small)}.
\end{align*}
This completes the proof.
\end{proof}

\begin{lemma} \label{ie3}
We have
\begin{align}
\int_{\bn} I_3 V_j(\calu_k) \ \dvg= \sum_{i \neq k} (\alpha_i^p-\alpha_i) \int_{\bn} \calu_i^{p}V_j(\calu_k) \ \dvg
\end{align}
for all $1 \leq k \leq N$ and $j = 1,\ldots,n.$
\end{lemma}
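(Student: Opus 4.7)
The plan is straightforward: expand $I_3$ termwise under the integral, and then observe that the diagonal contribution $i=k$ vanishes by a simple parity argument, leaving exactly the off-diagonal sum claimed in the statement.

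By definition of $I_3$ and linearity,
\begin{align*}
\int_{\bn} I_3\, V_j(\calu_k)\, \dvg \;=\; \sum_{i=1}^N (\alpha_i^p-\alpha_i) \int_{\bn} \calu_i^{p}\, V_j(\calu_k)\, \dvg.
\end{align*}
So the whole assertion reduces to showing
\begin{align*}
\int_{\bn} \calu_k^{p}\, V_j(\calu_k)\, \dvg \;=\; 0 \qquad\text{for every } 1\le k\le N,\ 1\le j\le n.
\end{align*}

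To prove this vanishing, I would use the conformal/isometric invariance from Lemma~\ref{lemma1} to reduce to the case $z_k=0$ (the hyperbolic translation $\tau_{z_k}$ preserves $\dvg$, sends $\calu_k$ to $\calu$, and commutes with the derivative operator $V_j$ in the sense that $V_j(\calu_k)\circ\tau_{z_k}$ equals, up to sign, $V_j(\calu)$ evaluated at the translated point — in any case the change of variables on both factors reduces the integral to $\int_{\bn}\calu^p V_j(\calu)\,\dvg$). Using the explicit formula recalled just before Lemma~\ref{interaction derivatives}, namely
\begin{align*}
V_j(\calu)(x)\;=\;2\,\frac{x_j}{|x|}\,\calu'(\rho), \qquad \rho=d(x,0),
\end{align*}
the integrand $\calu^p(x)\,V_j(\calu)(x) = 2\,\calu(\rho)^p\calu'(\rho)\,\tfrac{x_j}{|x|}$ is an odd function of the coordinate $x_j$, because $\calu(\rho)^p\calu'(\rho)$ depends only on $|x|$ while $x_j/|x|$ is odd under the reflection $x_j\mapsto -x_j$. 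Since both $\bn$ and the volume element $\dvg=\bigl(\frac{2}{1-|x|^2}\bigr)^n dx$ are invariant under this reflection, the integral vanishes. Substituting this back into the expansion and removing the $i=k$ term yields exactly
\begin{align*}
\int_{\bn} I_3\, V_j(\calu_k)\, \dvg \;=\; \sum_{i\neq k} (\alpha_i^p-\alpha_i) \int_{\bn} \calu_i^{p}\, V_j(\calu_k)\, \dvg,
\end{align*}
which is the desired identity.

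There is no real obstacle here: unlike Lemmas~\ref{ie1} and \ref{ie2}, no interaction estimate is needed because $I_3$ is already a clean linear combination of the pure bubble $p$-powers and the only nontrivial point is the parity cancellation of the self-interaction term $\int \calu_k^p V_j(\calu_k)\,\dvg$. The one technical care-point is to justify the reduction to $z_k=0$ carefully, since $V_j$ is defined via translation in the direction $e_j$ based at the origin of the ball model; but this is immediate from the definition $V_j(\calu_i) = \tfrac{d}{dt}\bigl(\calu_i\circ\tau_{te_j}\bigr)\big|_{t=0}$ together with the isometric invariance of the measure and inner product in Lemma~\ref{lemma1}.
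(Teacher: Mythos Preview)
Your overall strategy is right: the only nontrivial point is that $\int_{\bn}\calu_k^p V_j(\calu_k)\,\dvg=0$, and your parity argument for the case $z_k=0$ is correct. The gap is in the reduction to $z_k=0$. You assert that under a change of variables by a hyperbolic translation, $V_j(\calu_k)$ transforms to $\pm V_j(\calu)$; this is false. Conjugating the one–parameter group $\tau_{te_j}$ by a translation $\Phi$ gives another one–parameter group of isometries whose generator $W=\Phi^*V_j$ is in general a nontrivial linear combination of boosts $V_l$ and rotational Killing fields, not $\pm V_j$. So the change of variables actually produces $\int_{\bn}\calu^p\, W(\calu)\,\dvg$, not $\int_{\bn}\calu^p\, V_j(\calu)\,\dvg$. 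This can be repaired: since $\calu$ is radial the rotational part of $W$ annihilates it, so $W(\calu)=\sum_l c_l V_l(\calu)$ and then your parity argument applies termwise. But this extra step is exactly the ``care-point'' you dismissed as immediate.

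The paper sidesteps the reduction entirely with a one-line argument that works for arbitrary $z_k$:
\[
(p+1)\int_{\bn}\calu_k^p\,V_j(\calu_k)\,\dvg
=\frac{d}{dt}\Big|_{t=0}\int_{\bn}(\calu_k\circ\tau_{te_j})^{p+1}\,\dvg=0,
\]
because the $L^{p+1}$-norm is invariant under the isometries $\tau_{te_j}$ (Lemma~\ref{lemma1}(ii)). This is the same idea as observing that $V_j$ is a Killing field (hence divergence-free) and integrating $V_j(\calu_k^{p+1})$ by parts, and it avoids any discussion of how $V_j$ transforms under translations.
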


\begin{proof}
We note that 
\begin{align*}
(p+1)\int_{\bn} \calu_k^p V_j(\calu_k) \ \dvg&= (p+1) \int_{\bn} \calu_k^{p}\frac{d}{dt}\Big|_{t=0}(\calu_k\circ \tau_{te_j}) \ \dvg\\
&=\frac{d}{dt}\Big|_{t=0}\int_{\bn} (\calu_k\circ \tau_{te_j})^{p+1} \ \dvg = 0.
\end{align*}
Hence only the terms $i \neq k$ will survive 
\begin{align*}
\int_{\bn} I_3 V_j(\calu_k) \ \dvg= \sum_{i =1}^N (\alpha_i^p-\alpha_i) \int_{\bn} \calu_i^{p}V_j(\calu_k) \ \dvg = \sum_{i \neq k} (\alpha_i^p-\alpha_i) \int_{\bn} \calu_i^{p}V_j(\calu_k) \ \dvg.
\end{align*}

\end{proof}

\begin{lemma} \label{ie4}
Given $\epsilon >0$ there exists a $\delta > 0$ depending on $n,N,p,\la,\epsilon$ such that if $(\alpha_i, \calu_i)_{i=1}^N$ is a family of $\delta$-interacting hyperbolic bubbles satisfying the orthogonality conditions \eqref{org1},\eqref{org2} then
\begin{align}
\Big| \int_{\bn} p\sigma^{p-1} \rho \xi \ \dvg\Big| \lesssim (Q_k^{\min\{p-1,1\}} + \epsilon)\|\rho\|_{\la},
\end{align}
 where $\xi = $ either $\calu_k$ of $V_j(\calu_k),$ for every $1 \leq k \leq N, j = 1, \ldots, n,$ and $Q_k$ is defined by $Q_k = \max_{i \neq k} Q_{ik}.$ 
\end{lemma}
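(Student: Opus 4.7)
The starting observation is that the orthogonality conditions \eqref{org1}--\eqref{org2} were tailor-made so that $\int_{\bn}\calu_k^{p-1}\rho\,\xi\,\dvg = 0$ for both admissible choices of $\xi$: this reads $\int\calu_k^p\rho\,\dvg=0$ when $\xi=\calu_k$, and it is \eqref{org2} itself when $\xi=V_j(\calu_k)$. I would therefore split
$$ p\sigma^{p-1} \;=\; p\alpha_k^{p-1}\calu_k^{p-1} \;+\; p\bigl(\sigma^{p-1}-\alpha_k^{p-1}\calu_k^{p-1}\bigr), $$
observe that upon integration against $\rho\,\xi$ the first summand contributes exactly zero, and reduce the entire problem to bounding the interaction remainder
$$ \mathcal{I} \;:=\; p\int_{\bn}\bigl(\sigma^{p-1}-\alpha_k^{p-1}\calu_k^{p-1}\bigr)\rho\,\xi\,\dvg. $$

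Writing $\sigma=\alpha_k\calu_k+\sigma_k$ with $\sigma_k=\sum_{i\ne k}\alpha_i\calu_i\ge 0$ (positivity holds once $\delta$ is small enough that each $\alpha_i>0$), the plan is to invoke the elementary pointwise bound
$$ \bigl|(a+b)^{p-1}-a^{p-1}\bigr|\;\lesssim\;
\begin{cases} a^{p-2}b+b^{p-1}, & p\ge 2,\\ b^{p-1}, & 1<p\le 2, \end{cases} $$
for $a,b\ge 0$, combined with the crude majorisation $\sigma_k^{p-1}\lesssim\sum_{i\ne k}\calu_i^{p-1}$ (power-mean / subadditivity) and $|\xi|\lesssim\calu_k$. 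This dominates $|\mathcal{I}|$ by a finite sum over $i\ne k$ of genuinely two-bubble integrals $\int\calu_k^{p-1}\calu_i|\rho|\,\dvg$ and $\int\calu_k\calu_i^{p-1}|\rho|\,\dvg$. Next I would apply H\"older with dual exponents $(p+1,(p+1)/p)$ together with the Poincar\'e--Sobolev embedding $\|\rho\|_{L^{p+1}}\lesssim\|\rho\|_{\lambda}$, reducing each such integral to $\|\rho\|_\lambda\,\|\calu_k^{p-1}\calu_i\|_{L^{(p+1)/p}}$ (or the symmetric analogue). Finally Lemma \ref{lem-int-2}, applied with $\alpha=(p-1)(p+1)/p$ and $\beta=(p+1)/p$ (so $\alpha+\beta=p+1\ge 2$), puts each of these norms at order $Q_{ki}^{\min\{p-1,1\}}$; summing the $N-1$ contributions yields the principal bound $Q_k^{\min\{p-1,1\}}\|\rho\|_\lambda$.

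The main obstacle is that the scheme above in fact delivers the cleaner estimate $|\mathcal{I}|\lesssim Q_k^{\min\{p-1,1\}}\|\rho\|_\lambda$, while the $\epsilon\|\rho\|_\lambda$ cushion appearing in the statement is a flexibility term absorbing various lower-order residuals---for instance, contributions from the multinomial cross-terms of $\sigma_k^{p-1}$ (relevant when $p-1>1$) if one prefers a sharper expansion over the crude subadditivity majorisation. Such residuals reduce to three-bubble integrals $\int\calu_k^{p-1}\calu_i\calu_j|\rho|\,\dvg$ with pairwise distinct $i,j,k$, which by H\"older and Lemma \ref{three bubble} are $o(Q)$ as $\delta\to 0$ (e.g.\ $\lesssim Q^{3/2}(\ln\frac{1}{Q})^{1/3}$ when $p-1>1$), and hence are bounded by $\epsilon$ once $\delta=\delta(n,N,p,\lambda,\epsilon)$ is chosen sufficiently small. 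Crucially, the sign fluctuation of $V_j(\calu_k)$, which caused the genuine subtlety in Lemma \ref{interaction derivatives}, plays no role here because only an upper bound in absolute value is needed: the entire strategy is orthogonality $\Rightarrow$ pointwise binomial expansion $\Rightarrow$ H\"older together with Lemma \ref{lem-int-2}.
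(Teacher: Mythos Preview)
Your argument is correct and follows the same three-step skeleton as the paper: use orthogonality to subtract $\alpha_k^{p-1}\calu_k^{p-1}$, bound the difference $\sigma^{p-1}-\alpha_k^{p-1}\calu_k^{p-1}$ pointwise, then apply H\"older with exponents $(p+1,(p+1)/p)$ and Lemma~\ref{lem-int-2}. The one genuine difference is the choice of pointwise inequality. The paper uses the $\epsilon$-weighted Young-type bound
\[
\bigl|(a+b)^{p-1}-a^{p-1}\bigr|\;\le\;C(\epsilon)\,b^{p-1}+\epsilon\,a^{p-1},
\]
valid for all $p>1$, which immediately produces the two terms $C(\epsilon)\sum_{i\ne k}\int\calu_i^{p-1}\calu_k|\rho|\,\dvg$ and $\epsilon\int\calu_k^{p}|\rho|\,\dvg$; the second of these is exactly the source of the $\epsilon\|\rho\|_\lambda$ in the statement. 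Your sharper inequality $|(a+b)^{p-1}-a^{p-1}|\lesssim a^{p-2}b+b^{p-1}$ (for $p\ge 2$) avoids this term altogether and yields the cleaner bound $|\mathcal{I}|\lesssim Q_k^{\min\{p-1,1\}}\|\rho\|_\lambda$, which is strictly stronger than what is claimed. So your speculation that the $\epsilon$ absorbs three-bubble residuals is a misdiagnosis: in the paper it arises purely from the form of the scalar inequality, not from any multinomial cross-terms. Your route buys a sharper estimate at the cost of a case split in $p$; the paper's route is uniform in $p>1$ but carries the harmless $\epsilon$ slack.
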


\begin{proof}
Given $\epsilon >0,$ we use the following elementary inequality
$|(a+b)^{p-1} - a^{p-1}| \leq C(\epsilon)b^{p-1} + \epsilon a^{p-1}$ for every $a,b>0, p>1.$ Using the orthogonality conditions and applying the above elementary inequality with $a = \alpha_k\calu_k$ and $b =  \sum_{i \neq k} \alpha_i\calu_i$ so that $a+b = \sigma$ we get
\begin{align*}
\Big| \int_{\bn}p \sigma^{p-1}\rho \xi \ \dvg\Big| &= \Big| \int_{\bn}p (\sigma^{p-1}-\alpha_k^{p-1}\calu_k^{p-1})\rho \xi \ \dvg\Big| \\
&\lesssim \sum_{i \neq k} \int_{\bn}\calu_i^{p-1}\calu_k|\rho| \ \dvg + \epsilon \int_{\bn} \calu_k^p|\rho| \ \dvg\\
&\lesssim \sum_{i\neq k} \|\calu_i^{p-1} \calu_k\|_{L^{\frac{p+1}{p}}}\|\rho\|_{L^{p+1}} + 
\epsilon \|\calu_k\|_{L^{p+1}}^p\|\rho\|_{L^{p+1}}\\
&\lesssim (Q_k^{\min\{p-1,1\}} + \epsilon)\|\rho\|_{\la}.
\end{align*}
This completes the proof.
\end{proof}

\medskip

\noindent
{\bf Proof of Lemma \ref{interaction lemma}.}
\begin{proof}
We proceed inductively. First note that in dimension $3 \leq n \leq 5$, $\tstar - 2 >1,$ and recall our assumption is $2 < p \leq \tstar-1,$ so that in either case $p-1>1.$ Let $k$ be the indices and $e_j$ be the direction as obtained in Lemma \ref{geometric lemma}. We test \eqref{main equation} against $V_j(\calu_k)$ and integrate and use the orthogonality conditions to get
\begin{align*}
 &\int_{\bn} I_1 V_j(\calu_k) \ \dvg + \int_{\bn}  I_3 V_j(\calu_k) \ \dvg \\ = \ &  -\int_{\bn} p \sigma^{p-1 }\rho V_j(\calu_k) \ \dvg-\int_{\bn} I_2 V_j(\calu_k) \ \dvg  - \int_{\bn} f V_j(\calu_k) \ \dvg .
\end{align*}
By Lemma~\ref{ie1}-\ref{ie4} and using $\min\{p-1,1\} = 1$ we get
\begin{align} \label{ieeqn1}
&\sum_{i \neq k}(\alpha_k^{p-1}\alpha_i + \alpha_i^p - \alpha_i) \int_{\bn} \calu_i^p V_j(\calu_k) \ \dvg \nonumber\\
\lesssim \ \ &\|f\|_{H^{-1}} + (Q_k + \epsilon)\|\rho\|_{\la} + \|\rho\|_{\la}^{2}+o(Q).
\end{align}

By Lemma \ref{interaction derivatives refined} we see that $\int_{\bn} \calu_i^p V_j(\calu_k) \ \dvg \approx Q_{ik}$
for all $i \neq k$ and thanks to Lemma \ref{geometric lemma} the constant in $\approx$ depends only on $n,N,\lambda,p$ and $\epsilon.$ Moreover, since the family $(\alpha_i, \calu_i)$ are $\delta$-interacting with $\delta$
sufficiently small we conclude $(\alpha_k^{p-1}\alpha_i + \alpha_i^p - \alpha_i) \approx 1$ for all $1 \leq i \leq N.$ Hence \eqref{ieeqn1} gives 
\begin{align}\label{ieeqn2}
Q_k = \max_{i \neq k} Q_{ik} &\lesssim \|f\|_{H^{-1}} + (Q_k + \epsilon)\|\rho\|_{\la} +\|\rho\|_{\la}^{2}+o(Q) \notag\\
&\lesssim \|f\|_{H^{-1}} +  \epsilon\|\rho\|_{\la} +\|\rho\|_{\la}^{2}+o(Q),
\end{align}
where we have used $\delta$ is sufficiently small to absorb the term $Q_k\|\rho\|_{\la}$. Now testing \eqref{main equation} against $\calu_k$ and using the orthogonality conditions we get
\begin{align} \label{ieeqn3}
\sum_i (\alpha_i^p - \alpha_i)\int_{\bn}\calu_i^p\calu_k \ \dvg= -\int_{\bn}(I_1+I_2 + p\sigma^{p-1}\rho +f)\calu_k \ \dvg.
\end{align}
It follows from Lemma \ref{ie2} and Lemma \ref{ie4} that if $\delta$ is small then
\begin{align} \label{ieeqn4}
\Big| \int_{\bn}(I_2 + p\sigma^{p-1}\rho +f)\calu_k \ \dvg\Big| \lesssim \|f\|_{H^{-1}} +  \epsilon\|\rho\|_{\la} +\|\rho\|_{\la}^2.
\end{align}
On the other hand,  the inequality $|(\sum_i a_i)^p - \sum_{i}a_i^p| \lesssim \sum_{i \neq j} a_i^{p-1}a_j$ and using the estimate \eqref{ieeqn2} we get
\begin{align} \label{ieeqn5}
\Big| \int_{\bn} I_1 \calu_k  \ \dvg\Big| &\lesssim \sum_{i \neq j} \int_{\bn} \calu_i^{p-1}\calu_j \calu_k \ \dvg \lesssim Q_k \, + \, o(Q) \notag\\
&\lesssim \|f\|_{H^{-1}} +  \epsilon\|\rho\|_{\la} + \|\rho\|_{\la}^2 \, + \, o(Q).
\end{align}
Combining \eqref{ieeqn3}, \eqref{ieeqn4}, \eqref{ieeqn5} and using \eqref{ieeqn2} we get
\begin{align} \label{ieeqn6}
|\alpha_k^p - \alpha_k|\int_{\bn} \calu_k^{p+1} \dvg &\leq \sum_{i\neq k} |\alpha_i^p - \alpha_i|\int_{\bn} \calu_i^p\calu_k +\Big|\int_{\bn}(I_1+I_2 + p\sigma^{p-1}\rho +f)\calu_k \ \dvg\Big| \notag \\
&\lesssim \|f\|_{H^{-1}} +  \epsilon\|\rho\|_{\la} + \|\rho\|_{\la}^2+o(Q).
\end{align}
This gives the desired estimate on $\alpha_k$ i.e.
\begin{align}\label{ieeqn7}
|\alpha_k - 1| \lesssim   \|f\|_{H^{-1}} + \epsilon\|\rho\|_{\la}+ \|\rho\|_{\la}^2+o(Q).
\end{align}
We proceed inductively as follows. Discard the indices $k$ from $\{1,\ldots, N\}$ and apply Lemma \ref{geometric lemma} to find indices $l$ ($\neq k$) and a direction which we again call $e_j.$ We proceed as before and get 
\begin{align} \label{ieeqn6}
\sum_{i \neq l}(\alpha_l^{p-1}\alpha_i + \alpha_i^p - \alpha_i) \int_{\bn} \calu_i^p V_j(\calu_l) \ \dvg
\lesssim  \|f\|_{H^{-1}} +  \epsilon\|\rho\|_{\la}+\|\rho\|_{\la}^2+o(Q).
\end{align}
 The terms corresponding to $i = k$ in \eqref{ieeqn6} can be estimated by $Q_k$ and hence 
 \begin{align*}
 &\max_{i \neq l, i \neq k} Q_{il} \lesssim \sum_{i \neq l, i \neq k}(\alpha_l^{p-1}\alpha_i + \alpha_i^p - \alpha_i) \int_{\bn} \calu_i^p V_j(\calu_k) \ \dvg \notag\\
\lesssim \ \ &  \Big|  (\alpha_l^{p-1}\alpha_k + \alpha_k^p - \alpha_k) \int_{\bn} \calu_k^p V_j(\calu_l) \ \dvg\Big| +\|f\|_{H^{-1}} +  \epsilon\|\rho\|_{\la} + \|\rho\|_{\la}^2+o(Q) \notag \\
\lesssim \ \ & \ Q_k +  \|f\|_{H^{-1}} +  \epsilon\|\rho\|_{\la} +\|\rho\|_{\la}^2+o(Q) \notag \\
\lesssim \ \ & \|f\|_{H^{-1}} +  \epsilon\|\rho\|_{\la} + \|\rho\|_{\la}^2+o(Q).
 \end{align*}
Similarly we obtain the estimate $|\alpha_l -1| \leq \|f\|_{H^{-1}} +  \epsilon\|\rho\|_{\la} + \|\rho\|_{\la}^2+o(Q).$ Proceeding in this way and by deleting the indices one by one we get
\begin{align*}
Q = \max_i Q_i \lesssim  \|f\|_{H^{-1}} +  \epsilon\|\rho\|_{\la} +\|\rho\|_{\la}^2+o(Q).
\end{align*}
Since $\delta$ is sufficiently small we get the desired estimate on $Q$ which in turn gives the desired estimate on 
$\max_i |\alpha_i - 1|$ as derived in \eqref{ieeqn5}-\eqref{ieeqn7}, completing the proof of the lemma.
\end{proof}

\section{Counter examples for $1 < p \leq 2$: main theorem and technical results} \label{counter example section 1}

In this section, we prove that the estimate 
\begin{align*}
\delta(u) \lesssim    \|\Delta_{\bn} u+ \la u +  u^{p}\|_{H^{-1}},
\end{align*}
fails whenever $1 <p \leq 2$ in any dimension $n \geq 3$ and $\la$ satisfying {\bf(H1)}. The main theorem of this section is as follows:

\begin{theorem}\label{main counter example}

Let $n \geq 3$ and $1 < p \leq 2$ such that $\lambda, p$ satisfy {\bf (H1)}. There exists a family $u_{R} \in H^1(\bn)$ parametrized by $R \in (0, 1)$ such that the following 
statements hold : 

\begin{enumerate}

\item[(a)] Let $\mathcal{D} := \inf_{c_1, c_2, z_1, z_2} \| u_{R} - c_1 \, \calu[z_1] - c_2 \calu[z_2]  \|_{\lambda},$ then for $R$ sufficiently close to 1,

\begin{align*}
 \| \Delta_{\bn} u_{R} + \lambda u_R + |u_R|^{p-1} u_R \|_{L^{(p+1)'}} \lesssim
 \begin{cases}
\mathcal{D}|\log \mathcal{D}|^{-\frac{1}{2}}, \quad \mbox{if} \ p =2 \\
\medskip
\mathcal{D}^{1 + \alpha_0}, \quad \ \ \ \ \ \ \mbox{if}  \ 1< p < 2,
 \end{cases}
\end{align*}
where $\alpha_0 >0$ is a constant depending on $n,\la$ and $p.$ 
\medskip 

\item[(b)] If we let $R \uparrow 1,$ then the family $u_R$ satisfies 
\begin{align*}
\| u_R - \calu[-Re_1] - \calu[Re_1]\|_{\lambda} \rightarrow 0,
\end{align*}

where $\calu$ is the standard hyperbolic bubbles and $e_1 = (1, 0, \ldots, 0).$

\medskip 

\item[(c)] Let $\mathcal{D^+} := \inf_{c_1, c_2, z_1, z_2} \| u_{R}^{+} - c_1 \, \calu[z_1] - c_2 \calu[z_2]  \|_{\lambda},$ where $u^+ = \mbox{max} \{ u, 0 \},$ then 

\begin{align*}
 \| \Delta_{\bn} u_{R}^+ + \lambda u_R + |u_R|^{p-1} u_R \|_{H^{-1}} \lesssim
 \begin{cases}
\mathcal{D^+}|(\log \mathcal{D}^+)|^{-\frac{1}{4}}, \quad \mbox{if} \ p =2 \\
\medskip
(\mathcal{D^+})^{1 + \frac{\alpha_0}{2}}, \quad \ \ \ \ \ \  \ \mbox{if} \ 1< p < 2,
 \end{cases}
\end{align*}
where $\alpha_0 >0$ is the positive constant in $(a).$ If we let  $R \uparrow 1,$ then the family $u_R^+$ satisfies 
\begin{align*}
\| u_R^+ - \calu[-Re_1] - \calu[Re_1]\|_{\lambda} \rightarrow 0.
\end{align*}

\medskip 

\end{enumerate}
\end{theorem}

\medskip 

\begin{remark}
In particular, Theorem~\ref{main theorem} does not hold when $1 < p \leq 2.$ Moreover, since $2^{\star} - 1 = \frac{n+2}{n-2} \leq 2$ if $n \geq 6,$ therefore Theorem \ref{main theorem} does not hold in dimension $n \geq 6.$
\end{remark}

\medskip

\subsection{Notations and preliminaries}\label{notations section 2}
 We know that the eigenvalues of $(-\Delta_{\bn} - \la)/\calu[z]^{p-1}$ are independent of $z.$ The first two eigenvalues are $1$ and $p$ respectively and the corresponding eigenspace is generated by $(n+1)$ vectors 
$\{\calu[z], V_1(\calu[z]),\ldots,V_n(\calu[z])\}.$ Let $\Theta$ be the smallest eigenvalue strictly bigger than $p$. We fix $\epsilon >0$ such that
\begin{align*}
(1-\epsilon)^2 = \frac{p}{\Theta}, \ \ \ \ i.e. \ \ \ \ \ (1-\epsilon) \Theta = \frac{p}{1-\epsilon}.
\end{align*}

 We denote  $U = \calu[-Re_1], V = \calu[Re_1],$ where $R \in (0,1)$ and define
 \begin{align} \label{deff}
 f = (U+V)^p - U^p-V^p.
 \end{align}
Note that $d(x, Re_1) \leq d(x,-Re_1)$ if $x \in \bn_+ :=\{x \in \bn \ :\ x_1 >0 \}$ and $d(x, Re_1) \geq d(x,-Re_1)$ in $\bn_- :=\{x \in \bn \ :\ x_1 <0 \}.$ Hence $U \geq V$ in $\bn_-$ and $U \leq V$ in $\bn_+.$ Since $1 < p \leq 2$  we deduce 
\begin{align*}
f \approx 
\begin{cases}
UV^{p-1}, \ \ \ \ \mbox{in} \ \bn_+\\
U^{p-1}V, \ \ \ \ \ \mbox{in} \ \bn_-.
\end{cases}
\end{align*}

\begin{itemize}
\item[$\bullet$] We denote by $\mathcal{F} $ the subspace generated by all the eigenfunctions of 
$(-\Delta_{\bn} - \la)/U^{p-1}$ and $(-\Delta_{\bn} - \la)/V^{p-1}$ with eigenvalues less than 
$(1-\epsilon)\Theta.$ That is 
\begin{align*}
\mathcal{F} = \mbox{span} \ B_{\mathcal{F}}, \ \mbox{where} \  B_{\mathcal{F}} = \{U,V,V_j(U),V_j(V), \ 1 \leq j \leq n \} 
\end{align*}
where we recall $V_j(U) = \frac{d}{dt}\Big|_{t=0} U \circ \tau_{te_j}$ and hence dim $\mathcal{F} = 2(n+1).$

\item[$\bullet$] Let $\mathcal{E} $ be the subspace generated by all the eigenfunctions of 
$(-\Delta_{\bn} - \la)/(U+V)^{p-1}$ with eigenvalues less than 
$(1-\epsilon)\Theta.$ We denote by $B_{\mathcal{E}}$ a basis for $\mathcal{E}.$

\item[$\bullet$] Let $\pi : L^2_{(U+V)^{p-1}}(\bn) \to \mathcal{E}^{\bot} \subset  L^2_{(U+V)^{p-1}}(\bn)$ be the orthogonal projection onto the subspace orthogonal to $\mathcal{E}.$ We recall 
\begin{align*}
\mathcal{E}^{\bot} := \Big \{ g \in L^2_{(U+V)^{p-1}} \ | \ \int_{\bn} g\psi(U+V)^{p-1} \ \dvg = 0, \ \ \mbox{for all} \ \psi \in \mathcal{E}\Big\}.
\end{align*}

\item[$\bullet$] We define 
\begin{align}\label{deftf}
\tilde f = (U+V)^{p-1} \pi \left(\frac{f}{(U+V)^{p-1}}\right).
\end{align}
\end{itemize}

The following lemma is a straightforward consequence of the definition.

\begin{lemma} There exists a unique $\rho \in L^2_{(U+V)^{p-1}}(\bn)$ solving the equation 
\begin{align} \label{perturbed problem}
\Delta_{\bn} \rho + \lambda \rho + p (U+V)^{p-1}\rho + \tilde f =0.
\end{align}
\end{lemma}
\begin{proof}
We know that the operator $T := \left(\frac{-\Delta_{\bn} - \la}{(U+V)^{p-1}}\right)^{-1} : L^2_{(U+V)^{p-1}}(\bn) \to L^2_{(U+V)^{p-1}}(\bn)$ is a well defined compact, self-adjoint operator. Therefore, solving \eqref{perturbed problem} is equivalent to solving $(I - pT) \rho = T\left(\pi \left(\frac{f}{(U+V)^{p-1}}\right)\right).$ Since the eigenvalues of $T$  restricted to the subspace $\mathcal{E}^{\bot}$ are strictly smaller than $\frac{1}{p},$ there exists a unique $\rho \in \mathcal{E}^{\bot}$ solving \eqref{perturbed problem}.
\end{proof}

\subsection{ Definition of $u_R$ and a few straightforward  estimates}
For simplicity of notations, we omit the subscript $R.$ We define
\begin{align}\label{defurho}
u = U + V + \rho.
\end{align}
Then 
\begin{align*}
\Delta_{\bn} u + \la u + |u|^{p-1}u = f- \tilde f + \mathcal{R}
\end{align*}
where 
\begin{align*}
\mathcal{R} = |U+V+\rho|^{p-1}(U+V+\rho) - (U+V)^p - p(U+V)^{p-1}\rho.
\end{align*}
For $1 < p \leq 2,$ we have $|\mathcal{R}| \leq |\rho|^p$ and hence
\begin{align}\label{ufrhola}
\|\Delta_{\bn} u + \la u + |u|^{p-1}u\|_{L^{\frac{p+1}{p}}(\bn)} &\leq \|f-\tilde f\|_{L^{\frac{p+1}{p}}(\bn)} + \|\rho^p\|_{L^{\frac{p+1}{p}}(\bn)} \notag\\
&\leq \|f-\tilde f\|_{L^{\frac{p+1}{p}}(\bn)} + \|\rho\|^p_{L^{p+1}(\bn)}\notag\\
&\lesssim \|f-\tilde f\|_{L^{\frac{p+1}{p}}(\bn)} + \|\rho\|^p_{\la}.
\end{align}
Thanks to Lemma \ref{equivalence of norms}(b) we note that $\|\rho\|_{\la} \approx \|\tilde f\|_{H^{-1}(\bn)}. $
In section \ref{E and F} we will show that $\|f-\tilde f\|_{L^{\frac{p+1}{p}}(\bn)} = o( \|f\|_{H^{-1}(\bn)}).$ Since $L^{\frac{p+1}{p}}(\bn)  \hookrightarrow H^{-1}(\bn)$ we deduce 
\begin{align} \label{rhofh-1}
\|\rho\|_{\la} \approx \|f\|_{H^{-1}(\bn)}.
\end{align}

However, the estimate $\|f-\tilde f\|_{L^{\frac{p+1}{p}}(\bn)} = o( \|f\|_{H^{-1}(\bn)}),$ is non-trivial. Poincar\'e inequality implies $L^2(\bn) \hookrightarrow H^{-1}(\bn),$ which creates substantial difficulty in the hyperbolic space. This is where we need a major 
diversion in respect of the ideas used in \cite{FG}. We will achieve this in the next two subsections, building upon a delicate improved integrability of the eigenfunctions of the operator $(-\Delta_{\bn} - \la)/(U+V)^{p-1},$ and  
improved interaction estimates. The latter one follows the idea of our interaction estimates obtained in section \ref{interaction of bubbles section (a)} and based on the observation that the interaction integral estimates of $U^{\alpha}V^{\beta}$ over a half ball can be improved. The former one is much more delicate and would be derived in the next subsection.


\subsection{Improved integrability of the eigenfunctions}

We recall that $U = \calu[-Re_1]$ and $V = \calu[Re_1].$ Let $\Psi \in H^1(\bn)$ solves the following equation 

\begin{equation}\label{EVP}
-\Delta_{\bn} \Psi \, - \, \lambda \Psi \, = \, \mu (U + V)^{p-1} \Psi,
\end{equation}
where $\mu < \frac{p}{1 - \varepsilon}$  for some $\varepsilon > 0,$  and we assume $\Psi$ is normalized i.e. $\int_{\bn} \Psi^{2} (U + V)^{p-1} \, \dvg =1.$ 
We emphasize that $\Psi$ in \eqref{EVP} depends on $R.$ For simplicity of notations we shall denote it by $\Psi.$
\medskip 

The main purpose of this section is to derive the integrability of $\Psi^2$ with respect to the weight $(U+V)^{-\alpha}$ for some $\alpha >0.$  Let us state the main result of this subsection. 

\begin{proposition}\label{weighted-prop}
Let $n \geq 3,$ and $\Psi$ satisfies \eqref{EVP} and normalized. Then there exists a small $\alpha >0$ such that

\begin{equation}
\int_{\bn} \dfrac{\Psi^2}{(U + V)^{2 \alpha}} \, \dvg \, \lesssim \, 1, \quad \mbox{as} \ R \rightarrow 1.
\end{equation}
\end{proposition}

\medskip 

The proof of the above proposition requires many intriguing steps.  For the reader's convenience, we break the proof into several small lemmas. 

\medskip 

We first collect some straightforward estimates. We remark that Lemma \ref{easylemma1}(a), and by standard (interior) elliptic estimates, $\Psi$ is smooth, and hence pointwise evaluation of $\Psi$ is passable.

\begin{lemma}\label{easylemma1}
Let $n \geq 3 ,$ and $\Psi$ satisfies \eqref{EVP}, then the following results hold

\begin{itemize}
\item[(a)] $ \| \Psi\|^2_{\lambda} : = \int_{\bn} \left( |\nabla_{\bn} \Psi|^2 \, - \, \lambda \Psi^2  \right) \, \dvg = \mu,$ \quad \mbox{and} \  $ \| \Psi\|_{L^2} \lesssim 1.$

\medskip 

\item[(b)]  $|\Delta_{\bn} \Psi| = |\Psi| \left( \lambda + \mu (U + V)^{p-1} \right) \approx |\Psi|. $ 

\medskip 

\item[(c)] 
\begin{align*}
\int_{\bn} \dfrac{|\Delta_{\bn} \Psi + \lambda \Psi |^2}{(U + V)^{2(p-1)}} \, \dvg \, = \, \mu \int_{\bn} \Psi^2 \, \dvg \lesssim 1.
\end{align*}

\medskip 

\item[(d)] For every $0 < \alpha \leq \frac{p-1}{2}$ the following equality holds 
\begin{align*}
\int_{\bn} \dfrac{|\nabla_{\bn} \Psi|^2}{W^{2\alpha}} \, \dvg - \lambda \int_{\bn} \dfrac{\Psi^2}{W^{2\alpha}} \, \dvg \, 
= \, 2 \alpha \int_{\bn} \dfrac{\Psi \langle \nabla_{\bn} \Psi, \nabla_{\bn} W\rangle}{W^{2 \alpha + 1}} \, \dvg \, + \, \bigO(1),
\end{align*}
where we denoted $W = U + V$ and $O(1)$ is a quantity that stays bounded as $R\uparrow 1.$
\end{itemize}
\end{lemma}

\begin{proof}
The proof of $(a) - (c)$ is straightforward and follows immediately. For part $(d),$ multiply \eqref{EVP} by $\frac{\Psi}{W^{2 \alpha}}$ and integrate by parts 
\begin{align*}
\int_{\bn} \Big\langle \nabla_{\bn} \Psi, \nabla_{\bn}\left( \dfrac{\Psi}{W^{2 \alpha}} \right) \Big\rangle_{\bn} \, \dvg - \lambda \int_{\bn} \dfrac{\Psi^2}{W^{2 \alpha}} \, \dvg
= \mu \int_{\bn} \Psi^2 W^{p-1 - 2\alpha} \, \dvg = \bigO(1).
\end{align*}
Now the desired conclusion can be reached after some simple computations.
\end{proof}

\medskip 

\begin{remark}
Note that in the above we are using $\int_{\bn} \frac{\Psi^2}{W^{2\alpha}} < +\infty$ which is not yet justified. 
One can proceed as follows: Let $\eta_k$ be a smooth approximation of the identity as $k \rightarrow \infty$ such that $\|\nabla_{\bn}\eta_k\|_{L^{\infty}} + \|\Delta_{\bn}\eta_k\|_{L^{\infty}} = o(1).$ Then we can argue as described below (we apply it to $\Psi \eta_k$ instead of $\Psi$ and we let $k \rightarrow \infty$ instead of varying $R \uparrow 1$). 
\end{remark}

Fix $\alpha$ small (for the moment we assume $2\alpha < p-1$). Assume on the contrary that $\int_{\bn} \frac{\Psi^2}{W^{2 \alpha}} \, \dvg \rightarrow \infty$ as $R \rightarrow 1.$ Let us define the following limits which will play a very essential role in the subsequent proof. 

\begin{align*}
\bullet & \ \ \ l = \limsup_{R \rightarrow 1} \dfrac{ \int_{\bn} \dfrac{|\nabla_{\bn} \Psi|^2}{W^{2 \alpha}} \, \dvg}{\int_{\bn} \dfrac{\Psi^2}{W^{2 \alpha}} \, \dvg}, \ \ \
\ \ \ m =  \limsup_{R \rightarrow 1} \dfrac{ \int_{\bn}  \dfrac{\Psi \langle \nabla_{\bn} \Psi, \nabla_{\bn} W \rangle}{W^{2 \alpha + 1}} \, \dvg}{\int_{\bn} \dfrac{\Psi^2}{W^{2 \alpha}} \, \dvg}\\
\bullet & \ \ \ \mbox{and} \ k = \limsup_{R \rightarrow 1} \dfrac{ \int_{\bn} \dfrac{\Psi^2 |\nabla_{\bn} W|^2}{W^{2 \alpha + 2}} \, \dvg}{\int_{\bn} \dfrac{\Psi^2}{W^{2 \alpha}} \, \dvg}.
\end{align*}
Using Lemma~\ref{easylemma1} and the estimate $|\nabla_{\bn} W| = |\nabla_{\bn} U + \nabla_{\bn} V| \lesssim  U + V = W,$ we conclude that the above limits are finite. 
Moreover, from Lemma \ref{easylemma1}(d) we obtain

$$
l - \lambda = 2 \alpha m,
$$
 and using Cauchy-Schwartz inequality we obtain 
 
 $$
 m \leq k^{\frac{1}{2}} l^{\frac{1}{2}}.
 $$

\begin{lemma}\label{lem-comput}
Let $2\alpha < p-1.$ Then the following relations hold

\medskip 
\begin{itemize}
\item[(a)] $l = \lambda + 2 \alpha m \geq \frac{(n-1)^2}{4} + \alpha \lambda + \alpha(\alpha + 1) k.$ 

\medskip 

\item[(b)] $\lambda = l - \alpha \lambda - \alpha(2\alpha + 1) k.$
\end{itemize}

\end{lemma}

\begin{proof}


For $\beta > 0$ we compute
\begin{align}\label{simple-1}
\Delta_{\bn} \left( \frac{1}{W^{\beta}} \right) &= \mbox{div}_{\bn} \left( - \beta W^{-\beta -1} \nabla_{\bn} W \right) \notag \\
& = - \beta W^{- \beta -1} \Delta_{\bn} W + \beta (\beta + 1) W^{-\beta -2} |\nabla_{\bn} W|^2 \notag \\
& = - \beta W^{- \beta -1}(\Delta_{\bn} (U + V)) + \beta (\beta + 1) W^{-\beta -2} |\nabla_{\bn} W|^2 \notag\\
& =  - \beta W^{- \beta -1} (- \lambda W - (U^p + V^p)) + \beta (\beta + 1) W^{-\beta -2} |\nabla_{\bn} W|^2 \notag \\
& = \lambda \beta W^{-\beta} + \beta W^{- \beta -1} (U^p + V^p) + \beta (\beta + 1) W^{-\beta -2} |\nabla_{\bn} W|^2.
\end{align}
Using Poincar\'e inequality and \eqref{simple-1} we get 
\begin{align}\label{simple-2}
\frac{(n-1)^2}{4} \int_{\bn} \left( \frac{\Psi}{W^{\alpha}} \right)^2 \, \dvg &\leq \int_{\bn} \left| \nabla_{\bn} \left( \frac{\Psi}{W^{\alpha}} \right) \right|^2 \, \dvg 
 = - \int_{\bn} \frac{\Psi}{W^{\alpha}} \Delta_{\bn} \left( \frac{\Psi}{W^{\alpha}} \right) \, \dvg \notag \\
 & = - \int_{\bn} \frac{\Psi}{W^{\alpha}} \left[ \frac{\Delta_{\bn} \Psi}{W^{\alpha}} + 
 2 \langle \nabla_{\bn} \Psi, \nabla_{\bn} \frac{1}{W^{\alpha}} \rangle + \Psi \Delta_{\bn} \left(\frac{1}{W^{\alpha}} \right) \right] \, \dvg \notag \\
 & = - \int_{\bn} \frac{\Psi \Delta_{\bn} \Psi}{W^{2 \alpha}} \, \dvg + 2 \alpha \int_{\bn} \frac{\Psi \langle \nabla_{\bn} \Psi, \nabla_{\bn} W \rangle}{W^{2\alpha + 1}} \, \dvg \notag \\
 & - \int_{\bn} \frac{\Psi^2}{W^{\alpha}} \left[ \alpha \lambda W^{-\alpha} + \alpha W^{-\alpha -1} (U^p + V^p) 
 + \alpha(\alpha + 1) \frac{|\nabla_{\bn} W|^2}{W^{\alpha + 2}}  \right] \, \dvg \notag \\
 & = - \int_{\bn} \frac{\Psi (\Delta_{\bn} \Psi + \lambda \Psi)}{W^{2 \alpha}} \, \dvg + \lambda \int_{\bn} \frac{\Psi^2}{W^{2 \alpha}} \, \dvg \notag \\
& + 2 \alpha \int_{\bn} \frac{\Psi \langle \nabla_{\bn} \Psi, \nabla_{\bn} W \rangle}{W^{2\alpha + 1}} \, \dvg - \alpha \lambda \int_{\bn} \frac{\Psi^2}{W^{2 \alpha}} \, \dvg \notag\\
& - \alpha(\alpha + 1) \int_{\bn} \frac{\Psi^2 |\nabla_{\bn} W|^2}{W^{2 \alpha + 2}} \, \dvg + \, \bigO(1),
\end{align}
where in the last line we have used the fact that $U^p + V^p \lesssim W^p$ and hence $W^{-2 \alpha -1} (U^p + V^p) \lesssim W^{p-1-2\alpha}.$ In the above expression we divide by 
$\int_{\bn} \frac{\Psi^2}{W^{2 \alpha}} \, \dvg$ and let $R \rightarrow 1,$ to complete the proof of part $(a).$

\medskip 

To obtain part $(b)$ we argue as follows:  we multiply \eqref{simple-1}, applied for $\beta = 2 \alpha,$  by $\Psi^2$ and integrate 

\begin{align}\label{simple-3}
- \int_{\bn} \frac{\Psi \Delta_{\bn} \Psi}{W^{2 \alpha}} \, \dvg = \int_{\bn} \frac{|\nabla_{\bn} \Psi|^2}{W^{2 \alpha}}\, \dvg - \alpha \lambda 
\int_{\bn} \frac{\Psi^2}{W^{2 \alpha}} \, \dvg \notag\\
 - \alpha(2 \alpha + 1) \int_{\bn} \frac{\Psi^2 |\nabla_{\bn} W|^2}{W^{2 \alpha + 2}} \, \dvg \, + \, \bigO(1).
\end{align}
Furthermore, using the equation 

\begin{equation}\label{simple-4}
\frac{- \Psi \Delta_{\bn} \Psi}{W^{2 \alpha}} = \lambda \frac{\Psi^2}{W^{2 \alpha}} + \Psi^2 W^{p-1 - 2 \alpha}.
\end{equation}
and substituting \eqref{simple-4} into \eqref{simple-3} and by letting $R \rightarrow 1,$ we obtain the desired result $(b).$ This completes the proof of the lemma. 
\end{proof}

\medskip 

The next lemma provides us with a control over $k$ in terms of $\alpha.$ Note that according to our assumption on $p\leq 2$ and hence $2\alpha <p-1<1.$

\begin{lemma}
We have 
$\frac{\lambda}{1 + 2 \alpha} \leq k \leq \frac{\lambda}{ 1 - 2 \alpha}.$

\end{lemma} 

\begin{proof}
A simple computation using Lemma~\ref{lem-comput} gives us $2  m = \frac{l - \lambda}{\alpha} =  \lambda +  (2 \alpha + 1) k.$ Then 

\begin{align*}
 (\lambda +  (2 \alpha + 1) k)^2 = 4 m^2 \leq 4 l k = 4 ( \lambda + \alpha \lambda + \alpha (2 \alpha + 1)k)k.
\end{align*}
Therefore $\lambda^2 \leq (2 \alpha + 1) (2 \alpha -1)k^2 + 2 \lambda k$ and we obtain the following 

$$
(1 - 4 \alpha^2) k^2 - 2 \lambda k + \lambda^2 \leq 0,
$$
here we note that $2 \alpha \leq p-1< 1,$ provided $p < 2.$ Hence the result follows immediately. 

\end{proof}

Now we are ready to prove the proposition~\ref{weighted-prop}

\medskip

\noindent
{\bf Proof of Proposition~\ref{weighted-prop}.} If the required expression blows up then we have 

$$
\frac{(n-1)^2}{4} + \alpha \lambda + \alpha (\alpha + 1) k \leq l = \lambda + \alpha \lambda + \alpha (2 \alpha +1)k.
$$
The above implies 

$$
\frac{(n-1)^2}{4} - \lambda - \alpha^2 k \leq 0.
$$
This is impossible since, $\lambda < \frac{(n-1)^2}{4}$ and $k = \lambda(1 + \circ(1)),$ as $\alpha \rightarrow 0,$ hence the proposition follows.

\medskip


\subsection{Improved interaction estimates} This section is devoted to the improvement of interaction estimates obtained in Section~\ref{interaction of bubbles section} in the sense described below. We know that for $p < 2,$ there holds 

\begin{align}\label{c1c2c3}
\int_{\bn} U^2 V^{2 (p-1)} \, \dvg \approx e^{-2c(n, \lambda) (p-1)d(-Re_1, Re_1)}.
\end{align}
and it is optimal. However, it turns out that we can improve the estimate if we restrict the domain to the half ball $\bn_{+}:= \{ x \in \bn: x_1 > 0 \}.$ 


\medskip

For simplicity of notations, in this subsection, we shall denote $z_1 = -Re_1$ and $z_2 = Re_1.$
\begin{proposition}\label{general-prop}
Let $U = \calu[- Re_1] = \calu[z_1]$ and $V = \calu[Re_1] = \calu[z_2],$ then 

\begin{equation}\label{iminest}
\int_{\bn_+} U^{\alpha} V^{\beta (p-1)} \, \dvg \approx 
\begin{cases}
e^{- \frac{c(n, \lambda)( \alpha + \beta (p-1))}{2} d(z_1,z_2)}, \quad \  \qquad \mbox{if}  \ \ \alpha > \beta (p-1),\\
d(z_1,z_2) e^{- c(n, \lambda) \beta(p-1) d(z_1,z_2)}, \quad \mbox{if} \quad \alpha = \beta(p-1).
\end{cases}
\end{equation}
\end{proposition}

\begin{remark}
Comparing with \eqref{c1c2c3} we see that the above proposition is a nontrivial improvement of the previous interaction estimates when $p$ is very close to $1.$
\end{remark}

\begin{proof}
Since $z_1 =- Re_1$ and $z_2 = Re_1,$ it is easy to verify that
\begin{align*}
d(z_1, z_2) = 2 d(0, z_2) = 2 \log \left(  \frac{1 + R}{1 -R}\right).
\end{align*}
Therefore $e^{d(z_1, z_2)} \approx \frac{1}{(1 - R^2)^2}$ if $R\approx 1.$ For simplicity of notations we denote $c = c(n,\la)$ then recall that 
\begin{align*}
U(x) \approx e^{- c(n, \lambda) d(x, z_1)} \approx 
 \left[ \dfrac{(1 -R^2)(1 - |x|^2)}{R^2 |x - z_1^{\star}|^2} \right]^{c},
 \end{align*}
  and similarly, 
 \begin{align*}
 V(x) \approx e^{- c(n, \lambda) d(x, z_2)} \approx 
 \left[ \dfrac{(1 -R^2)(1 - |x|^2)}{R^2 |x - z_2^{\star}|^2} \right]^{c}.
 \end{align*}
 
 We estimate 
 
 \begin{align*}
 \int_{\bn_+} U^{\alpha} V^{\beta(p-1)} \, \dvg &\approx \int_{\bn_+} \left[ \dfrac{(1 -R^2)(1 - |x|^2)}{R^2 |x - z_1^{\star}|^2} \right]^{c\alpha }
 \left[ \dfrac{(1 -R^2)(1 - |x|^2)}{R^2 |x - z_2^{\star}|^2} \right]^{c\beta(p-1)} \dfrac{1}{(1 - |x|^2)^n} \, {\rm d}x \\
 & \approx \dfrac{(1 - R^2)^{c(\alpha + \beta(p-1))}}{R^{2 c(\alpha + \beta(p-1))}} \int_{\bn} \dfrac{(1 - |x|^2)^{c(\alpha + \beta(p-1)) - n}}{|x - z_1^{\star}|^{2 c\alpha} |x - z_2^{\star}|^{2c\beta(p-1)}} \, {\rm d}x.
 \end{align*}
 Since $|x - z_1^{\star}| \approx 1$ in $\bn_+$ as $R \rightarrow 1$ we therefore can neglect the term $|x-z_1^*|.$ Hence  
 
 \begin{align*}
  \int_{\bn_+} U^{\alpha} V^{\beta(p-1)} \, \dvg &\approx (1 - R^2)^{c(\alpha + \beta(p-1))} \int_{\bn_+} \dfrac{(1 - |x|^2)^{c(\alpha + \beta(p-1)) -n}}{|x - z_2^{\star}|^{2 c\beta(p-1)}} \, {\rm d}x \\
  & \approx (1 - R^2)^{c(\alpha + \beta(p-1))} 
  \begin{cases}
  1, \ \ \ \ \ \ \ \ \ \ \ \ \mbox{if} \ \alpha > \beta(p-1), \\
  d(0,z_2), \ \ \ \ \mbox{if} \ \alpha = \beta(p-1),
  \end{cases}
 \end{align*} 

which is precisely \eqref{iminest} and hence the proposition follows. 
\end{proof}


\medskip 

The next result is devoted to finding a lower bound on $\| f\|_{H^{-1}},$ where $f = (U + V)^p - U^p - V^p \geq 0$ and $1 < p \leq 2.$ In particular, we prove the following lemma 

\begin{lemma}\label{estimate-f}
Let $z_1 = - Re_1$ and $z_2 = Re_1.$ Then 

\begin{equation}
\| f\|_{H^{-1}} \gtrsim
\begin{cases}
(d(z_1, z_2))^{\frac{1}{2}}e^{- c(n, \lambda) d(z_1, z_2)} , \quad \mbox{if} \ p =2, \\
\medskip
e^{-\frac{c(n, \lambda) p}{2} d(z_1, z_2)} \qquad \qquad \qquad \mbox{if} \  1< p < 2.
\end{cases}
\end{equation}
\end{lemma}

\begin{proof}
Let us first consider the special case $p =2$ for which $f$ simplifies to $f = 2 UV.$ Fix any  $\alpha \geq 2.$ Then we have 
\begin{align}\label{h-1}
d(z_1,z_2)e^{- c(n, \lambda) \alpha d(z_1, z_2)} \approx \int_{\bn} U^{\alpha} V^{\alpha} \, \dvg \approx \int_{\bn} f U^{\alpha - 1} V^{\alpha - 1} \, \dvg
\leq \|f\|_{H^{-1}} \| U^{\alpha -1} V^{\alpha -1}\|_{H^1}.
\end{align}
A direct computation of the $H^1$ norm yields 

\begin{align}\label{h-1-1}
\| U^{\alpha - 1} V^{\alpha -1}\|_{H^1} &= \| \nabla_{\bn} (U^{\alpha -1} V^{\alpha -1})\|_{L^2}\notag \\
&= \left( \int_{\bn} | U^{\alpha -1} (\alpha -1) (\nabla_{\bn} V) V^{\alpha -2} \, + \, V^{\alpha -1} (\alpha -1) U^{\alpha -2} \nabla_{\bn} U|^2 \, \dvg \right)^{\frac{1}{2}} \notag \\
& \lesssim \left( \int_{\bn} U^{2(\alpha -1)} V^{2(\alpha -1)}  \, \dvg\right)^{\frac{1}{2}} \approx e^{-c(n, \lambda) (\alpha - 1)d(z_1, z_2)} (d(z_1, z_2))^{\frac{1}{2}},
\end{align}
where in the last line we have used $\|\nabla_{\bn} \calu\|_{\bn} \lesssim \calu.$
Now substituting \eqref{h-1-1} into  \eqref{h-1} we obtain the desired result.  

\medskip 

Next, we shall consider the case $ 1 < p < 2.$ We use Green function estimates of $-\Delta_{\bn}$ in the hyperbolic space. Let $G(x, y)$ be the 
Green function of $-\Delta_{\bn}$ and is given by $G(x, y):= G(d(x, y)) := \int_{d(x, y)}^{\infty} (\sinh s)^{-(n-1)} \, {\rm d}s.$ Then $G$ satisfies 

$$
G(x, y) \geq C_n\, e^{-(n-1) d(x, y)},
$$
whenever $d(x, y) \geq 1$ and $C_n$ is a dimensional constant.  Let us define 

$$
u(x) := \int_{\bn} G(x, y) \, f(y) \, \dvg(y).
$$
Then $u$ satisfies,  $- \Delta_{\bn} u = f$ in $\bn.$ Further, multiplying this equation by $u$ and integration by parts gives  

$$
\| \nabla_{\bn} u\|^2_{L^2} = \int_{\bn} f \, u \, \dvg \leq \, \| f\|_{H^{-1}} \| \nabla_{\bn} u\|_{L^2},
$$
and hence we obtain $\|\nabla_{\bn} u\|_{L^2} \leq \|f\|_{H^{-1}}.$ Then the Poincar\'e-Sobolev inequality gives us 

$$
\|f\|_{H^{-1}} \geq \|\nabla_{\bn} u\|_{L^2} \geq \| u\|_{L^{p+1}}.
$$
Our aim is now to derive a point-wise lower bound of $u.$ Recall from subsection \ref{notations section 2},
\begin{equation}
f \approx 
\begin{cases}
UV^{p-1} \quad \mbox{in} \ \bn_+ \\
U^{p-1} V \quad \mbox{in} \ \bn_-.
\end{cases}
\end{equation}
Let us denote $\Omega = \{  x \in \bn : x_1 < -\frac{1}{2}\},$ then  for $x \in \Omega$ and  $y \in \bn_+,$  $d(x, y) \gtrsim 1,$  and hence 

\begin{align}\label{f-1-1}
u(x) &:= \int_{\bn} G(x, y) \, f(y) \, \dvg(y) \gtrsim \int_{\bn_+} G(x, y) \ UV^{p-1}(y) \, \dvg(y) \notag \\
& \approx  \int_{\bn_+} e^{-(n-1) d(x, y)} e^{- c(n, \lambda) d(y, z_1)} e^{-c(n, \lambda)(p-1) d(y, z_2)} \, \dvg(y) \notag \\
& \approx \int_{\bn_+} \left[ \left(\dfrac{(1 - |x|^2)(1 - |y|^2)}{|x|^2 |y - x^{\star}|^{2}} \right)^{n-1} 
\left(\dfrac{(1 - R^2) (1 - |y|^2)}{R^2 |y - z_1^{\star}|^2} \right)^{c(n, \lambda)} \times\right. \notag \\
& \qquad \qquad \qquad \qquad \qquad \ \ \  \left.  \left(\dfrac{(1 - R^2) (1 - |y|^2)}{R^2 |y - z_2^{\star}|^2} \right)^{c(n, \lambda)(p-1)} (1 - |y|^2)^{-n} \right]\, {\rm d}y \notag\\
& \approx (1 - R^2)^{c(n, \lambda) p} \dfrac{(1 - |x|^{2})^{n-1}}{|x|^{2(n-1)}} \int_{\bn_+} 
\dfrac{(1 - |y|^2)^{c(n, \lambda) p -1}}{|y - z_2^{\star}|^{2 c(n, \lambda) (p-1)} |y - x^{\star}|^{2(n-1)}} \, {\rm d}y.
\end{align}
In the last line we have used the fact that $|y - z_1^{\star}| \approx 1$ if $y \in \bn_+.$ Also by our assumption $|y - x^{\star}| \approx 1,$ whenever $y \in \bn_+$
 and $x \in \Omega.$ Moreover, $2c(n, \lambda) (p-1) - c(n, \lambda) p + 1 = c(n, \lambda)p - 2 c(n, \lambda) + 1 = c(n, \lambda) (p-2) + 1 < n$ and hence 
 
 \begin{align*}
 \int_{\bn_+} \dfrac{(1 - |y|^2)^{c(n, \lambda) p -1}}{|y - z_2^{\star}|^{2 c(n, \lambda) (p-1)} } \, {\rm d}y \gtrsim 1
 \end{align*}
 
 Therefore we deduce from \eqref{f-1-1} that 
 \begin{align*}
 u(x) \gtrsim (1 - R^2)^{c(n, \lambda) p} (1 - |x|^2)^{n-1} \quad \mbox{in} \ \Omega,
 \end{align*}
and consequently,  
\begin{align*}
\|u\|_{L^{p+1}} \gtrsim (1 - R^2)^{c(n, \lambda) p} \left( \int_{\Omega} (1 - |x|^2)^{(n-1)(p+1) - n} \, {\rm d}x \right)^{\frac{1}{p+1}} \approx (1 - R^2)^{c(n, \lambda) p}.
\end{align*}
Since $e^{-\frac{d(z_1,z_2)}{2}} \approx (1 - R^2),$ the proof of the proposition has been completed.
\end{proof}

\medskip

As a consequence of the above results we prove 

\begin{proposition} \label{intfpsi}
Let $\Psi$ satisfies \eqref{EVP}. Then there exists a constant $\alpha_0>0$ depending on $n,\la$ and $p$ such that 

\begin{equation}
\left|  \int_{\bn} f \, \Psi \ \dvg\right| \lesssim
\begin{cases}
 \| f\|_{H^{-1}}^{1+ \alpha_0}, \quad \ \ \ \ \qquad \quad \  \ \ \ \ \ \ \ \mbox{if} \ 1 < p < 2,\\
\medskip
\| f\|_{H^{-1}} \Big|\ln \|f\|_{H^{-1}}\Big|^{-\frac{1}{2}}, \quad \ \ \ \ \mbox{if} \ p =2.
\end{cases}
\end{equation}

\end{proposition}

\begin{proof}
We first consider $1 < p < 2.$ Let  $\alpha >0$ be the small quantity determined by Proposition ~\ref{weighted-prop}. We denote $W = U + V,$ then $W \approx V$ (respectively, $U$) in $\bn_+$ (respectively, $\bn_-$). By Cauchy-Schwartz inequality we deduce

\begin{align} \label{fpsiestimate}
\left|  \int_{\bn} f \, \Psi \, \dvg\right| &\lesssim \int_{\bn_+} U V^{p-1} |\Psi| \, \dvg \,+ \,   \int_{\bn_-} U^{p-1} V |\Psi| \, \dvg \notag \\
& \approx \int_{\bn_+} U V^{p-1 + \alpha}  \frac{|\Psi|}{W^{\alpha}} \, \dvg \, + \, \int_{\bn_-} U^{p-1 + \alpha} V  \frac{|\Psi|}{W^{\alpha}} \, \dvg \notag \\
& \lesssim \left( \int_{\bn_+} U^2 V^{2(p-1 + \alpha)} \, \dvg \right)^{\frac{1}{2}} \, + \, \left( \int_{\bn_-} U^{2(p-1 + \alpha)} V^2 \, \dvg \right)^{\frac{1}{2}} \notag \\
& \approx e^{-c(n, \lambda) \frac{2 + 2(p-1 + \alpha)}{4} d(z_1, z_2)} \ \ \ \mbox{if $p-1+\alpha <1$}\notag \\
& \approx \left( e^{- c(n, \lambda) \frac{p}{2} d(z_1, z_2)} \right)^{\frac{p+\alpha}{p}} \lesssim   \| f \|_{H^{-1}}^{1+\alpha_0},
\end{align}
where $\alpha_0 = \frac{\alpha}{p}.$ Note that if $p-1+ \alpha \geq 1,$ then \eqref{fpsiestimate} is still valid for any $\alpha_0 <\frac{2-p}{p}.$ The case, $p = 2$ can be tackled as follows 
 
 \begin{align}
\left|  \int_{\bn} f \, \Psi \, \dvg\right|  \lesssim  \int_{\bn} UV |\Psi| \, \dvg &\lesssim \left( \int_{\bn_+} U^2 V^{2(1 + \alpha)} \, \dvg \right)^{\frac{1}{2}} + \left( \int_{\bn_-} U^{2(1 + \alpha)} V^{2} \, \dvg \right)^{\frac{1}{2}}\notag \\
& \lesssim \left( \int_{\bn} U^2 V^{2(1 + \alpha)} \, \dvg \right)^{\frac{1}{2}} \notag\\ 
&\approx e^{-c(n, \lambda) d(z_1, z_2)} 
 \end{align}
 By Lemma \ref{estimate-f} and using the bound $\|f\|_{H^{-1}} \lesssim \|f\|_{L^2}$ we get $|\ln \|f\|_{H^{-1}}| \approx d(z_1,z_2)$ and hence $e^{-c(n, \lambda) d(z_1, z_2)} \lesssim \|f\|_{H^{-1}}|\ln \|f\|_{H^{-1}}|^{-\frac{1}{2}}.$ This completes the proof of the lemma.  

\end{proof}

Based on the above estimates, we can now follow the ideas of Figalli and Glaudo \cite{FG} and complete the proof of sharpness of the stability estimate of Theorem \ref{main theorem}. There are a few technical points that need to be addressed in the case of the hyperbolic space, however, the main arguments are almost along the lines of \cite{FG}. For the reader's convenience, we plan to provide as many details as possible, so that the article can be read without prior consultation of the Euclidean case.


\subsection{The subspaces $\mathcal{E}$ and $\mathcal{F}$ are close}\label{E and F}

We recall the definition of generalized Hausedorff distance between two subspaces of a Hilbert space $H$ 
\cite{Morris, FG}.

\begin{definition}
Let $X$ be a Hilbert space and let $\|\cdot\|_X$ be the associated norm. Let $E, F$ be two subspaces of $X.$ The distance $d_X(E,F)$ between them is defined as 
\begin{align*}
d_X(E,F) = d_{H}(\overline{B_X(0,1)}\cap E, \overline{B_X(0,1)}\cap F).
\end{align*}
where $d_H$ is the Hausedorff distance defined by
\begin{align*}
d_H(A,B) = \inf\{r>0 \ | \ A \subset B + \overline{B_X(0,r)}, \ \mbox{and} \ B \subset A + \overline{B_X(0,r)}\},
\end{align*}
and $B_X(0,r)$ denotes the open ball in $X$ of radius $r.$  It is easy to verify that the Hausedorff distance satisfies the scaling property $d_H(\kappa A, \kappa B) = \kappa \ d_H(A,B),$ for $\kappa>0.$
\end{definition}

 The main properties of $d_X$ used in this article are mentioned in the next few bullet points
\begin{itemize}
\item For every subspaces $E,F$ of $X$ it holds $d_X(E,F) \leq 1,$ and if two subspaces have different dimensions then $d_X(E,F) = 1.$

\medskip

\item If $E$ and $F$ are two closed subspaces of $X,$ then $d_X(E,F)$ is given by 
\begin{align*}
d_X(E,F) = \sup_{e \in E\backslash \{0\}} \frac{\|e - \pi_F(e)\|_X}{\|e\|_X} = \sup_{f \in F\backslash \{0\}} \frac{\|f - \pi_E(f)\|_X}{\|f\|_X},
\end{align*}
where $\pi_E$ and $\pi_F$ are the projection operators onto $E$ and $F$ respectively. Hence as a corollary
\begin{align}\label{generalized hdis}
\|e - \pi_F(e)\|_X \leq d_X(E,F)\|e\|_X, \ \ \|f - \pi_E(f)\|_X \leq d(E,F)\|f\|_F
\end{align}
holds for every $e \in E$ and $f \in F.$

\medskip

\item $d_X$ is invariant under orthogonal complement, i.e.,
\begin{align*}
d_X(E,F) = d_X(E^{\bot}, F^{\bot}).
\end{align*}

\end{itemize}

In this section we assume $X = L^2_{(U+V)^{p-1}}(\bn)$ and $E = \mathcal{E}, F = \mathcal{F}$ where $\mathcal{E}, \mathcal{F}$ are defined in section \ref{notations section 2}. The main purpose of this section is to prove that $d_X(\mathcal{E},\mathcal{F}) = o(1),$ which would ensure $\mbox{dim} \  \mathcal{E} = \mbox{dim} \ \mathcal{F}.$

\begin{lemma} \label{same dimension}
Let $X = L^2_{(U+V)^{p-1}}(\bn),$ then
$d_X(\mathcal{E}, \mathcal{F}) = o(1)$ as $R \uparrow 1.$ In particular,
\begin{align*}
\mbox{dim} \  \mathcal{E} = \mbox{dim} \ \mathcal{F} = 2(n+1).
\end{align*}
\end{lemma}

\begin{proof}
We divide the proof into two steps.

\medskip

\noindent
{\bf Step 1:} For any $\psi \in B_{\mathcal{F}}$ there exists $\psi^{\prime} \in \mathcal{E}$ such that 
\begin{align*}
\|\psi - \psi^{\prime}\|_{L^2_{(U+V)^{p-1}}(\bn)} = o(1), \ \ \mbox{as} \ R \uparrow 1.
\end{align*}
Without loss of generality, we may assume that $\psi$ satisfies
\begin{align*}
-\Delta_{\bn} \psi - \la \psi = \mu U^{p-1}\psi, \ \ \ \mbox{where} \ \ \mu = 1 \ \mbox{or} \ p.
\end{align*}
The idea is to project $\psi$ onto $\mathcal{E}.$ We rewrite the equation satisfied by $\psi$ as follows
\begin{align*}
-\Delta_{\bn} \psi - \la \psi - \mu (U+V)^{p-1}\psi= \mu \left(U^{p-1} - (U+V)^{p-1}\right)\psi =: \ f.
\end{align*}
We write $\psi = \sum_k \alpha_k \psi_k,$ where $\{\psi_k\}$ is the $L^2_{(U+V)^{p-1}}$-orthonormal  basis consisting of eigenvectors of $(-\Delta_{\bn} - \la)/((U+V)^{p-1}).$ Then by Lemma \ref{equivalence of norms}(a) we get
\begin{align*}
\sum_k \alpha_k^2 \left(1 - \frac{\mu}{\la_k}\right)^2 &= \|(-\Delta_{\bn} - \la)^{-1}f\|_{L^2_{(U+V)^{p-1}}(\bn)}^2 \\
&\lesssim \|f\|^2_{L^2(\bn)} \\
&\lesssim \int_{\bn} |U^{p-1} - (U+V)^{p-1}|^2\psi^2 \ \dvg \\
&\lesssim \left(\int_{\bn} |U^{p-1} - (U+V)^{p-1}|^{\frac{2(p+1)}{p-1}}\right)^{\frac{p-1}{p+1}}
\left(\int_{\bn} |\psi|^{p+1}\right)^{\frac{2}{p+1}}\\
&= o(1), \ \ \mbox{as} \ \ R \uparrow 1.
\end{align*}
We define $\psi^{\prime} = \pi_{\mathcal{E}}(\psi),$ the projection of $\psi$ on to the subspace
$\mathcal{E}.$ Then 
\begin{align*}
\|\psi - \psi^{\prime}\|_{L^2_{(U+V)^{p-1}}(\bn)}^2 &= \sum_{\la_k \geq p(1-\epsilon)^{-1}} \alpha_k^2 \\
&\leq \frac{1}{\epsilon^2}\sum_{\la_k \geq p(1-\epsilon)^{-1}} \alpha_k^2 \left(1- \frac{\mu}{\la_k}\right)^2 \\
&=o(1), \ \ \mbox{as} \ \ R \uparrow 1.
\end{align*}
 
 \noindent
 {\bf Step 2:} For any $\psi \in B_{\mathcal{E}},$ there exists $\psi^{\prime} \in \mathcal{F}$
 such that 
 \begin{align*}
 \|\psi - \psi^{\prime}\|_{L^2_{(U+V)^{p-1}}(\bn)} = o(1), \ \ \mbox{as} \ \ R \uparrow 1.
 \end{align*}
 The idea is to localize $\psi$ around the bubbles $U$ and $V$ respectively and approximate the localized function by the sum of elements of $\mathcal{F}.$ Here recall 
 $\mathcal{F}$ is composed of eigenvectors of $(-\Delta_{\bn}-\la)/U^{p-1}$ and $(-\Delta_{\bn} - \la)/V^{p-1}$ with eigenvalues $1$ and $p$ respectively.
 
 Let $\eta$ be a smooth cutoff function identically $1$ around $-Re_1.$ We will decide $\eta$ later. Set $\varphi_U = \psi \eta$ and we write $\varphi_U = \sum_k \beta_k\varphi_k,$ where
 $\{\varphi_k\}$ is an orthonormal basis of $L^2_{U^{p-1}}(\bn)$ consisting of eigenvectors of $(-\Delta_{\bn} - \la)/U^{p-1}.$ Here for clarity, we remark that orthonormal with respect to the $L^2_{U^{p-1}}$ inner product, and we denote the eigenvalues by $\{\tilde \la_k\}$. By Lemma \ref{equivalence of norms}(a), denoting $f = (-\Delta_{\bn} - \la - \mu U^{p-1})\varphi_U$ we get
 \begin{align} \label{identity beta1}
 \sum_{k} \beta_k^2\left(1 - \frac{\mu}{\tilde \la_k}\right)^2 &= \|(-\Delta_{\bn} - \la)^{-1}f\|_{L^2_{U^{p-1}}}^2 \notag\\
 &\lesssim \|f\|_{L^2(\bn)}^2 = \|(-\Delta_{\bn} - \la - \mu U^{p-1})\varphi_U\|_{L^2(\bn)}^2.
 \end{align}
 Since $(-\Delta_{\bn} - \la - \mu (U+V)^{p-1})\psi = 0,$ where $\mu < p(1-\epsilon)^{-1},$
 we compute
 \begin{align*}
 (-\Delta_{\bn} - \la - \mu U^{p-1})\varphi_U &= (-\Delta_{\bn} - \la - \mu U^{p-1})(\psi \eta)\\
 &= (-\Delta_{\bn} \eta) \psi - 2 \langle \nabla_{\bn} \psi, \nabla_{\bn} \eta \rangle_{\bn}
 + \mu((U+V)^{p-1} - U^{p-1})\psi\eta.
 \end{align*} 
 Let $d = d(-Re_1,Re_1).$ Let $\eta$ be the cut off function as in Lemma \ref{l:lip func}, such that $\eta \equiv 1$ on  $ B(-Re_1,\frac{d}{4})$ and support $\eta$ contained in $B(-Re_1, \frac{d}{2})$ so that 
 \begin{align*}
 \|\Delta_{\bn} \eta\|_{L^{\infty}(\bn)} + \|\nabla_{\bn}\eta\|_{L^{\infty}(\bn)} = o(1).
 \end{align*}
Then we have supp $\varphi_U \subset \bn_-$ and
\begin{align} \label{identity beta2}
\|(-\Delta_{\bn} - \la - \mu U^{p-1})\varphi_U\|_{L^2(\bn)} &\leq \|\psi \Delta_{\bn}\eta\|_{L^2(\bn)} + 
2\| \langle \nabla_{\bn} \psi, \nabla_{\bn} \eta \rangle_{\bn}\|_{L^2(\bn)} \notag\\
& \ \ \ \ + \|\mu((U+V)^{p-1} - U^{p-1})\psi\eta\|_{L^2(\bn)} \notag\\
 &= o(1), \ \ \mbox{as} \ \ R \uparrow 1.
\end{align}
From \eqref{identity beta1} and \eqref{identity beta2} we get $ \sum_{k} \beta_k^2\left(1 - \frac{\mu}{\tilde \la_k}\right)^2 = o(1)$ as $R \uparrow 1.$ We set $\psi_U = \sum_{\tilde \la_k < p(1-\epsilon)^{-1}} \beta_k\varphi_k \in \mathcal{F},$ then 
\begin{align*}
\|\psi_U - \varphi_U\|_{L^2_{U^{p-1}}(\bn)} = \sum_{\tilde \la_k \geq p(1-\epsilon)^{-1}} \beta_k^2 \leq \frac{1}{\epsilon^2} \sum_{\tilde \la_k \geq p(1-\epsilon)^{-1}} \beta_k^2\left(1 - \frac{\mu}{\tilde \la_k}\right)^2 = o(1), \ \ \mbox{as} \ \ R \uparrow 1.
\end{align*}
Similarly, we find $\varphi_V$ such that supp $(\varphi_V) \subset \bn_+$ and $\psi_V \in \mathcal{F}$ such that 
\begin{align*}
\|\psi_V - \varphi_V\|_{L^2_{V^{p-1}}(\bn)} = o(1), \ \ \mbox{as} \ \ R \uparrow 1.
\end{align*}
We recall that $U + V \approx U$ in $\bn_-$ and $\approx V$ in $\bn_+.$ Also, recall that 
$\psi_U$ is an element of $\mathcal{F}_U,$ $|\psi_U| \lesssim U$ holds.
With this information at hand, we refine the previous estimates as follows
\begin{align*}
\|\psi_U - \varphi_U\|_{L^2_{(U+V)^{p-1}}(\bn)} &\leq \|\psi_U - \varphi_U\|_{L^2_{(U+V)^{p-1}}(\bn_-)} + \|\psi_U\|_{L^2_{(U+V)^{p-1}}(\bn_+)} \\ 
&\lesssim \|\psi_U - \varphi_U\|_{L^2_{U^{p-1}}(\bn_-)} + \|\psi_U\|_{L^2_{V^{p-1}}(\bn_+)}\\
&= o(1), \ \ \mbox{as} \ \ R \uparrow 1.
\end{align*}
In the last line, we have used 
\begin{align*}
 \|\psi_U\|_{L^2_{V^{p-1}}(\bn_+)}^2 \lesssim \int_{\bn_+}U^2V^{p-1} \ \dvg  = o(1) \ \ \mbox{as} \ \ R \uparrow 1.
\end{align*}
Similarly, we get
\begin{align*}
\|\psi_V - \varphi_V\|_{L^2_{(U+V)^{p-1}}(\bn)} = o(1), \ \ \mbox{as} \ \ R \uparrow 1.
\end{align*}
We set $\psi^{\prime} = \psi_U + \psi_V,$ then 
\begin{align*}
\|\psi - \psi^{\prime}\|_{L^2_{(U+V)^{p-1}}(\bn)} &\leq \|\varphi_U - \psi_U\|_{L^2_{(U+V)^{p-1}}(\bn)} + \|\varphi_V - \psi_V\|_{L^2_{(U+V)^{p-1}}(\bn)} \\
& \ \ \ \ + \|\psi - \varphi_U - \varphi_V\|_{L^2_{(U+V)^{p-1}}(\bn)} \\
& = o(1) + \|\psi - \varphi_U - \varphi_V\|_{L^2_{(U+V)^{p-1}}(\bn)}, \ \ \mbox{as} \ \ R \uparrow 1.
\end{align*}
Since $\varphi_U = \psi$ on $B(-Re_1, \frac{d}{4})$ and $ \varphi_V = \psi$ on $B(Re_1, \frac{d}{4})$ we have
\begin{align*}
\|\psi - \varphi_U - \varphi_V\|_{L^2_{(U+V)^{p-1}}(\bn)}^2 &\lesssim 
\int_{\bn \backslash (B(-Re_1, \frac{d}{4}) \cup B(Re_1, \frac{d}{4}))} \psi^2(U+V)^{p-1} \ \dvg \\
&\lesssim o(1)\int_{\bn} \psi^2 \ \dvg \\
&=o(1), \ \ \mbox{as} \ \ R \uparrow 1.
\end{align*}
This completes the proof.
\end{proof}

As a corollary, we have the following lemma
\begin{lemma} \label{ffh-1}
Let $f$ and $\tilde f$ be defined as in \eqref{deff} and \eqref{deftf} respectively and $R \approx 1.$ 
\begin{itemize}
\item[(a)] If $p = 2,$ then 
\begin{align*}
\|f - \tilde f\|_{L^{\frac{p+1}{p}}(\bn)} \lesssim \|f\|_{H^{-1}(\bn)}\Big|\ln \|f\|_{H^{-1}(\bn)}\Big|^{-\frac{1}{2}}.
\end{align*}
\item[(b)] If $1 < p < 2,$ then there exists $\alpha_0 >0$ depending on $n,\la$ and $p$ such that 
\begin{align*}
\|f - \tilde f\|_{L^{\frac{p+1}{p}}(\bn)} \lesssim \|f\|_{H^{-1}(\bn)}^{1+\alpha_0}.
\end{align*}
\end{itemize}
In particular, $\|f - \tilde f\|_{H^{-1}(\bn)} \lesssim \|f - \tilde f\|_{L^{\frac{p+1}{p}}(\bn)} = o(\|f\|_{H^{-1}(\bn)}).$
\end{lemma}
\begin{proof}
By definition 
\begin{align*}
f - \tilde f &= (U+V)^{p-1}\left[\frac{f}{(U+V)^{p-1}} - \pi_{\mathcal{E}}\left(\frac{f}{(U+V)^{p-1}}\right)\right]\\
&=(U+V)^{p-1}\sum_{\psi \in B_{\mathcal{E}}} \langle \frac{f}{(U+V)^{p-1}}, \psi\rangle_{L^2_{(U+V)^{p-1}}(\bn)} \psi \\
& = (U+V)^{p-1}\sum_{\psi \in B_{\mathcal{E}}} \langle f, \psi \rangle_{L^2(\bn)} \psi.
\end{align*}
Hence, 
\begin{align*}
\|f - \tilde f\|_{L^{\frac{p+1}{p}}(\bn)}  &\leq \sum_{\psi \in B_{\mathcal{E}}} |\langle f, \psi \rangle_{L^2(\bn)}|\| \psi(U+V)^{p-1}\|_{L^{\frac{p+1}{p}}(\bn)}\\
&\leq \sum_{\psi \in B_{\mathcal{E}}} |\langle f, \psi \rangle_{L^2(\bn)}| \|\psi\|_{L^{p+1}(\bn)}
\|U + V\|_{L^{p+1}(\bn)} \\
&\lesssim \sup_{\psi \in B_{\mathcal{E}}}|\langle f, \psi \rangle_{L^2(\bn)}|.
\end{align*}
By Proposition \ref{intfpsi}, the lemma follows.
\end{proof}

Thanks to \eqref{ufrhola}, Lemma \ref{ffh-1} and \eqref{rhofh-1}  we have the following
\begin{lemma} \label{finallemma}
If $R \approx 1$, then 
\begin{align*}
\|\Delta_{\bn} u + \la u + |u|^{p-1}u\|_{L^{\frac{p+1}{p}}(\bn)} \lesssim
\begin{cases}
\|\rho\|_{\la}|\ln \|\rho\|_{\la}|^{-\frac{1}{2}}, \ \ \ \ \mbox{if} \ p=2, \\
\|\rho\|_{\la}^{1+\alpha_0}, \ \ \ \ \ \ \ \ \ \ \ \ \ \ \mbox{if} \ 1 < p < 2,
\end{cases}
\end{align*}
where $\alpha_0$ is a positive constant depending on $n,\la$ and $p.$
\end{lemma}

\begin{remark}
We expect that Lemma \ref{finallemma} should hold for all $\alpha_0 < \frac{2-p}{p},$ however, we do not know yet how to prove it. Given the Euclidean stability estimate of \cite{Weietal}, $\alpha_0 = \frac{2-p}{p}$ should be optimal. 
\end{remark}


\section{Proof of the counter examples}

In this section, we prove that $\|\rho\|_{\la} \approx \mbox{dist}(u, \mbox{sum of hyperbolic bubbles}),$ which will complete the proof of counter-example. First, we need some preparatory lemmas.

\begin{lemma}\label{derivative vu}
The followings hold
\begin{itemize}
\item[(a)] For fixed $w\in \bn,$ the following holds
\begin{align*}
\calu[z] - \calu[w]  + 2\calu^{\prime}(d(\cdot, w))\frac{\tau_{-w}(\cdot)}{|\tau_{-w}(\cdot)|}\cdot \tau_{-w}(z) = o(|\tau_{-w}(z)|), \ \ \mbox{in} \ H^1(\bn)
\end{align*}
as $d(z,w) \rightarrow 0.$  

\item[(b)] $\calu[w]$ is orthogonal to $2\calu^{\prime}(d(\cdot, w))\frac{\tau_{-w}(\cdot)}{|\tau_{-w}(\cdot)|}$ with respect to $\langle \cdot, \cdot \rangle_{\lambda}.$

\item[(c)] Moreover, denoting $u_j[w] = 2\calu^{\prime}(d(\cdot, w))\frac{(\tau_{-w}(\cdot))_j}{|\tau_{-w}(\cdot)|}, j = 1,2,\ldots,n$
we have  
\begin{align*}
\|u_j[w]\|_{L^{p+1}(\bn)} \approx 1 , \ \ \ \|u_j[w]\|_{\la} \approx 1
\end{align*}
uniformly with respect to $w,$ where $(\tau_{-w}(\cdot))_j$ denotes the $j$-th component of $\tau_{-w}(\cdot).$
\end{itemize}
\end{lemma}

\begin{proof}
\noindent
(a) We first consider the case $w=0.$ The general case then follows from this particular case.
\begin{align}\label{someeq0}
\calu[z](x) - \calu[0](x) &= \calu(d(x,z)) - \calu(d(x,0)) \notag\\
& = \int_0^1 \frac{d}{ds} \calu(d(x,sz)) \ ds \notag\\
&= \int_0^1 \calu^{\prime}(d(x,sz)) \frac{d}{ds}d(x,sz) \ ds \notag\\
&= \int_0^1 \left(\calu^{\prime}(d(x,sz)) - \calu^{\prime}(d(x,0))\right)\frac{d}{ds}d(x,sz) \ ds \notag \\
& \ \ \ \ \ \ \ \ \ \ \ \ \ \ \ \ \ \ \ \ + \calu^{\prime}(d(x,0))\left(d(x,z) - d(x,0)\right).
\end{align}
We recall, by definition
\begin{align} \label{12vj12}
V_j(\calu[0])(x) &= 2\calu^{\prime}(d(x,0)) \frac{x_j}{|x|}.
\end{align}
Multiplying \eqref{12vj12} by $z_j$ and summing over $j = 1,2,\ldots, n$ we get
\begin{align}\label{someeq4}
V[\calu[0]](x) \cdot z = 2\calu^{\prime}(d(x,0)) \frac{(x \cdot z)}{|x|}
\end{align}

We see from the formula $d(x,sz) = 2 \ln \left(\frac{\sqrt{1-2sx\cdot z + s^2|x|^2|z|^2} + |x-sz|}{\sqrt{(1-|x|^2)(1-s^2|z|^2)}}\right)$ that
\begin{align}
\frac{d}{ds}d(x,sz) = O\left(\frac{|z|}{|x|}\right), \ \ \ \mbox{as} \ |z|\rightarrow 0,
\end{align}
in the $C^1$ topology and it is easy to see from the same formula that 
\begin{align}
|\nabla_{\bn} d(x,z)|_{\bn} = \left(\frac{1-|x|^2}{2}\right)|\nabla \ d(x,z)| = O(1),
\end{align}
uniformly in $x$ as $z \approx 0.$
Now we estimate $d(x,z) - d(x,0)$ as $|z| \rightarrow 0.$ Using the same formula for the distance we get
\begin{align}\label{abc1}
e^{d(x,z)-d(x,0)} - 1 &= \frac{(\sqrt{1-2x\cdot z + |x|^2|z|^2} + |x-z|)^2}{(1+|x|)^2(1-|z|^2)} - 1 \notag\\
&=\frac{-4 x\cdot z -2|x| + 2|x-z|\sqrt{1-2x\cdot z + |x|^2|z|^2} + O(|z|^2)}{(1+|x|)^2(1-|z|^2)}
\end{align}

Now 
\begin{align}\label{abc2}
2|x-z|\sqrt{1-2x\cdot z + |x|^2|z|^2}&=2\sqrt{(|x|^2-2x\cdot z +|z|^2)(1-2x\cdot z + |x|^2|z|^2)}\\
&=2\sqrt{|x|^2-2(x\cdot z)(1+|x|^2) +O(|z|^2)}
\end{align}
On the other hand
\begin{align}\label{abc3}
\left(|x| - \frac{x \cdot z}{|x|}(1+|x|^2)\right)^2 = |x|^2 - 2(x \cdot z)(1+|x|^2) + O\left(\frac{|z|^2}{|x|}\right),
\end{align}
in the $C^1$-topology. Combining \eqref{abc1},\eqref{abc2} and \eqref{abc3} we get
\begin{align*}
e^{d(x,z)-d(x,0)} - 1 &= \frac{-4 x\cdot z  - 2\frac{x \cdot z}{|x|}(1+|x|^2) + O(|z|^2)}{(1+|x|)^2(1-|z|^2)} \\
&=-2\frac{\frac{x \cdot z}{|x|}(1 + |x|)^2 + O(|z|^2)}{(1+|x|)^2(1-|z|^2)} \\
& = -2\frac{x \cdot z}{|x|} + O(|z|^2).
\end{align*}
As a result, we deduce
\begin{align} \label{abc4}
d(x,z) - d(x,0) = \ln \left(1-2\frac{x \cdot z}{|x|} + O\left(\frac{|z|^2}{|x|}\right)\right) = -2\frac{x \cdot z}{|x|} + O\left(\frac{|z|^2}{|x|}\right).
\end{align}
We get from \eqref{someeq0}, \eqref{someeq4} and \eqref{abc4} that 
\begin{align}\label{abc5}
\calu[z](x) - \calu[0](x) + V[\calu[0]](x)\cdot z &= \int_0^1 \left(\calu^{\prime}(d(x,sz)) - \calu^{\prime}(d(x,0))\right)\frac{d}{ds}d(x,sz) \ ds \\
& \ \ \ \ \ \ \ \ \ \ \ + O\left(\calu^{\prime}(d(x,0))\frac{|z|^2}{|x|}\right),
\end{align}
in $H^1(\bn),$ which completes the proof of the lemma for $w=0$.

For general case, thanks to \eqref{abc4} we note that 
\begin{align*}
d(x,z) - d(x,w) &= d(\tau_{-w}(x), \tau_{-w}(z)) - d(\tau_{-w}(x),0) \\
&= -2\frac{\tau_{-w}(x) }{|\tau_{-w}(x)|}\cdot \tau_{-w}(z) + O\left(\frac{|\tau_{-w}(z)|^2}{|\tau_{-w}(x)|}\right).
\end{align*}
as $|\tau_{-w}(z)| \rightarrow 0,$ a consequence of $d(z,w) = d(\tau_{-w}(z), 0) = \ln \left(\frac{1 + |\tau_{-w}(z)|}{1-|\tau_{-w}(z)|}\right) \rightarrow 0.$ Now the proof can be completed as before.

\medskip

\noindent
(b) For simplicity we denote $u = 2\calu^{\prime}(d(\cdot, w))\frac{(\tau_{-w}(\cdot))_j}{|\tau_{-w}(\cdot)|},$ for some $j \in \{1,2,\ldots,n\},$ where $(\tau_{-w}(\cdot))_j$ denotes the $j$-th component of $\tau_{-w}(\cdot).$ Multiplying the equation satisfied by $\calu[w]:$ $-\Delta_{\bn} \calu[w] - \la \calu[w] = \calu[w]^p$ by $u$ and integrating by parts we get 
\begin{align*}
\langle \calu[w], u \rangle_{\la} &= \int_{\bn} \calu[w]^p u \ \dvg \\
&= 2\int_{\bn} \calu(d(x,w))^p\calu^{\prime}(d(x, w))\frac{(\tau_{-w}(x))_j}{|\tau_{-w}(\cdot)|} \ \dvg \\
&=2 \int_{\bn}\calu(d(x,0))^p \calu^{\prime}d(x,0)\frac{x_j}{|x|} \ \dvg \\
&=\int_{\bn} \calu[0]^p V_j(\calu[0]) \ \dvg = 0
\end{align*}
 where in the third line we used the change of variable formula.
 
 \medskip
 
 \noindent
 (c) Direct computation using the change of variable formula gives 
 \begin{align*}
 \int_{\bn} |u_j[w](x)|^{p+1} \ \dvg &\approx \int_{\bn} \left|\calu^{\prime}(d(x, w))\frac{(\tau_{-w}(x))_j}{|\tau_{-w}(x)|}
 \right|^{p+1} \ \dvg \\
 & = \int_{\bn} \left|\calu^{\prime}(d(x,0)) \frac{x_j}{|x|}\right|^{p+1} \ \dvg \approx \int_{\bn}|V_j(\calu[0])|^{p+1} \ \dvg \approx 1.
 \end{align*}
The  statement regarding the Sobolev norm now follows from the Poincar\'e-Sobolev inequality and the bound on $|\calu^{\prime \prime}| \lesssim \calu.$ 
\end{proof}

\begin{remark}
The expression of $u_j[w]$ is slightly different from  $V_j(\calu[w]).$ In the appendix section we will derive an explicit expression of $V_j(\calu[w])$ and show that $u_j[w]$ is a linear combination of $\{V_k(\calu[w])\}_{k=1}^n.$
\end{remark}


\begin{lemma} \label{compare norms}
There exists a constant $C_0$ such that for every $R \approx 1$ and  $\phi \in \mathcal{F},$ there holds
\begin{align*}
C_0^{-1}\|\phi\|_{L^2_{(U+V)^{p-1}}(\bn)} \leq \|\phi\|_{\la} \leq C_0\|\phi\|_{L^2_{(U+V)^{p-1}}(\bn)}.
\end{align*}
\end{lemma}

\begin{proof}
Without loss of generality we may assume that $\phi \in F_U,$ i.e., $\phi$ satisfies
\begin{align*}
-\Delta_{\bn} \phi - \la \phi = \mu U^{p-1}\phi, \ \ \ \ \mbox{where} \ \mu =1 \ \mbox{or} \ p.
\end{align*}
Multiplying the above equation by $\phi$ and integration by parts gives
\begin{align*}
\|\phi\|_{\la}^2 = \mu\int_{\bn}\phi^2U^{p-1} \ \dvg \leq \mu\int_{\bn}\phi^2(U+V)^{p-1} \ \dvg,
\end{align*}
which gives one side of the inequality. On the other side
\begin{align}\label{vbvbvb1}
\|\phi\|_{\la}^2 = \int_{\bn}\phi^2U^{p-1} \ \dvg = \int_{\bn}\phi^2(U+V)^{p-1} \ \dvg + \int_{\bn}\phi^2(U^{p-1} -(U+V)^{p-1}) \ \dvg.
\end{align}
We can estimate 
\begin{align}\label{vbvbvb}
\left|\int_{\bn}\phi^2(U^{p-1} -(U+V)^{p-1}) \ \dvg \right| &\leq \|\phi\|_{L^{p+1}(\bn)}^2\|U^{p-1}  - (U+V)^{p-1}\|_{L^{\frac{p+1}{p-1}}(\bn)}^{\frac{p-1}{p+1}} \notag\\
& \leq o(1) \|\phi\|_{\la}^2.
\end{align}
As a result \eqref{vbvbvb} and \eqref{vbvbvb1} together give the other side of the inequality, hence completing the proof of the lemma.
\end{proof}

\begin{lemma} \label{rho..distance}
Let $\rho$ be defined as in Lemma \ref{perturbed problem} and $u = U+V+\rho.$ If $R$
is sufficiently close to $1,$ then 
\begin{align*}
\|\rho\|_{\la} \lesssim \inf_{c_1,c_2 \in \R, z_1,z_2 \in \bn} \|u - c_1\calu[z_1] - c_2\calu[z_2]\|_{\la} 
\end{align*}
\end{lemma}

\begin{proof}
We define $\sigma  = U + V$ and $\sigma^{\prime} = c_1\calu[z_1] + c_2\calu[z_2],$ then 
$u - \sigma^{\prime} = \rho + (\sigma - \sigma^{\prime}).$
If $\|\sigma - \sigma^{\prime}\|_{\la} \geq 2 \|\rho\|_{\la},$ then $\|u - \sigma^{\prime}\|_{\la} \geq \|\rho\|_{\la}$ and hence there is nothing to prove. So we may assume $\|\sigma - \sigma^{\prime}\|_{\la} < 2 \|\rho\|_{\la}.$ Since $\|f\|_{L^2(\bn)} \rightarrow 0$ as $R\uparrow 1$ we have $\|\rho\|_{\la} \rightarrow 0$ and hence $\|\sigma - \sigma^{\prime}\|_{\la} \rightarrow 0$ as $R \uparrow 1.$ To begin with, we may without loss of generality assume that $\|U - c_1\calu[z_1]\|_{\la} = o(1)$ and $\|V - c_2\calu[z_2]\|_{\la} = o(1).$
We define 
\begin{align*}
\delta = |c_1 - 1|+|c_2-1| + |\tau_{Re_1}(z_1)|+|\tau_{-Re_1}(z_2)|,
\end{align*}
then $\delta = o(1)$ as $R\uparrow 1.$
We will show that $\|\sigma - \sigma^{\prime}\|_{\la} \approx \delta.$ We compute
\begin{align*}
\|\sigma - \sigma^{\prime}\|_{\la}^2 = \|U - c_1\calu[z_1]\|_{\la}^2 + \|V - c_2\calu[z_2]\|_{\la}^2 
+ 2\langle U - c_1\calu[z_1], V - c_2\calu[z_2] \rangle_{\la}.
\end{align*}
Note that $d(z_1, -Re_1) = d(z_1, \tau_{-Re_1}(0)) = d(\tau_{Re_1}(z_1),0) \approx |\tau_{Re_1}(z_1)|$ as 
$d(z_1,-Re_1) \rightarrow 0.$ Similarly for $z_2.$

Since $V_j(U)$ (respectively, $V_j(V)$) is $\langle \cdot,\cdot \rangle_{\la}$ orthogonal to $U$ (respectively, $V$) 
we deduce from Lemma \ref{derivative vu} 
\begin{align*}
\|U - c_1\calu[z_1]\|_{\la}^2 = \|(1 - c_1)U + c_1(U - \calu[z_1])\|_{\la}^2 
\approx (|c_1 - 1| + |\tau_{Re_1}(z_1)|)^2.
\end{align*}
 Similarly,
 \begin{align*}
 \|V - c_2\calu[z_2]\|_{\la}^2 = \|(1 - c_2)U + c_2(U - \calu[z_2])\|_{\la}^2 
\approx (|c_2 - 1| + |\tau_{-Re_1}(z_2)|)^2.
 \end{align*}
On the other hand, using again Lemma \ref{derivative vu} and fact that the interaction between $U$ and $V$ is small as $R \approx 1$ and $|V_j(U)| \lesssim U, \  |V_j(V)| \lesssim V$ we get
\begin{align}
\langle U -c_1\calu[z_1], V - c_2\calu[z_2]\rangle_{\la} \lesssim o(\delta^2) + \delta^2\|UV\|_{L^2(\bn)} = o(\delta^2),
\end{align} 
and hence 
\begin{align*}
\|\sigma - \sigma^{\prime}\|_{\la} \approx \delta.
\end{align*}
The next step is to show that 
\begin{align}\label{a09087}
|\langle \rho, \sigma - \sigma^{\prime}\rangle_{\la}| = o(\|\rho\|_{\la} \|\sigma - \sigma^{\prime}\|_{\la})
\end{align}
which would conclude that 
\begin{align*}
\|u - \sigma^{\prime}\|_{\la} = \|\rho + (\sigma - \sigma^{\prime})\|_{\la} \approx \|\rho\|_{\la} + \|\sigma - \sigma^{\prime}\|_{\la} \geq \|\rho\|_{\la},
\end{align*}
and complete the proof of the lemma. The remains of the proof of the lemma will be concentrated on the
 proof of \eqref{a09087}. Let $\rho^{\bot}$ be the orthogonal projection of $\rho$ on to $\mathcal{F}^{\bot}$ with respect to the $L^2_{(U+V)^{p-1}}(\bn)$ inner product. Recall that $\rho \in \mathcal{E}^{\bot}.$ Since the subspaces $\mathcal{E}$ and $\mathcal{F}$ are close and the distance between $\mathcal{E}$ and $\mathcal{F}$
is the same as the distance between $\mathcal{E}^{\bot}$ and $\mathcal{F}^{\bot},$ Lemma \ref{same dimension} and \eqref{generalized hdis} imply
\begin{align} \label{nmnmnm1}
\|\rho - \rho^{\bot}\|_{L^2_{(U+V)^{p-1}}(\bn)}\leq d(\mathcal{E},\mathcal{F})\|\rho\|_{L^2_{(U+V)^{p-1}}(\bn)} = o\left(\|\rho\|_{L^2_{(U+V)^{p-1}}(\bn)}\right).
\end{align}
 Moreover, from Lemma \ref{perturbed problem} and \eqref{rhofh-1} we deduce
 \begin{align}\label{nmnmnm2}
 \|\rho\|_{L^2_{(U+V)^{p-1}}(\bn)}^2 \leq \|\rho\|_{\la}^2 + \left|\int_{\bn} \tilde f \rho \ \dvg\right| \lesssim \|\rho\|_{\la}^2.
 \end{align}
On the other hand, since $\rho - \rho^{\bot} \in \mathcal{F},$ Lemma \ref{compare norms} gives
\begin{align}\label{nmnmnm3}
\|\rho - \rho^{\bot}\|_{L^2_{(U+V)^{p-1}}(\bn)} \approx \|\rho -  \rho^{\bot}\|_{\la}.
\end{align}
Hence \eqref{nmnmnm1}, \eqref{nmnmnm2}, \eqref{nmnmnm3} together gives
\begin{align} \label{mnbv1}
\|\rho -  \rho^{\bot}\|_{\la} = o(\|\rho\|_{\la}).
\end{align}

As a corollary of \eqref{mnbv1}  we deduce
\begin{align} \label{missing21}
|\langle \sigma - \sigma^{\prime}, \rho\rangle_{\la}| &\leq |\langle \sigma - \sigma^{\prime}, \rho^{\bot}\rangle_{\la}| + 
|\langle \sigma - \sigma^{\prime}, \rho - \rho^{\bot}\rangle_{\la}| \notag \\
& \leq |\langle \sigma - \sigma^{\prime}, \rho^{\bot}\rangle_{\la}| + \|\sigma - \sigma^{\prime}\|_{\la}\|\rho - \rho^{\bot}\|_{\la} \notag \\
& \leq |\langle \sigma - \sigma^{\prime}, \rho^{\bot}\rangle_{\la}| + o(\|\sigma - \sigma^{\prime}\|_{\la}\|\rho\|_{\la}),
\end{align}
 In order to estimate $|\langle \sigma - \sigma^{\prime}, \rho^{\bot}\rangle_{\la}|$ we decompose it as 
\begin{align} \label{mnbv2}
|\langle \sigma - \sigma^{\prime}, \rho^{\bot}\rangle_{\la}| \leq |\langle U - c_1\calu[z_1], \rho^{\bot}\rangle_{\la}|  + |\langle V- c_2\calu[z_2], \rho^{\bot}\rangle_{\la}|,
\end{align}
and noting that the two terms are identical in nature, it is enough to estimate the first term.

To proceed further, we use the following: Let us denote by $\mathcal{F}^{\bot}$ the $L^2_{(U+V)^{p-1}}(\bn)$-the orthogonal complement of $\mathcal{F}.$ Let $\psi \in \mathcal{F}^{\bot}\cap H^{1}(\bn)$ and $\phi \in \mathcal{F}.$ With out loss of generality assume that  $-\Delta_{\bn} \phi - \la \phi = \mu U^{p-1}\phi$ where $\mu = 1$ or $p.$ Then 
\begin{align} \label{mnbv3}
\left|\langle \psi , \phi\rangle_{\la} \right|=\left| \mu\int_{\bn} \psi  \phi U^{p-1} \ \dvg \right|&= \left| \mu\int_{\bn} \psi  \phi (U^{p-1} - (U+V)^{p-1}) \ \dvg \right| \notag\\
&\lesssim \|\psi\|_{L^{p+1}(\bn)}\|(U^{p-1} - (U+V)^{p-1})\|_{L^{\frac{p+1}{p-1}}(\bn)}\|\phi\|_{L^{p+1}(\bn)} \notag \\
&\lesssim o(1) \|\psi\|_{\la}\|\phi\|_{\la},
\end{align}
holds for every $\psi \in \mathcal{F}^{\bot} \cap H^1(\bn)$ and $\phi \in \mathcal{F}.$ Since $U, V_j(U) \in \mathcal{F}$ and $\rho^{\bot} \in \mathcal{F}^{\bot},$ using \eqref{mnbv3} we get
\begin{align*}
 |\langle (1 - c_1)U - c_1V(U) \cdot \tau_{Re_1}(z_1), \rho^{\bot}\rangle_{\la}| &\lesssim o(1)\|(1 - c_1)U - c_1V(U) \cdot \tau_{Re_1}(z_1)\|_{\la}\|\rho^{\bot}\|_{\la}\\
 &\lesssim o(1)(|c_1-1| + |\tau_{Re_1}(z_1)|)\|\rho\|_{\la} \\
 & \lesssim o(1)\delta \|\rho\|_{\la} \lesssim o(\|\sigma - \sigma^{\prime}\|_{\la}\|\rho\|_{\la}),
 \end{align*}
where in second last line we have used $\|\rho^{\bot}\|_{\la} \approx \|\rho\|_{\la}$ (thanks to \eqref{mnbv1}).  Therefore  
\begin{align}\label{mnbv5}
|\langle U - c_1\calu[z_1], \rho^{\bot}\rangle_{\la}| &\leq |\langle (1 - c_1)U - c_1V(U) \cdot \tau_{Re_1}(z_1), \rho^{\bot}\rangle_{\la}| \notag\\
& \ \ + |c_1\langle U + V(U) \cdot \tau_{Re_1}(z_1) - \calu[z_1], \rho^{\bot}\rangle_{\la}| \notag\\
&\leq o(\|\sigma - \sigma^{\prime}\|_{\la}\|\rho\|_{\la}) + \|U + V(U) \cdot \tau_{Re_1}(z_1) - \calu[z_1]\|_{\la}\|\rho^{\bot}\|_{\la} \notag\\
&\leq o(\|\sigma - \sigma^{\prime}\|_{\la}\|\rho\|_{\la}) + o(|\tau_{Re_1}(z_1)|)\|\rho\|_{\la}\notag\\
& \lesssim o(\|\sigma - \sigma^{\prime}\|_{\la}\|\rho\|_{\la}).
\end{align}
The desired bound \eqref{a09087} now follows from \eqref{mnbv1}, \eqref{missing21}, \eqref{mnbv2} and \eqref{mnbv5}.
\end{proof}

\medskip

\subsection{The function $u$ and $u^+$ are  counter examples}
In this section, we will complete the proof of counter-example. First, we prove Theorem \ref{main counter example}
and then we will show as in \cite{FG} that $u^{+} := \max\{u,0\}$ satisfies the same assertion of Theorem \ref{main counter example}. Before starting the proof let us briefly state and prove a lemma which would give us control on $\| u^{-}\|_{\lambda}.$

\begin{lemma}\label{nonlem1}
There holds 

\begin{equation}
\|u^-\|_{\lambda} \lesssim 
\begin{cases}
\|\rho\|_{\la}|\ln \|\rho\|_{\la}|^{-\frac{1}{4}}, \ \ \ \ \mbox{if} \ p=2, \\
\|\rho\|_{\la}^{1+ \frac{\alpha_0}{2}}, \ \ \ \ \ \ \ \ \ \ \ \ \ \ \mbox{if} \ 1 < p < 2,
\end{cases}
\end{equation}
where $u^- = - \mbox{min} \{ u, 0 \}.$

\end{lemma}

\begin{proof}
By applying divergence theorem to the vector field $u^- \nabla_{\bn} u$ we deduce 

\begin{equation}\label{eq1-nonlem1}
\int_{\bn} |\nabla_{\bn} u^{-}|^2 \, \dvg = \int_{ \bn \cap \{ u < 0 \}} u\, (-\Delta_{\bn} u) \, \dvg.
\end{equation}
We recall that $u = U + V + \rho,$ then $u$ satisfies the following equation

$$
-\Delta_{\bn} u \, = \, \lambda u + U^p + V^p +p (U + V)^{p-1} \rho \, + \, \tilde{f}.
$$
Moreover if $u < 0,$ then $U + V < |\rho|.$ Hence in the region $\{ u < 0 \}$ it holds $|u| \lesssim |\rho|$ and $f \lesssim |\rho|^p.$ Hence 

$$
\int_{ \bn \cap \{ u < 0 \}} u\, (-\Delta_{\bn} u) \, \dvg = \lambda \int_{ \bn \cap \{ u < 0 \}} (u^-)^2 \, \dvg \, + 
\, \int_{ \bn \cap \{ u < 0 \}} u(U^p + V^p +p (U + V)^{p-1} \rho \, + \, \tilde{f}) \, \dvg.
$$
Now substituting back in \eqref{eq1-nonlem1} we obtain

\begin{align*}
\| u^{-}\|_{\lambda}^2 \lesssim \int_{\bn} \rho^{p+1} \, + \, \int_{\bn} |\rho| |f - \tilde{f}| \, \dvg \lesssim \|\rho\|^{p+1}_{\lambda} \, + 
\, \| \rho\|_{\lambda} \| f - \tilde{f}\|_{L^{\frac{p+1}{p}}(\bn)}.
\end{align*}
Now the result follows from Lemma~\ref{ffh-1}.

\end{proof}

\medskip

\noindent
{\bf Proof of Theorem \ref{main counter example}.}

\begin{proof}
Let $u = U + V + \rho$ be as defined in \eqref{defurho} and set $\mathcal{D} = \inf_{c_1,c_2 \in \R, z_1,z_2 \in \bn} \|u - c_1\calu[z_1] - c_2\calu[z_2]\|_{\la}.$ Note that $u$ depends on $R.$ Then by \eqref{rhofh-1}
\begin{align*}
\mathcal{D} \leq \|u - U - V\|_{\la} = \|\rho\|_{\la} \lesssim \|f\|_{L^2(\bn)} \rightarrow 0,
\end{align*}
 as $R \uparrow 1.$ On the other hand by Lemma \ref{rho..distance}, $\|\rho\|_{\la} \lesssim \mathcal{D}$ and hence $\|\rho\|_{\la} \approx \mathcal{D}$ as $R \uparrow 1.$ Using this in Lemma \ref{finallemma} we infer that
 \begin{align*}
\|\Delta_{\bn} u + \la u + |u|^{p-1}u\|_{L^{\frac{p+1}{p}}(\bn)} \lesssim
\begin{cases}
\mathcal{D}|\ln\mathcal{D}|^{-\frac{1}{2}}, \ \ \ \ \mbox{if} \ p=2, \\
\mathcal{D}^{1+\alpha_0}, \ \ \ \ \ \ \ \ \ \  \mbox{if} \ 1 < p < 2,
\end{cases}
\end{align*}
completing the proof of part $(a)$ and part $(b)$ of the theorem.

\medskip 

Now we begin the proof for $u^{+},$ i.e., part $(c).$ We essentially follow the steps as in \cite[Section~4.6]{FG}. For the sake of brevity, we shall outline the minor modifications.
 We prove that $u^{+} = (u_{R})^{+}$ satisfies the requirements. One can see this immediately using Lemma~\ref{nonlem1}

$$
\left|   \| u^+ - U - V\|_{\lambda} - \| u - U -V\|_{\lambda} \right| \leq \| u^{-}\|_{\lambda}  \lesssim \circ(\| u - U - V\|_{\lambda}).
$$
The rest of the proof follows by realising the following estimates 

\begin{equation}\label{eq-1non}
\| u^+ - c_1 \calu[z_1] - c_2 \calu[z_2] \|_{\la} \approx \| u - c_1 \calu[z_1] - c_2 \calu[z_2] \|_{\la},
\end{equation}
and 

\begin{equation}\label{eq-2non}
\| \Delta_{\bn} u^+ + \lambda u^+ + (u^+)^p\|_{H^{-1}} \lesssim
\begin{cases}
\| \Delta_{\bn} u + \lambda u + u|u|^{p-1}\|_{H^{-1}} \, + \, \|\rho\|_{\la}|\ln \|\rho\|_{\la}|^{-\frac{1}{4}}, \ \ \ \ \mbox{if} \ p=2,\\
\| \Delta_{\bn} u + \lambda u + u|u|^{p-1}\|_{H^{-1}} \, + \, \|\rho\|_{\la}^{1+ \frac{\alpha_0}{2}}, \ \ \ \ \ \ \ \ \ \ \ \ \ \ \mbox{if} \ 1 < p < 2.
\end{cases}
\end{equation}
The proof of the above estimates follows along the line of \cite[Proof of Theorem~4.3]{FG}. Indeed, using triangle inequality we obtain 

\begin{align*}
\left| \| u^+ -  c_1 \calu[z_1] - c_2 \calu[z_2] \|_{\la} - \| u - c_1 \calu[z_1] - c_2 \calu[z_2]\|_{\la} \right| &\leq \| u^+ - u\|_{\lambda} = \| u^{-}\|_{\lambda} \\
& \lesssim\begin{cases}
\|\rho\|_{\la}|\ln \|\rho\|_{\la}|^{-\frac{1}{4}}, \ \ \ \ \mbox{if} \ p=2,\\
\|\rho\|_{\la}^{1+ \frac{\alpha_0}{2}}, \ \ \ \ \ \ \ \ \ \ \ \ \ \ \mbox{if} \ 1 < p < 2.\\
\end{cases}
\end{align*}
Now the estimate \eqref{eq-1non} follows by using Lemma~\ref{rho..distance} and the fact that $ \frac{t |\log t|^{-\frac{1}{4}}}{t} \rightarrow 0$ and $\frac{t^{1 + \frac{\alpha_0}{2}}}{t} \rightarrow 0$
when $t \rightarrow 0.$ For \eqref{eq-2non}, using triangle and Sobolev inequalities we deduce 

\begin{align*}
\left| \| \Delta_{\bn} u^+ + \lambda u^+ (u^+)^p\|_{H^{-1}} - \| \Delta_{\bn} + \lambda u + u|u|^{p-1}\| \right|_{H^{-1}} & \lesssim 
\| u^+ - u\|_{\lambda} + \| (u^+)^p - u |u|^{p-1}\|_{H^{-1}} \\
& \lesssim \| u^-\|_{\lambda} + \| (u^-)^{p}\|_{L^{\frac{p+1}{p}}(\bn)} \\
& \lesssim \| u^-\|_{\lambda} + \| u^-\|^p_{\lambda} \lesssim \| u^-\|_{\lambda}.
\end{align*}
The desired estimate now follows from \eqref{eq-1non}, \eqref{eq-2non} and Lemma~\ref{nonlem1}. This completes the proof of part $(c)$ of  Theorem~\ref{main counter example}.
\end{proof}

\medskip


\appendix \section{Non-degenaracy and spectral properties of the linearized operator} \label{Appendix}
\setcounter{equation}{0}

In this subsection, we collect a few key lemmas needed for the proof related to  the eigenvalues and eigenfunctions of the linearized operator 
$$\mathcal{L} := (-\Delta_{\bn} - \lambda)/\mathcal{U}[z]^{p-1}.$$
We know that if $\tau_b$ is a hyperbolic translation then $\mathcal{U}[z] \circ \tau_b,$ also solves \eqref{eq1} and hence the kernel of the linearized operator contains non-trivial elements. It was shown in \cite{DG2} that the degeneracy happens only along an $n$-dimensional subspace characterized by the vector fields 
\begin{align*}
V_j(x) := (1 + |x|^2) \frac{\partial}{\partial x_j} \, - \, 2x_j \sum_{l =1}^{n} x_l \, \frac{\partial}{\partial x_l}.
\end{align*}
 for $j = 1, \ldots, n.$ More precisely, we define 
\begin{align*}
 \Phi_j(x) := \frac{d}{dt} \Big|_{t=0} \mathcal{U}[z] \circ \tau_{t e_j} , \ \ 1 \leq i \leq n,
 \end{align*}
then $\Phi_{j}(x) = V_j(\calu[z]),$ $\Phi_j$ solves the eigenvalue problem 
\begin{align} \label{eigenvalue}
-\Delta_{\bn} \Phi_j - \lambda \Phi_j = p \,  \mathcal{U}[z]^{p-1} \, \Phi_j.
\end{align}

and the degeneracy in the solution space to \eqref{eigenvalue} can occur only along the directions $\Phi_j, 1 \leq j \leq n.$ 
 \begin{theorema}[\cite{DG2} ]
 Let $V_j$ be the vector fields in $\bn$ defined above and $\Phi_j = V_j(\calu[z]).$ Then $\{\Phi_j\}_{j=1}^n$ forms a basis for the kernel of $(-\Delta_{\bn} - \lambda - p\calu[z]^{p-1}).$
 \end{theorema}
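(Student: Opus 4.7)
The plan is to exploit the rotational symmetry of $\calu[0]$ together with separation of variables. By Lemma~\ref{lemma1}, the map $u \mapsto u\circ\tau_z$ conjugates the operator at $\calu[z]$ to $\mathcal{L} := -\Delta_{\bn}-\la - p\calu^{p-1}$ with $\calu=\calu[0]$ radial, so I would reduce without loss of generality to $z=0$. That each $\Phi_j$ lies in $\ker\mathcal{L}$ is immediate from differentiating the identity $(-\Delta_{\bn}-\la)(\calu\circ\tau_{te_j}) = (\calu\circ\tau_{te_j})^p$ at $t=0$, while linear independence follows from the explicit expression $\Phi_j(x) = 2\tfrac{x_j}{|x|}\calu'(\rho)$ combined with the strict radial monotonicity of $\calu$. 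The real task is the reverse inclusion: every $\psi\in\ker\mathcal{L}\cap H^1(\bn)$ is a linear combination of the $\Phi_j$.

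For that, I would decompose any such $\psi$ in geodesic polar coordinates $(\rho,\omega)\in[0,\infty)\times S^{n-1}$ as $\psi(\rho,\omega)=\sum_{k\geq 0}\sum_l \psi_{k,l}(\rho)Y_{k,l}(\omega)$, where $\{Y_{k,l}\}_l$ is an orthonormal basis of spherical harmonics of degree $k$ on $S^{n-1}$, so that $-\Delta_{S^{n-1}}Y_{k,l}=k(k+n-2)Y_{k,l}$. Using the polar decomposition $\Delta_{\bn} = \partial_\rho^2 + (n-1)\coth\rho\,\partial_\rho + \sinh^{-2}\rho\,\Delta_{S^{n-1}}$ and the orthogonality of the $Y_{k,l}$, each mode must independently satisfy the singular Sturm--Liouville equation
\begin{equation*}
L_k\psi_{k,l} := -\psi_{k,l}'' - (n-1)\coth\rho\,\psi_{k,l}' + \Big[\tfrac{k(k+n-2)}{\sinh^2\rho} - \la - p\calu(\rho)^{p-1}\Big]\psi_{k,l} = 0
\end{equation*}
subject to $\int_0^\infty(|\psi_{k,l}'|^2+\psi_{k,l}^2)(\sinh\rho)^{n-1}\,d\rho <\infty$. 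The problem is thereby reduced to counting, for each $k\geq 0$, the $H^1$ solutions of this ODE. The sector $k=1$ is straightforward: $g(\rho):=-\calu'(\rho)>0$ satisfies $L_1 g=0$ by differentiating the radial equation for $\calu$ in $\rho$, and the sharp decay $\calu(\rho)\approx e^{-\cn \rho}$ from \eqref{bubble decay} together with the non-integrable Frobenius singularity of the second ODE solution at $\rho=0$ confines the $H^1$ solution space to be one-dimensional. Multiplied by the $n$-fold degree-one harmonic basis, this recovers exactly $\Phi_1,\ldots,\Phi_n$.

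The main obstacle is ruling out kernel contributions from $k=0$ and $k\geq 2$. For $k\geq 2$, I would use a Sturm/Allegretto--Piepenbrink comparison: the existence of the positive function $g$ in $\ker L_1$ certifies that the quadratic form associated to $L_1$ is non-negative on compactly supported radial test functions, and since $L_k - L_1 = \tfrac{k(k+n-2)-(n-1)}{\sinh^2\rho}$ is strictly positive for $k\geq 2$, the operator $L_k$ is strictly positive, ruling out non-trivial $H^1$ solutions. For $k=0$ I would argue via the spectral theory of the weighted eigenvalue problem $(-\Delta_{\bn}-\la)\phi = \mu\calu^{p-1}\phi$: since $\calu$ itself is the positive eigenfunction corresponding to $\mu=1$, Perron--Frobenius identifies $\mu=1$ as the simple principal radial eigenvalue, and a careful ODE analysis---combined with the uniqueness of positive radial solutions in Theorem~\ref{MS_uniq}---shows that the next radial eigenvalue exceeds $p$, so $\mu=p$ is not attained in the radial sector and $\psi_0\equiv 0$. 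Assembling the three cases gives $\dim\ker\mathcal{L}=n$, completing the proof.
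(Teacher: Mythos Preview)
The paper does not prove this statement; it is quoted from \cite{DG2} in the Appendix and used as a black box. So there is no in-paper proof to compare against, and your separation-of-variables outline is indeed the standard route taken in \cite{DG2}: reduce to $z=0$, decompose into spherical harmonics, and analyse the radial ODE sector by sector. Your treatment of the sectors $k=1$ (explicit positive solution $-\calu'$) and $k\geq 2$ (Sturm/Allegretto--Piepenbrink comparison with $L_1$) is correct.

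The weak point is the $k=0$ sector. You write that uniqueness of positive radial solutions (Theorem~\ref{MS_uniq}) together with ``a careful ODE analysis'' forces the second radial eigenvalue to exceed $p$. Two issues. First, Theorem~\ref{MS_uniq} as stated in the paper is only for the critical exponent under hypothesis {\bf (H2)}, whereas the non-degeneracy result is asserted under {\bf (H1)}; you should instead cite the uniqueness statement from \cite{MS} valid under {\bf (H1)}. Second, and more seriously, the implication you are using runs in the wrong direction: uniqueness of the positive radial solution does \emph{not} by itself rule out radial degeneracy of the linearised operator---one can in principle have an isolated solution with a one-dimensional radial kernel. In \cite{DG2} (following the template of Kwong-type arguments) radial non-degeneracy is obtained by a direct ODE analysis of the linearised equation---tracking the sign changes of a second solution and showing it fails the $H^1$ integrability at infinity---rather than as a corollary of uniqueness. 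Your phrase ``careful ODE analysis'' is hiding precisely the non-trivial content of the $k=0$ case; the invocation of uniqueness is decorative, not load-bearing.
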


 As a result, we obtain complete information on the first and the second eigenvalues and corresponding eigenspaces of the operator $\mathcal{L}.$ We recall the following result from our earlier work \cite{BGKM}

\begin{proposition} \label{eigen value lemma}
The first and the second eigenvalues of the operator $$\mathcal{L} := (-\Delta_{\bn} - \lambda)/\mathcal{U}[z]^{p - 1}$$ 
are respectively $1$ and $p.$ Moreover, the first eigenspace is one dimensional and spanned by $\mathcal{U}[z]$ and the second eigenspace is $n$-dimensional and spanned by $\{\Phi_i\}_{1 \leq i \leq n}.$
\end{proposition}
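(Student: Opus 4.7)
\noindent
\textbf{Proof proposal for Proposition~\ref{eigen value lemma}.}

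The plan is to split the argument into three pieces: a Perron--Frobenius step giving $\mu_1=1$, the non-degeneracy theorem of \cite{DG2} giving $\dim \ker(\mathcal{L}-pI)=n$, and a Morse-index argument exploiting that $\calu$ is a ground state to rule out any eigenvalue in the open interval $(1,p)$. First, by Lemma~\ref{lemma1} I may reduce to $z=0$, so that $\calu$ is radial. Viewing $\mathcal{L}$ as a self-adjoint operator on the weighted Hilbert space $L^2_{\calu^{p-1}}(\bn)$ with form domain $H^1(\bn)$, the exponential decay of $\calu$ and standard Rellich compactness on geodesic balls make the embedding $H^1(\bn)\hookrightarrow L^2_{\calu^{p-1}}(\bn)$ compact, yielding a discrete sequence $0<\mu_1\le\mu_2\le\cdots\to\infty$. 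Since $\mathcal{L}\calu=\calu$ and $\calu>0$, the strong-maximum-principle/Perron--Frobenius argument forces $\mu_1=1$ with one-dimensional eigenspace $\mathrm{span}\{\calu\}$.

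Next, equation \eqref{eigenvalue} makes each $\Phi_j$ an eigenfunction with eigenvalue $p$. Writing $\Phi_j(x)=2\,(x_j/|x|)\,\calu'(\rho)$ exhibits them as products of the pairwise independent first-order spherical harmonics $x_j/|x|$ with a common radial factor, so they are linearly independent, giving $\dim\ker(\mathcal{L}-pI)\ge n$. The non-degeneracy result of \cite{DG2} recalled just above supplies the matching upper bound, so the $p$-eigenspace is exactly $\mathrm{span}\{\Phi_j\}_{j=1}^n$.

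To finish it remains to exclude eigenvalues of $\mathcal{L}$ in $(1,p)$. For this I would exploit that $\calu$ is a ground state, i.e.\ a minimizer of $\|u\|_\la^2/\|u\|_{L^{p+1}}^2$ and hence of $I_\la$ on the Nehari manifold $\mathcal{N}=\{u\ne 0 : \|u\|_\la^2=\int_\bn |u|^{p+1}\dvg\}$. The tangent space at $\calu$ equals $T_\calu\mathcal{N}=\{v:\int_\bn \calu^p v\,\dvg=0\}$, and minimality yields the second-variation inequality
\begin{equation*}
\|v\|_\la^2 - p\int_\bn \calu^{p-1}v^2\,\dvg \ge 0 \qquad \text{for every } v\in T_\calu\mathcal{N}.
\end{equation*}
Any eigenfunction $\phi$ of $\mathcal{L}$ with eigenvalue $\mu\ne 1$ is $L^2_{\calu^{p-1}}$-orthogonal to $\calu$, so $\int_\bn \calu^p\phi\,\dvg=0$ and $\phi\in T_\calu\mathcal{N}$; substituting $\|\phi\|_\la^2=\mu\int_\bn \calu^{p-1}\phi^2\,\dvg$ into the above inequality gives $(\mu-p)\int_\bn \calu^{p-1}\phi^2\,\dvg \ge 0$, whence $\mu\ge p$. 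Combined with the preceding paragraph this forces $\mu_2=p$, with eigenspace $\mathrm{span}\{\Phi_j\}_{j=1}^n$. The main technical obstacle is precisely this second-variation step: everything else reduces either to standard spectral theory or to a direct computation once the non-degeneracy result from \cite{DG2} is invoked; one must carefully verify that the minimizing property of $\calu$ on $\mathcal{N}$ genuinely delivers the above inequality (and not merely a weaker tangential version), which uses that the associated Lagrange multiplier is exactly $p$.
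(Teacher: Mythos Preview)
The paper does not prove this proposition: it simply recalls it from the authors' earlier work \cite{BGKM}, so there is no argument in the present paper to compare against. Your proposal, by contrast, supplies a complete self-contained proof, and it is correct.

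A few remarks. The compactness of $H^1(\bn)\hookrightarrow L^2_{\calu^{p-1}}(\bn)$ indeed follows from $\calu^{p-1}\in L^{(p+1)/(p-1)}(\bn)$ combined with local Rellich compactness, so the spectrum is discrete as you claim. The Perron--Frobenius step is standard: a positive eigenfunction forces $\mu_1=1$ and simplicity. For the second eigenspace you correctly combine the explicit eigenfunctions $\Phi_j$ with the non-degeneracy theorem of \cite{DG2} (Theorem~A above), which is exactly how the paper itself packages that information.

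Your worry in the last sentence about the Lagrange multiplier is unnecessary and slightly misstated. Because $\calu$ is a \emph{free} critical point of $I_\la$ (not merely a constrained one), for any $C^2$ curve $\gamma$ in $\mathcal{N}$ with $\gamma(0)=\calu$, $\gamma'(0)=v$ one has
\[
\frac{d^2}{dt^2}\Big|_{t=0} I_\la(\gamma(t)) = D^2 I_\la(\calu)[v,v] + DI_\la(\calu)[\gamma''(0)] = \|v\|_\la^2 - p\int_{\bn}\calu^{p-1}v^2\,\dvg,
\]
since $DI_\la(\calu)=0$. Thus the coefficient $p$ arises directly from the unconstrained Hessian of $I_\la$, and no multiplier correction term is present. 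Minimality of $\calu$ on $\mathcal{N}$ then gives the desired inequality on $T_\calu\mathcal{N}=\{v:\int_{\bn}\calu^p v\,\dvg=0\}$, and your conclusion $\mu\ge p$ for every eigenvalue $\mu\ne 1$ follows exactly as you wrote.
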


In this sequel, we also recall another relevant result used in this article whose proof can be found in \cite{DG2}.

\begin{lemma}\label{half ev}
The first Dirichlet eigenvalue of the operator $$\mathcal{L} := (-\Delta_{\bn} - \lambda)/\mathcal{U}[z]^{p - 1}$$ 
in the negative half $\bn_{-}:= \{ x = (x_1, \ldots, x_n) \in \bn \ : \ x_j <0 \}$ is  $p$ for every $j = 1,\ldots,n$ and the corresponding eigenspace is one dimensional spanned by $V_j(\calu[z]).$ 
\end{lemma}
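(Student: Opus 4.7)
The plan is to exhibit $V_j(\calu[z])$ as a positive Dirichlet eigenfunction of $\mathcal{L}$ on $\bn_-$ with eigenvalue $p$ and then invoke a Perron--Frobenius/Krein--Rutman argument to conclude that $p$ is the first eigenvalue and that its eigenspace is one-dimensional. Since hyperbolic translations are isometries of $\bn$ carrying bubbles to bubbles, we may reduce to the essential case $z=0$, so that $\calu[z]=\calu$ is the standard radial bubble, $\mathcal{L}=(-\Delta_{\bn}-\lambda)/\calu^{p-1}$, and the half-space remains $\bn_-$.

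From the explicit identity derived at the start of Section \ref{interaction of bubbles section},
$$V_j(\calu)(x)\,=\,2\,\frac{x_j}{|x|}\,\calu'(\rho),\qquad \rho=d(x,0),$$
we extract three properties: (i) since $\calu$ is strictly radially decreasing, $\calu'(\rho)<0$ for every $\rho>0$, so $V_j(\calu)$ has sign opposite to that of $x_j$, is strictly positive throughout $\bn_-$, and vanishes on $\{x_j=0\}\cap\bn$; (ii) the pointwise bound $|V_j(\calu)|\leq C\calu$ together with \eqref{bubble decay} yields $V_j(\calu)\in H^1(\bn)$ with exponential decay at infinity; (iii) by Proposition \ref{eigen value lemma},
$$-\Delta_{\bn}V_j(\calu)-\lambda V_j(\calu)\,=\,p\,\calu^{p-1}V_j(\calu)\quad\text{on all of }\bn.$$
Restricting to $\bn_-$ gives a positive Dirichlet eigenfunction of $\mathcal{L}$ at eigenvalue $p$. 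The first Dirichlet eigenvalue $\mu_1$ is the infimum of the Rayleigh quotient
$$\mu_1\,=\,\inf\Bigl\{\|\phi\|_\lambda^2\Big/\textstyle\int_{\bn_-}\calu^{p-1}\phi^2\,\dvg\,:\,\phi\in H^1_0(\bn_-)\setminus\{0\}\Bigr\},$$
attained thanks to compactness of the embedding $H^1_0(\bn_-)\hookrightarrow L^2_{\calu^{p-1}}(\bn_-)$, a consequence of the exponential decay of the weight. By the strong maximum principle the minimizer can be taken strictly positive; any eigenfunction corresponding to a strictly larger eigenvalue is $L^2_{\calu^{p-1}}$-orthogonal to it and hence sign-changing. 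Therefore the existence of the positive eigenfunction $V_j(\calu)$ at eigenvalue $p$ forces $p=\mu_1$, and the first eigenspace is one-dimensional, spanned by $V_j(\calu)$.

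The main technical obstacle is the rigorous verification that $V_j(\calu)|_{\bn_-}\in H^1_0(\bn_-)$: although its vanishing on the boundary hyperplane is manifest from the formula and its exponential decay at infinity is inherited from $\calu$, one must approximate it in $H^1$ by $C_c^\infty(\bn_-)$ functions. A two-step cutoff argument should suffice: first truncate by a radial hyperbolic cutoff at large distance to handle infinity (using Lemma \ref{l:lip func}), then multiply by a smooth bump $\eta_\epsilon$ vanishing in a thin neighborhood $\{|x_j|<\epsilon\}$ of the boundary hyperplane; the linear vanishing $V_j(\calu)\sim x_j$ near $\{x_j=0\}$ makes the $H^1$-error introduced by the boundary cutoff tend to zero with $\epsilon$. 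A secondary softer point is the compactness of the embedding $H^1_0(\bn_-)\hookrightarrow L^2_{\calu^{p-1}}(\bn_-)$, which follows from a standard tail-estimate plus local Rellich argument using the exponential decay of the weight $\calu^{p-1}$.
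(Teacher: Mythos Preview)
The paper does not supply its own proof of this lemma; it merely records the statement in the Appendix and refers to \cite{DG2}. Your approach---exhibit $V_j(\calu)$ as a strictly positive Dirichlet eigenfunction on $\bn_-$ at eigenvalue $p$ and then use the Perron--Frobenius characterisation of the principal eigenvalue (a positive eigenfunction forces the eigenvalue to be the bottom of the spectrum, and the bottom is simple)---is the natural one and is correct. The technical points you flag (membership in $H^1_0(\bn_-)$ via a two-scale cutoff exploiting the linear vanishing at $\{x_j=0\}$ and the exponential decay at infinity; compactness of the weighted embedding from the exponential decay of $\calu^{p-1}$) are routine and your sketches are sound.

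One caveat on the reduction step: the claim ``we may assume $z=0$ and the half-space remains $\bn_-$'' is only valid when $z_j=0$, since only then does the translation $\tau_{-z}$ commute with the reflection $x_j\mapsto -x_j$ and hence preserve $\bn_-$. For $z$ with $z_j\neq 0$ the nodal set of $V_j(\calu[z])$ is \emph{not} the hyperplane $\{x_j=0\}$, and the assertion as literally written (principal eigenvalue equal to $p$ with eigenfunction $V_j(\calu[z])$ on that specific half-space) fails. This does not damage the paper's only application of the lemma, in the proof of Lemma~\ref{interaction derivatives}: there one needs merely that the principal Dirichlet eigenvalue of $\mathcal{L}$ on a compact $B\subset\bn_-$ exceeds $1$, and that already follows from domain monotonicity together with the fact that the global ground state $\calu[z]$ (eigenvalue $1$) is strictly positive and hence not in $H^1_0(\bn_-)$. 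Your argument, read for the case $z=0$ (equivalently, $z$ on the boundary hyperplane), is complete.
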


\subsection{Some properties of $V_j(\calu[z])$}
The main purpose of this section is to derive an explicit expression of $V_j(\calu[z])$ where $z \in \bn$ and $1 \leq j \leq n$ and to show that several norms of $V_j(\calu[z])$ remains uniformly bounded below and above as $z \rightarrow \infty$ in $\bn.$ We divide the proof into several small lemmas.

\begin{lemma} \label{derivative of distance}
For every $z \in \bn$ and $1 \leq j \leq n,$
\begin{align*}
\sinh d(x,z) \ \frac{d}{dt} \Big|_{t = 0} d(\tau_{te_j}(x),z) = \frac{4[x_j(1+|z|^2) - z_j(1+|x|^2)]}{(1-|z|^2)(1-|x|^2)},
\end{align*}
for all $x \in \bn.$
\end{lemma}
\begin{proof}
Differentiating the hyperbolic translation $\tau_{te_j}(x) = \frac{(1-t^2)x + t(|x|^2 + 2tx_j + 1)e_j}{t^2|x|^2 + 2tx_j+1}$ we get
\begin{align*}
\frac{d}{dt}\Big|_{t=0} \tau_{te_j}(x) = (1+|x|^2)e_j - 2x_j x.
\end{align*}
Again differentiating the $\cosh d(\tau_{te_j}(x),z) = 1 + \frac{2|\tau_{te_j}(x)-z|^2}{(1-|z|^2)(1 - |\tau_{te_j}(x)|^2)}$
we get
\begin{align*}
\sinh d(x,z) \ \frac{d}{dt} \Big|_{t = 0} d(\tau_{te_j}(x),z) = &\frac{4(x-z)\cdot [(1+|x|^2)e_j - 2x_j x]}{(1-|z|^2)(1-|x|^2)} \\
&+ \frac{4|x-z|^2}{(1-|z|^2)(1-|x|^2)^2} \ x \cdot[(1+|x|^2)e_j - 2x_j x] \\
& = \frac{4[(1+|x|^2)e_j - 2x_j x]}{(1-|z|^2)(1-|x|^2)^2} \cdot  [(x-z)(1-|x|^2) + |x-z|^2x]
\end{align*}
Direct computations give the result. 
\end{proof}

As a consequence of the above lemma, we have the following 
\begin{corollary} The explicit expression of $V_j(\calu[z])$ is given by
\begin{align*}
V_j(\calu[z])(x) = 2\calu^{\prime}(d(x,z)) \frac{x_j(1+|z|^2) - z_j(1+|x|^2)}{|x-z|\sqrt{1 - 2x\cdot z + |x|^2|z|^2}}, \ \ 
\mbox{for} \ x \in \bn.
\end{align*}

\end{corollary}
\begin{proof}
By definition 
\begin{align}\label{exp1}
V_j(\calu[z])(x) &= \frac{d}{dt} \Big|_{t=0} \calu[z] \circ \tau_{te_j}(x) \notag\\
&=  \frac{d}{dt} \Big|_{t=0} \calu(d( \tau_{te_j}(x), z)) \notag\\
&= \calu^{\prime}(d(x,z))  \frac{d}{dt} \Big|_{t=0} d( \tau_{te_j}(x), z).
\end{align}
Again from the explicit expression of the hyperbolic distance
\begin{align}\label{exp2}
\sinh d(x,z) = 2 \sinh \frac{d(x,z)}{2} \cosh \frac{d(x,z)}{2} = 2 \frac{|x-z|\sqrt{1 - 2x\cdot z+ |x|^2|z|^2}}{(1-|z|^2)(1 - |x|^2)}.
\end{align}
The lemma now follows from \eqref{exp1}, \eqref{exp2} and Lemma \ref{derivative of distance}.
\end{proof}

Note that $V_j$ and $u_j$ from Lemma \ref{derivative vu} are not the same. Actually, $u_j$ are linear combinations of $V_k$s
as indicated in the next lemma.

\begin{lemma}
For $j \in \{1,2,\ldots,n\}$ we have
\begin{align*}
u_j[z] = \sum_{k=1}^n c_kV_k(\calu[z]),
\end{align*}
where 
\begin{align*}
c_k =
\begin{cases}
\frac{z_kz_j}{1 + |z|^2}, \ \ \ \ \ \ \ \ \mbox{if} \ k \neq j, \\ \\
\frac{1 - |z|^2 - 2z_j^2}{1 + |z|^2}, \ \ \ \mbox{if} \ k = j.
\end{cases}
\end{align*}
\end{lemma}
\begin{proof}
It is not difficult to check from the expression of $\tau_{-z}(x) = \frac{(1 - |z|^2)x - (|x|^2 - 2 x \cdot z + 1)z}{|z|^2|x|^2 - 2 x\cdot z + 1}$ that 
\begin{align*}
|\tau_{-z}(x)|^2 = \frac{|x-z|^2}{|z|^2|x|^2 - 2 x\cdot z + 1}.
\end{align*}
 Hence 
 \begin{align*}
 u_j[z](x) &= 2 \calu^{\prime}(d(x,z)) \frac{(\tau_{-z}(x))_j}{|\tau_{-z}(x)|} \\
 & = 2\calu^{\prime}(d(x,z)) \frac{(1 - |z|^2)x_j - (|x|^2 - 2 x \cdot z + 1)z_j}{|x-z|\sqrt{1 - 2x\cdot z + |x|^2|z|^2}}\\
 & = V_j(\calu[z])
 + 2\calu^{\prime}(d(x,z)) \frac{2((x \cdot z)z_j - |z|^2x_j)}{|x-z|\sqrt{1 - 2x\cdot z + |x|^2|z|^2}}
 \end{align*}
Therefore it is enough to express $2((x \cdot z)z_j - |z|^2x_j)$ in terms of $x_k(1+|z|^2) - z_k(1+|x|^2)$.
We put 
\begin{align*}
2((x \cdot z)z_j - |z|^2x_j) = \sum_{k=1}^n d_k(x_k(1+|z|^2) - z_k(1+|x|^2)),
\end{align*}
which should hold for all $x \in \bn.$ Putting $x=0$ we derive $\sum_{k=1}^nd_kz_k = 0$ and hence 
\begin{align*}
 \sum_{k=1}^n d_kx_k(1+|z|^2) = 2((x \cdot z)z_j - |z|^2x_j) = 2 \sum_{k \neq j} z_kz_j x_k + 2(z_j^2 - |z|^2)x_j.
\end{align*}
Comparing the coefficients we deduce $d_k = 2z_kz_j/(1 + |z|^2)$ for $k \neq j$ and $d_j = 2(z_j^2 - |z|^2)/(1 + |z|^2).$
It is now easy to see that $\sum_k d_kz_k = 0$ is also satisfied. Plugging it in the expression of $u_j$ we conclude the result.
\end{proof}

Though not needed for this article it is interesting to investigate the corresponding results of Lemma \ref{derivative vu} for 
$V_j$'s. We do it for the particular case $z = Re_1.$
For $j = 2, \ldots, n$

\begin{align}\label{derivative of sinh}
\sinh d(x,Re_1) \ \frac{d}{dt} \Big|_{t = 0} d(\tau_{te_j}(x),Re_1) =  \frac{4[x_j(1+R^2)]}{(1-R^2)(1-|x|^2)}
\end{align}

Again from the formula of the hyperbolic distance, we see that 
\begin{align*}
\sinh d(x,Re_1) &= 2 \sinh \frac{d(x,Re_1)}{2}\cosh \frac{d(x,Re_1)}{2} \\
&= \frac{2R|x-Re_1||x-R^{-1}e_1|}{(1-R^2)(1-|x|^2)} \lesssim \frac{1}{(1-R^2)(1-|x|^2)}.
\end{align*}
On the other hand, from \eqref{derivative of sinh}, we deduce that on the region $\{x \in \bn \ | \ |x_j| \geq \delta\}$
for some $\delta > 0$ small 
\begin{align} \label{der of hype}
 \left | \frac{d}{dt} \Big|_{t = 0} d(\tau_{te_j}(x),Re_1) \right| \gtrsim_{\delta} 1.
\end{align}

When $j=1,$ we have $x_1(1+R^2) - R(1+|x|^2) = R(x-Re_1)\cdot (x-R^{-1}e_1)$ and hence 
\begin{align}
 \frac{d}{dt} \Big|_{t = 0} d(\tau_{te_1}(x),Re_1) = \frac{f_R(x)}{\sinh (x,Re_1)},
\end{align}
where $f_R(x) = \frac{4R(x-Re_1)\cdot (x-R^{-1}e_1)}{(1-R^2)(1-|x|^2)}.$ In either case it is easy to see that 
\begin{align*}
\left| \frac{d}{dt} \Big|_{t = 0} d(\tau_{te_j}(x),Re_1)\right| \lesssim 1, \ \ \mbox{for all} \ j=1,2,\ldots,n.
\end{align*}
 Now we find an explicit expression of $V_j(\calu[Re_1]).$ By definition $\calu[Re_1](x) = \calu \circ \tau_{-Re_1}(x)$ and hence $\calu[Re_1] \circ \tau_{te_j}(x) = \calu(d(\tau_{-Re_1}(\tau_{te_j}(x)),0)) = \calu(d(\tau_{te_j},Re_1)).$ Hence
 \begin{align*}
 V_j(\calu[Re_1]) = \frac{d}{dt}\Big|_{t=0} \calu(d(\tau_{te_j},Re_1)) = \calu^{\prime}(d(x,Re_1)) \frac{d}{dt} \Big|_{t = 0} d(\tau_{te_j}(x),Re_1),
 \end{align*}
which together with $|\calu^{\prime}|\lesssim \calu$ gives
\begin{align}\label{calure1}
|V_j(\calu[Re_1])| \lesssim \calu[Re_1], \ \ \mbox{for all} \ j = 1,2,\ldots,n.
\end{align}

\medskip

With this preparation, we can now state the main result on $V_j(\calu[Re_1]).$

\begin{lemma} \label{h1normvu}
For every $1 \leq j \leq n$ and $R \sim 1$ 
\begin{align*}
\|V_j(\calu[Re_1])\|_{L^{2}(\bn)} \approx 1, \ \ \|V_j(\calu[Re_1])\|_{\la}\approx 1.
\end{align*}
\end{lemma}

 \begin{proof}
 The upper bound follows easily from the equation satisfied by $\Phi_j:= V_j(\calu[Re_1])$ and \eqref{calure1}.  Indeed, multiplying \eqref{eigenvalue} by $\Phi_j$, 
 \begin{align*}
 \|\Phi_j\|_{\la}^2 = p\int_{\bn}\Phi_j^2\calu[Re_1]^{p-1} \ \dvg \lesssim \int_{\bn} \calu[Re_1]^{p+1} \ \dvg \lesssim 1,
 \end{align*}
 and Poincar\'e inequality gives the required $L^2$-bound. For the lower bound when $j>1$ we use \eqref{der of hype} to get 
 \begin{align*}
 |\Phi_j(x)| \gtrsim_{\delta} |\calu^{\prime}(d(x,Re_1))|, \ \ \ \mbox{on} \ \ |x_j|\geq \delta.
 \end{align*}
 We fix $R_0$ such that the hyperbolic ball $B(Re_1, R_0)$ is entirely contained in the region $|x_j| \geq \delta.$ Then 
 \begin{align*}
 \int_{\bn} |\Phi_j(x)|^2 \ \dvg \gtrsim_{\delta} \int_{B(Re_1,R_0)}|\calu^{\prime}(d(x,Re_1))|^2 \ \dvg = \int_{B(0,R_0)}|\calu^{\prime}|^2 \ \dvg \approx 1.
 \end{align*}
Now for $j = 1,$ 
\begin{align*}
\int_{\bn} |\Phi_1(x)|^2 \ \dvg &= \int_{\bn} (\calu^{\prime}(d(x,Re_1)))^2 \frac{f_R(x)^2}{\sinh^2 d(x,Re_1)} \ dvg \\
&= \int_{\bn} (\calu^{\prime}(d(y,0)))^2 \frac{f_R(\tau_{Re_1}(y))^2}{\sinh^2 d(y,0)},
\end{align*}
where we have used the change of variable $x = \tau_{Re_1}(y).$ Let us denote $D = R^2|x|^2 + 2Ry_1+1.$ A few straightforward computations give
\begin{align*}
\tau_{Re_1}(y) - Re_1 &= (1-R^2) \ \frac{y+R|y|^2e_1}{D} \\
\tau_{Re_1}(y) - R^{-1}e_1 &= (1-R^2) \ \frac{y-2y_1e_1 - R^{-1}e_1}{D} \\
1 - |\tau_{Re_1}(y)|^2 &= \frac{(1-R^2)(1-|y|^2)}{D}.
\end{align*}
Hence from the definition of $f_R$, we get
\begin{align*}
f_R(\tau_{Re_1}(y)) &= \frac{4R(\tau_{Re_1}(y) - Re_1)\cdot (\tau_{Re_1}(y) - R^{-1}e_1)}{(1-R^2)(1 - |\tau_{Re_1}(y)|^2)}\\
&= 4R \frac{(y + R|y|^2e_1)\cdot(y - 2y_1e_1-e_1)}{D(1-|y|^2)} \\
&\rightarrow  \frac{4(y + |y|^2e_1)\cdot(y - 2y_1e_1-e_1)}{|y+e_1|^2(1-|y|^2)} = \frac{4g(y)}{(1-|y|^2)}, \ \ \mbox{as} \ \ R \uparrow 1.
\end{align*}
Clearly, $g$ is bounded above. Recalling $\sinh d(y,0) = \frac{2|y|}{(1-|y|^2)}$ we get
\begin{align*}
\liminf_{R \rightarrow 1} \int_{\bn} |\Phi_1(x)|^2 \ \dvg \geq \int_{\bn} (\calu^{\prime}(d(y,0)))^2 \frac{g(y)^2}{|y|^2} \ \dvg \approx 1,
\end{align*}
which completes the proof.
 \end{proof}

\subsection{Spectral properties of weighted Laplace-Beltrami operator}

In this subsection, we recall some well-known results on the spectral properties of $(-\Delta_{\bn} - \la)/w.$ The basic example of weights we consider is one of  $U^{p-1}, V^{p-1}, (U+V)^{p-1}.$ Hence we assume 
\begin{align} \label{assumption w}
0 < w \in L^{\frac{p+1}{p-1}}(\bn)\cap L^{\infty}(\bn) \ \ \mbox{and the embedding} \ H^1(\bn)  \hookrightarrow L^2_w(\bn) \ \ \mbox{is compact}.
\end{align}
As a result, $\left(\frac{-\Delta_{\bn} - \la}{w}\right)^{-1}: L^{2}_w(\bn) \to L^2_w(\bn)$ is a compact, self-adjoint and positive operator. By standard spectral theory, $L^2_w(\bn)$ admits a countable orthonormal basis $\{\psi_k\}_{k=1}^{\infty}$ consisting of eigenvectors of $\left(\frac{-\Delta_{\bn} - \la}{w}\right)^{-1}.$ Up to a rearrangement of indices we can write
\begin{align*}
-\Delta_{\bn}\psi_k - \la \psi_k = \mu_k w\psi_k, \ \ 0<\mu_1 < \mu_2\leq \mu_3\leq \cdots \leq \mu_k \rightarrow \infty, \ as \ k \rightarrow \infty.
\end{align*}
Moreover $\{\mu_k^{-\frac{1}{2}}\psi_k\}$ forms an orthonormal basis for $H^1(\bn)$ with respect to the bilinear form $\langle \cdot, \cdot\rangle_{\la}.$

The following theorem is a straightforward adaptation of \cite[Lemma A.4 and Lemma A.5]{FG}.

\begin{lemma} \label{equivalence of norms}
Let $n \geq3$ and $w$ be a weight satisfying \eqref{assumption w}. Let $\psi \in L^2_w(\bn)$ satisfies $-\Delta_{\bn}\psi - \la \psi - \mu w \psi = f.$ If we write $\psi = \sum_k \alpha_k \psi_k$ then
\begin{itemize}
\item[(a)] $\sum_k \alpha_k^2 \left(1 - \frac{\mu}{\la_k}\right)^2 = \|(-\Delta_{\bn} - \la)^{-1}f\|_{L^2_w(\bn)}^2.$
\item[(b)] Moreover, if we assume $\la_k \geq \frac{\mu}{1-\epsilon},$ whenever $\alpha_k \neq 0,$ where $\epsilon \in (0,1)$ then 
\begin{align*}
 \|\psi\|_{\la} \approx_{\epsilon} \|f\|_{H^{-1}(\bn)}.
\end{align*}
\end{itemize}
\end{lemma}

\begin{proof}
Plugging $\psi = \sum_k \alpha_k\psi_k$ in to $-\Delta_{\bn}\psi - \la \psi - \mu w \psi = f$ we get
\begin{align*}
\frac{f}{w} &= \frac{1}{w}\sum_k \alpha_k (-\Delta_{\bn}\psi_k - \la \psi_k - \mu w \psi_k) \\
&= \frac{1}{w}\sum_k \alpha_k(\la_kw\psi_k - \mu w \psi_k)\\
&= \sum_{k} \alpha_k(\la_k -\mu_k)\psi_k.
\end{align*}
Appling $\left(\frac{-\Delta_{\bn} - \la}{w}\right)^{-1}$ both sides and using $\left(\frac{-\Delta_{\bn} - \la}{w}\right)^{-1}(\psi_k) = \frac{\psi_k}{\la_k}$ we obtain
\begin{align}\label{z-1}
\sum_{k} \alpha_k \left(1 - \frac{\mu}{\la_k}\right)\psi_k = (-\Delta_{\bn} - \la)^{-1}f.
\end{align}
Since $\{\psi_k\}$ forms an orthonormal basis for $L^2_w(\bn)$, taking $L^2_w$-norm on both sides of \eqref{z-1}
we get $(a).$ 

For (b), we recall that $\{\la_k^{-\frac{1}{2}}\psi_k\}$  forms an orthonormal basis of $H^1(\bn)$ with respect to $\langle \cdot, \cdot \rangle_{\la},$ and hence 
\begin{align}
\|\psi \|_{\la}^2 = \sum_k \alpha_k^2 \la_k.
\end{align}
 On the other hand, taking the $\|\cdot\|_{\la}$-norm on both sides of \eqref{z-1} we get
 \begin{align*}
 \sum_{k} \alpha_k^2\left(1 - \frac{\mu}{\la_k}\right)^2 \la_k \ = \ \| (-\Delta_{\bn} - \la)^{-1}f \|_{\la}^2 \ \approx \ \| f \|_{H^{-1}(\bn)}^2.
 \end{align*}
 As a result, we get $\|\psi\|_{\la} \lesssim \|f\|_{H^{-1}(\bn)}.$ To prove the equivalence $\approx$ in last line we set $v = (-\Delta_{\bn} - \la)^{-1}f,$ then $(-\Delta_{\bn} - \la)v = f$ and hence 
 $\|v\|_{\la}^2 = \int_{\bn} f v \ \dvg \leq \|f\|_{H^{-1}(\bn)}\|v\|_{H^1(\bn)} \lesssim  \|f\|_{H^{-1}(\bn)}\|v\|_{\la},$
 which gives one side of the equivalence. For the other side, for every $\varphi \in C_c^{\infty}(\bn)$
 \begin{align*}
 \left| \int_{\bn} f \varphi \ \dvg \right|  = |\langle v, \varphi \rangle_{\la}| \leq \|v\|_{\la} \| \varphi \|_{\la} \lesssim \|v\|_{\la} \| \varphi \|_{H^{1}(\bn)}.
 \end{align*}
Taking supremum over all $\| \varphi \|_{H^{1}(\bn)} \leq 1$ proves the equivalence. 

\medskip

Now if $\la_k \geq \mu(1-\epsilon)^{-1}$ whenever $\alpha_k \neq 0,$ then $1- \frac{\mu}{\la_k} \geq \epsilon$ and hence
\begin{align*}
\|f\|_{H^{-1}(\bn)}^2 \approx  \sum_{k} \alpha_k^2\left(1 - \frac{\mu}{\la_k}\right)^2 \la_k \geq \epsilon^2 \sum_k \alpha_k^2\la_k^2 = \epsilon^2 \| \psi \|_{\la}^2.
\end{align*}
\end{proof}

We also used the following simple lemma.

\begin{lemma} \label{bounded operator}
Let $w$ be a weight satisfying \eqref{assumption w} (but not necessarily bounded), then $(-\Delta_{\bn} - \la)^{-1} : L^2(\bn) \to L^2_w(\bn)$ is a bounded operator.
\end{lemma}
\begin{proof}
For every $u \in C_c^{\infty}(\bn),$ we have 
\begin{align*}
\|(-\Delta_{\bn} - \la)u\|_{L^2(\bn)} &= \sup_{\|\varphi \|_{L^2(\bn)}= 1} \int_{\bn} (-\Delta_{\bn} - \la)u \varphi \ \dvg \\
&\geq \frac{1}{\|u\|_{L^2(\bn)}}\int_{\bn} u(-\Delta_{\bn} - \la)u \ \dvg \\
& = \frac{\| u\|_{\la}^2}{\|u\|_{L^2(\bn)}} \\
& \gtrsim \|u\|_{\la} \gtrsim \|u\|_{L^{p+1}(\bn)},
\end{align*}
where in the second last line we have used Poincar\'{e} inequality.  Since $w \in L^{\frac{p+1}{p-1}}(\bn)$ 
\begin{align*}
\int_{\bn} u^2w \ \dvg \leq \| u \|_{L^{p+1}(\bn)}^2 \|w\|_{L^{\frac{p+1}{p-1}}(\bn)} \lesssim \|(-\Delta_{\bn} - \la)u\|_{L^2(\bn)}^2
\end{align*}
and hence $\|(-\Delta_{\bn} - \la)^{-1} u\|_{L^{2}_w(\bn)} \lesssim \| u\|_{L^2(\bn)}.$

\end{proof}


\par\bigskip\noindent
\textbf{Acknowledgments.}
The research of M.~Bhakta is partially supported by the {\em SERB WEA grant (WEA/2020/000005)} and DST Swarnajaynti fellowship (SB/SJF/2021-22/09). D.~Ganguly is partially supported by the INSPIRE faculty fellowship (IFA17-MA98). D.~Karmakar acknowledges the support of the Department of Atomic Energy, Government of India, under project no. 12-R\&D-TFR-5.01-0520. S.~Mazumdar is partially supported by IIT Bombay SEED Grant RD/0519-IRCCSH0-025.


\begin{thebibliography}{XX}






\bibitem{AT} {\sc T.~Aubin}, Probl\'{e}mes isop\'{e}rim\'{e}triques et espaces de Sobolev. \textit{J.~Differ.~Geom} 11(4), (1976) 573--598.

 \bibitem{SA} {\sc S. Aryan,}  Stability of Hardy-Littlewood-Sobolev inequality under bubbling. \textit{Preprint arXiv: 2109.12610}.

 \medskip
\bibitem{Bahri} {\sc A.~Bahri}, Critical points at infinity in some variational problems. 
\textit{Pitman Research Notes in Mathematics Series,} 182. Longman Scientific \& Technical, Harlow; copublished in the United States with John Wiley \& Sons, Inc., New York, 1989. vi+I15+307 pp. ISBN: 0-582-02164-2.

\medskip



\medskip


\medskip


\bibitem{forthcoming}{\sc M.~Bhakta, D.~Ganguly, D.~Karmakar, S.~Mazumdar}, Quantitative stability of Poincar\'e-Sobolev equation in the hyperbolic space in $n\geq 6.$
\textit{Work in progress}.

\medskip
\bibitem{BGKM}{\sc M.~Bhakta, D.~Ganguly, D.~Karmakar, S.~Mazumdar}, Sharp quantitative stability of Poincar\'e-Sobolev inequality in the hyperbolic space and applications to fast diffusion flows, \textit{Preprint, 	arXiv:2207.11024}. 
\medskip

\bibitem{BS} {\sc M.~Bhakta,  K.~Sandeep}, Poincar\'{e}-Sobolev equations in the hyperbolic space. \textit{Calc. Var. Partial Differential Equations} 44 (2012), no. 1-2, 247--269.
\medskip

\bibitem{BE} {\sc G.~Bianchi, H.~Egnell}, A note on the Sobolev inequality. \textit{J.~Funct. Anal}. 100 (1991), no. 1, 18--24.
\medskip





\bibitem{isosphere}{\sc V. ~B\"ogelein, F.~ Duzaar, and N. ~Fusco,} A quantitative isoperimetric inequality on the sphere, \textit{Adv. Calc. Var.} 10 (2017), no. 3, 223--265.

\medskip
\bibitem{isohyperbolic}  {\sc V. ~B\"ogelein, F.~ Duzaar, and C.~Scheven,} A sharp quantitative isoperimetric inequality in hyperbolic $n$-space, \textit{Calc. Var. Partial Differential Equations} 54 (2015), no. 4, 3967--4017.

\medskip
\bibitem{BDNS} {\sc M.~Bonforte, J.~Dolbeault, B.~Nazaret and N.~Simonov,} Stability in Gagliardo-Nirenberg-Sobolev inequalities: flows, regularity and the entropy method \textit{Preprint hal-02887010 and arXiv: 2007.03674} 
\medskip


\bibitem{BGGV} {\sc M.~Bonforte, F.~Gazzola,  G.~Grillo, J.~L.~V\'{a}zquez},  Classification of radial solutions to the Emden-Fowler equation on the hyperbolic space. Calc. Var. Partial Differential Equations 46 (2013), no. 1-2, 375--401.

\medskip







 \bibitem{BL} {\sc H.~Brezis, E.~H.~Lieb}, Sobolev inequalities with remainder terms, \textit{J. Funct. Anal}, 62 (1985), no. 1, 73--86.
 \medskip


\medskip 

 \bibitem{CF} {\sc E.~A.~Carlen, A.~Figalli},  Stability for a GNS inequality and the log-HLS inequality, with application to the critical mass Keller-Segel equation, 
  \text{Duke Math. J}, 162 (2013), no. 3, 579--625.
  
  \medskip
  
  \bibitem{CI} {\sc A.~Cianchi}, A quantitative Sobolev inequality in BV,  \textit{J. Funct. Anal}, 237 (2006), no. 2, 466--481.
  
  \medskip
  
  
 \bibitem{CFMP}  {\sc A.~Cianchi, N.~Fusco, F.~Maggi, A.~Pratelli}, The sharp Sobolev inequality in quantitative form.  \text{J. Eur. Math. Soc.(JEMS)} 11 (2009), no. 5, 1105--1139.
\medskip

\bibitem{ISO4} {\sc M.~Cicalese, G.~P.~Leonardi,} A selection principle for the sharp quantitative isoperimetric inequality. \textit{Arch. Ration. Mech. Anal.} 206(2), 617--643, 2012.
\medskip

\bibitem{CFM} {\sc G.~Ciraolo, A.~Figalli, F.~ Maggi}, A quantitative analysis of metrics on $\mathbb{R}^N$ with almost constant positive scalar curvature, with applications to fast diffusion flows, \textit{Int. Math. Res. Not. IMRN} 2018, no. 21, 6780--6797.

\medskip

\bibitem{CM17} {\sc G. ~Ciraolo and F.~ Maggi,} On the shape of compact hypersurfaces with almost-constant mean curvature, \textit{Comm. Pure Appl. Math.} 70 (2017), no. 4, 665--716.

\medskip


\medskip

\bibitem{Weietal} {\sc B.~ Deng, L.~ Sun, J.~C.~ Wei}, 
Sharp quantitative estimates of Struwe's Decomposition, \textit{	arXiv:2103.15360.}
\medskip

\bibitem{DingY} {\sc W.~Y.~Ding,} On a conformally invariant elliptic equation on $\rn$. \textit{Commun. Math. Phys.} 107(2), 331–335, 1986.


\medskip

 
 
 
 
 


\medskip

\bibitem{DT}{\sc J.~Dolbeault and G.~Toscani,} Stability results for logarithmic Sobolev and GagliardoNirenberg inequalities. \textit{Int. Math. Res. Not. IMRN} (2016), no. 2, 473--498.

\medskip

\bibitem{DE1}{\sc J. ~Dolbeault, M. ~J.~ Esteban,}  Improved interpolation inequalities and stability. \textit{Advanced
Nonlinear Studies} 20 (2020), no. 2, 277--291.

\medskip

\bibitem{DE}{\sc J. ~Dolbeault, M. ~J.~ Esteban,} Hardy-Littlewood-Sobolev and related inequalities: stability. \textit{	arXiv:2202.02972}.

\medskip

\bibitem{DEFFL} {\sc J. ~Dolbeault, M. ~J.~ Esteban, A. ~Figalli, R.~ L.~ Frank, M. ~Loss,} Stability for the Sobolev inequality with explicit constants. \textit{Preprint, arXiv:2209.08651.}
\medskip

\bibitem{EPaT} {\sc H.~Egnell,  F.~Pacella,  M.~Tricarico,} Some remarks on Sobolev inequalities. 
\textit{Nonlinear Anal.} 13 (1989), no. 6, 671--681.

\medskip

\bibitem{Max} {\sc M.~Engelstein, R.~ Neumayer,L.~ Spolaor, Luca}
Quantitative stability for minimizing Yamabe metrics.  
\textit{Trans. Amer. Math. Soc. Ser.} B 9 (2022), 395--414. 


\medskip






\bibitem{ISO2} {\sc A.~Figalli, F.~Maggi, A.~Pratelli,}  A mass transportation approach to quantitative isoperimetric inequalities. \textit{Invent. Math.} 182(1), 167--211, 2010.
\medskip

\bibitem{FG} {\sc A.~ Figalli, F.~ Glaudo}, On the sharp stability of critical points of the Sobolev inequality, \textit{Arch. Ration. Mech. Anal}, 237 (2020), no. 1, 201--258.
\medskip

\bibitem{FN} {\sc A.~Figalli, R.~Neumayer}, Gradient stability for the Sobolev inequality: the case $p \geq 2$, \textit{J. Eur. Math. Soc. (JEMS)} 21 (2019), no. 2, 319--354.

\medskip

\bibitem{FMP} {\sc A.~Figalli, F.~Maggi, A.~Pratelli}, Sharp stability theorems for the anisotropic Sobolev and log-Sobolev inequalities on
functions of bounded variation, \textit{Adv. Math}. 242 (2013), 80--101.

\medskip

\bibitem{FZp} {\sc A.~Figalli, Y. Ru-Ya Zhang} Sharp gradient stability for the Sobolev inequality, \textit{Duke Math. J., to appear, arXiv:2003.04037}.

\medskip 

\bibitem{Frank} {\sc R. ~L.~ Frank,} Degenerate stability of some Sobolev inequalities. \textit{Ann. Inst. H. Poincar\'e Anal. Non Lin\'{e}aire, to appear, 
Preprint arXiv: 2003.04037, 2021}.
\medskip
\bibitem{Fuscosurvey} {\sc N.~ Fusco,} The quantitative isoperimetric inequality and related topics,\textit{ Bull. Math. Sci.} 5 (2015), no. 3, 517--607.

\medskip
\bibitem{ISO1} {\sc N.~Fusco, F.~Maggi, A,~ Pratelli,} The sharp quantitative isoperimetric inequality. \textit {Annals of mathematics,}  pp. 941--980 (2008).

\medskip


\bibitem{FMP-1} {\sc N.~Fusco, F.~Maggi, A.~Pratelli}, The sharp quantitative Sobolev inequality for functions of bounded variation, \textit{J. Funct. Anal}, 244 (2007), no. 1, 315--341.


\medskip


 \bibitem{DG2}  {\sc D.~Ganguly, K.~Sandeep}, Nondegeneracy of positive solutions of semilinear elliptic problems in the hyperbolic space. \textit{Commun. Contemp. Math}. 17 (2015), no. 1, 1450019, 13 pp.
 

\medskip
\bibitem{GNN} {\sc B.~Gidas, W.~M.~Ni, and L.~ Nirenberg,} Symmetry and related properties via the maximum principle.\textit{ Commun. Math. Phys.} 68(3), 209--243, 1979.


\medskip
\bibitem{Kon} {\sc T.~ K\"onig,} On the sharp constant in the Bianchi–Egnell stability inequality. \textit{Preprint 	arXiv:2210.08482}.


\medskip

\bibitem{KM17} {\sc B. ~Krummel and F. ~Maggi,} Isoperimetry with upper mean curvature bounds and sharp stability estimates, \textit{Calc. Var. Partial Differential Equations} 56 (2017), no. 2, Paper No. 53, 43.

\medskip

 
 
  
\bibitem{ISO3} {\sc F.~Maggi,}  Some methods for studying stability in isoperimetric type problems. \textit{Bull. Am. Math. Soc.} 45(3), 367-408, 2008.
  \medskip
  

 
 \bibitem{MS} {\sc G.~Mancini and K.~Sandeep}, On a semilinear elliptic equation in $\mathbb{B}^N.$ \textit{Ann. Sc. Norm. Super. Pisa Cl. Sci.}  7 (2008), no. 4, 635--671.
 \medskip
 
   
   \medskip
   
  \bibitem{Morris} {\sc I. ~D. ~Morris}, 
A rapidly-converging lower bound for the joint spectral radius via multiplicative ergodic theory.  
\textit{Adv. Math.} 225 (2010), no. 6, 3425--3445. 
   
  \bibitem{Neum} {\sc R.~Neumayer,} A note on strong-form stability for the Sobolev inequality, \textit{Calc. Var. Partial Differential Equations} 59 (2020), no. 1, Paper No. 25, 8.
   \medskip
   
   \bibitem{Ngu} {\sc V.~H.~Nguyen.}The sharp Gagliardo-Nirenberg Sobolev inequality in quantitative form. \textit{J. Funct. Anal.} 277, no.7 (2019), 2179--2208.
   \medskip
   
  \bibitem{NVo} {\sc F.~ Nobili, I.~Y.~ Violo,} Stability of Sobolev inequalities on Riemannian manifolds with Ricci curvature lower bounds, \textit{	arXiv:2210.00636}.
   
 \medskip
 \bibitem{RAT} {\sc J~.G~Ratcliffe}, Foundations of Hyperbolic Manifolds, Graduate Texts in Mathematics, vol. 149. (1994) Springer, New York.
\medskip

\bibitem{Ruff} {\sc B. Ruffini.} Stability theorems for Gagliardo-Nirenberg-Sobolev inequalities: a reduction principle to the radial case. \textit{Rev. Mat. Complut.} 27 (2014), 509--539.

\medskip



\bibitem{SRa} {\sc K.~Sandeep, R.~Dutta}, In preparation.


  
  \medskip
  
 \bibitem{Seu} {\sc F.~ Seuffert,} An extension of the Bianchi-Egnell stability estimate to Bakry, Gentil, and Ledoux’s generalization of the Sobolev inequality to continuous dimensions,\textit{J. Funct. Anal.,} 273(10):3094--3149, 2017.
  \medskip
  
 \bibitem{Stoll} {\sc M.~Stoll}, Harmonic and subharmonic function theory on the hyperbolic ball. 
London Mathematical Society Lecture Note Series, 431. \textit{Cambridge University Press, Cambridge, 2016. xv+225 pp. ISBN: 978-1-107-54148-1}.

\medskip


\bibitem{MS1}{\sc M.~Struwe}, Variational methods. Applications to nonlinear partial differential equations and Hamiltonian
systems, (1990) Springer-Verlag, Berlin.

\medskip

 \bibitem{MS2} {\sc M.~Struwe}, A global compactness result for elliptic boundary value problems involving limiting nonlinearities.
  \textit{Math. Z.} 187 (1984), no. 4, 511--517.
  \medskip
 
 
\bibitem{TAL} {\sc G.~Talenti}, Best constant in Sobolev inequality. \textit{Ann. Mat. pura Appl.} 110(1), (1976) 353--372. 
\medskip

\bibitem{TT}{\sc A.~Tertikas, K.~Tintarev}
On the existence of minimizers for the Hardy-Sobolev-Ma\'{z}ya inequality.   \textit{Ann. Mat. Pura Appl.} (4) 186 (2007), no. 4, 645--662.
\medskip

\bibitem{CKNWei} {\sc J.~C.~ Wei, Y.~ Wu,} On the stability of the Caffarelli-Kohn-Nirenberg inequality.
\textit{Math. Ann., to appear, arXiv:2106.09253}.


\end{thebibliography}
\end{document}